\def\type{\it type}
\def\depth{\it depth}
\def\gpd{\mathcal{G}}
\def\sqb{\sq_\act}
\def\sqbt{\sq_{\act_\ot}}
\def\Perm{\boldsymbol{\pi}}
\def\aca{\act^{op}\times \act}
\newcommand{\pher}{prehereditary}
\newcommand{\Pher}{Prehereditary}
\newcommand{\fact}{localizable}
\newcommand{\catplus}[1]{\mathcal{P}l({#1})}
\newcommand{\catplusp}[1]{\mathcal{P}l^{u}({#1})}
\newcommand{\catplusgp}[1]{\mathcal{P}l^{uc}({#1})}
\newcommand{\monplus}[1]{\mathcal{P}l_{\otimes}({#1})}
\newcommand{\monplusp}[1]{\mathcal{P}l_{\otimes}^{u}({#1})}
\newcommand{\monplusgr}[1]{\mathcal{P}l_{\otimes}^{graph}({#1})}
\newcommand{\monplusgcp}[1]{\mathcal{P}l_{\otimes}^{\it gcp}({#1})}
\newcommand{\monplusgcpgr}[1]{\mathcal{P}l_{\otimes}^{\it graph, gcp}({#1})}
\newcommand{\monplushyp}[1]{\mathcal{P}l_{\otimes}^{\it hyp}({#1})}
\newcommand{\monplushypgr}[1]{\mathcal{P}l_{\otimes}^{\it graph, hyp}({#1})}
\newcommand{\locmonplus}[1]{\mathcal{P}l_{\otimes}({#1})_{loc}}
\newcommand{\locmonplusp}[1]{\mathcal{P}l_{\otimes}^{u}({#1})_{loc}}
\newcommand{\redmonplus}[1]{\mathcal{P}l_{\otimes}({#1})_{red}}
\newcommand{\redmonplusp}[1]{\mathcal{P}l_{\otimes}^{u}({#1})_{red}}
\newcommand{\plus}[1]{{#1}^+}
\newcommand{\plusp}[1]{{#1}^{+, \it u}}
\newcommand{\plusgr}[1]{{#1}^{+, \it graph}}
\newcommand{\plusgcp}[1]{{#1}^{+, \it gcp}}
\newcommand{\plusgcpgr}[1]{{#1}^{+, \it graph, gcp}}
\newcommand{\plushyp}[1]{{#1}^{+, \it hyp}}
\newcommand{\plushypgr}[1]{{#1}^{+, \it graph, hyp}}
\newcommand{\anyplus}{\mathcal{P}_+}
\newcommand{\anyplusp}{\anyplus^{u}}
\newcommand{\anyplusg}{\anyplus^{c}}
\newcommand{\anyplusgp}{\anyplus^{uc}}
\newcommand{\anyplushyp}{\anyplus^{hyp}}
\newcommand{\gplus}[1]{\mathcal{G}r({#1})}
\begin{document}

\def\PlC{\catplus{\C}}
\def\PlM{\monplus{\M}}
\def\twocat{\mathbb{H}}

\title[Plus constructions]{Plus constructions, plethysm, and unique factorization categories with applications to graphs and operad--like  theories}

\author{Ralph M.\ Kaufmann}
\email{rkaufman@math.purdue.edu}

\address{Purdue University Department of Mathematics, and Department of Physics \& Astronomy, West Lafayette, IN 47907
}

\author{Michael Monaco}
\email{monacom@purdue.edu}

\address{Purdue University Department of Mathematics, West Lafayette, IN 47907
}

\begin{abstract}
 Baez-Dolan type plus constructions  serve three main purposes:
 they
(1)  corepresent categorical bimodules that  are monoids with respect to a plethysm product,
 (2)  allow to define  functors as bimodule monoids, and thereby algebras over functors,
 (3)  provide a theory of twists of monads.
 Unital (monoidal) bimodule monoids yield (monoidal) categories and the corepresentation is for indexed enrichments of categories.
The original Baez--Dolan construction constructed algebras over operads. Operads are functors out of a particular Feynman Category (FC), viz.\ they are corepresented by it, and the plus construction was previously generalized to FCs.
 We now define several of these constructions in the general context of categories and (symmetric) monoidal categories, show that they are functorial  ---hence categorically good---, and prove their corepresentation properties.

One application is that the structures corepresented by an FC, like operads, props, etc.\  can be defined as plethysm monoids if and only if the corepresenting FC is a plus construction. In one direction, we prove that such a plus construction is based on a {Unique Factorization Category} (UFC)---a new notion that we introduce. In the other direction, we
prove that the resulting FC has special properties, like being cubical. This explains why there is no monoid formulation for cyclic or modular operads or props,  but there is for operads and properads. Additionally, using the bimodule monoids point of view,
we prove that as monoidal bimodule monoids FCs are characterized by the fact that the functor constructing free algebras preserves the property of being strongly monoidal.

We give a  local presentation of these constructions, as well as a global description of the morphisms, and a graphical version using decorated groupoid colored graphs. Our considerations are in an enriched setting.
The global presentation utilizes pasting diagrams from 2--categories or equivalently double categories, which is of independent interest. In the special case of a UFC, we also present a graphical formalism with groupoid colored graphs.
This allows us to identify our plus constructions as the step-by--step generalization of  the Baez--Dolan plus constructions.

\end{abstract}
\dedicatory{To Yuri Ivanovich Manin on the occasion of his 85th birthday---et in memoriam}

\maketitle


\section*{Introduction}

 The original source for the type of plus constructions we are considering is the opetopic construction of \cite{BaezDolan}. Plus constructions in related contexts can  be found in \cite{monads,feynman,BMplus,Bergermoment}.
This paper introduces more general plus constructions for arbitrary (monoidal) categories and proves fundamental structural results. It gives an {\em ab uovo} progression which builds the results step--by--step, clarifying the role of each structure and property individually along the way, e.g.\ the role of levels and units. The known constructions are recovered as special cases.

The successive constructions are as follows:
The first one is a plus construction for a pointed category $\C$ yielding a monoidal category $\catplus{\C,P}$---a pointing is a functor $P\in [\act,\C]$ that is bijective on objects. The second  $\monplus{\M,P}$, is a plus construction for  a monoidally pointed (symmetric) monoidal category $\M$ which adds morphisms reflecting the original monoidal structure. The third $\locmonplus{\M,P}$ is its localization with respect to these morphisms. It has a  companion, the reduced version $\redmonplus{\M,P}$, which is a subcategory that is equivalent to the ambient category. This last version  $\plus{\M}$ with the pointing given by the inclusion $Iso(\M)\to \M$, where $Iso(\M)$ is the underlying groupoid of $\M$ retaining only isomorphisms,  is called the standard plus construction. It is what the known constructions are equivalent to in the relevant specializations. The previous constructions were graphical in nature, and to make the identification, we develop a more general graphical framework for the standard plus construction.

 Plus constructions are designed to corepresent.
 A category over $\E$ is {\em corepresented} by a category $\mathcal{C}$ if it is equivalent to the category of functors, aka cosheaves, $[\mathcal{C},\mathcal{E}]$.  In the monoidal case, the categories of functors  we utilize are strict and strong monoidal functors.
 We show  that plus constructions corepresent  (monoidal) $\B$--bimodule monoids (M)BMs. Here   a $\B$--bimodule  is  a functor $[\aca,\E]$, where $\E$ is a symmetric monoidal enrichment category. The monoidal structure for bimodules, called plethysm is  an equivariant tensor product defined by a coend. Unital (monoidal) bimodule monoids  naturally arise as the morphisms of (monoidal) categories enriched over $\E$. The paradigmatic examples of (M)BMs are the morphisms of a (monoidal) category $\C$,
 considered as $\C$, $Iso(\C)$ or $\C^{disc}$ bimodules, where $\C^{disc}$ is the underlying discrete category retaining only identity morphisms. The identity morphisms can be viewed as given by the inclusion functor $\C^{disc}\to \C$. This and the inclusion are prime examples of a pointing. We define unital versions of the plus construction to corepresent bimodule units. This  requires the relative framework given by pointed categories. The localized versions corepresent strong monoidal structures.   All constructions  work in an enriched setting, which in the non--Cartesian case necessitates counital versions. Throughout there is a dichotomy between Cartesian enrichment, which poses not problems, and non--Cartesian enrichment, which requires extra care.

The main role ---and original motivation of Baez--Dolan--- of unital plus constructions is that they corepresent so--called indexed enrichments and allow to define algebras over modules in the following sense.
An indexed enrichment $\hat \C$ of $\C$ is a bijection--on--objects functor $b:\hat C\to\C$, which contrary to a pointing is allowed to be lax or strong monoidal. If $\anyplusp(\C)$ is a unital ---unital and counital in the non--Cartesian case--- plus construction of $\C$, it  defines  an indexed enrichment $\C_\D$ of $\C$. This allows one to define algebras over the module $\D$ as
$\C_\D$--modules thereby completely generalizing the notion of algebras over operads.
A module over $\C$ in this setting is simply a cosheaf.
The terminology ``module'' for a functor is natural from the bimodule perspective.
The term
``algebra'' is natural from the operadic perspective.
In the pointed case, a free $\C$--module on a $\act$--module, can be defined by Kan extension. The basic question is if  every free algebra for a unital monoidal bimodule monoid on a strong monoidal module is strong monoidal. We identify a necessary and sufficient condition for this called ``hereditary''. This specializes to the hereditary condition  a Feynman Category (FC) satisfies \cite{feynman}, with the name ``hereditary'' going back to \cite{Markl}.
An FC \cite{feynman} is a special type of monoidal category whose  underlying groupoid is free monoidal, whose morphisms are hereditary and whose slice categories are small.
They corepresent operad-like theories via strong monoidal functors, where corepresentation is generalized to corepresentation in a category $\hat E$ over $\E$.
This includes the usual generalizations of operads, modular operads, props, properads, and the whole zoo of them. The conditions are that the underlying groupoid is free monoidal, the bimodule of morphisms is hereditary and the slice categories are essentially small. Due to the hereditary condition, there is a free--forget adjunction which allows one to view these functors as algebras over a triple, aka.\ monad.
An instructive example is that of the FC $\F^{\rm operads}$ corepresenting operads, which is the Feynman plus construction of the FC of finite sets and surjections, $\F^{\rm surj}$, see~\cite{feynmanrep} for details. Thus algebras over an operad $\O$ can be identified with functors out of $F^{\rm surj}_\O$
The hereditary condition as defined in the current paper also clarifies the relationship to the notion of patterns  \cite{Getzoperad}.

One caveat for the construction of algebras remains, namely that not every FC is the plus construction of another FC, and the pertinent question left open is which Feynman categories are plus construction of what type of categories. To answer, we introduce the notion of a Unique Factorization Category (UFC), which is a generalization of Feynman categories and the notion of \pher{}, which is also a condition for the localized plus construction to be given by a roof calculus. We prove that  the standard  plus construction of a \pher{} UFC is a Feynman category and, vice--versa, if a Feynman category is equivalent to a standard plus construction of a monoidal category $\M$, then $\M$ is a \pher{} UFC. FCs are closely linked to all types of plus constructions as they appear as their output.
UFCs generalize FCs in the following way:
For both FCs and UFCs the underlying groupoid  is free. While the morphisms of an FC are generated by basic ``many--to--one'' morphisms, which is equivalent to the hereditary condition, for a UFC the hereditary condition is relaxed to allow for basic ``many--to--many'' maps which play the role of prime factors in a unique factorization.
In contrast to the FCs, this factorizablity of morphisms is not automatically compatible with the composition, but is guaranteed by an extra \pher{} condition.
Generalizing to ``many--to--many'', one loses the forget--free adjunction, but one still can recover many interesting features among them the plethysm description. The paradigmatic example of a UFC is that of cospans and of an FC is that of finite sets. Moreover, we define an indexing functor from a \pher{} UFC to the \pher{} UFC of cospans.
The standard plus construction of a \pher{} UFC is an FC which complements a previous result that the plus construction of an FCs is again an FC.
Hence ``plus'' is a kind of stabilization.
Moreover, many FCs occur naturally as standard plus constructions.
For example, it explains why there is a Feynman category for properads: it is the localized plus construction for cospans. Note that Cospans have also  been independently considered in \cite{Hackney} in a related situation.

A further use of plus constructions is to have a formulation of operads, and more generally the corepresented functors, as monoids in a monoidal category. Whereas FCs guarantee that the functors are algebras over the monad of a free--forget adjoint pair, the monoid description involves the plethysm product. More precisely: if $\F=\anyplusp(\M)$ given an $\F$--algebra $\O$, the $\Iso(M)$, i.e. the underlying groupoid of $\M$, bimodule defined by the morphisms of $\M_\O$ is the desired monoid and vice--versa. Dropping the unitality condition,
the non--unital versions of the plus construction corepresent the  (monoidal)  bimodule monoids over the plethysm unit, which are the morphisms of $\act$. Our results explain why there is such a formulation for operads and properads, but not for props, cyclic and modular operads as the latter are not the standard plus construction of a UFC, while the former are.

The second hidden origin of the plus construction, as explicated in \cite{feynman}, lies in their use in the definition of twists which are necessary to define bar and cobar construction and also appear in Koszul duality \cite{KW2}. In order for differentials to work properly signs need to be introduced, and the twists are the correct method for this. This facet first appeared in the form of hyper modular operads in \cite{GKmodular} and is  used widely, \cite{GKcyclic}, \cite{wheeledprops}, etc.\ see \cite{KWZ} and references therein.
Accordingly, there is a variation of the plus constructions called the \emph{hyper} version, which satisfies an additional condition needed for twists in the theory of operads.

\subsection*{Outline}
After laying the foundation in \S\ref{par:prelim}---including pointed categories in \S\ref{par:philosophy}---
we define the plus constructions locally, that is by by generators and relations, in \S\ref{par:plus}.
There are several types of generators, those coming from the pointing, $\g$--morphisms stemming from compositions and in the  monoidal case $\mu$-morphisms stemming from taking monoidal products. The localization is with respect to the $\mu$ morphisms.
The conditions for the localization to be given as a right roof calculus are in Definition \ref{df:localizable} and the result is Proposition \ref{prop:roof}. One of these is the \pher{} condition, making its appearance natural.
In \S\ref{par:corep},  we prove the first main result, the  corepresentation Theorem \ref{thm:1}.  The fact that the plus construction is a good categorical notion, namely it behaves well with equivalences, follows from Theorem \ref{thm:endo} showing the the plus construction is functorial.

In \S\ref{par:defs}, we turn our attention to modules and algebras. Here we prove Theorem \ref{thm:adjunction}, which ties the hereditary condition Definition \ref{df:herpointing} to the existence of a free--forget adjunction. This yields the recharacterization of FCs, cf.\ Proposition \ref{prop:fey=her}.
We define Unique Factorization Categories (UFC)s in Definition \ref{df:UFCdef} and provide a functor $\index$ to the UFC of cospans in \S\ref{sec:hereditary-UFC-indexing}.

In \S\ref{par:global}, we work out the details of the global representation using the notion of crossed categories, cf.\ Definition \ref{df:crossed}. This paragraph contains combinatorics relating to formulas, brick wall diagrams and several types of graphs ---among them generalizing string diagrams of 2--categories with decorations--- which may be of independent interest.
This is done in a step--by--step approach providing intermediate results of independent interest, e.g.\ for defining ``planar'' or non--Sigma versions of the construction. This gives a relation to decomposable little 2--cubes as they appear in \cite{Dunn,Brinkmeier,BFSV}.
The role of units is that of {\em levelling}.
 The results are Theorem~\ref{thm:cplus-equals-FC} which characterizes the plus construction for a category as a certain Feynman category and Theorem~\ref{thm:mncp-equals-FC}
which characterizes the monoidal plus  construction under the additional assumption of factorizable pointing (see Definition \ref{df:factpointing}).

In \S\ref{par:global}, we work out the details of the global representation using the notion of crossed categories, cf.\ Definition \ref{df:crossed}. This paragraph contains combinatorics relating to formulas, brick wall diagrams and several types of graphs which may be of independent interest. The combinatorial descriptions of the plus constructions  in Theorems \ref{thm:remonplusbox}.
The main results are Theorem~\ref{thm:cplus-equals-FC} which characterizes the plus construction for a category as a certain type of FC, Theorem~\ref{thm:mncp-equals-FC}
which characterizes the monoidal plus  construction under the additional assumption of factorizable pointing (see Definition \ref{df:factpointing}), and
 Theorem \ref{thm:remonplusbox} which gives the combinatorial description of the reduced plus construction.

Specializing to UFCs in \S\ref{par:ufcstructure}, this leads to one of the main results, Theorem \ref{thm:ufcp-equals-fc}, which characterizes \pher{} UFCs as those monoidal categories whose standard plus construction is equivalent to an FC, and, vice--versa, gives conditions on an FC to be in the image of this plus construction.

Finally, we give a graphical version of the  plus construction for UFCs \S\ref{sec:graphical-plus-construction}.
  The graphical language used is the
   groupoid extension of the one used in \cite{feynman}, which is grounded in the graph category introduced in \cite{BorMan}, given by graphs with  groupoid colored edges.
   This relates UFCs  to groupoid colored properads.
   The graph combinatorics  should be of independent interest.
  In the case of FCs, we recover the construction via groupoid colored trees of \cite{feynmanrep}.
  In this graphical description as well as in the string picture, the role of the units in leveling the graphs becomes apparent.
The non--decorated groupoid colored graphs are also at the heart of Koszul duality \cite{KW2}.
  The main result is Theorem \ref{thm:graphs}, which shows that graphical versions are equivalent to the ones defined by generators and relations. This  is what allows us to make contact with the previously existing plus constructions and prove that the new plus constructions are an extension of the known ones built from first principles.



\subsection*{Dedication and Acknowledgements}
It is a great pleasure and honor to dedicate this  paper to Yuri Ivanovich Manin, whom  we  thank for his continued guidance and support.
His example of regarding mathematical truths, formulating and sharing his insights have been a guiding light for mathematics. His unmistakable style is an aspirational goal for the field. His character, vision and overarching influence in and outside of mathematics have  been a constant inspiration.
The current results as well as many others in the field have a direct line to the themes and presentation of \cite{ManinBook,ManinBlack,ManinQG}.
During the review process, we had to mourn the passing of the great mathematical genius that Yuri Ivanovich Manin was, and we would like to also dedicate the paper to his memory.

This has been a multi--year effort dating back to at least 2019. During this time RK acknowledges support from the MPIM in Bonn, the Czech Academy of Sciences, the CRM in Barcelona and the KMPB in Berlin   and thanks the hosts, especially Yuri Manin, Martin Markl, Carles Casacuberta and Dirk Kreimer for the invitations and discussions.  RK also acknowledges recent support from the Simons foundation.
We  especially wish to thank Clemens Berger for continued key discussions on the subject. RK would also like to thank Don Zagier and Alexei Davydov for related discussions.

\section{Plus constructions, corepresentation and algberas}
\subsection{Preliminaries}
\label{par:prelim}
\subsubsection{Notation and conventions}\label{par:freemonoidal}
For a category $\C$, the underlying groupoid $\Gpd=\Iso(\C)$ is given by the objects and isomorphisms.
We let $\C^{\rm disc}$ denote the discrete category with all objects of $\C$, but only the identity maps as morphisms.
We work in the setting of categories enriched over an enrichment category $\E$. For brevity and expository purposes, we will assume that $\E$ is Cartesian closed for the most part. In this case, the theory is completely parallel to that of $\E=\Set$. In particular, the unit is a terminal object if $\E$ is Cartesian.
One can also work in the non--Cartesian closed case with suitable modifications which we will describe. For a unital $\E$--monoid $A$, the corresponding $\E$--enriched category with one object will be denoted by $\underline{A}$.
For instance, the trivial monoid $*$ gives the category $\triv$ with one object and only its identity as morphisms.

In a monoidal category, we will denote the monoidal unit by $\unit$ and the left and right unit constraints  by $u_l,u_r$, the associators by $A_{123}$, and the commutators by $C_{12}$ (and more generally by $C_{ij}$) in the symmetric case. Composition and monoidal structure are related by the interchange equation:
\begin{equation}
    \label{interchangeeq}
    (\phi\otimes \psi)\circ(\phi'\ot\psi')=(\phi\circ\phi')\ot (\psi\circ\psi')
\end{equation}
The unital monoid $R=(\Hom(\unit, \unit), \circ)$ plays the role of a ground monoid for the homomorphisms.   If $\M$ is a strict monoidal category, then $R$ is commutative and equal to $Hom(\unit,\unit)$ with product structure $\ot$ by the Eckmann--Hilton argument.
The action on the morphisms is given by using the left unit constraint: $
    \Hom(\unit,\unit)\times \Hom(X,Y)\stackrel{\ot}{\to} \Hom(\unit\ot X,\unit\ot Y)\stackrel{(u_l^{\ast -1},(u_l)_*)}{\longrightarrow} \Hom(X,Y)
$.

We will denote the strong/strict/lax/op-lax  monoidal functors between two
monoidal categories $\M$ and $\M'$ by
$[\M,\M']_{\ot}$, $[\M,\M']_{strict-\ot}$ $[\M,\M']_{lax-\ot}$, respectively $[\M,\M']_{op-\ot}$. The lax structure morphisms for a monoidal functor $F$ will be called $\mu^F_{X,Y}:F(X)\ot F(Y)\to F(X\ot Y)$.
The {\em trivial} strong monoidal functor $\final:\M \to \M'$ is defined by $\final(X)=\unit_{\M'}$
and $\final(\phi)=\id_{\unit_{\M'}}$ with the isomorphisms $f_{X,Y}$ given by the unit constraints of $\M'$: $\final(X)\ot \final(Y)=\unit_{\M'} \ot \unit_{\M'} \to \unit_{\M'}$.
The category of (symmetric) monoidal functors with (symmetric) monoidal transformations is (symmetric) monoidal with level--wise monoidal structure. The unit is the trivial functor $\final$.
A {\em counit} for a (symmetric) monoidal functor $\O$ is a  (symmetric) monoidal transformation $\eta:\final\to \O$, $\eps:\O\to \final$. A {\em counital (symmetric) monoidal functor} is such a  functor together with such a choice.
Note that if the target category is Cartesian monoidal, then $\unit$ is a final object so that $\eps$ is unique and every (symmetric) monoidal functor is automatically counital. However, this is not the case when $\E$ is not Cartesian.
 \label{par:groundring}

\subsubsection{Free monoidal categories}
$\C^{\bt}$ will denote the free (symmetric) monoidal category, see \cite{matrix} for more details and examples.  The context will specify if this is symmetric or not. Passing to strict  versions, a representation is given by a category whose objects are words (tuples) in the basic objects, i.e.\ objects of $\C$, and the morphisms are words in morphisms of $\C$. The empty word  is the unit: $\unit_{\C^\bt}=\emptyset$. Composition is defined by enforcing the interchange equation.
 In the symmetric case, there are also permutations of the letters as extra morphisms, which act in a wreath product fashion. We will tacitly fix a strict version and an equivalence to the strong version.
The free (symmetric) monoidal category  comes with an inclusion $j:\C\to \C^\bt$ satisfying the  universal property  that for any functor
$f:\C\to \E$ into a monoidal category $(\E,\otimes)$,
there is a  strong (symmetric) monoidal functor $f^\bt:\C^\bt\to \E$
 with $f=f^\bt i$ that is unique up to unique isomorphism, viz.\ $[\C^\bt,\E]_{strict\mdash\ot}\simeq [\C,\E]$.
The universal property for a free symmetric monoidal category is analogous.
We will say that a category $\act$ is a free monoidal category if there is a category $\V$ and a functor $\imath:\V\to \act$ such $\imath^\bt$ is an equivalence.
A free monoidal category has a natural {\em length} for objects and for morphisms given by the length of the word, i.e.\ the number of tensor factors, e.g.\ if $X=X_1\bt\cdots\bt X_n$ then the length of $X$ is $|X|=n$.

If $\M$ is a monoidal category, by the universal property of $\M^\bt$, the
identity functor $id_\M:\M\to \M$ defines the functor $\mu=id_\M^\bt:\M^\bt\to \M$. It is given by sending $\boxtimes$ to $\ot$. I.e.\
$\mu(X\boxtimes Y)=X\ot Y$ and $\mu(\phi\boxtimes \psi)=\phi\ot \psi$.
If $\M$ is monoidal and $f$ is a strict monoidal functor, then $f^\bt=f\circ \mu$. If $f$ is lax or op-lax, then there is a natural transformation from one side to the other, and if it is strong, then the two functors in the equation are isomorphic. This informs the following construction:
Given a monoidal category $(\M,\ot)$, there is a non--connected (nc) free monoidal \cite{KWZ,feynman} category, aka.\ strings, \cite{baues}, aka.\ necklaces in the case of $\Delta$, see e.g.\cite{rigidification-necklaces}.
This is the category $\M^{nc}$ obtained from $\M^{\bt}$ by adjoining the data of the functor $\mu$,   viz.\ adding the morphisms $\mu_{X,Y}:X\boxtimes Y\to X\ot Y$ and the morphism $\eps:\unit_\boxtimes\to \unit_\M$.
There is an equivalence of categories between $[\M,\M]_{lax-\ot}$ and $[\M^{nc},\M]_{strict\mdash\ot}$, cf.\ e.g.\ \cite[\S 3.2.1]{feynmanrep}.
To obtain colax functors, one adjoins morphisms $\mu^{op}_{X,Y}:X\ot Y \to X\bt Y$, and to obtain strong functors one also enforces $\mu^{op}\mu=id_{X\bt Y}$ and $\mu\mu^{op}=id_{X\ot Y}$. This is the localization $\M^{nc}_{\it loc}$ of $\M^{nc}$ with respect to the morphisms $\mu$. Then $[\M^{nc}_{\it loc},\E]_{strict\mdash\ot}=[\M,\E]_{strong\mdash\ot}$. Notice that $\M$ is a subcategory of $\M^{nc}_{loc}$, which is monoidally equivalent to it, but not strictly monoidally. This is how strict functors with respect to $\bt$ are equivalent to strong functors with respect to $\ot$.

\subsubsection{Enrichment}
 \label{par:enrichphil}
 We will consider categories enriched over a closed symmetric monoidal category $\E$. Functors will be allowed to take values in any (symmetric) monoidal $\hat \E$ which is tensored over $\E$.
This means that they are enriched over $\E$, have fixed functor $\ot:\E\times \hat \E\to \hat \E$, and  there are fixed natural isomorphisms:
 \begin{equation}\label{eq:tensoradjuction}
\hat \E(E\ot X,Y)\simeq \E(E,\hat \E(X,Y))
\end{equation}
We will not dwell too much on the details in the following, since the versed reader will know how to make the adjustments, while the uninitiated would risk being confused by overly detailed exposition, but, we point out extra assumptions throughout the text. We also refer to \cite{feynmanrep} for examples.
The main guidelines are as follows. There are two cases, $\E$ is Cartesian, which is a straightforward generalization from $\E=\Set$, or $\E$ is not, e.g.\ $\E$ is linear.
The latter case needs some extra care.
 Following \cite{feynman,feynmanrep}, the appropriate notion of a groupoid $\Gpd$ in the non--Cartesian setting is a freely enriched groupoid $\Gpd=\Gpd'\odot\E$ of an (ordinary) groupoid $\Gpd'$; see \cite{kellybook} for the definition of free enrichment. A good example is the group algebra $k[G]$ as $\underline{G}\odot \Vect_k=\underline{k[G]}$.
To identify the groupoid as the isomorphisms in this case, $\C$ has to be {\em isomorphism split} meaning that
$\Hom_\C(X,Y)=(I(X,Y)\odot \E )\coprod \overline{\Hom}(X,Y)$ where $I(X,Y)$ is a set of isomorphisms and $\overline{\Hom}(X,Y)$ contains no isomorphisms.
 As a mnemonic, we often use $\oplus$ for the coproduct in a non--Cartesian setting, e.g.\ for an Abelian category like $\Vect_k$ or dg-$\Vect$.

\subsubsection{$R$--module categories}
\label{sec:rmodulecat}
When dealing with categories of morphisms of enriched categories, such as the arrow category or slice category, objects of the category now become objects in $\E$.
In particular, for monoidal categories of morphisms in a monoidal category, the objects are $R$--modules for the unital commutative ground monoid $R=\Hom(\unit,\unit)$ and the categories become not only enriched over $R$, but also subcategories of the category of $R$--modules. In this situation, the (symmetric) free monoidal category is the one with the universal property for $R$--module preserving functors and is given by the free $R$--modules on objects of $\V$ and morphisms of $\V$, viz.\ $T(\V)=\bigoplus_n \V^{\ot_R n}$,
 where the objects of $\V^{\ot_R n}$ are $X_1\ot_R\cdots\ot_R X_n$ and morphisms are $\phi_1\ot_R\cdots  \ot_R\phi_n$. If $\Indec$ is an ordinary category, we replace $\Indec$ by the free $R$--module category by taking formal extension of coefficients, that is objects are the free $R$--modules $R\times X$ for $X$ an object of $\V$, for brevity assume $\Indec$ is concrete, and the morphisms are $R\times \Hom(X,Y)$ with $R$ acting on the first factor.

\subsubsection{Assumptions}
\label{sec:colimits}
 We will  assume that  the category $\C$ is isomorphism split if it is enriched. 
All colimits will be indexed colimits, cf. \cite{kellybook}.
 We will also assume that the ground ring $R$ is trivial or we are enriched over $R \mddash Mod$. In this case, the ground ring of the free monoidal category is taken to be $R$.
We also assume that whenever we take a colimit in a monoidal category $(\E, \ot)$, the tensor products $X \ot -$ and $- \ot Y$ will commute with colimits.
We will need this assumption in the coend computations of \S\ref{sec:monoidal-bimodules}.
This same condition also appears in many basic constructions in the theory of operads, see \cite[\S1.9]{MSS} for instance.
In practice, $\E$ will often be closed symmetric monoidal so that monoidal products will commute with colimits as a consequence of $X \ot -$ and $- \ot Y$ being left adjoints.
We will furthermore assume that for an $\mathcal I$-indexed colimit in a category $\E$, the smallness properties of $\mathcal I$ and completeness properties of $\E$ match up appropriately.
Since a coend can be equivalently expressed as a colimit of the subdivision category \cite[IX.5]{MacLane}, we make the same assumption for computing coends.
For example, if we have a coend $\int^X F(X) \otimes G(X)$ for the pair of functors $F: \act \to \E$ and $G: \act^{op} \to \E$ such that $\act$ is a finite category, then we will assume that $\E$ is at least finite-complete.

\subsubsection{Pointed categories}
Our plus constructions are relative to a base category via a pointing.
\label{par:philosophy}

\begin{df}
    \label{df:pointing} Given a category $\act$, called the \emph{base--category}, a \emph{$\act$--pointed category} $(\C, P)$ is a category $\C$ together with an identity on objects  functor $P: \act \to \C$ called a \emph{$\act$--pointing}. We will call the morphisms $P(\sigma)$ {\em base morphisms}.
    If $\B = \act_\ot$ is a monoidal category (we will use the subscript $\ot$ to emphasize the monoidal structure), then the functor $P$ is required to be {\em strict}.
    It is \emph{compatibly pointed} if $P$ is also faithful, where in the enriched case, we take faithful to also mean split, i.e.\ there is a splitting $\Hom_\C(P(X),P(Y))=P(\Hom_{\act}(X,Y)) \oplus \overline{\Hom}_\C(P(X),P(Y))$.
    A \emph{groupoid compatible pointing} (gcp) is additionally required to be pseudomonic, this means that $P$ also induces an isomorphism of categories $P:\Iso(\act) \simeq \Iso(\C)$.
The {\em standard gcp for a category} is the inclusion $\Iso(\C)\to \C$.


Pointed categories form a category where the morphisms between $\act \overset{P}{\to} \C$ and $\act' \overset{P'}{\to} \C'$ are pairs of functors $(f:\act\to \act',F:\C\to \C')$ such that $FP=P'f$.
In the monoidal case, we require that $f,F$ are strong.  The condition of $P$ being identity on objects can be relaxed to the categorically good notion of a {\em weak pointing} which is a functor $P$ that is equivalent to a pointing, i.e.\ there are equivalences $E_1,E_2$ such that $P=E_1 \bar P E_2$ for a pointing $\bar P$.
\end{df}

\begin{rmk}
\label{rmk:mueq}
Since $P,P'$ are strict and bijective on objects, the equation $FP=P'f$ as lax monoidal functors implies that
 $P'(\mu^f_{X, Y}) = \mu^F_{PX, PY}$.
\end{rmk}

A pointing $P:\act \to \C$ gives rise to an action given by pre- and post--composition on morphisms: $(\s^{op},\s')\phi=P(\s')\phi P^{op}(\s^{op})$, where we define $P^{op}:\act^{op}\to \C^{op}$ by $P^{op}(\s^{op})=P(\s)$. We will thus write $(\sigma,\s')(\phi)=\s'\phi\s$ by slight abuse of notation.
Categorically, the action is captured by the comma category $(P^{op}\downarrow P)$. This  is the relative twisted arrow category, which  is not quite a double category as there are left and right vertical morphisms with the left morphisms in $\act^{op}$ and the right morphisms in $\act$.
We will call this the action by base--morphisms.

Our philosophy in this aricle  is that categories are
constructed starting a base category. This means that there are three
steps:
(1) fix the pointing category $\B$, (2) add new
morphisms $\overline{\Hom}_\C$ with the
action of pre- and post-composition by the
morphisms from $\B$ and (3) add a
composition rule for the new morphisms
which is equivariant with respect to the specified
action. This results in a pointing.
The main examples will be: the discrete/set pointing $\C^{\rm disc}
\stackrel{i_{\C^{\rm disc}}}\hookrightarrow \C$, the standard gcp $\Iso(\C) \stackrel{i_{\Iso(\C)}}{\hookrightarrow} \C$, and the full pointing $\C\stackrel{id_\C}\to \C$.
The discrete pointing corresponds to the classical setting where one starts with objects and their identity morphisms and adds morphisms on which the identities act as identities. For applications, the standard gcp is the most important and corresponds to the philosophy going back to \cite{feynman,feynmanrep}.
Lastly, the full pointing, which is also groupoid compatible, is the maximal compatible pointing.

\begin{rmk}[Link to double categories]
\label{rmk:double}
If $\act$ is a groupoid $\Gpd$, then there is an equivalence of categories $\Gpd\simeq \Gpd^{op}$ given by $\s\to \s^{-1}$.
In this case, the action can be changed to the left action $\sds(\phi)=\s'\phi\s^{-1}=(\s^{-1},\s')$.
This action of isomorphisms naturally corresponds to 2--cells in a double category whose vertical category is $\C(\rho)$ and whose horizontal category is $\Gpd$, with the modified source and target given by $Ps$ and $Pt$ and we will denote this action by $\sds$, see \eqref{sdseq}.
Categorically speaking, this is the thin 2--category defined by the comma category $(P , P)$. This is the version used in \cite{feynman,feynmanrep}.
 We will use the notation $\sds$ for elements of $\Gpd\times \Gpd$
and use $(\s,\s')$ for those of  $\GtG$  if necessary, tacitly making the identification $\sds\leftrightarrow (\s^{-1},\s')$ where it is warranted.

\begin{equation}
\label{sdseq}
    \begin{tikzcd}[column sep=large]
X
\ar[r,"\phi",""{name=U,inner sep=5pt,below, near start}]
\ar[d,"\underset{\simeq}{\sigma}"']
&Y\ar[d,"\underset{\simeq}{\s'}"]\\
X'
\ar[r,"\phi'=\s'\phi\s^{-1}"',""{name=D,inner sep=5pt, near start}]
&Y'
\arrow[Rightarrow, from=U, to=D, "\sdsphi"]
\end{tikzcd}
\quad\text{where}\quad
\begin{aligned}
    X=P(s(\s)),\quad & Y=P(s(\s'))\\
    X'=P(t(\s), \quad &Y'=P(t(\s'))\\
\end{aligned}
\end{equation}

\end{rmk}

    Notice that since $P$ is bijective and strong, the associativity, unitary and commutativity constraints are always in the image of $P$, e.g.\ given $X_1,X_2,X_3\in \M$ there are $\hat X_i$ such that $P(\hat X_i) = X_i$ and then $P(a_{\hat X_1,\hat X_2,\hat X_3}) \simeq
    a_{X_1,X_2,X_3}$. (up to isomorphism)

\begin{df}
\label{df:factpointing}
    A monoidal pointing is {\em factorizable} if
    for  all  morphisms $\scs$ in $\aca$ acting on  $\phi=\phi_1\ot\phi_2$ in $\M$, there are morphisms  $(\sigma_1,\s_1'),(\s_2,\sigma'_2)$ in $\aca$ and $\psi_1,\psi_2$ in $\M$, such that  $\scs (\phi_1\ot \phi_2)=(\sigma_1,\s_1')\psi_1\ot (\s_2,\sigma'_2)\psi_2$ or,  in the symmetric case, alternatively, $\scs (C_{12},C_{12})(\phi_1\ot \phi_2)=(\sigma_1,\s_1')\psi_1\ot (\s_2,\sigma'_2)\psi_2$.
\end{df}
\noindent {\sc NB:} If the base category $\B$ is free monoidal, or rigid, then any pointing $P$ is factorizable.

\subsection{The plus constructions via generators and relations} \label{par:plus}
We treat the relative case here, as described in \S\ref{par:philosophy}.
In view of the applications and the philosophy, the case of $\act = \Iso(\C)$ is especially relevant.

\subsubsection{The  plus constructions}
\label{par:cplus}

\begin{df}
\label{def:cplus}
Given a $\act$--pointed category $(\C, P)$, we define a (symmetric) monoidal $(P^{op}\da P)^\bt$ pointed category $\catplus{\C, P}$.
That is, a basic object of $\catplus{\C, P}$ is a triple $(X, Y; \phi)$ where $X, Y \in \Obj(\act)$ and $\phi \in \Hom_{\C}(P(X), P(Y))$.
For the sake of brevity, we will simply write this object as ``$\phi$''.
A general object of $\catplus{\C, P}$ is then a word of basic objects $\Phi=\phi_1\bdb\phi_n$.
The morphisms realizing the pointing are words $(\s_1^{op},\s'_1) \bt \ldots \bt (\s_n^{op},\s'_n)$ with $\s_i, \s'_i \in \Mor(\act)$ where a morphism $(\s_i^{op},\s'_i)$ has a source $\phi_i$ and a target $\phi'_i := P(\s'_i) \circ \phi \circ P^{op}(\s^{op}_i)$.

The {\em general morphisms} of $\catplus{\C, P}$ and their compositions are given by additional generators and the following relations:

\noindent{\sc  Generators:}
$\gamma_{\phi_1,\phi_0}:\phi_1\bt \phi_0\to \phi_1\circ\phi_0$ for a composable pair of morphisms $(\phi_1,\phi_0)$.

\noindent{\sc Relations:}
The relations are the usual relations for a (symmetric) monoidal category, that is associativity, identities, interchange, and the following additional relations:
\begin{enumerate}
\item {\it Equivariance\/}: For morphisms $\s:X_0\to X_0'$ and $\s':X_2\to X_2'$ in $\act$\\
(a) {\it ``Outer'' equivariance\/}:  $(\s,\s')\gamma_{\phi_1,\phi_0}=\g_{\s'\phi_1,\phi_0 \s}[(id,\s')\bt (\s,id)] $
 and \\
(b) {\it ``Inner'' equivariance\/}: $\g_{\phi_1,\s\phi_0}[(id,id)\bt (\s,id)]=\g_{\phi_1\s,\phi_0}[(\s,id)\bt (id,id)]$


%
%




    %

\item
{\it Internal Associativity\/}:

For a composable triple of morphisms $(\phi_2,\phi_1,\phi_0)$:  $\gamma_{\phi_2\phi_1,\phi_0}[\gamma_{\phi_2,\phi_1}\bt id_{\phi_0}]=\g_{\phi_2,\phi_1\phi_0}[\id_{\phi_2}\bt \gamma_{\phi_1,\phi_0}]$

\end{enumerate}

\end{df}

As a short hand, we will use $\gamma_{\phi_0\kdk \phi_n}:\phi_0\bdb\phi_n\to \phi_0\circ\dots\circ \phi_n$ for the unique morphism resulting from any $n$--fold iteration of $\gamma$.
We will also use the convention that
$\g_\phi:=\id_\phi=(id_{s(\phi)},id_{t(\phi)}):\phi\to \phi$.

\begin{nota} If no pointing is specified for $\C$, we will take $P = i_{\Iso(\C)}: \Iso(\C) \to \C$ as our default choice of pointing and use the notation $\catplus{\C} := \catplus{\C, i_{\Iso(\C)}}$.
\end{nota}

\label{par:mnc}
\begin{df}
\label{def:mnc}
Given a pointing $P: \act \to \M$ between (symmetric) monoidal categories $\act$ and $\M$, the (symmetric)  monoidal category $\monplus{\M, P}$ is obtained from $\catplus{\M, P}$ by adjoining additional  generators  and relations.

\noindent{\sc Additional Generators:}
\begin{enumerate}
\item $\mu_{\phi_0,\phi_1}:\phi_1\boxtimes \phi_2\to \phi_1 \ot \phi_2$ for a pair of morphisms $(\phi_1, \phi_2)$
\item $\mu_\unit:id_{\unit_\bt}\to id_{\unit_\M}$ where $\unit_\bt$ is the empty word and $\unit_\M$ is the monoidal unit for $\M$
\end{enumerate}

 \noindent{\sc Additional Relations:}
 \begin{enumerate}
 \setcounter{enumi}{2}
\item\label{eq:muequi} {\it Equivariance\/}:  $\mu[(\s_1,\s_2')\bt (\s_2,\s'_2)]=[\s_1\ot \s_2,\s'_1\ot \s'_2]\mu$.
=

\item {\em Compatibility with unit constraints} $u_{\M, l}$ and $u_{\M, r}$ of $\M$:\\  $(u^{-1}_{\M, l},u_{\M, l}) \mu_{\phi, id_{\unit_\M}}[id_\phi \bt \mu_\unit]= u_l$, and  $(u_{\M,r}^{-1},u_{\M, r})\mu_{\id_{\unit_\M}}[\mu_\unit\bt id_\phi]=u_r$.

\item {\em Internal Associativity for $\mu$}:
$\mu_{\phi_1\ot\phi_2,\phi_3 }
[\mu_{\phi_1,\phi_2}\bt id_{\phi_3}]=
\mu_{\phi_1,\phi_2\ot \phi_3}[id_{\phi_1}\bt \mu_{\phi_2,\phi_3}]
$.


\item \label{eq:intinter} {\it Internal Interchange\/}:  $\g_{\phi_1\ot \psi_1,\phi_0\ot \psi_0} [\mu_{\phi_1,\psi_1}\bt\mu_{\phi_0,\psi_0}]\t^{23}=\mu_{\phi_1\phi_0,\psi_1\psi_0}[\g_{\phi_1\phi_0}\bt\g_{\psi_1\psi_0}]$.
\end{enumerate}
And in the symmetric case:
\begin{enumerate}
\setcounter{enumi}{6}
\item \label{commu} {\it Compatibility with commutators\/}:
$\mu_{\psi,\phi}\tau^{12}=(C_{12},C_{12})\mu_{\phi, \psi}$, where $C_{12}$ is the commutativity constraint in $\M$ and $\t^{12}$ is the commutativity constraint for $\bt$.
%
\end{enumerate}
\end{df}

\begin{rmk}
\label{identitiesrmk}
There are several technical remarks:
\begin{enumerate}

\item
A general morphism in $\catplus{\C, P}$ and $\monplus{\M, P}$ is obtained by concatenating and forming tensor products of identities,  generating morphisms, and isomorphisms, modulo the relations of a monoidal category ---associativity, units, interchange--- and  the additional relations above.

\item The morphisms in a monoidal category always have a symmetric structure.
The isomorphisms contain the associativity, unit and in the symmetric case commutativity  constraints on the level of morphisms.

\item The unit, associators,  and in the symmetric case, the commutators descend to the quotients.
  The unit is $id_\unit$. In the strict case, $\phi\ot\id_\unit=\phi$ as a morphism $X\ot\unit=X\to Y$.
  In the non--strict case, the unit constraints and associativity constraints descend---as do the commutativity constraints.

\item Associativity and, in the symmetric case, interchange hold automatically in the quotient and do not lead to additional identifications there.

\end{enumerate}
\end{rmk}

\begin{df}
\label{def:mloc}
For a (symmetric) monoidal category $\M$, the {\em localized plus construction} $\locmonplus{\M} = \monplus{\M}[\mu^{-1}]$ is given by  the localization with respect to the morphisms $\mu$.
That is, adjoin  generators $\mu^{-1}_{\phi; \phi_1, \phi_2}:\phi\to \phi_1\bt \phi_2$
whenever $\phi$ decomposes as $\phi=\phi_1\ot \phi_2$ and $\mu_\unit^{-1}:id_{\unit_\M}\to id_{\unit_\bt}$ then quotient by the relations $\mu^{-1}\mu=\mu\mu^{-1}=id$.

\end{df}

In Propositions \ref{prop:loc-red-equiv} and \ref{lem:roof}, we will need to consider iterations of $\mu$ morphisms, so we will develop the following short hand notation. Given a source $\phi_1\bt\dots\bt\phi_{n+1}$ for $I\subset \{1,\dots,n\}$, we let $\mu_I$ be the map that converts the $i$th occurrence of $\bt$ to $\ot$ for all $i\in I$. Explicitly,  if $I=\{i_1,\dots,i_k\}$ with $i_1<\dots < i_k$, then inductively set $\mu_I=\mu_{I\setminus \{i_k\}}\circ (id_{\phi_1}\odo id_{\phi_{i_k}-1}\ot \mu_{\phi_{i_k},\phi_{i_k+1}}\ot\id_{\phi_{i_k+2}}\odo id_{\phi_{n+1}})$. We also set $\mu_n=\mu_{\{1\kdk n\}}$.

\begin{rmk}
\label{rmk:loc}
In the localization,
an object $\phi_1\bdb \phi_n$ of $(P\da P)^\bt$ is isomorphic to the image of the object $\phi_1\odo \phi_n$ under $(P \da P) \to (P \da P)^\bt$.
There are  new morphisms $\bar \gamma_{\phi_0\phi_1}:=\gamma_{\phi_0,\phi_1}\mu^{-1}:\phi_0\ot\phi_1\to \phi_0\circ \phi_1$,
and new isomorphisms $(\s,\s')\mu:
\psi_1\bt \psi_2\stackrel{\sim}{\to} \psi_1\ot\psi_2\stackrel{\sim}{\to} \s'(\psi_1\ot\psi_2)\s$, and in case of $\s'(\psi_1\ot\psi_2)\s=\psi_1\ot\psi_2$, new automorphisms $\mu^{-1}(\s',\s)\mu$ of $\psi_1\bt \psi_2$.
Furthermore, $\unit=id_{\unit_\bt}$ and $id_{\unit_\M}$ become isomorphic  and the ground ring becomes the group ring $End(\unit)$ of the plus construction.
\end{rmk}



\begin{prop} \label{prop:loc-red-equiv} Let $\redmonplus{\M,P}$ be the full (symmetric) monoidal subcategory of $\locmonplus{\M,P}$ whose objects are those of $(P\da P)$, then $\redmonplus{\M,P}$ is equivalent to $\locmonplus{\M,P}$ and the functor $m:\locmonplus{\M,P}\to \redmonplus{\M,P}$ that extends
  $m:(P \da P)^\bt\to (P \da P)$ by $m(\g_{\phi_0,\phi_1})=\bar \g_{\phi_0,\phi_1}$ and $m(\mu_{\psi_1,\psi_2})=id_{\psi_1\ot\psi_2}$
  witnesses the equivalence.

  Furthermore $\redmonplus{\M,P}$ can be identified with the (symmetric) monoidal category obtained from $(P\da P)$ by adjoining the morphisms $\bar \gamma:\phi_0\ot \phi_1\to \phi_0\phi_1$ which satisfy the relations of the $\gamma$ morphisms, associativity, inner and outer equivalence as in Definition~\ref{def:cplus}, and interchange in the form
  $\bar\gamma_{\phi_0\ot \psi_0,\phi_1\ot\psi_1}=\bar\gamma_{\phi_0,\phi_1}\ot \bar \gamma_{\psi_0\psi_1}$.
\end{prop}

\begin{proof}
  The equivalence is clear, since  by definition the inclusion $i:\redmonplus{\M,P}\to \locmonplus{\M,P}$ is full and faithful, and by Remark~\ref{rmk:loc} is essentially surjective.
The composition $mi$ is the identity, and there is a natural transformation $n$ providing the isomorphism from the identity to $im$ whose components are $\mu_{k-1}: \phi_1\bdb\phi_k \to \phi_1\odo \phi_k$.
This is natural in the generators $(\s,\s')$ by definition, natural in the generators $\gamma$ as $im(\gamma)=\bar\gamma$ and $\gamma=\bar \gamma \mu$, and natural with respect to $\mu$ and $\mu^{-1}$ since $im(\mu)=id$. The last statement readily follows from this, since any morphism in the subcategory is a word in the  generators which under $m$ gets sent to a word in the $\bar \gamma$s. That the $\bar\gamma$ morphisms satisfy the same relations as the $\gamma$ morphisms is clear, since these are related by isomorphisms $\mu$. The inner interchange relations \ref{def:mnc} (6) maps to the stated interchange relation under $m$.
It is straightforward to check that there are no additional relations, coming from the morphisms $\mu, \mu^{-1}$, by going through the list of relations.
\end{proof}

\begin{df} \label{standard-plus} The {\em standard} plus construction $\M^+$ for a symmetric monoidal category $\M$, is defined to be $\M^+ = \redmonplus{\M,P}$ for the standard gcp $P$.
\end{df}
This is equivalent to the plus construction found in the literature \cite{BaezDolan,feynman,feynmanrep}
by Theorem~\ref{thm:graphs}.
If certain factorizability conditions are met, then the localization can be computed using a roof calculus. These hold for the standard plus constructions found in the literature

\begin{df}
\label{df:localizable}
A monoidally pointed category $\M$
\begin{enumerate}
    \item   has {\em localizable base morphisms}, if given a morphism $\phi$ in $\M$ and a morphism $(\s,\s')$ in $\act^{op}\times \act$ such that $\scs(\phi)=\psi_1\ot \psi_2$
    for morphisms $\psi_1,\psi_2$ in $\M$, there are morphisms $\phi_1,\phi_2$ in $\M$  and morphisms
    $\scsi{1},\scsi{2}$ in $\act^{op}\times \act$ such that $\phi=\phi_1\ot \phi_2$, and $\psi_i=\scsi{i}\phi_i$, or $\psi_1=\scsi{1}\phi_2$ and $\psi_2=\scsi{2}\phi_1$.
\item has {\em common monoidal factorizations}, if given morphisms $\phi_1,\phi_2, \psi_1,\psi_2$, satisfying $\phi_1\ot \phi_2=\psi_1\ot \psi_2$ there is a morphisms $\chi$ in $\M$ s.t.\
$\phi_1=\psi_1\ot \chi$ and $\psi_2=\chi \ot \phi_2$.
\item  is {\em prehereditary}, if given morphisms $\phi_1,\phi_2, \psi_1,\psi_2$, satisfying $\phi_1\phi_2=\psi_1\ot\psi_2$ there are morphisms $\chi_1,\chi_2,\chi_3,\chi_4$, s.t.\
$\phi_1=\chi_1\chi_2,\phi_2=\chi_3\chi_4$ and $\psi_1=\chi_1\chi_3,\psi_2=\chi_2\chi_4$.
 \end{enumerate}
Such an $\M$ is called  {\em \fact{}} if it satisfies all three conditions, and is called  {\em fully factorizable} if is factorizable and localizable.
\end{df}

\begin{lem}
\label{lem:locfac}
 If the base morphisms are  invertible, then  the pointing being factorizable is equivalent to  the base morphisms being localizable. I.e.\ in this situation, fully factorizable and localizable coincide.
\end{lem}
\begin{proof}
    Invert the horizontal arrows in the first diagram of \eqref{eq:factorizable}.
\end{proof}
\noindent{\sc NB:} This is the case for the standard gcp.


\begin{prop}
\label{prop:roof}
If $\M$ has a \fact{} pointing, then
the localization can be realized by a right roof calculus.
\end{prop}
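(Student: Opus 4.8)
The plan is to exhibit $\M^{loc +}=\M^{nc +}[\mu^{-1}]$ as a Gabriel--Zisman localization admitting a right calculus of fractions, so that every morphism is represented by a single right roof $\Phi\xleftarrow{s}\Theta\xrightarrow{f}\Psi$ with $s$ built from $\mu$'s. First I would fix the multiplicative class $S\subset\M^{nc +}$ consisting of all morphisms obtained from the generators $\mu$ and the isomorphisms by composition and $\ot$; concretely these are, up to isomorphism, the consolidated maps $\mu_I$ of \S\ref{par:mnc}. Inverting $\mu$ fiberwise is the same as inverting all of $S$, since isomorphisms are already invertible and each $\mu_I$ is a composite of $\mu$'s, so $\M^{loc +}=\M^{nc +}[S^{-1}]$, and it suffices to verify that $S$ admits a right calculus of fractions.

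The heart of the argument is the Ore (square--completion) condition: given an arbitrary $f\colon\Theta\to\Psi$ in $\M^{nc +}$ and an $s\in S$ with the same target $\Psi$, one must produce $s'\in S$ and a morphism $f'$ with $fs'=sf'$, i.e.\ fill the cospan $\Theta\xrightarrow{f}\Psi\xleftarrow{s}\bullet$ by a span whose new backward leg lies in $S$. I would first treat the three kinds of generators of $f$. For $f$ a single $\gamma$ this completion is exactly the hereditary diagram \eqref{eq:hereditary}; for $f$ a single $\mu$ it is the common factorization diagram \eqref{eq:cm}; and for $f$ an isomorphism it is the factorizability diagram \eqref{eq:sigmafac}. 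Thus each of the three hypotheses in Definition \ref{df:conditions} is precisely the elementary Ore completion against $S$ for one class of generators, which is the conceptual content of the proposition.

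Next I would promote the elementary completions to arbitrary $f$ by induction on its degree (Remark \ref{identitiesrmk}). The inductive step pastes two elementary squares along a shared edge: after completing the last generator of $f$ against $s$, one is left with completing the remaining shorter composite against a (possibly different) member of $S$, to which the induction hypothesis applies, and the internal associativity and interchange relations \eqref{eq:muassoc}, \eqref{eq:intinter} of $\M^{nc +}$ guarantee that the stacked squares paste to a single commuting square. I expect this pasting and coherence bookkeeping to be the main obstacle: one must check that the refinements produced at the successive steps are mutually compatible, so that the composite backward leg is genuinely in $S$ and the resulting square commutes on the nose in $\M^{nc +}$ rather than merely modulo the relations.

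Finally I would dispatch the remaining calculus-of-fractions axioms: closure of $S$ under composition (internal associativity \eqref{eq:muassoc}) together with the cancellation/weak-equalizer condition, which in this model follows from the fact that each $\mu_I$ refines a word into a finer one with strictly controlled degree, so that two morphisms equalized after merging are already equalized after a suitable refinement. By the Gabriel--Zisman theorem this makes $\M^{nc +}[S^{-1}]=\M^{loc +}$ computable by right roofs $\Phi\xleftarrow{s}\Theta\xrightarrow{f}\Psi$ with $s\in S$; identifying each object with its one--letter contraction via $\mu_n$ then recovers the description of $\M^+$ in Definition-Proposition \ref{dfprop:mp} and the right roof calculus of \cite{feynman,feynmanrep}.
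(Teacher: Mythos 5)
Your proposal is correct and follows essentially the same route as the paper: the paper's proof likewise observes that the class generated by the $\mu$'s is closed under composition, identifies the three diagrams \eqref{eq:sigmafac} (rewritten as \eqref{eq:sigmaroof}), \eqref{eq:cm} and \eqref{eq:hereditary} as exactly the right Ore conditions for the three kinds of generators, and dismisses the cancellability axiom on the grounds that no distinct parallel morphisms are coequalized by a combination of $\mu$'s. Your added induction on degree and the pasting of elementary Ore squares is left implicit in the paper but is the standard way to pass from generators to arbitrary morphisms, so nothing essential differs.
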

\begin{proof}
The subcategory generated by morphisms $\mu_{\phi_1,\phi_2}$ is closed under composition and contains all identities as $\mu_{id_\unit,\phi}=id_\phi$ by convention.  Writing out the conditions of Definition \ref{df:localizable} in terms of morphisms in $\monplus{\M,P}$ ---the two cases for (1) are given by $j=0,1$ in the first diagram---
we precisely  obtain the right Ore conditions for the three types of generators \eqref{eq:factorizable}. In the symmetric case, the last Ore condition is the compatibility condition (7) of Definition \ref{def:mnc}. There are no non--trivial cancelability conditions to check,  since ---due to the nature of the relations--- there are no non--identical parallel morphisms coequalized by  a combination of $\mu$'s.
\begin{equation}
\label{eq:factorizable}
\begin{tikzcd}[column sep=large]
\phi_1\bt \phi_2\ar[r,dotted,"\scsi{1}\bt\scsi{2}"]\ar[d,dotted,"\mu"]&\psi_1\bt\psi_2
\ar[d,"\mu"]\\
\phi\ar[r,"\scs {(C_{12},C_{12})^j}"]&\psi
\end{tikzcd},
\begin{tikzcd}
\psi_1\bt\chi\bt\phi_2\ar[r,dotted,"id\bt\mu"]
\ar[d,dotted,"\mu\bt id"]
&\psi_1\bt \psi_2\ar[d, "\mu"]\\
\phi_1\bt\phi_2\ar[r,"\mu"]&\phi
\end{tikzcd},
\begin{tikzcd}
\chi_1\bt \chi_2\bt \chi_3\bt \chi_4\ar[r,dotted,"(\g\bt\g)\sigma_{23}"]
\ar[d,dotted,"\mu\bt\mu"]
&\psi_1\bt \psi_2\ar[d, "\mu"]\\
\phi_1\bt\phi_2\ar[r,"\g"]&\psi
\end{tikzcd}
\end{equation}

\end{proof}

%



\begin{lem}
\label{lem:roof}
Two roofs $(\mu_1,f_1)$ and $(\mu_2,f_2)$ are equivalent if and only if there is a roof $(\mu_l, \mu_r)$ such that the diagram
\eqref{eq:roofdiag} commutes
\begin{equation}
\label{eq:roofdiag}
\begin{tikzcd}
&&\Phi_a\ar[dr,"\mu_r"]\ar[dl,"\mu_l"']&&\\
&\Phi_v\ar[dl,"\mu_1"']\ar[drrr,"f_1", very near start]&&\Phi_w\ar[dlll,"\mu_2"', very near start]\ar[dr,"f_2"]\\
\Phi_s&&&&\Phi_t
\end{tikzcd}
\end{equation}
\end{lem}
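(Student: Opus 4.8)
Write $S$ for the class of morphisms inverted in $\M^{loc +}$, namely the composites of the $\mu_{\phi,\psi}$'s and isomorphisms, so that a roof $(\mu_i,f_i)$ has left leg $\mu_i\in S$ with target $\Phi_s$ and right leg $f_i$ with target $\Phi_t$, sharing the apex $\Phi_v$ (resp.\ $\Phi_w$), and represents the morphism $f_i\circ\mu_i^{-1}\colon\Phi_s\to\Phi_t$ of $\M^{loc +}$; two roofs are \emph{equivalent} when they represent the same morphism. By Proposition~\ref{prop:roof} the class $S$ satisfies the right Ore conditions and, as noted there, carries no nontrivial cancellations, i.e.\ $S$ admits a right calculus of fractions, which is the engine of the argument. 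Recall that a roof $(\mu_l,\mu_r)$ has, by definition, left leg $\mu_l\in S$, while its right leg $\mu_r$ (the ``$f$''-leg, here named $\mu_r$) is a general morphism.

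For $(\Leftarrow)$ I would argue directly in $\M^{loc +}$. Commutativity of \eqref{eq:roofdiag} gives $\mu_1\circ\mu_l=\mu_2\circ\mu_r$ and $f_1\circ\mu_l=f_2\circ\mu_r$. Here $\mu_l\in S$ becomes invertible, and the common composite $\mu_1\circ\mu_l=\mu_2\circ\mu_r$ again lies in $S$, hence is invertible. Therefore $f_1\circ\mu_1^{-1}=(f_1\circ\mu_l)\circ(\mu_1\circ\mu_l)^{-1}=(f_2\circ\mu_r)\circ(\mu_2\circ\mu_r)^{-1}$, and composing on the right with the invertible $\mu_2\circ\mu_r$ and using $f_2\circ\mu_r=(f_2\circ\mu_2^{-1})\circ(\mu_2\circ\mu_r)$ shows this equals $f_2\circ\mu_2^{-1}$. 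Thus the two roofs are equivalent; note $\mu_r$ itself need not lie in $S$ for this.

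For $(\Rightarrow)$ the plan is to show that the relation $\approx$, ``admits a common refinement as in \eqref{eq:roofdiag}'', is exactly equality in $\M^{loc +}$. First I would check $\approx$ is an equivalence relation: reflexivity uses $\mu_l=\mu_r=\id$, symmetry swaps the legs, and transitivity is the heart of the matter. Given a refinement $(\mu_l,\mu_r)$ with apex $\Phi_a$ relating $(\mu_1,f_1),(\mu_2,f_2)$ and a refinement $(\mu_l',\mu_r')$ with apex $\Phi_b$ relating $(\mu_2,f_2),(\mu_3,f_3)$, the legs $\mu_r\colon\Phi_a\to\Phi_w$ and $\mu_l'\colon\Phi_b\to\Phi_w$ form a cospan with $\mu_l'\in S$; applying the right Ore condition of Proposition~\ref{prop:roof} yields an apex $\Phi_c$ with $\nu\colon\Phi_c\to\Phi_a$ in $S$ and $\nu'\colon\Phi_c\to\Phi_b$ satisfying $\mu_r\circ\nu=\mu_l'\circ\nu'$. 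Then $(\mu_l\circ\nu,\ \mu_r'\circ\nu')$ is a common refinement of $(\mu_1,f_1),(\mu_3,f_3)$: its left leg $\mu_l\circ\nu\in S$ (a composite of elements of $S$), and the two required identities follow by substitution, e.g.\ $\mu_1\circ\mu_l\circ\nu=\mu_2\circ\mu_r\circ\nu=\mu_2\circ\mu_l'\circ\nu'=\mu_3\circ\mu_r'\circ\nu'$, and likewise with $f_1,f_3$ replacing $\mu_1,\mu_3$.

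Having shown $\approx$ is an equivalence relation, I would finish by identifying it with equality in the localization. Composition of roofs defined through an Ore square descends to $\approx$-classes and is associative and unital, so the roofs modulo $\approx$ assemble into a category $\mathcal{C}_S$, and the canonical functor $\M^{nc +}\to\mathcal{C}_S$, $f\mapsto(\id,f)$, inverts $S$; the triviality of cancellation recorded in Proposition~\ref{prop:roof} guarantees this functor enjoys the universal property of the localization (Gabriel--Zisman). By uniqueness of localizations $\mathcal{C}_S\cong\M^{loc +}$, whence two roofs are equal in $\M^{loc +}$ precisely when they are $\approx$-equivalent, which is the assertion. The main obstacle is the transitivity step: collapsing a two-step zigzag of refinements to a \emph{single} common refinement whose left leg again lies in $S$. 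This is exactly where the right Ore property of Proposition~\ref{prop:roof} is indispensable, and it is what makes the one-diagram criterion of \eqref{eq:roofdiag}, rather than an a priori chain of such diagrams, suffice.
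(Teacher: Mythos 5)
Your argument establishes the standard Gabriel--Zisman criterion for a right calculus of fractions: two roofs represent the same morphism of $\M^{loc +}$ if and only if they admit a common refinement $(\mu_l,f)$ with $\mu_l$ (or at least the composite $\mu_1\mu_l$) in the localizing class and $f$ an \emph{arbitrary} morphism making \eqref{eq:roofdiag} commute. Your $(\Leftarrow)$ computation and the Ore-based transitivity argument for $(\Rightarrow)$ are fine for that weaker statement. But that is not what the lemma asserts. The point of the statement --- signalled by the label ``$f=\mu_r$'' in \eqref{eq:roofdiag} and by the requirement that the refinement be a roof $(\mu_l,\mu_r)$ --- is that the right leg of the refining roof is itself a composite of $\mu$'s. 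You explicitly disclaim this (``$\mu_r$ itself need not lie in $S$''), and nothing in your $(\Rightarrow)$ direction produces a refinement of that form: the Ore squares of Proposition \ref{prop:roof} that drive your transitivity step have, as their non-$S$ legs, general morphisms involving $\gamma$'s and isomorphisms (cf.\ \eqref{eq:hereditary}), so the refinement you extract is only of the weak kind. This matters because the sharper form is what is used later, e.g.\ in the uniqueness argument of Proposition \ref{prop:hufcp}.

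The missing step is short but is essentially the entire content of the paper's proof, which takes the Gabriel--Zisman criterion as given and concentrates on identifying the legs. Since $\mu_l$ is a composite of $\mu$'s, the composite $\mu_1\circ\mu_l$ equals $\mu_I$ for some index set $I$, and commutativity of the upper square forces $\mu_2\circ f=\mu_I$. Now inspect the relations of $\M^{nc+}$: they are homogeneous in the generators, and no relation equates a word consisting only of $\mu$'s with a word containing a $\gamma$ or a nontrivial isomorphism. Hence in any factorization $\mu_I=\mu_2\circ f$ the morphism $f$ can contain no $\gamma$ and no isomorphism, so $f=\mu_r$ with $\mu_2\circ\mu_r=\mu_I$. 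Adding this observation (and the analogous one showing the left leg may be taken to be $\mu_l$ rather than merely satisfying $\mu_1 h_1\in S$) would close the gap; as written, your proof establishes a strictly weaker equivalence criterion than the one stated.
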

\begin{proof}
  If $(\mu_1,f_1)$ and $(\mu_2,f_2)$ are equivalent, then by definition there is a roof $(\mu_l, f)$ which makes \eqref{eq:roofdiag} commute.
  Since $\mu_1\mu_l=\mu_I$ for some index set $I$, we see that $f$ must be $\mu_r$ with $\mu_2\mu_r=\mu_I$, since no $\g$ and hence no maps $\scs$ may appear.
\end{proof}



\subsubsection{Unital and counital plus construction}
\label{par:unitalconst}
\label{par:plusnotation}
In many situations, units are part of the structure, while counits are important in the non--Cartesian case.
To adapt the plus construction as corepresenting objects for these structures, the morphisms that become  the
units/counits under functors must be added as generators---with new relations.
To shorten statements, we will use the generic notation $\anyplus$ for one of the plus constructions introduced below. That is $\anyplus$ can be $\catplus{\C,P}$,
$\monplus{\M,P}$, $\locmonplus{\M,P}$, $\redmonplus{\M,P}$ or $\M^+$.

\begin{definition}
\label{gcpdef}
\label{df:hyp}
Starting with any of the
plus constructions $\anyplus$, define the {\em unital plus construction} $\anyplusp$ for a  pointing to be the
monoidal category obtained by first
freely monoidally adjoining the morphism $i_\sigma: \unit_\bt \to P(\sigma)$, one for each morphism $\sigma$ in the underlying category $\act$ to $\anyplus$.
Here freely monoidally means that the data of the $i_\s$ is extended by
 $
 i_{\s_1\bdb\s_n}:=\unit_\bt
\to \unit_\bt \bdb \unit_\bt \stackrel {i_{\s_1} \bdb i_{\s_n}}{\longrightarrow}\s_1\bdb \s_n
$. \\
Let $u_\bt: \unit_\bt \bt \unit_\bt \to \unit_\bt$ be a unit constraint
and mod out by the following relations:
 \begin{itemize}
     \item [(a)]
 The compatibility with the pointing structure:
$i_\s=i_{\s'}$ if $\P(\s)=P(\s')$, and
$(\s,\s')(i_\tau)=i_{ \sigma' \tau \sigma}$.

\item[(b)]  The compatibility with the generating morphisms: $\gamma_{\s,\s'}[i_\s\bt i_{\s'}]=i_{\s\s'}u_\bt$
\item[(c)] The unit conditions:
  $\gamma_{\s,\phi}[i_\s \bt id_\phi]u^{-1}_l = (\s, id): \phi \to \phi\s$ and
  $\gamma_{\phi, \s'}[id_\phi \bt i_\s] u^{-1}_r = (id, \s'): \phi\to \s'\phi$,
  viz $i_{\s'}\phi i_\s=(\s,\s')\phi$.
\end{itemize}
For the monoidal unital cases $\monplusp{\M,P},\locmonplusp{\M,P},\redmonplus{\M,P}$, we further mod out by the relations:
\begin{itemize}
    \item [(d)]Monoidal compatibility: $\mu_{\s,\s'}[i_\s\bt i_{\s'}]u_{\bt}=i_{\s\ot \s'}$.
\end{itemize}
In the non--Cartesian case, we also require $i_\s$ to be split.
\noindent The {\em counital plus construction} $\anyplus^{c}(\C,P)$ based on $\anyplus(\C,P)$
is obtained by freely monoidally adding generators
$r_\phi:\phi\to \unit_\bt$ which satisfy the
equations:
\begin{itemize}
\item[(a')] Equivariance: $r_{ \sigma'\phi\sigma}(\s,\s')=r_\phi$.
\item [(b')] Multiplicatively: $r_{\phi_1\phi_0}\gamma_{\phi_1,\phi_0}=u_\bt[r_\phi\bt r_\psi]$.
\item[(c')] Counitality:
$(id, \s')u_l[r_{\s'} \bt id_\phi]=\g_{\s',\phi}$ and
$(\s, id)u_r[id_\phi \bt r_\s]=\g_{\phi,\s}$
\end{itemize}
And in the monoidal case
\begin{itemize}
    \item [(d')] Monoidal compatibility: $u_\bt[r_\phi\bt r_\psi]=r_{\phi\ot \psi}\mu_{\phi,\psi}$.
\end{itemize}
In the unital and counital case, the equation $r_{P(\s)} i_\s=id_{\unit_\bt}$ is enforced.

\noindent The {\em hyper--version} $\anyplus^{hyp}(\C,P)$ is the unital version with inverses $r_{\s}$ to the $i_{\s}$. In the unital and counital case, hyper means that the inverses are part of the counital structure.

There are natural inclusion functors $Pt: \anyplus \to \anyplusp$, $B:\anyplus \to \anyplusg$, $BPt:\anyplus\to \anyplusgp$  by inclusion and $Hyp:\anyplusgp\to \anyplushyp$ as a quotient.
\end{definition}

To make contact with previous definitions of the unital plus construction, in the case of the standard gcp we will use the superscript $gcp$ and omit $P$ for the unital plus construction, e.\ g.\ $\plusgcp{\M}$.

\begin{lem} A functor $\D$ from $\anyplusp$ factors through $\anyplushyp$ if and only if   $\D(\phi)=\unit_\E$ for all $\phi\in \anyplus$. Such a functor will be called {\em hyper}.
\end{lem}
\begin{proof}
   This is straightforward in the Cartesian case. In the non-Cartesian case, this follows from the fact that $i_\s$ is taken to be split.
\end{proof}

\begin{ex}
\label{ex:algebra}
Consider the trivial category $\triv$, with the pointing $\triv\to \triv$. The sole basic object of $\catplus{\triv}$ is $id_*$ with general objects being words $n=id_*^{\bt n}$. The pointing gives the morphism $(id_*,id_*)=id_{id_*}$
and there is a generating morphism $\g_{id_*,id_*}: 2 \to 1$ iterations of which yield morphisms $n \to 1$ for each $n$. A general morphism from $n$ to $m$ is then given by surjections $n = n_1 \bt \dots \bt n_m \to m$. This can be seen as the skeleton of ordered surjections, aka.\ the subcategory of surjection in the simplicial category $\Delta$.

Endowing the category $\triv$ with the monoidal structure $*\ot *=*$, the monoidal $\monplus{\triv}$ has additional morphisms $\mu_{id_*, id_*}: 2 \to 1$ and $\mu_{\unit}: 0 \to 1$ where $0$ is the empty word.
Applying the Eckmann-Hilton argument to internal interchange shows that $\g_{id_*,id_*} = \mu_{id_*, id_*}$.
Using the monoidal structure, we can factor a general morphism $f$ into $f = i \circ s$ where $i$ is monoidally generated by $id_{id_*}$ and $\mu_\unit$, and $s$ is generated by $id_{id_*}$ and $\mu_{id_*, id_*}$.
The unitality relation allows one to exchange the two types of morphisms: $s \circ i = i' \circ s'$ in such a fashion that morphisms in $\monplus{\triv}$ behave the same way as maps of finite sets do. Hence this category is ordered finite sets in the non--symmetric version and finite sets in the symmetric version.
\end{ex}

\begin{ex}
\label{ex:monoidalgebra}
Considering the free symmetric monoidal category on  $\triv$, which is equivalent to $\SS$, the basic objects of $\catplus{\SS,id_\SS}$ are $\sigma \in \SS_n$. The action is by $(\t,\t')(\s)=\t'\s\t$ and
each object $\sigma \in \SS_n$ is isomorphic to $id_*^{\bt n},n  \in \N$,
with $\Aut(id_n)\simeq \SS_n$.
Up to isomorphism, a general object is then represented by a word $id_{n_1} \bt \ldots \bt id_{n_k}$, that is simply is a tuple $(n_1,\dots, n_l)$.
For $\monplus{\SS}$, there are generating morphisms $\gamma_n: id_n \bt id_n \to id_n$ for each $n \in \N$.
Iterating morphisms of this type yields ordered surjections and general morphisms are given by $\bt$ products of these and isomorphisms.
Moving on to $\monplus{\SS}$, we get a new generating morphism $\mu: id_n \bt id_m \to id_{n+m}$.

In $\locmonplus{\SS}$, the object $(n_1\kdk n_l)$ becomes isomorphic to $n=\sum_{i=1}^l n_i$ and it is shown in \cite{feynman} that one obtains ordered surjection in the non--symmetric case and surjections with orders on the fibers in the non--symmetric case. Adding units upgrades the skeletal version to $\Delta_+$ and non--commutative sets, that is finite sets with maps where the maps have orders on the fibers.
\end{ex}

\begin{ex}
\label{ex:operad}
Consider the category $\C=\Surj$ that is finite sets with surjections with the pointing by $\Iso(\C) \simeq \SS$ where the latter is the skeleton of the category of finite sets and bijections. The basic objects of $\catplus{\C}$ are surjections $f:S\twoheadrightarrow T$. The morphisms are generated by pairs of bijections $(\s:S'\leftrightarrow S,\t:T\leftrightarrow T')$ and the basic compositions for two surjections $f\bt g\to f\circ g$.
A surjection $f$ is given by its fibers $\amalg_{t\in T}f^{-1}(t)$. Each fiber can be depicted by a corolla with inputs, aka.\ leaves, $f^{-1}(t)$ and output, aka.\ root, $t$. In this way, $f$ is a collection/forest/aggregate of corollas. A general object of $\catplus{\C}$ is a collection of forests.
The composition of two such forests of corollas $f$ and $g$ is possible by a matching of the roots of $g$ with the leaves of $f$ and contracting the new edges. This is given by the composability $S\stackrel{g}{\twoheadrightarrow}T\stackrel{f}{\twoheadrightarrow}U$,  cf.\ e.g.\ \cite{KWZ} for pictures.
Inner equivariance means that one can change $T$ by an isomorphism, which is taken care of pictorially by the matching. Outer equivariance means that one can pre-- and post--compose with isomorphisms relabeling the leaves and root of the output. In this fashion, the $\g_{f_1\kdk f_n}$ are $n$--level forests whose target is the total contraction.

The category $\C$ has a symmetric monoidal product given by disjoint union, which yield additional morphisms $f\bt g\to f\amalg g$ in $\monplus{\C}$ that send two forests of corollas to their union. This is similar to the unboxing of \cite{KWZ}.
The morphism in the category $\C$ are monoidally generated by the surjections to one element $\pi_{S,t}:S\to \{t\}$.
The skeleton of $\locmonplus{\C}$ then has just these corollas as basic objects. When a forests for $\g_{f_1\kdk f_n}$ has a corolla as a target, $f_1$ must be a corolla so that these forests become trees under gluing roots to leaves.
With the results of the next chapters, this category is equivalent to the Feynman category for operads, cf.\  \cite{feynmanrep} for full details.
Units play a special role here as they allow one to level and de--level the trees, see section B.16 {\it loc.\ cit.}.
\end{ex}

Other examples can be found in \cite{feynman,feynmanrep} for the plus construction for Feynman categories and in \cite{Michael}.

\subsubsection{Equivalence principle}
\label{par:goodnotion}
We will extend the plus construction to the category of pointed categories as defined in Definition \ref{df:pointing}.
A consequence is that the equivalent categories have equivalent plus constructions making it a categorically ``good'' (not evil) notion.

\begin{lem}
\label{lem:functor}
On objects, define $\anyplus(f, F)(\phi_1 \boxtimes \ldots \boxtimes \phi_n) = F(\phi_1) \boxtimes \ldots \boxtimes F(\phi_n)$. \\
On generating morphisms define:
\begin{itemize}
\item [(a)] $\anyplus(f, F)(\sigma , \sigma') = (f(\sigma) , f(\sigma')): F(\phi) \to P'(f(\s))F(\phi)P'(f(\s))=
F(P(\s')\phi P (\s))$
\item [(b)] $\anyplus(f, F)(\gamma_{\phi_1,\phi_0}) = \gamma_{F(\phi_1), F(\phi_0)}: F(\phi_1) \boxtimes F(\phi_0) \to F(\phi_1) \circ F(\phi_0) = F(\phi_1 \circ \phi_0)$,
\item[(c)] In the monoidal case, writing $\phi_1 = (X_1, Y_1; \phi_1)$ and $\phi_2 = (X_2, Y_2; \phi_2)$
  \[ \monplus{f, F}(\mu_{\phi_1, \phi_2})
  = ( (\mu^f_{X_1, X_2})^{-1}, \mu^f_{Y_1, Y_2}) \mu_{F(\phi_1), F(\phi_2)}: F(\phi_1) \boxtimes F(\phi_2) \to F(\phi_1 \otimes \phi_2) \]
Note by functorality we must have $\locmonplus{f,F}(\mu^{-1})=\monplus{f,F}(\mu)^{-1}$.
\end{itemize}
Then $\anyplus(f, F): \anyplus(\C, P) \to \anyplus(\C', P')$ is a  functor which in the (symmetric) monoidal case is strict (symmetric) monoidal functor.
\end{lem}

\begin{proof}
  We need to verify that the assignments of $\anyplus(f,F)$ preserve the relations of the plus categories.
  First, we check the equivariance of $\mu$.
  Let $(\sigma_1, \sigma_1'): (X_1, Y_1; \phi_1) \to (W_1, Z_1; \psi_1)$ and $(\sigma_2, \sigma_2'): (X_2, Y_2; \phi_2) \to (W_2, Z_2; \psi_2)$ be two isomorphisms and consider the outer rectangle of Figure~\ref{fig:plus-functor-mu}.
  The top square commutes by equivariance of $\mu$ with respect to isomorphisms.
  Equality (1) is an application of interchange.
  Equality (2) is a consequence of naturality.
  The commutativity of the middle square is straightforward.
  Equalities (3a) and (3b) follow from $P'(\mu^f_{X, Y}) = \mu^F_{PX, PY}$, cf.\ Remark \ref{rmk:mueq}.
  The final square commutes tautologically.

  \begin{figure}
    \centering
    \begin{equation*}
    \begin{tikzcd}[column sep = 1.3cm]
      F(\phi_1) \boxtimes F(\phi_2)
      \arrow[d, "\mu_{F(\phi_1), F(\phi_2)}"']
      \arrow[r, "{(f \sigma_1, f \sigma'_1) \boxtimes (f \sigma_2, f \sigma'_2)}"']
      & F(\psi_1) \boxtimes F(\psi_2)
      \arrow[d, "\mu_{F(\psi_1), F(\psi_2)}"]
      \\
      F(\phi_1) \otimes F(\phi_2)
      \arrow[r, "{(f \sigma_1, f \sigma'_1) \otimes (f \sigma_2, f \sigma'_2)}"']
      \arrow[ddd, "{((\mu^f_{X_1, X_2})^{-1}, \mu^f_{Y_1, Y_2})}"']
      & F(\psi_1) \otimes F(\psi_2)
      \arrow[d, equals, "\text{(1)}"]
      \\
      {}
      & P'(f \sigma'_1 \otimes f \sigma'_2)
      (F\phi_1 \otimes F\phi_2)
      P'(f \sigma_1 \otimes f \sigma_2)
      \arrow[d, "{((\mu^f_{W_1, W_2})^{-1}, \mu^f_{Z_1, Z_2})}"]
      \\
      {}
      & P'(\mu^f_{Z_1, Z_2})
      P'(f \sigma'_1 \otimes f \sigma'_2)
      (F\phi_1 \otimes F\phi_2)
      P'(f \sigma_1 \otimes f \sigma_2)
      P'(\mu^f_{W_1, W_2})^{-1}
      \arrow[d, equals, "\text{(2)}"]
      \\
      P'(\mu^f_{Y_1, Y_2} )
      (F\phi_1 \otimes F\phi_2)
      P'(\mu^f_{X_1, X_2})^{-1}
      \arrow[r, "{( f(\sigma_1 \otimes \sigma_2), f(\sigma'_1 \otimes \sigma'_2) )}"']
      \arrow[d, equals, "\text{(3a)}"]
      & P'f(\sigma'_1 \otimes \sigma'_2)
      P'(\mu^f_{Y_1, Y_2})
      (F\phi_1 \otimes F\phi_2)
      P'(\mu^f_{X_1, X_2})^{-1}
      P'f(\sigma_1 \otimes \sigma_2)
      \arrow[d, equals, "\text{(3b)}"]
      \\
      F(\phi_1 \otimes \phi_2)
      \arrow[r, "{( f(\sigma_1 \otimes \sigma_2), f(\sigma'_1 \otimes \sigma'_2) )}"']
      & P'f(\sigma'_1 \otimes \sigma'_2)
      F(\phi_1 \otimes \phi_2)
      P'f(\sigma_1 \otimes \sigma_2)
    \end{tikzcd}
    \end{equation*}
    \caption{Equivariance relation of $\mu$ under $\anyplus(f, F)$}
    \label{fig:plus-functor-mu}
  \end{figure}

  For the remaining verifications, we suppress the subscript of $\mu^f_{X,Y}$ and simply write $\mu^f$.
  Next, observe that an internal interchange diagram in $\monplus{\M}$ gets mapped to the following diagram in $\monplus{\M'}$:
  \begin{equation}
    \begin{tikzcd}[column sep = 0.5cm]
      F\phi_0 \boxtimes F\phi_1 \boxtimes F\psi_0 \boxtimes F\psi_1
      \arrow[rr, "\gamma \boxtimes \gamma"] \arrow[d, "(\mu \boxtimes \mu) \circ \sigma_{23}"']
      & {}
      & F\phi_1 \circ F\phi_0 \boxtimes F\psi_1 \circ F\psi_0 \arrow[d, "\mu"] \\
      F\phi_0 \otimes F\psi_0 \boxtimes F\phi_1 \otimes F\psi_1
      \arrow[d, "{((\mu^f)^{-1}, \mu^f) \boxtimes ((\mu^f)^{-1}, \mu^f)}"']
      \arrow[r, "\gamma"]
      & (F\phi_0 \otimes F\phi_1) \circ (F\psi_0 \otimes F\psi_1)
      \arrow[r, equal]
      & (F\phi_1 \circ F\phi_0) \otimes (F\psi_1 \circ F\psi_0)
      \arrow[d, "{((\mu^f)^{-1}, \mu^f)}"] \\
      F(\phi_0 \otimes \phi_1) \boxtimes F(\psi_0 \otimes \psi_1)
      \arrow[r, "\gamma"']
      & F(\phi_0 \otimes \phi_1) \circ F(\psi_0 \otimes \psi_1)
      \arrow[r, equal]
      & F(\phi_1 \circ \phi_0 \otimes \psi_1 \circ \psi_0)
    \end{tikzcd}
  \end{equation}
  The top commutes by internal interchange in $\monplus{\M'}$ and the bottom commutes by equivariance of $\gamma$.
 When it applies, the compatibility with commutators diagram gets sent to \eqref{eq:compat-com}.
  \begin{equation}
    \label{eq:compat-com}
    \begin{tikzcd}[column sep = 1.5cm]
    {F\phi_1\bt F\phi_2}
    \arrow[r, "{\mu_{\phi_1,\phi_2}}"]
    \arrow[d, "\tau^{12}"]
    & {F\phi_1\ot F\phi_2}
    \arrow[d, "{(FC_{12}, FC_{12})}"]
    \arrow[r, "{((\mu^f)^{-1}, \mu^f)}"]
    & {F(\phi_1\ot\phi_2)}
    \arrow[d, "{F(C_{12}, C_{12})}"] \\
    {F\phi_2\bt F\phi_1}
    \arrow[r, "{\mu_{\phi_2,\phi_1}}"]
    & {F\phi_2\ot F\phi_1}
    \arrow[r, "{((\mu^f)^{-1}, \mu^f)}"]
    & F(\phi_2\ot \phi_1)
    \end{tikzcd}
  \end{equation}
  The left commutes by compatibility with commutators in $\monplus{\M', P'}$ and the right commutes because the functor is symmetric monoidal.
\end{proof}

\begin{lemma}
\label{lem:functorcomp}
  For a composible pair $(f, F)$ and $(g, G)$ of pointed functors,
 $\catplus{g \circ f, G \circ F} = \catplus{g, G} \circ \catplus{f, F}$.
 In the monoidal case, $\monplus{g \circ f, G \circ F} = \monplus{g, G} \circ \monplus{f, F}$.
\end{lemma}\begin{proof}
  The left and right hand sides affect basic isomorphisms in exactly the same way.
  The two sides also agree for $\gamma$-morphisms by functorality.
  For $\mu$-morphisms, this is a consequence of composition of monoidal functors:
 \begin{equation}
  \begin{aligned}
       &\monplus{g, G}(\monplus{f, F}(\mu))
      = ((g \mu^f_{X_1, X_2})^{-1}, g \mu^f_{Y_1, Y_2})
      \circ \monplus{G}(\mu_{F(\phi_1), F(\phi_2)})    \\
      &= ((g \mu^f_{X_1, X_2})^{-1}, g \mu^f_{Y_1, Y_2})
      \circ ((\mu^g_{FX_1, FX_2})^{-1}, \mu^g_{FY_1, FY_2})
      \circ \mu_{GF(\phi_1), GF(\phi_2)}    \\
      &= ((\mu^{g \circ f}_{X_1, X_2})^{-1}, \mu^{g \circ f}_{Y_1, Y_2})
      \circ \mu_{GF(\phi_1), GF(\phi_2)}
      = \monplus{g \circ f, G \circ F}(\mu)
  \end{aligned}
 \end{equation}

\end{proof}

\begin{df}
  \label{df:pointing-2mor}
  Building on the category structure defined in Definition~\ref{df:pointing}, we define additional 2--morphisms $(\beta, \alpha)$ where $\alpha: F \to G$ and $\beta: f \to g$ are natural transformations such that $\beta$ whiskered with $P'$ is the same as $\alpha$ whiskered with $P$.
  In the monoidal case, these natural transformations are monoidal as well.
\end{df}

\begin{lemma}
  \label{plus-natural-transformation}
  For a 2--isomorphism $(\beta, \alpha)$ as in Definition~\ref{df:pointing-2mor}, the family of morphisms $\catplus{\beta, \alpha}_\phi := (\beta_{s(\phi)}^{-1}, \beta_{t(\phi)})$ indexed over the morphisms of the original category constitutes a monoidal natural transformation $\catplus{\beta, \alpha}: \catplus{f, F} \to \catplus{g, G}$.
  Moreover when $(\beta, \alpha)$ are monoidal natural transformations, this defines a monoidal natural transformation $\monplus{\beta, \alpha}: \monplus{f, F} \to \monplus{g, G}$.
\end{lemma}

\begin{proof}
  Like before, it is enough to prove naturality for generators.
  We start by considering a morphism $(\s, \s'): \phi \to \psi$ and verifying that \eqref{eq:action-natural} commutes.
   \begin{equation}
     \label{eq:action-natural}
     \begin{tikzcd}[column sep = 2cm]
       F(\phi)
       \arrow[d, "{\anyplus(\beta, \alpha)_\phi}"']
       \arrow[r, "{\anyplus(f, F)(\sigma,\sigma')}"] 
       & F(\psi)
       \arrow[d, "{\anyplus(\beta, \alpha)_\psi}"]
       \\
       G(\phi)
       \arrow[r, "{\anyplus(g, G)(\sigma, \s')}"]
       & G(\psi)
     \end{tikzcd}
   \end{equation}
   Unpacking the clockwise direction and applying the whiskering assumption amounts to appending the $\bullet$-marked arrows in Diagram~\eqref{eq:unpack} onto $F(\phi)$.
   Likewise, the counterclockwise morphisms amounts to appending the $\circ$-marked arrows onto $F(\phi)$.
   Diagram~\eqref{eq:unpack} commutes by naturality of $\alpha$, which implies that clockwise and counterclockwise directions coincide.
   Hence Diagram~\eqref{eq:action-natural} commutes.
  \begin{equation}
    \label{eq:unpack}
    \begin{tikzcd}
      FP(X')
      \arrow[d, leftarrow, "\alpha_{X'}^{-1}", "\bullet" marking]
      \arrow[r, "FP(\sigma)", "\bullet" marking]
      & FP(X)
      \arrow[r, "F(\phi)"]
      \arrow[d, leftarrow, "\alpha_{X}^{-1}", "\circ" marking]
      & FP(Y)
      \arrow[d, "\alpha_{Y}^{-1}", "\circ" marking]
      \arrow[r, "FP(\s')", "\bullet" marking]
      & FP(Y')
      \arrow[d, "\alpha_{Y'}", "\bullet" marking]
      \\
      GP(X')
      \arrow[r, "GP(\sigma)", "\circ" marking]
      & GP(X)
      \arrow[r, "G(\phi)"']
      & GP(Y)
      \arrow[r, "GP(\s')"', "\circ" marking]
      & GP(Y')
    \end{tikzcd}
    \end{equation}
  Naturality of $\gamma$-morphisms is a straightforward consequence of inner and outer equivariance:
  \begin{equation}
    \begin{tikzcd}[column sep = large]
      F(\phi_1) \boxtimes F(\phi_0)
      \arrow[d, "{\anyplus(\beta, \alpha)_{\phi_1} \boxtimes \anyplus(\beta, \alpha)_{\phi_0}}"']
      \arrow[r, "{\gamma_{F(\phi_1), F(\phi_0)}}"]
      & F(\phi_1 \circ \phi_0)
      \arrow[d, "{\anyplus(\beta, \alpha)_{\phi_1 \circ \phi_0}}"] \\
      G(\phi_1) \boxtimes G(\phi_0)
      \arrow[r, "{\gamma_{G(\phi_1), G(\phi_0)}}"']
      & G(\phi_1 \circ \phi_0)
    \end{tikzcd}
  \end{equation}
 To show naturality for the $\mu$-morphisms in the monoidal case, consider the following diagram with $\monplus{F}(\mu)$ on top and $\monplus{G}(\mu)$ on the bottom:
  \begin{equation}
    \begin{tikzcd}[column sep = 2cm]
      F(\phi_1) \boxtimes F(\phi_2)
      \arrow[r, "\mu_{F(\phi_1), F(\phi_2)}" {yshift=5pt}]
      \arrow[d, "\monplus{\alpha}_{\phi_1} \boxtimes \monplus{\alpha}_{\phi_2}"]
      & F(\phi_1) \otimes F(\phi_2)
      \arrow[r, "{((\mu^f_{X_1, X_2})^{-1}, \mu^f_{Y_1, Y_2})}" {yshift=5pt}]
      \arrow[d, "\monplus{\alpha}_{\phi_1} \otimes \monplus{\alpha}_{\phi_2}"]
      & F(\phi_1 \otimes \phi_0)
      \arrow[d, "\monplus{\alpha}_{\phi_1 \otimes \phi_2}"] \\
      G(\phi_1) \boxtimes G(\phi_2)
      \arrow[r, "\mu_{G(\phi_1), G(\phi_2)}"' {yshift=-5pt}]
      & G(\phi_1) \otimes G(\phi_2)
      \arrow[r, "{((\mu^g_{X_1, X_2})^{-1}, \mu^g_{Y_1, Y_2})}"' {yshift=-5pt}]
      & G(\phi_1 \otimes \phi_0)
    \end{tikzcd}
  \end{equation}
  The left square commutes by equivariance with respect to isomorphisms.
  The right square commutes because $\alpha$ is a monoidal natural transformation.
\end{proof}

\begin{theorem}  \label{thm:endo}
  The plus construction $\catplus{-}$ is a 2--functor from the 2--category of pointed categories, pointed functors, and pointed natural isomorphisms to the 2--category of symmetric monoidal categories, symmetric monoidal functors, symmetric monoidal natural isomorphisms.

  Similarly, the plus constructions $\monplus{-}$, $\locmonplus{-}$, and $\redmonplus{-}$
  as well as their unital, counital, gcp and hyper versions
  are 2--functors from monoidal categories to symmetric monoidal categories and endofunctors on symmetric monoidal categories.
 \end{theorem}
\begin{proof}
The statement for $\catplus{-}$ and $\monplus{-}$ is proven in Lemmata \ref{lem:functor}, \ref{lem:functorcomp} and \ref{plus-natural-transformation}.
  Now consider the localized version.
  For a pointed functor $(f, F)$, define $\locmonplus{f, F}: \locmonplus{\M, P} \to \locmonplus{\M', P'}$ so that $\locmonplus{f, F}(\mu_{\phi_1, \phi_2}^{-1})=\mu_{F(\phi_1), F(\phi_2)}^{-1}(\mu^f_{s(\phi)_1, t(\phi_1)}, (\mu^f_{s(\phi_2), t(\phi_2)})^{-1})$
  and define the functor on the remaining generators like we did with $\monplus{f, F}$.
  For a pointed natural isomorphism $(\beta, \alpha)$, define $(\locmonplus{\beta, \alpha})_\phi$ the same way as ${\monplus{\beta, \alpha}}_{\phi}$.
  The commutativity of the diagrams for $\mu$-morphisms implies the commutativity of the diagrams for $\mu^{-1}$-morphisms.
  Hence $\locmonplus{f, F}$ and $\locmonplus{\beta, \alpha}$ are both well-defined.
  Therefore the argument for $\monplus{-}$ carries over to $\locmonplus{-}$ and $\redmonplus{-}$.

  Finally, for the unital, gcp and hyper versions, define
     $\anyplusp(f, F)(i_\sigma)$ as
        $F(\unit_{\M}) \stackrel{\sim}{\to}\unit_{\M'} \stackrel{i_{F(\sigma)}}{\to}  F(\sigma)$.
In the counital and hyper cases define $\anyplusg(F)(r_\sigma)$ by
$F(\unit_{\M}) \stackrel{\sim}{\leftarrow} \unit_{\M'} \stackrel{r_{F(\sigma)}}{\leftarrow}
 F(\sigma)$.
  The rest of the argument is now routine.
 \end{proof}
\begin{cor}
  \label{cor:plusgood}
  If $(\C,P)$ and $(\C',P')$ are equivalent pointed categories, then
 $\catplus{\C,P} \cong \catplus{\C',P}$.
  If $(\M,P)$ and $(\M',P')$ are monoidally equivalent pointed monoidal categories, then their plus constructions are also monoidally equivalent $\anyplus(\M,P)\simeq \anyplus(\M',P')$

 The same holds for the unital, counital, gcp and the hyper versions of these functors.
\end{cor}



\subsection{Corepresentation and indexing}
\label{par:corep}

\subsubsection{Categories as unital bimodule monoids} \label{prelim-cat-bimodule}
The  point of view of bimodules is useful for the three step process above and necessary for the  corepresentation properties of the plus construction.
 For categories $\act$ and $\act'$,
a $\act$--$\act'$ bimodule in $\E$ is a functor $\act^{op}\times \act'\to \E$.\footnote{Both ways of ordering $\act^{op}$ and $\act'$ are reasonable. We adopt the categorical version which agrees with how the $Hom$ functor is usually written. The alternative is the algebraic version of functors $[\act'\times \act^{op},\E]$ which agrees with the convention that the left action is covariant and the right action is contravariant.} By convention, a $\act$-bimodule is a $\act$--$\act$ bimodule.
One can also consider the enriched analog.
In that case,
if $\act=\underline{R}$ is the one object $\Ab$-enriched category given by a ring $R$, then a $\act$-bimodule is an $R$--bimodule in the usual sense, whence the name.
$\act$--bimodules form a category $\act\mdash Bimod$ whose morphisms are natural transformations.


\subsubsection{Plethysm monoids}
\label{sec:plethysm-monoids}

The \emph{plethysm product} of a $\act^{op}$--module $\a$ and a $\act$--module $\beta$ is defined to be the coend $\int^{X\in \B}\a(X)\ot \beta(X)$. In particular for an
$\act$--$\act'$--bimodule $\rho$ and a
$\act''$--$\act$--bimodule $\rho'$, the plethysm product is the $\act''$--$\act'$--bimodule defined by the coend $(\rho\pl_{\act}\rho')(X'',X')=\int^{X\in \act}\rho(X,X')\ot_\E\rho'(X'',X)$.
Specializing to $\act$---bimodules, the plethysm product
$\pl_{\act}$ yields a monoidal structure on the category of bimodules whose unit is $u= \Hom_{\act}(-,-)$.
A bimodule $\rho$ is coaugmented  if it is endowed with a map, i.e.\ natural transformation, $\eta:u \to \rho_\C$.
We will write $u_\s$ for the element $\eta(\s)$ of $\rho$.

\begin{df}
A $\act$--bimodule monoid is a bimodule $\rho$ together with an associative multiplication map $\g:\rho\pl_{\act}\rho\to \rho$.
We will write $\gamma (\phi_1\sq_\act \phi_0)=\phi_0\phi_1$.
A unit is a coaugmentation which we assume that the image is split in the enriched case, that is $\rho(X,Y)=\eta(u(X,Y))\oplus \bar \rho(X,Y)$.
Split unital means that $\eta$ is an isomorphism onto its image, i.e. $\rho(X,Y)\simeq \rho_\act(X,Y))\oplus \bar \rho(X,Y)$. A split unit is a {\em groupoid compatible} (gc) if
(a) $\bar\rho(X,Y)$ contains no isomorphisms and
(b) $\eta(\s)$ is invertible only if
$\s$ is.
\end{df}

Paradigmatic examples of unital bimodule monoid are the $\Hom$ functor $\rho_\C := \Hom_\C: \C^{op} \times \C \to \E$ its restriction to the
    discrete subcategory $\C^{disc}$,
    $\rho_{\C^{disc}}: (\C^{disc})^{op} \times \C^{disc} \to \E$, and its restriction to the underlying groupoid $\Iso(\C)$.
Composition in the category makes $\rho_{\C^{disc}}$ into
a unital monoid in the category of $\C$ bimodules, with the unit given by the inclusion of the identity maps $id_X$ into $\Hom_\C(X,X)$. This is simply a reinterpretation of the requirement that for
$\phi_0$ and $\phi_1$ to be composable, one needs that $t(\phi_0)=s(\phi_1)$ where $s,t$ are the source and target maps. Note that by dropping the unital condition, one will only need a category without units, aka.\ semi--category \cite{MitchellSemiCat}.
Considering the full $\rho_\C$, the monoidal structure is still composition which descends to the coend of the plethysm product due to the associativity equation $\phi_1\circ(\psi\circ \phi_2)=(\phi_1\circ\psi)\circ \phi_2$. In this case, the unit is the identity natural transformation.
Restricting $\Hom$ to the groupoid $\Gpd=\Iso(\C)$, this is still a unital monoid.
All of these are compatibly pointed and the restriction to $\Gpd$ is groupoid compatible.

More generally, any functor $P:\act \to \C$  gives rise to a unital $\act$--bimodule  monoid via pull--back
$\rho_P =P^*\rho_{\C}$ defined by
$P^*(\rho_{\C})(-,-) = \Hom_\C(P^{op}(-), P(-)): \act^{op} \times \act\to \E$.
The functorial action is by pre-- and post--composition, $\rho_P(\s^{op},\s')\phi=P(\s')\phi P^{op}(\s^{op})$.
The monoid structure is given by the composition
$\rho_P(Y,Z)\pl_{\act}\rho_P{(X,Y)}\to \rho_P(X,Z): \phi\pl_{\act}\psi\to \phi\circ \psi$ and the unit is
$\eta=P:\rho_{\act}\to \rho_P$, i.e.~$u$ is the natural transformation defined by $P$ on morphisms, viz.\ $P:\Hom_{\act}(X,Y)\to \Hom_\C(P(X),P(Y))$.
There is a connection to the relative twisted arrow category as $\el(\rho_P)=(P^{op}\da P)$, where $\el$ is the category of elements.

 The trivial  bimodule monoid is given by $\final(X,Y)=\unit$ with the composition map given by the unit constraints.  This is the terminal bimodule monoid in the Cartesian case. Indeed a natural transformation is given by $\rho(X,Y)\to \T(X,Y)=\unit$. Such maps are unique in the Cartesian case and extra data in the general case.
 A unit for the functor $\final$
 must send any $\s\in \act(X,Y)$ to $\unit \in \final(X,Y)$, this is unique in the Cartesian case and otherwise extra data.

\begin{prop}\label{prop:bimodcat}
    The category of (gcp/ split) unital $\act$--bimodule monoids is equivalent to $\act$ (gcp/ compatibly) pointed categories $\C$.
    Without the unital condition, the equivalance is between bimodule monoids and semi--categories, whose objects are those of $\B$ and whose morphisms have an two--sided action of $\B$.
\end{prop}
\begin{proof}
Given a unital bimodule $\rho$, define the category $\C(\rho)$ to have the same objects as $\act$ and  $\Hom_{\C(\rho)}(X,Y)=\rho(X,Y)$ with composition and the compatible pointing given by the monoid structure and the unit. The composition is given by $\phi_0\circ\phi_1=\mu(\phi_0,\phi_1)$. The identity morphisms for the category are given by the images $\eta(id_X)$.
If the unit is split, so that $\Hom_{\C(\rho)}=\Hom_{\act}\oplus \bar \rho$, then the inclusion of the first factor defines a faithful functor $P:\act\to \C$.

Vice--versa, given a pointing $P:\act\to \C$, the corresponding  bimodule is $\rho_P$ with the unit provided by the pointing. If the functor is faithful, then $\Hom_{\C(\rho)}=\Hom_{\act}\oplus \bar \rho$ and the pointing is split.
The conditions of being gcp clearly match up on both sides.

Finally, dropping the unit, forgets the pointing and thereby the units in the category. The remaining structure is that of a semi--category.
\end{proof}

\label{par:bimodmon}

For the trivial bimodule $\final:\act^{op} \times \act \to \E$, the category
$\C(\final)$ is the complete groupoid on $\Obj(\act)$, viz.\ exactly one isomorphism,  between any two objects. This is also the groupoid associated to the complete graph on the vertices labeled by objects of $\C$.

\begin{rmk}
Rephrasing the philosophy using bimodules:
Step~(1) is the specification of an underlying bimodule monoid $\rho_{\act}$, coding a base category $\act$.
 In step~(2), the putative new morphisms are then given by a $\act$ bimodule
 $\rho$ in $\E$.
For step~(3), one specifies the structure of a unital monoid for the
plethysm product $\pl_\act$.
This perspective is explored
more deeply for $\act = \Iso(\C)$ in \cite{plethysm}.
\end{rmk}

\subsubsection{Monoidal bimodules}
\label{sec:monoidal-bimodules}

In addition to the plethysm monoid structures, a bimodule can be equipped with a second kind of multiplication analogous to the monoidal product in a monoidal category if the base category $\act_\ot$ has a monoidal structure.

\begin{df}
For a (symmetric) monoidal category $\act_\ot$,  a $\B_\ot$ bimodule $\rho$ is called {\em monoidal} if it is a lax monoidal  functor.
\end{df}
Being a lax monoidal functor means that the is a natural transformation of functors $\mu_\rho:\rho\ot \rho \to \rho (\mu_\act\ot \mu_\act)\t_{23}:\aca\times \aca\to \E$ that is in elements a natural family of morphisms
\begin{equation}
    \mu_\rho:\rho(X,Y)\ot \rho(X',Y')\to \rho(X\ot X',Y\ot Y')
\end{equation}
  and a unit $\unit_\E\in \rho(\unit_\act,\unit_\act)$ which satisfies the usual conditions.
This family being natural with respect to the morphisms $\scs$ means:
 $\mu_\rho(\rho(\s_1{,} \s_2)\times (\s'_1,\s'_2))
=(\rho(\s_1 \ot \s'_1),(\s_2 \ot  \s'_2)) \mu_\rho$. The multiplication
$\mu_\rho$ is  associative $\mu_\rho[\mu_\rho\ot id_\rho]\simeq\mu_\rho[id\ot \mu_\rho]$ and unital with respect to the a given unit morphism $\unit_\E \to \rho(\unit_{\act}, \unit_{\act})$.

 \begin{ex}
 \label{ex:hommon}
 If $\M$ is a monoidal category then $\rho_\M=\Hom_\M(-,-):\M^{op}\times \M \to \Set$, has a  monoidal structure $\mu_{\rho_\M}$. The transformation $\mu$ is given by the monoidal structure:  $\mu_{\rho_\M}:\Hom(X,Y)\times \Hom(X',Y')\to \Hom(X\ot X',Y\ot Y')$ is given by $(\phi,\psi)\mapsto \phi\ot \psi$. Note the compatibility 
 holds since
 $(\s_1, \s_2)\ot(\s'_1,\s'_2)=((\s_1\ot\s'_1),(\s_2\ot \s'_2))$
 and as $u$ is the inclusion $u_\s=\s$ and $\mu(\s\ot \s')=\s\ot\s'$ so that  the unit conditions
holds tautologically. The associativity is given by the associators in $\M$.
This structure restricts to $\act_\ot=\M^{disc}$ and $\act_\ot=\Iso(\M)$.
 \end{ex}

  \begin{lem}
  \label{lem:rhorhomonoidal}
If $\rho$ is monoidal then so is $\rho\sq_\act\rho$.
 The lax structure is given by the sequence of maps
$
    \int^{Y\in \act}\rho(Y,Z)\ot \rho(X,Y)\ot \int^{Y'\in \act}\rho(Y',Z')\ot \rho(X',Y')\stackrel{I}{\to}$ $ \int^{Y\times Y'\in \B\times \B} \rho(Y\ot Y',Z\ot Z')\ot \rho(X\ot X',Y\ot Y')\to \int\limits^{\hat Y\in \B}\rho(\hat Y, Z\ot Z')\ot \rho(X\ot X',\hat Y)$,
where $I: (\rho \pl_{\act_\ot} \rho)\ot (\rho \pl_{\act_\ot} \rho) \to (\rho\ot\rho)\pl_{\act\times \act}(\rho \ot \rho)$ is the interchange defined by applying Fubini and exchanging the second and third term and the second limit is defined by $\mu_\act: \act \ot \act \to \act$ and the universality of colimits.
    \end{lem}

 \begin{proof}
 Straightforward.
\end{proof}

\begin{df}
A monoidal bimodule $\rho$ is a {\em monoidal bimodule monoid} (MBM) if $\g:\rho\sq_\act\rho\to \rho$ is  monoidal natural transformation.
    In the unital case,
  $\g$ is required to be compatible with the unit in the sense that $u_{id_\unit}\in \rho(\unit,\unit)$ is the lax unit of $\rho$ and
  $\mu\circ (u\ot u)=u\circ\ot$.
  A morphisms from a $\B_\ot$ MBM to a $\C_\ot$ MBM is given by a strong monoidal functor $\B_\ot\to \C_\ot$. This applies to units and counits.
\end{df}
The unital condition in elements reads
  $\mu(u_\s,u_{\s'})=u_{\s\ot\s'}$.
  Note that $(\s_1,\s_2)\ot (\s'_1,\s'_2)=((\s_1\ot\s'_1),(\s_2\ot \s'_2))$ by definition.  Being a monoidal natural transformation translates to the following interchange equation: $
    \mu_\rho (\gamma \ot \gamma)=\gamma\circ(\mu_{\rho} \ot \mu_\rho)\circ I:(\rho\pl_{\act}\rho)\ot_\E (\rho \pl_{\act_\ot}\rho)\to \rho
  $.

\noindent {\sc NB:} More generally one can use pairs of functors $(F,G)$ to define the morphism  by precomposing with $F^{op}\times G:\B^{op}\times \B\to \C^{op}\times \C$ where now $F$ is oplax and G is lax ($F$  being oplax means $F^{op}$ is lax).

\begin{prop}
    The category of lax monoidal  unital $\act_\ot$--bimodule monoids is equivalent to $\act_\ot$ compatibly pointed monoidal categories $\M$.

    Without the unital condition, this is an equivalence between MBMs and monoidal semi-categories, whose objects and their monoidal structure are those of $\B$ and whose morphisms have a monoidal action of $\B$.
\end{prop}
\begin{proof}
    Analogous to Proposition~\ref{prop:bimodcat} with the addition of  the operation tensoring morphisms and monoidal units provided by the extra data.
    In the non--unital case, this is just a reinterpretation of the data.
\end{proof}
The monoidal category defined by a monoidal $\rho$ will be called $\M(\rho)$.

{\sc NB:} For a unital MBM $\rho$ to be strong monoidal means that the morphism  $\Hom_{\M(\rho)}(X,Y) \ot \Hom_{\M(\rho)}(X',Y)' \to \Hom_{\M(\rho)}(X\ot Y, X'\ot Y')$ is an isomorphism, which is one of the conditions of being rigid in the definition of \cite{DelMilne}.

\subsubsection{Indexing Data, Algebras, and Functors}
In order to identify the corepresented objects, functors are encoded as indexing data.
\begin{df}
\label{df:indexingdata}
Given a pointing $P$,  suppressing the obvious associators  in $\E$, the following data will be called \emph{indexing data}:
\begin{enumerate}
\item A functor $\D: (P^{op} \da P) \to \E$, viz.\
an object $\D(\phi)$ of $\E$ for each $\phi\in \Mor(\C)$ together with an action of $\act^{op} \times \act$:
$\D((\s,\s')):\D(\phi)\to \D(P(\s')\phi P(\s))$.
\item Multiplication maps:
$\gamma^\D_{\phi_1,\phi_0}:\D(\phi_1) \ot_\E \D(\phi_0)\to \D(\phi_1\circ\phi_0)
$ that are:\\
(a) associative: $\g_{\phi_2\phi_1,\phi_0}^\D[\gamma^D_{\phi_2,\phi_1}\bt id_{\phi_0}]=\g^D_{\phi_2,\phi_1\phi_0}[id_{\phi_2}\bt \g^D_{\phi_1,\phi_0}]$,\\
(b) outer equivariant: $\g^\D [\D((\s , \id)) \bt \D((\id, \s'))] = \D((\s, \s')) \g^\D$, and\\
(c) inner equivariant: $\g^\D [\D((\id , \s)) \bt \D((\id , \id))] = \g^\D[\D((\id , \id)) \bt \D((\s,\id))]$
\end{enumerate}
\emph{Monoidal indexing data} includes additional maps:
\begin{enumerate}
\setcounter{enumi}{2}
\item  This includes a family of maps  $\mu^\D_{\phi,\psi}:\D(\phi\bt\psi):=\D(\phi)\ot \D(\psi)\to \D(\phi\ot\psi)
$ and a map
$\mu_\unit^\D:\D(id_{\unit_\bt})=\unit_\E\to \D(id_{\unit_\M})$, these maps are required to be\\
(a) associative: $\mu^\D_{\phi_1,\phi_2\ot\phi_3}[id_{\phi_1}\bt \mu^\D_{\phi_2,\phi_3}h]=\mu^\D_{\phi_1\ot\phi_2,\phi_3}[\mu^\D_{\phi_1,\phi_2}\bt id_{\phi_3}]$,
\\
(b) equivariant:
$\mu^\D[ \D(\scs)\ot \D(\tct)]=\D((\s\ot \t,\s'\ot \t')) \mu^\D$,\\
(c) distributive $
\g^\D_{\phi_0\ot \psi_0,\phi_1\ot \psi_1}
[\mu^\D_{\phi_0,\psi_0}\bt\mu^\D_{\phi_1,\psi_1}]=\mu^\D_{\phi_0\phi_1,\psi_0\psi_1}
[\g^\D_{\phi_0,\phi_1}\bt
\g^\D_{\psi_0,\psi_1}] \tau^{23}$, and\\
(d) symmetric in the symmetric case:
$\mu_{\phi,\psi}^\D=\D((C_{12},C_{12})) \mu_{\psi,\phi}^\D\tau^{12}$.\\
Monoidal data is \emph{strong} if the maps $\mu^\D_{\phi,\psi}$ are isomorphisms otherwise it is called {\em lax}.
\end{enumerate}
\emph{Unital indexing data} is a
 natural  transformation $\eta$ from the trivial functor $\final:P(\act^{op}\da \act)\to \E$ to the restriction of  $\D$, i.e.\
 \begin{itemize}
\item[(u)]
 For each
 $\phi\in \Mor(\act)$, an element $u_\phi:\unit_\E\to  \D(P(\phi))$   such that
\begin{itemize}
     \item [(a)]  $\D(\scs)(u_\phi)= u_{\s'\phi \s}$.
 \end{itemize}
 with the additional conditions that:
\begin{itemize}
 \item[(b)] $\gamma_{\phi_0\phi_1}(u_{\phi_0} \otimes u_{\phi_1}) = u_{\phi_0\phi_1}:\unit_\E \to \D(\phi_0\phi_1)$ under the identification $\unit_\E\ot \unit_\E\simeq \unit_\E$.
 \item[(c)] $(\s, id)^\D = \g^D [\D(\phi)\ot u_\s ]u^{-1}_{\E,r}: \D(\phi)\to \D(\phi P(\s))$ and \\ $(id,\s')^\D = \g^D[u_{\s'}\ot \D(\phi)]u^{-1}_{\E,l}: \D(\phi)\to \D(P(\s')\phi)$
\end{itemize}
In the monoidal case, it is also required that
\begin{itemize}
    \item [(d)]
$\mu^{\D} [u_\phi\ot u_\psi] \simeq u_{\psi\ot\phi}$ under the identification $\unit_\E\ot \unit_\E\simeq \unit_\E$.
\end{itemize}
 In the enriched case, unital data is required to be split, that is $\D(\s)=u_{\s}\oplus \bar \D(\s)$.
 Unital data is {\it compatible\/} if the natural transformation consists of injections and {\it groupoid compatible\/} if $\bar \D(\s)$ contains no isomorphisms and $u_\s$ is an isomorphism if and only if $\s$ is. Unital data is {\it reduced\/} if $\D(\s)\simeq \unit_\E$ for all $\s\in \Mor(\act)$.
\end{itemize}
\emph{Counital indexing data} is specified by a natural transformation $\D\to \final$, i.e.\ the additional data
\begin{itemize}
    \item[(c)] Morphisms  $\eps_\phi:\D(\phi)\to \unit_\E$ which satisfy:
\begin{itemize}
\item [(a')] $\eps_{\scs\phi} \scs=\eps_\phi:\D(\phi)\to \unit_\E$.
\item[(b')]
  $\eps_{\phi_1\phi_0}\gamma^\D_{\phi_1,\phi_0}=\eps_{\phi_1}\ot\eps_{\phi_0}: \D(\phi_0)\ot \D(\phi_1)\to \unit_\E$ under the identification $\unit_\E\ot\unit_\E\simeq \unit_\E$.
\item[(c')] $(\s,id)u_r[id_\phi \bt \eps_\s] = \g_{\phi,\s}$ and $(id,\s')u_l[\eps_{\s'}\bt id_\phi] = \g_{\s',\phi}$.
\end{itemize}
In the monoidal case,  we also require that
\begin{itemize}
    \item [(d')]$\eps_{\phi\ot \psi}\simeq (\eps_\phi\ot \eps_\psi)\mu^\D_{\phi,\psi}$ under the identification $\unit_\E\ot\unit_\E\simeq \unit_\E$.
\end{itemize}

\end{itemize}
In case of unital and counital data, it is required that  $\eps_\phi u_\phi=id_{\unit_\E}$.
\end{df}
\noindent As $u_\s=\D(\id_{s(\s)}, \s) u_{\id_{s(\s)}}$ 
the unital data is already fixed to be the data on the unit morphisms.


\begin{prop}\label{prop:datafun}
Fix a (symmetric) monoidal category $\E$.
\begin{enumerate}
\item For a category $\C$, the collection of strict (symmetric) monoidal functors $\D: \Pl(\C,P) \to \E$ is in bijection with sets of (symmetric) indexing data.

\item For a {\em (symmetric) monoidal} category $\M$, the collection of strict (symmetric) monoidal functors $\monplus{\M} \to \E$ are in bijection with sets of (symmetric) monoidal indexing data.

\item The strict (symmetric) monoidal functors $\D:\locmonplus{\M}\to \E$ are in bijection with strong (symmetric) monoidal indexing and is equivalent to strong monoidal functors $\redmonplus{\M}\to \E$.

\end{enumerate}
 Functors from the unital plus constructions of a category are in bijection with unital data, those from the counital plus construction to counital data, and those from the unital and counital construction to unital and counital data, where the data being reduced is equivalent to the functor factoring through the hyper plus construction.

 Defining (symmetric) (monoidal) natural transformations between indexing data via these identifications, upgrades the bijections to isomorphisms of categories.

\end{prop}

\begin{proof}
  Giving a strict (symmetric) monoidal functor from $(P^{op}\da P)^\bt$ is, by the universality of the free  (symmetric) monoidal product, equivalent to giving a  functor from $(P^{op} \da P)$.
 The extension to the plus constructions is given  by $\D(\g_{\phi_0,\phi_1})=\g_{\phi_0,\phi_1}^D$, and in the monoidal case $\D(\mu_{\phi,\psi})=\mu^\D_{\phi,\psi}$. The compatibilities are straightforward.
 Reading these definitions backwards yields indexing data from a functor. It is straightforward that this is a  bijection.

 By the universal property of localization,
$[\locmonplus{\M}, \E]_{strict\mdash\ot}$ is equivalent to the subcategory $[\monplus{\M}, \E]_{strict\mdash\ot}$ such that the maps $\D(\mu_{\phi,\psi})$ are isomorphisms.
That this translates to the condition of being strong for $\redmonplus{\M}$ is straightforward.

In the unital, counital and hyper cases, the identification is given by $\D(i_\s)=u_\s$, $\D(r_\phi)=\eps_\phi$ which is an isomorphism if the data is reduced.
 The additional claims are straightforward.
\end{proof}

\begin{ex}
    In the case of the trivial category, strict monoidal functors from $\catplus{\triv}$ are given by the data of an object $A = \D(id_*)$ and a multiplication map $\g^\D:A\ot A\to A$, that is an algebra.
    In the unital case, there is a unit $1\in A$ given by $u_{id}$ and a counit adds a morphism $A \to \unit_\E$ (e.g.\ $k$ in the case $\E=kVect$).
    That is, these functors are (unital) (augmented) monoids.

A strong monoidal functor $\catplus{\SS} \to \C$ determines an $\SS$-module $M = \{M_i\}_{i\in\N}$ with an extra structure that makes $M_i$ into an $\SS_i$ equivarient monoid in $\C$.
Further, a strong monoidal functor $\monplus{\SS} \to \C$ adds a new family of morphisms $\mu_{ij}: M_i \otimes M_j \to M_{i+j}$ making $(M_i, \mu_{ij})$ into a twisted commutative algebra where ``twisted commutativity'' is a consequence of Condition~\ref{commu} of Definition~\ref{def:mnc}.
The terminology of a ``twisted algebra'' can be traced back to \cite{barratt-twisted}.
See \cite{ginzburg-schedler-diffop-bv} for an application of twisted commutative algebras in the context of noncommutative algebra or the expository article \cite{sam-snowden-tca}.

By localizing, we get $id_n \cong (id_1)^{\bt n}$ so that $id_1$ is the only basic object $\locmonplus{\SS}$ and the generating morphisms $\g: id_1 \bt id_1 \to id_1$.
Carrying out a similar analysis as \cite{feynmanrep}, we see that $\locmonplus{\SS}$ is equivalent to the category of ordered finite sets and order preserving maps.
\end{ex}

\begin{ex}
\label{ex:Doperad} For $\locmonplus{\Surj}$, the strict monoidal functors are determined by data $D(\pi_n)=:\D(n)$ which have an action by $(\s,\t)$ that is a right action by bijections and a left action by bijections of one--element sets. Restricting to the skeleton, the maps $\g^\D$ define
$\g^\D_{f,\pi_m}:\D(n)\ot \D(|f^{-1}(1)|)\odo \D(|f^{-1}(m)|)\to \D(m)$,
for a map $f:n\to m$.
These operations need to be associative and equivariant. Adding units
    there is an element $u\in \D(1)$ which is a left unit under composition and for which $u^{\ot n}$ is a right unit.
    This is a definition of operads in terms of generators and relations, see \cite{feynmanrep} for details.
 \end{ex}

\begin{rmk}[Enrichment functors]

 In the standard gcp case $\B=\Gpd=Iso(\C)$, indexing data can be viewed as defining a double functor, called an enrichment functor \cite[Appendix]{feynmanrep}.
The source is the  double category $\underline{\C}$ (actually the double groupoid) whose groupoid of objects is $\gpd$, and whose groupoid of morphisms is $P(\gpd\da\gpd)$ as in Remark~\ref{rmk:double}.
The target $\underline{\E}$ is the usual  double category for a monoidal category $\E$:  one object $\Mor_h(\D(\E))=Obj(\E)$ with $\circ_h=\ot$ as composition $\Mor_v(\D(\E))$ is trivial and $2 \mdash Mor(\D(\E))=Mor(\E)$.
An {\em enrichment functor} is then a horizontally lax and vertically strict functor of double categories $\underline{\C}\to \underline{\E}$.

 {\em Mutatis mutandis} the above isomorphisms extend to include enrichment functors in the case $\B$ is a groupoid.
We refer to {\it loc.~cit.\/} for the link of units and counits to holonomy and connections.

    \end{rmk}

\subsubsection{Corepresentation}

The corepresenting objects are most naturally thought of as bimodules.
These can be thought of as categories in the unital case and as a semi-categories in the non-unital case, see \S\ref{par:bimodmon}.

\begin{df}
A bimodule  $\rho$ is {\em indexed} over a pointed category $(\C, P)$ if it is in the slice category of $P^*\rho_\C$, that is it comes equipped with a natural transformation to $P^*\rho_\C$. To connect to the indexing data, note that in the Cartesian case, we have the pull--backs $\D(\phi)$ defined as
\begin{equation}
\label{eq:Dphidef}
\begin{tikzcd}[column sep = small]
\D(\phi):=\{X,Y,\phi\}\bisub{i}{\times}{b}
\rho(X,Y)\ar[r]\ar[d]&\rho(X,Y)\ar[d,"b"]\\
\{X,Y,\phi\}\ar[r,"i"]&P^*\rho_\C(X,Y)
\end{tikzcd}
\end{equation}
and in this way $\rho$ splits as $\rho(X,Y)=\coprod_{\phi\in P^*\rho(X,Y)} \D(\phi)$ using the shorthand $\phi$ for $(X,Y,\phi:P(X)\to P(Y))$.
In the general enriched setting, our general assumptions are that the coproducts exist in $\E$, the natural transformation is to  $P^*\rho_C\odot \E$, that is
\begin{equation}
\label{eq:phisplit}
 \rho(X,Y)=\bigoplus_{\phi\in P^*\rho(X,Y)} \D(\phi), \quad P^*\rho_\C\odot \E(X,Y)=\bigoplus_{\phi\in
P^*\rho_\C(X,Y)}\unit_\E \text{ and }
b = \bigoplus_{\phi\in P^*(\rho_\C)} \eps_\phi
\end{equation}
with $\eps(\phi):\D(\phi)\to \unit_\E $.
A unital bimodule $\rho$ indexed over $P^*\rho_\C$
is called {\em reduced} if $\D(P(\s))\simeq \unit$ for every $P(\s)\in P^*\rho_\C$.

A (symmetric (monoidal)) bimodule monoid indexed over $P^*\rho$ is
an indexed bimodule whose structure maps commute with
the base map:
$\g_{\rho_{\act_\ot}}(b\sq_{\act_\ot} b)=b\g_{\rho},
\mu_{\rho_{\act_\ot}}(b\sq_{\act_\ot} b)=b\mu_{\rho_{\act_\ot}}$
and similarly for units/counits.

\end{df}
%
\noindent {\sc NB:}
In the Cartesian case, the $\eps(\phi)$ are not extra data, in the sense that they uniquely exist, and every bimodule is in the slice category of $\final$. In the non--Cartesian case, the natural transformations to $\final$ are given by morphisms $\eps:\rho(X,Y)\to \unit$; this is exactly the counital data.

\begin{prop}
\label{prop:data-equals-bimod}
Let $\C$ be a ((symmetric) monoidal) category with a pointing $P:\act\to \C$ and let $\E$ be a symmetric monoidal enrichment category.
There are isomorphisms between the categories of
\begin{enumerate}
    \item ((Symmetric) (lax) monoidal), indexing data, which is also counital in the non--Cartesian case.
    \item ((Symmetric) (lax) monoidal) bimodule monoids indexed over $P^*\rho$.
\end{enumerate}
Unital  indexing data corresponds to unital bimodules.
These are (groupoid) compatible, and/or reduced, when the data is.
In the non--Cartesian case, without a counit the decomposition \eqref{eq:phisplit} holds, but there is no natural transformation to $P^*\rho_\C$.
\end{prop}
We will write $\rho_\D$ for the bimodule defined by the data $\D$.
\begin{proof}
The equation \eqref{eq:phisplit}
establishes the bijection between the indexed bimodule and the data (1) of the functor $\D$.
In the non--Cartesian case, the data needs to be counital for $b$ to exist.
The data (2) is then equivalent to a monoid structure.
The data (3) is given by the monoidal monoid structure.
By inspection, unital data is equivalent to giving a natural transformation $\rho_\act\to \rho$.
That the conditions transform into each other is then straightforward.
\end{proof}

In the unital case (and the unital and counital case if $\E$ is non--Cartesian), the plus constructions  corepresent indexed enriched categories.
\label{par:indexing}
\begin{df} A  Cartesian-enriched category $\hat\C$ is said to be \emph{indexed enriched} over a category $\C$ if there is a functor $b:\hat\C\to \C$,
which is identity on objects. For (symmetric) monoidal categories, the functor needs to be (lax) (symmetric) monoidal. The indexing is lax or strong if the functor is.
A \emph{section} over a pointing $P:\act\to \C$ is a lift of $P$ against $b$, i.e.\ a functor $\hat P: \act \to \hat \C$ such that $P=b\hat P$.
If $P$ is a compatible pointing, then a section is compatible if it is also a compatible pointing.

If $\E$ is non--Cartesian and $\act$ and $\C$ are not yet enriched, we postulate the following additional  restrictions for indexed enrichments:  $\Hom_{\hat \C}$ splits as \eqref{eq:indexcoprod}
 and the functor $b:\hat \C\to \C\odot \E$ splits accordingly as $b=\bigoplus_\C \eps_\phi$ where $\eps_\phi$ maps to the component of $\phi$ in $\Hom_{\C\odot \E}(X,Y)=\bigoplus_{\phi \in \Hom_{\hat \C}(X,Y)}\unit_\E$.
After enriching, the section is given by $\hat P:\act\odot \E\to \hat \C$ such that $b\hat P=P\odot\E$ where $P\odot\E$: $\act\odot \E\to\C\odot \E$ induced by the pointing $P:\act\to \C$.

A compatible section is reduced if on morphisms $b$ splits as $b_p\oplus \bar b: \
\hat P(Hom_\act) \odot \E \oplus  \overline{ Hom}_{\hat \C}\to  (P(Hom_\act)\odot \E) \oplus (\overline{Hom}_{\C}\odot \E)$ with $b_p$ induced by an isomorphism $\hat P(Hom_\act) \simeq P(Hom_\act)$.
\end{df}

\noindent {\sc NB:} Although both a pointing and an indexed enrichment are bijective on objects, a pointing needs to be strict, while and indexed enrichment may be lax or strong. In particular, if $P:\B\to \C$ is a pointing, then $\B$ is also index enriched over $\C$, but not vice--versa.

\begin{dfprop}
\label{dfprop:index-equals-data} Fix the enrichment category $\E$ and
assume that coproducts exist in $\E$.
A set of unital indexing data $\D$ for a pointing $P:\act\to \C$ defines a category $\C_\D$
 with the same objects as $\C$
and morphisms
\begin{equation}
\label{eq:indexcoprod}
\Hom_{\C_\D}(X,Y)=\coprod_{\phi\in \Hom_\C(X,Y)}\D(\phi)
\end{equation}
where
the source and target maps are $s(\D(\phi))=s(\phi)$,
$t(\D(\phi))=t(\phi)$. The unit morphisms are given by the elements $u_{id_X}$. The composition is given by $\circ=\g^\D$ and in the monoidal case the monoidal product for morphisms is given by $\mu^\D$. This is lax/strong if the data is.
The unitors, and in the symmetric case, commutators  are given by the monoidal indexing data.

The unital data also defines a section of $P$, by $\hat P(\s)=u_{\s}$. If $P$ is compatible, so is $\hat P$. If $P$ is groupoid compatible and
the data is groupoid compatible, then so is $\hat P$.

If the data is counital, then there is a functor $b=\amalg \eps_\phi:\C_\D\to \C$ and $\C_\D$ is indexed over $\C$. For the pointings, $\eps\hat P(\phi)=P(\phi)$ by definition.
\end{dfprop}

\begin{proof}
What needs to be verified is that $\C_\D$
is indeed a category. This is routine since source, target, identities and composition, respectively tensor product are explicitly given. The functor $\hat P$ is surjective on objects and clearly faithful. In the case of a groupoid compatible pointing, $u_\s$ where $\s$ is an isomorphism is still invertible with inverse $u_{\s^{-1}}$. As the identities lie in $\D(id_X)$, any invertible element must lie in some $\D(\s)$ with $\s$ invertible. By definition, there are no further invertible elements outside of $u_\s$.
\end{proof}

\noindent{\sc NB:}
In the Cartesian case, every category $\C(\rho)$ is naturally index enriched over $\C(\final)$.
In the non--Cartesian case, such an indexing is given by a functor $\eps:\Hom_\C(X,Y)=\rho(X,Y)\to \unit$.

\begin{prop}
\label{prop:unitaldatacat}
Let $\C$ be a ((symmetric) monoidal) category, pointed by $P:\act\to \C$ and $\E$ be a symmetric monoidal enrichment category.
There are isomorphisms between the categories of
\begin{enumerate}
    \item Unital (counital) ((symmetric) (lax) monoidal), indexing data.
      \item Unital (counital) ((symmetric) (lax) monoidal) bimodule monoids.
      \end{enumerate}
      These are also  equivalent to
      \begin{enumerate}
    \setcounter{enumi}{2}
    \item $\E$ enriched ((symmetric) monoidal)
    categories index enriched over $\C$ with a section over the pointing, where in the Cartesian case, only unital is required and in the non--Cartesian case unital and counital is required.
\end{enumerate}
The section is compatible,  groupoid compatible or reduced on one side if the data is or equivalently the indexed bimodule is.

Dropping the counit in the non--Cartesian case, there is no canonical functor to $\C$.  The equivalence is then to categories where equation \eqref{eq:indexcoprod} respectively bimodules where \eqref{eq:phisplit}  holds.

Dropping the unit, the equivalence is to semi--categories as in Proposition \ref{prop:bimodcat}.
\end{prop}

\begin{proof}
The equivalence of (1) and (2) immediately follows from Proposition~\ref{prop:data-equals-bimod} with the added data of units.
To go from (1) to (3) is the content of the Definition-Proposition above. One can also use  Proposition~\eqref{prop:bimodcat} to get the equivalence from (2) to (3).
To get the correspondences of conditions is a straightforward tracing through the data.
To go from (3) to (1) directly: reading \eqref{eq:indexcoprod} from right to left defines the data of $\D$, where the morphisms $\g$ come from composition in $\hat C$ and the morphisms $\mu$ from the monoidal product if present. The counit is part of the data and the unit is given by the section of the pointing, that is the equation $u_\phi=\hat P(\phi)$.
\end{proof}

\begin{rmk}
The construction and the theorem generalize to the case that $\C$ is already enriched over $\E$ and $\hat \E$ is an enrichment category tensored over $\E$.
The construction goes through {\em mutatis mutandis} in this case as well using indexed colimits, cf. \cite{kellybook}.
\end{rmk}

Combining  Propositions \ref{prop:datafun}, \ref{prop:data-equals-bimod}
 and \ref{prop:unitaldatacat} we obtain an omnibus theorem, where parentheses indicate possible restrictions that match.

\begin{thm}
\label{thm:1}
Let $(\C,P)$ be a ((symmetric) monoidal) category, with base category $\B$, and let $\E$ be  a symmetric monoidal category. In the Cartesian case,
there are isomorphisms between the categories of
\begin{enumerate}
    \item (Unital) ((symmetric) (lax) monoidal) indexing data.
 \end{enumerate}
\begin{enumerate}
    \item [(2)] (Unital)   ((symmetric)  (lax) monoidal) $\B$--bimodule monoids in $\E$  over $\rho_P$.
\end{enumerate}
\begin{enumerate}
    \item[(3)]  $\E$ enriched ((symmetric) monoidal) semi--categories, categories in the unital case, which are (lax) index enriched over $\C$, with a section over the pointing in the unital case.
\end{enumerate}
The conditions of being (groupoid) compatible, and/or reduced match.

The above are equivalent to strict monoidal functor categories corepresented by $\anyplus$ and $\anyplusp$ in the unital case, where $\anyplus$ is $\catplus{\C,P}$  in the non--monoidal case, $\monplus{\C,P}$ in the lax (symmetric) monoidal case and $\locmonplus{\C,P}$ or equivalently in the strong (symmetric) monoidal case. Here $\redmonplus{\C,P}$ corepresents via strong monoidal functors.

The statements hold for non--Cartesian $\E$ if one adds the condition of being counital.
This means that the corepresentation is by $\anyplusg$ or by $\anyplusgp$ in the  unital case.
 \qed
\end{thm}

\noindent Writing out the corepresentation explicitly for the unital case:
\begin{enumerate}
\item The category of strict monoidal functors $[\catplusp{\C, P}, \E]_{strict\mdash\ot}$ is equivalent to the category of  unital $\act$ bimodule monoids over $\rho_P$ in the Cartesian case, as well as to the category of categories index enriched over $\C$ with a section over the pointing. In the non--Cartesian case, the equivalence is with
$[\catplusgp{C,P},\E]_{strict\mdash\ot}$.
\item The category of
 strict monoidal functors $[\monplusp{\M, P}, \E]_{strict\mdash\ot}$ is equivalent to the category of unital (symmetric) lax--monoidal  $\act_\ot$--bimodules  monoids over $\rho_P$, as well as to the category of (symmetric) monoidal categories with lax indexing over $\C$ in the Cartesian case. In the non--Cartesian case, the equivalence is with $[\catplusgp{\M,P},\E]_{strict\mdash\ot}$.
\item The categories of
 strict  monoidal functors $[\locmonplusp{\M, P}, \E]_{strict\mdash\ot}$ and strong monoidal functors $[\redmonplusp{\M, P}, \E]_{\ot}$ are both equivalent to the category of unital strong--monoidal  $\act_{\otimes}$--bimodules monoids over $\rho_P$  as well as to the category of (symmetric) monoidal categories with strong indexing over $\C$ in the Cartesian case. In the non--Cartesian case, the equivalence is with the unital and counital versions.
\end{enumerate}


\section{Algebras and special monoidal categories}
\label{par:defs}

\subsection{Algebras and Hereditary condition}
One main reason for Baez--Dolan type plus constructions is that they allow one to define algebras over functors, like algebras over operads, and furthermore again see them as functors.
The classical formulation of algebras over operads generalizes to the following:
\begin{df}
    A ((strong) monoidal) (unital) algebra in $\E$ over ((strong) monoidal) (unital) indexing data is given by a
    functor $\a:\act \to \hat \E$ together with a collection $m$ of morphisms $m_\phi:\D(\phi)\ot \a(s(\phi))\to \a(t(\phi))$ which are
    \begin{itemize}
    \item[(a)] actions: $m[id\ot m]=m[\g^D\ot id]$.
    \item[(b)] equivariant: $m[\D(\s,id)\ot id] =m[id\ot \a(\s)]$.
    \item[(c)] (strong) monoidal in the (strong) monoidal case:
    $m[\mu^\D\ot \mu^\a]=\mu^\a (a\ot a)\t^{23}$, where $\mu^\A_{X,Y}:\a(X)\ot \a(Y)\to \a(X\ot Y)$
    is the lax/strong structure.
    \end{itemize}

\end{df}

For monoids, the natural definition of modules is:

\begin{df} Let $\hat \E$ be tensored over $\E$ and let $\rho$ be a  $\act$--bimodule monoid in $\E$. Then an $\rho$ module $\alpha$ in $\hat \E$ is a functor $\alpha:\act\to \hat \E$ together with natural transformations
   $m_\a:\rho \pl_{\act} \alpha := \int^{X \in \act}\rho(X,-) \otimes \alpha(X) \to \alpha$
which satisfies the module property $m_\a[\g_\rho \pl_\act id_\a]=m_\a[id_\rho\pl_\act m_\a]: \rho \pl_\act \rho \pl_\act \a\to \rho $.

Moreover, if $\rho$ is a unital bimodule monoid, then $\a$ is a unital module if $m_\a[\eta \pl_\act id]=id_\a$, i.e.\ $a(u_\s)=\a(\s)$.

A (symmetric) strong/lax  monoidal module over a monoidal $\act_\ot$ (symmetric) bimodule monoid $\rho$ with a monoidal $\B$ is a strong/lax (symmetric) monoidal functor $\alpha:\act_\ot\to \hat \E$ with an associative (symmetric) monoidal natural transformation $m_\a:\rho\pl_{\act_\ot}\alpha\to \alpha$.

The natural transformations that intertwine the action imbue $\rho$--algebras of all flavors with a categorical structure.

\end{df}

\begin{prop}
\label{prop:data=algebra}
    There is an equivalence of categories between
    (1) Algebras over indexing data $\D$, and
    (2) Modules over $\rho_\D$.
    This is true for all flavors: regular, (symmetric) strong/lax monoidal.

\end{prop}
\begin{proof}
    This is a straightforward unraveling of the definitions.
\end{proof}

    \begin{ex} Continuing Examples \ref{ex:operad} and \ref{ex:Doperad},
    a strong algebra over an operad thought of as indexing data $\D$ is given by a strong monoidal functor $\a:\Surj\to \hat\E$.
    Up to isomorphism, this is fixed by $\A(1)=A$ as $\A(S)\simeq \A(|S|)\simeq \A(1)^{\ot |S|}=A^{\ot |S|}$.
    The additional data up to isomorphisms is given by (a)---(c) as an action $\D(n)\ot_{\SS_n}A^{\ot n}\to A$. If $\hat \E$ satisfies the adjunction \eqref{eq:tensoradjuction}, then this is a map $\D(n)\to Hom(A^{\ot n},A)$. This is the traditional way of defining an algebra over an operad.
    If the operad is unital, then there is an element $u\in \D(1)$, such that its action is $id_A$.

        Using the category $\Surj_\D$, a strong algebra over a unital operad $\D$ becomes a functor from $\Surj_\D$. In the nonunital case, $\alpha$ is a strong monoidal functor from $\SS$,
         which is given by $\a(1)=A\in \hat \E$ and an $\SS_n$ equivariant action by $\rho_\D$. Setting $D(n)=\rho_\D(n,1)$ this is again defined by equivariant maps $D(n)\ot_{\SS_n} A^{\ot n}\to A$
    \end{ex}

\begin{prop}
\label{prop:modulefunctor}
If $\rho$ is unital,
the category of unital $\rho$ modules  is equivalent to the category of functors $\O\in [\C(\rho), \hat\E]$. Similarly, this is true for (strong)(symmetric) monoidal algebras and
the selected type of functors.
\end{prop}
\begin{proof}
  This is an unwinding of definitions. Let the module $\a$ be given.
  Define the functor $\O:\C(\rho)\to \E$ on objects by $\O(X)=\a(X)$ using the fact that $\act$ and $\C(\rho)$ have the same objects.
  A morphism $\phi \in \C(\rho)(X,Y)$ is by definition an element of $\rho(X,Y)$.
  Setting $\O(\phi):=m( \phi):\O(X)\to \O(Y)$, as defined by \eqref{eq:tensoradjuction}, defines the functor on morphisms.
The functionality $\O(\phi\psi)=m(\phi\psi)= m(\g(\phi\bt\psi))=m(\phi)a(\psi)$ follows from  the module property, and unitality implies that $\O(id_X)=id_{\O(X)}$.

Vice--versa given a functor $\O: \C(\rho)\to \hat \E$,  let $P:\act\to \C(\rho)$ be the corresponding pointing from Proposition \ref{prop:bimodcat}. Then set $\a=P^*\O$ and define the natural transformation $\rho \pl_\act \a\to \a$ using the morphism $\O(\phi):\O(s(\phi))\to \O(t(\phi))$ and the adjunction
\eqref{eq:tensoradjuction}. This will be unital.

The remaining statements are analogous using the fact that $\a$ is strong/lax monoidal as well as the natural transformation defining the action.
\end{proof}

For MBMs, the free $\rho$ algebras given by strong monoidal $\B$--modules need not be strong and this involves an extra condition.
\begin{df}
\label{df:herpointing}
We call a  pointing \emph{hereditary} if the  following morphism of monoidal functors $\act_\ot^{op}\times \act_\ot\times \act_\ot\to \E$ is an isomorphism.
\begin{multline}
    \label{eq:hereditarypointed}
  (\rho\otimes \rho) \pl_{\act_\ot\times \act_\ot}(\rho_{\act_\ot}(id\times \mu))\stackrel{(1)}{\to}
 (\rho(\mu\ot \mu))\pl_{\act_\ot \times \act_\ot}(\rho_{\act_\ot}(id\times \mu))\\\stackrel{(2)}{\to}\rho(id\times \mu)\sq_{\B_\ot}\rho_{\B_\ot}\stackrel{(3)}{\to}\rho(id\times \mu)
\end{multline}
Here (1) is the morphism given by the monoidal $\mu^F$ structure of $\rho$, (2) is defined by the universal property of the coend, as in Lemma \ref{lem:rhorhomonoidal}, and (3) is the isomorphism coming from the fact that $\rho_{\act_\ot}$ is a unit for $\sq_{\B_\ot}$.
\end{df}

\begin{ex}
\label{ex:homhereditary}
For $\rho=Hom_\C$, \eqref{eq:hereditarypointed}  reads
$ \int^{Y,Y'}Hom_\C(Y,Z)\ot Hom_\C(Y',Z')\ot  Hom_\B(X,Y\ot Y')
\to
 \int^{Y,Y'}Hom_\C(Y\ot Y',Z\ot Z')\ot Hom_\B(X,Y\ot Y')
\to\int^{\hat Y} Hom_\C(\hat Y,Z\ot Z') \ot Hom_\B(X,\hat Y) \simeq Hom_\C(X,Z\ot Z')$. Where the penultimate map is induced by including the elements $Y\ot Y'=\hat Y$ into coend. Hereditary means that this is an isomorphism. Written in this way, this is the condition (ii') of a Feynman category, cf.\ \cite[\S 1.8.5]{feynman}.

    \end{ex}

\begin{thm}
   \label{thm:adjunction}
   For a unital bimodule $\rho$, let $G$ be the forgetful functor $G:\rho$--modules $\to$ $\rho_\act$--modules by letting $G(\a)$ be the underlying $\rho_\B$--module, and define $F:$ $\rho_\act$--modules $\to$ $\rho$--modules by  $F(\alpha):=\rho \pl_\act \a$ with the $\rho$--action given by $\mu_\rho\pl_\act id:\rho\pl_\B\rho\pl_\B\a\to \rho\pl_\B\a$.
In the (symmetric) monoidal case, if $\a$ is lax, then $F(\a)$ is lax. If $\alpha$ is strong and $\rho$ is hereditary, then $F(\a)$ is strong. Moreover, if $F(\a)$
is strong for every $\a$, then $\rho$ is hereditary.
Also,
$F\dashv G$, i.e.\ $F$ is a left adjoint for $G$, in the following situations:
\begin{enumerate}
    \item $F$:~$\rho_{\act}$--modules in $\hat \E$ $\leftrightarrows$ $\rho$--modules in $\hat\E$:~$G$.
    \item $F$:~(symmetric) lax monoidal
$\rho_{\B_\ot}$--modules in $\hat\E$ $\leftrightarrows$ (symmetric) lax monoidal $\rho_{\act_\ot}$--modules in $\hat\E$:~$G$.
\item If $\rho$ is hereditary, then
$F$:~(symmetric) strong monoidal
$\rho_{\B_\ot}$--modules $ \leftrightarrows$ (symmetric) strong  monoidal $\rho_{\act_\ot}$--algebras:~$G$.
\end{enumerate}

Or, equivalently, using Proposition~\ref{prop:modulefunctor}, adjunctions for the categories of functors:
\begin{itemize}
    \item [(1')] $F$:~$[\act,\hat\E]\leftrightarrows [\C(\rho),\hat\E]$:~$G$.

    \item[(2')]  $F$:~$[B_\ot,\hat \E]_{lax-\ot}  \leftrightarrows [\C(\rho_{\act_\ot}),\hat\E]_{lax-\ot}$:~$G$.
    \item  [(3')]
      $F$:~$[\B_\ot,\hat \E]_\ot \leftrightarrows [\C(\rho),\hat \E]_\ot$:~$G$, in the case that  $\rho$ is hereditary.
\end{itemize}
\end{thm}

\begin{proof}
Thinking of $F(\a)$ as a functor $\O(F(\a))$ by using Proposition \ref{prop:modulefunctor} from $\C(\rho): \O(F(\a))(X)=\int^{Y\in \B}\rho(Y,X)\ot \a(Y)= Lan_P\a$, that is the pointwise left Kan extension defined by the coend, which is the left adjoint of the ``forgetful'' functor $P^*:[\C(\rho),\hat \E]\to [\B,\hat E]$, proving (1').
Using the Proposition in the other direction now proves (1).
By assumption,  $(\rho\sq_{\act_\ot}\a)\ot(  \rho \sq_{\act_\ot} \a)
    \simeq (\rho\ot\rho)
    \sq_{\act_\ot \times \act_\ot}(\a\ot \a)$ and
there is a sequence of natural transformations of functors $\act\times \act\to \hat \E$
\begin{equation}
\label{eq:alpha}
 (\rho\ot\rho)
    \sq_{\act\ot\times \act_\ot}(\a\ot \a)
    \stackrel{id\sq\mu^\a}{\to}\\
(\rho\ot\rho)
    \sq_{\act\ot\times \act_\ot}\a\mu_\act
    \stackrel{\mu_\rho\sq\id}{\to}
   \rho( \mu_\act\ot \mu_\act)\sq_{\act_\ot\times \act_\ot} \a\mu_{\act}
   \to(\rho\sq_{\act_\ot}\a)\mu_\act
\end{equation}
where $\mu_\act$ is the monoidal structure of $\act$, the first transformation is given by commuting the coend with the tensor product,  $\mu^\a$ is the lax structure of $\a$, $\mu^\rho$ is the  lax structure of $\rho$  and
the last map is the defined by the universality of the coend as in Lemma~\ref{lem:rhorhomonoidal}. This defines the lax structure of $F(\a)$. Together with (1) this implies (2) and (2').

If $\rho$ is hereditary, note that, as $\rho_\B$ is the unit for $\sqbt$, tensoring the sequence \eqref{eq:hereditarypointed} with $\sqbt \a$ yields the sequence
\eqref{eq:alpha}.
Vice--versa the fact that  \eqref{eq:alpha} holds for modules
$\alpha_T(-)=\rho_\act(T,-)$ for all $T\in \act$   yields \eqref{eq:hereditarypointed}.
\end{proof}

Theorem \ref{thm:1} can be seen as the Monadicity Theorem for Feynman categories \cite[Theorem 1.5.6]{feynman}. It also clarifies the relationship to patterns \cite{Getzler}, which are basically equivalent to hereditary pointings, cf. \cite[1.11.14]{feynman}, and in another language to substitudes \cite{BKW}.

In the following, we will mostly consider the case where $\B$ is a groupoid and use the notation $\sds$ instead of $\scs$.

\begin{prop}
\label{prop:fey=her}
A Feynman category (FC) as in \cite{feynman} is a (symmetric) monoidal category $\F$ together with a groupoid $\V$ and a functor $\imath:\V\to \F$, satisfying the conditions:
\begin{itemize}
    \item[(i)] The functor $\imath$ induces an equivalence $\Iso(\F)\simeq \V^{\bt}$.
    Thus the functor exhibiting the equivalence is weak pointing equivalent to the standard gcp pointing.
    \item [(ii'')] For the standard gcp  $\rho_\F$ is hereditary.
    \item [(iii)] The slice categories of $\F$ are essentially small.
\end{itemize}
\end{prop}
\begin{proof}
    The conditions (i) and (iii) are those appearing in the definition of a Feynman category \cite[Definition 1.1.1]{feynman}. The condition (ii'') is a reformulation of the condition (ii'), which states that $Hom_\F(X,Z\ot Z')=\int^{Y,Y'\in \B\times \B} Hom_\F(Y,Z)\ot Hom_\F(Y',Z') \ot Hom_\V(X,Y\ot Y')$ of \cite{feynman}, which in light of the Example \ref{ex:homhereditary} is the stated hereditary condition.
The equivalence is given by \cite[Proposition 1.8.9]{feynman} which states, that conditions (i),(ii),(iii) defining a Feynman category are equivalent to conditions (i),(ii'),(iii), where the original condition (ii) states that $Iso(\F\da \F)\simeq Iso(\F\da \V)^\bt$.
\end{proof}
The non--symmetric FC are  alternatively called non-sigma FCs.


\begin{ex}[Finite sets and variations]
\label{ex:finset}
 Let  $\FinSet$ be the category of finite sets with the monoidal product of disjoint union $\amalg$.
 Let $\triv$ be the category with one object $*$ and its identity $id_*$.
 We will denote the unique map $T\to \{*\}$ by $\pi_T$.
 Then $\FinSet$ is a Feynman category by $\F=\FinSet$, $\V = \triv$, and $\imath(*)=\{1\}$.
 Note that $\V^\ot$ has the groupoid $\SS$ as its strict version and skeleton.

The  condition on morphisms is guaranteed by the fact that maps of sets have fibers, that is any map $F:S\to T$  in the arrow category is isomorphic to  the map: $\coprod f|_{f^{-1}(i)}:\coprod_{i=1}^{|T|} f^{-1}(i)\to \coprod_{i=1}^{|T|} \{*\}$. Here
the fibers are the decomposition into irreducibles. The objects in $\Indec=\Iso(\M\da\V)$
 are the surjections $\pi_S:S\to \{*\}$
if $S\neq \emptyset$ and the empty map $i_\emptyset:\emptyset\to \{*\}$ if the source is empty.
We set $\FFinSet=(\triv,\FinSet,\imath)$.

Working with the skeleton $sk(\FinSet)$, this has the ``same'' objects as $\SS$ under the identification $n\leftrightarrow \underline{n}=\{1,\dots,n\}$,
 where the equivalence class of a finite set $T$ is $\underline{|T|}$ in the skeleton. Under the above identification,
 $\SS=\Iso(sk(\FinSet))=sk(\Iso(\FinSet))$.
 In the skeleton, the monoidal structure is then given by addition or $\un\amalg \um=\underline{n+m}$
and $\underline{0}=\emptyset$ is the monoidal unit. This identifies $sk(\triv^\ot)=\SS$ and establishes the equivalence.
Working with the unbiased strict monoidal category, $\triv^{\ot\Set}=\Iso(\FinSet)$.

The restrictions to injections, $FI$, and surjections, $FS$, are also Feynman categories, \cite{feynman,feynmanrep}.
 There is a non--Sigma version given by finite ordered sets, see \cite{feynmanrep} for details, and the respective subcategories $OI$ and $OS$.
     A graphical representation for the surjections, see e.g.\ \cite{matrix} is given by depicting a morphism as a collection of rooted corollas. In particular, the basic morphism $\pi:S\to \{t\}$ is depicted by the corolla with root $t$ and leaves $S$. The composition is given by grafting corollas to a level forests and then contracting the edges, see \cite{woods}.
   The connected components of a pair of composable morphisms, discussed in the next paragraph,  are the  2-level trees of the 2-level forest.
   In the ordered case, these corollas and forests are planar, cf.\cite{feynmanrep}.
 \end{ex}

We adapt \cite[Definition 5.2.5]{feynman} to our situation.
\begin{df}
\label{df:deg}
A {\em degree function} for morphisms of a monoidal category $\M$ is a map $\Mor(\M)\to \N_0$, which is  additive under tensor product and composition, and the degree $0$ and $1$ morphisms generate all morphisms under tensor product and composition. It is {\em proper} if additionally the degree $0$ morphisms are precisely the isomorphisms.

A category with a proper degree function is {\em cubical} if, up to concatenation with base morphisms, each degree $n$ morphism its decompositions $\phi=\phi_1\codco \phi_n$ into degree $1$ morphisms is an $\SS_n$ torsor which is natural under composition.

The latter means that for the upon composition of two decompositions the torsors embed under $\SS_n\times \SS_m\subset \SS_{n+m}$.
Here everything is understood in terms of $R$--modules in the presence of a ground ring.

In terms of bimodules, this can be stated as follows.
Given a proper degree function $\rho\to \N_0$, where $\N_0$ is considered with trivial bimodule action, let $\rho^{(n)}$ be the degree $n$ morphisms, then the cubical condition says that the $(\mu^\rho)^{(n)}: (\rho^{(1)})^{\sqb n} \to \rho^{(n)}$ is a principal
$\SS_n$-bundle which is natural under composition $(\rho^{(1)})^{\sqb n}\sqb (\rho^{(1)})^{\sqb m}\to (\rho^{(1)})^{\sqb n+m}$.
\end{df}

\subsection{UFCs}

The question about defining algebras for functors $\O$
from an FC then turns to the question of if $\F=\M^+$ for some monoidal category $\M$.
The main result is Theorem~\ref{thm:ufcp-equals-fc} which says that this is the case if and only if $\M$ is a \pher{} UFC, defined below. We will work in the standard gcp case and use the language of double categories, cf.\ Remark \ref{rmk:double}.

\label{df:conditions}

\begin{df}
  \label{df:factorizable}
  A (symmetric) monoidal category $(\M,\ot)$ has {\em essentially uniquely factorizable objects}, if there is a groupoid $\V$, s.t.\ $\V^\bt\simeq Iso(\M)$, in other words the
  standard gcp is free monoidal.
It  has {\em essentially uniquely factorizable morphisms} if there is a groupoid $\Indec$ such that $\Indec^\bt \simeq \Iso(\M \da \M)$, viz.\ that the groupoid of the arrow category is free monoidal.
 Note, here we are using the conventions of \S\ref{sec:rmodulecat}.

   A choice of  a pair $(\V,\imath:\V \to Iso(\M))$ witnessing the equivalence will be called a {\em basis of objects} and its elements will be called {\em irreducibles} or {\em basic objects}.
A choice of  a pair $(\Indec,\jmath:\Indec \to Iso(\M\da \M))$ witnessing the equivalence will be called a {\em basis of morphisms} and its elements will be called {\em basic morphisms}.
 \end{df}

\begin{rmk}
\label{rmk:factorizable}
\label{item:iso}
 The condition  of factorizable objects guarantees that any object $X$ can be decomposed as $X\simeq \bigotimes_{v \in V} \imath(*_v)$ up to isomorphism with $*_v \in \V$.   Moreover, the only isomorphisms are given by wreath products of isomorphism groups, that is isomorphisms on each $*_v$ and a bijection of indexing sets, inducing permutations of the isomorphic factors.
 In particular, any isomorphism $\sigma:X\to X'$  decomposes as
$
         \label{eq:isodecomp}
         \sigma\simeq \Perm \circ \bigotimes_{i=1}^n \sigma_i=\bigotimes_{i=1}^n \sigma_{\Perm(i)} \circ \Perm
$, for some permutation $\Perm\in \SS_n$,
    where $\sigma_i:*_i\to *'_i$ for chosen decompositions $X\simeq \bigotimes_{i=i}^n \imath(*_i)$ and $X'\simeq \bigotimes_{i=1}^n\imath(*'_i)$.
The factorization condition for morphisms  means that any $\phi \in \M(X,Y)$ can, up to isomorphism $\sds$, be decomposed as a monoidal product. That is there are isomorphisms $\s,\s'$ and a set of morphisms $\phi_v\in \Indec\in V$ such that
$\sds(\phi)=\bigotimes_{v\in V}\jmath(\phi_v)$.
 This decomposition is essentially unique, in the sense that it is unique up to  isomorphisms ---in the arrow category--- on the factors $\phi_v$ and
permutations of these factors. We will write $\phi\simeq \bigotimes_{v\in V} \phi_v$ as a shorthand.
\end{rmk}

\begin{df}

\label{df:degrees}
If a category has essentially uniquely factorizable objects, then
each object has a degree $|X|$ which is the length of any isomorphic object in $\V^\bt$.
This gives a natural {\em bidegree} or {\em type} of a morphism $\type(\phi)=(|s(\phi)|,|t(\phi)|)$ and a {\em degree}, the {\em length decrease}, $|\phi| = |s(\phi)| - |t(\phi)|$.

If $\M$ has essentially uniquely factorizable morphisms there is a natural degree called {\em depth} for morphisms: $\depth(\phi)$  is the length of a monoidal decomposition in $\Indec^\bt$ isomorphic to $\phi$.
\end{df}
Both type and length decrease are  invariant under isomorphism.
Both are additive under $\ot$ and the length decrease is also additive under $\circ$ as well. Depth is additive under $\ot$.

\begin{df}
\label{df:UFCdef}
A {\em unique factorization category} (UFC) is a symmetric monoidal category
with uniquely factorizable morphisms, whose slice categories are essentially small, together with a choice of basis, viz.\ a triple $(\M,\Indec,\jmath)$.
We will call a  UFC {\em strict} if  $\jmath^\bt$ is the identity.

A morphism of  UFCs is a pair of a functor $e:\Indec\to \Indec'$ and a strong monoidal functor $f:\M \to \M'$ that commute with the functors $\jmath^{\bt}$ and $\jmath^{\prime \bt}$.
\end{df}

\begin{df}
\label{df:ufc}
{\em A compatibility} between a choice of basic objects and a choice of basic morphisms is a choice of functor $\imath_\Indec:\Indec\to (\V^\bt\da\V^\bt)$, such that $\jmath = (\imath^\bt, id, \imath^\bt) \circ \imath_\Indec$.
This means that the following diagram commutes:

\begin{equation}
    \xymatrix{
    \Indec\ar[d]_{\imath_P}\ar[r]\ar@(ur,ul)[rr]^\jmath&\Indec^\bt\ar[r]^{\jmath^\bt}\ar[d]_{\imath_\Indec^\bt}&
    \Iso(\M\da\M)\ar@{=}[d]\\
   \Iso(\V^\bt\da\V^\bt)\ar[r]\ar@(dr,dl)[rr]_{(\imath^\bt,id,\imath^\bt)}&
   \Iso(\V^\bt\da\V^\bt)^\bt\ar[r]_{(\imath^\bt,id,\imath^\bt)^\bt}&\Iso(\M\da\M)
   }
\end{equation}
A compatible choice of basis is a choice of basic objects, basic morphisms and a compatibility.
\end{df}
\begin{nota}
\label{nota:index}
To simplify the notation, we will  often work in $\V^{\otimes Set}$ and $\Indec^{\otimes Set}$, using the formalism of \cite{TannakaDel},
fix a presentation, and choose pseudo--inverses $\tilde \imath$ and $\tilde \jmath$.
Thus $\tilde\imath(X)=\bigotimes_{s\in S}*_s$ for a collection of objects $*_s$ in $\V^S$
and $X\simeq \bigotimes_{s\in S}\imath(*_S)$ in a chosen fashion  $\sds$. As a short hand for these choices, we will simply write $X\simeq \bigotimes_{s\in S}*_s$.
Any isomorphism of objects $\sigma:X=\bigotimes_{s\in S}*_S \stackrel{\sim}{\to} \tilde X=\bigotimes_{t\in T}*_T$ is then given by
the image of a bijection $\bar\sigma:S\leftrightarrow T$ and fixed isomorphisms $\sigma_s:*_s\to *_{\bar \sigma (t)}$. We will simply write $S\leftrightarrow T$ for such a map.
We define the index of a such a tensor product as $\index(\bigotimes_{s\in S}*_s)=S$ and $\index(X)=\index(\tilde\imath(X))$.
\end{nota}

\begin{prop}
\label{prop:facobj}
If a (symmetric) monoidal category has essentially uniquely factorizable morphisms, then it has essentially uniquely factorizable objects. In particular, it has factorizable pointing and common factorizations of morphisms.

Concretely, if $\Indec \subset Iso(\M\da \M)$ is a maximal set of basic morphisms, which can be obtained
after replacing some choice of basic morphisms with its essential image,
$\V$ can be chosen to be the  full subgroupoid of $\Iso(\M)$ whose objects are those $X$
 which $id_X\in  \Indec$.
The inclusion $\imath_\Indec$ which is given by the source and target is a compatibility.
\end{prop}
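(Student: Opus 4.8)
The plan is to realize $\Iso(\M)$ as a (symmetric) monoidal retract of the free symmetric monoidal groupoid $\Indec^\bt$ and then to show that such a retract is again free, on the subgroupoid of basic morphisms that happen to be isomorphisms. First I would introduce the diagonal embedding $\Delta\colon\Iso(\M)\to\Iso(\M\da\M)$, $X\mapsto\id_X$ and $\sigma\mapsto(\sigma\Da\sigma)$. It is strict monoidal and fully faithful, and an object $\phi$ of $\Iso(\M\da\M)$ lies in its essential image if and only if $\phi$ is an isomorphism of $\M$, since every isomorphism $\psi\colon A\to B$ is connected to $\id_B$ by the invertible square $(\psi\Da\id_B)$. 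Dually, the source functor $s\colon\Iso(\M\da\M)\to\Iso(\M)$, $\phi\mapsto s(\phi)$, $\sds\mapsto\sigma$, is strict monoidal and satisfies $s\circ\Delta=\id$. Transporting these along the equivalence $\jmath^\bt\colon\Indec^\bt\xrightarrow{\sim}\Iso(\M\da\M)$ produces monoidal functors $\tilde\Delta=(\jmath^\bt)^{-1}\circ\Delta$ and $\tilde s=s\circ\jmath^\bt$ with $\tilde s\circ\tilde\Delta\cong\id_{\Iso(\M)}$, exhibiting $\Iso(\M)$ as a monoidal retract of $\Indec^\bt$.

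The key step, and the main obstacle, is to prove that a (symmetric) monoidal retract of a free symmetric monoidal groupoid is itself free. On connected components this is exactly the statement that a retract of a free commutative monoid is free: the maps $\tilde s_*$ and $\tilde\Delta_*$ make $\pi_0\Iso(\M)$ a retract of the free commutative monoid $\pi_0\Indec^\bt$, and one checks directly that such a retract is cancellative and reduced with unique factorization into atoms, hence free. The automorphism data is then recovered from the full faithfulness of $\jmath^\bt$, which identifies $\Aut_{\Indec^\bt}(\vec p)$ with the automorphism group in $\Iso(\M)$ of the corresponding object. Concretely, freeness amounts to the assertion that a tensor factor of an isomorphism is again an isomorphism: writing $\id_X=\Delta(X)\cong\bigotimes_i\jmath(p_i)$ and applying $s\circ\Delta=\id$ gives $X\cong\bigotimes_i s\jmath(p_i)$, while $\Delta(X)\cong\bigotimes_i\id_{s\jmath(p_i)}$; comparing these two factorizations of $\id_X$ inside the free groupoid $\Indec^\bt$, uniqueness of factorization forces each $\jmath(p_i)$ to be isomorphic, in the arrow category, to an identity $\id_{Y_i}$. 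I expect the delicate point to be ruling out would-be ``creation/annihilation'' basic factors $\unit\to T$, which is precisely where the retract (equivalently, the freeness) argument is needed rather than pure source/target bookkeeping.

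This simultaneously yields essential surjectivity of $\imath^\bt$ and identifies the generators. Set $\V'\subseteq\Indec$ to be the full subgroupoid of basic morphisms $p$ for which $\jmath(p)$ is an isomorphism of $\M$. Because $\Indec$ is taken saturated, i.e.\ its own essential image as a maximal set of basic morphisms, $\jmath(p)\cong\id_X$ forces $\id_X\in\Indec$, so $X\in\V$; conversely $\id_X\in\Indec$ gives $\id_X\in\V'$. The functors $s$ and $\Delta$ restrict to mutually inverse equivalences between $\V'$ and $\V$, so the inclusion $\imath\colon\V\hookrightarrow\Iso(\M)\subseteq\M$ induces $\imath^\bt\colon\V^\bt\xrightarrow{\sim}\Iso(\M)$; that is, $\M$ has essentially uniquely factorizable objects. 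By Lemma~\ref{lem:ufcisocm} this immediately delivers factorizable isomorphisms and common factorizations of morphisms.

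It remains to verify the last sentence, that the source--target assignment is a compatibility. Using Remark~\ref{rmk:factorrmk}, for each basic morphism $p$ the objects $s\jmath(p)$ and $t\jmath(p)$ acquire canonical decompositions $\bigotimes_{v}\imath(X_v)$ and $\bigotimes_{w}\imath(Y_w)$ into basic objects, and I would define $\imath_\P\colon\Indec\to\Iso(\V^\bt\da\V^\bt)$ by sending $p$ to the pair of these decompositions with $\jmath(p)$ as the middle datum. Reading off sources and targets then shows $(\imath^\bt,\id,\imath^\bt)\circ\imath_\P=\jmath$, so $\imath_\P$ is a compatibility and the stated concrete choice of basis works. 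The only place where genuine content enters is the freeness of the retract in the second paragraph; everything else is a matching of source/target data against the unique monoidal factorization of morphisms already supplied by the hypothesis.
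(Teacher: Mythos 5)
Your core computation is the same as the paper's: the paper also takes the essentially unique factorization $id_X \simeq \bigotimes_v \phi_v$, passes to sources $X_v = s(\phi_v)$ to get $X \simeq \bigotimes_v X_v$, and compares the resulting second decomposition $id_X \simeq \bigotimes_v id_{X_v}$ with the first; your $\Delta$/$s$ retraction is a clean repackaging of exactly this, and your observation that every isomorphism $\psi$ is connected to $id_{t(\psi)}$ by $(\psi \Da id)$ is the paper's diagram \eqref{sigmadecompeq}. The identification of the morphism level via full faithfulness of $\Delta$ and $\jmath^\bt$, and the compatibility check at the end, are fine and if anything slightly more explicit than the paper's.

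The gap is at the step you yourself single out as the key one. The assertion that a retract of a free commutative monoid is ``cancellative and reduced with unique factorization into atoms, hence free'' is not an argument: cancellativity and reducedness are immediate for any submonoid of $\N^{(S)}$ but do not imply unique factorization (the numerical monoid $\langle 2,3\rangle\subset\N$ is cancellative, reduced, atomic, and not free), so the entire content of ``retracts of free commutative monoids are free'' is the UF claim you do not prove. Your concrete substitute also does not close the circle as written: you compare $\bigotimes_i \jmath(p_i)$ with $\bigotimes_i id_{s\jmath(p_i)}$ and invoke uniqueness of factorization, but the right-hand side is not a priori a decomposition into irreducibles, so essential uniqueness does not apply to it directly. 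One must first refine each $id_{s\jmath(p_i)}$ into irreducibles and then use additivity of $\depth$ to force each refinement to have length one --- this is how the paper argues, via $\depth(id_X)=|V|$ and $\depth(id_{X_v})=1$ --- and that count is exactly where the ``creation/annihilation'' factors with source $\simeq \unit$ must be excluded (they are the factors with $\depth(id_{s\jmath(p_i)})=0$). You correctly identify this as the delicate point but then defer it to the retract lemma, which is the unproved step; so the argument, as it stands, is the paper's argument minus the depth count, plus an appeal to a general lemma whose proof would again be that depth count.
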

\begin{proof}
%

Let $\V$ be the full subgroupoid defined above. We have to show that this is equivalent to $\Iso(\M)$.
First, we will show that any object $X$ of $\M$  factorizes essentially uniquely. By the assumption for any object $X$ of $\M$, $id_X$ factorizes essentially uniquely as $\bigotimes_{v\in V}\phi_v$, so that $\depth(id_X)=|V|$. Let $X_v=s(\phi_v)$,
 then $X$ factorizes as $X\simeq \bigotimes_{v\in V} {X_v}$ and $id_X\simeq \bigotimes_{v\in V}id_{X_v}$ is another decomposition of the morphism $id_X$. Hence $\depth(id_{X_v})=1$ and there are irreducible and
   unique up to isomorphisms and permutations of the identities.
 This in turn means that the decomposition into the $X_v$ is essentially unique up to isomorphisms and permutations and that the $X_v$ are irreducible. Indeed, if $X_v\simeq X'_v\ot X''_v$ then $id_{X_v}\simeq id_{X'_v}\ot id_{X''_v}$ and one of the $X'',X'=\unit$ as otherwise,
 the identity morphisms of the other would not be in $\Indec^\ot$.

Second, we have to show that $\V^\ot\simeq Iso(\M)$ also on the level of morphisms.
Consider an isomorphism $\sigma:X\stackrel{\sim}{\to} Y$.
Let $\id_Y\simeq \bigotimes_{v\in V}id_{Y_v}$ be a decomposition of $id_Y$ into irreducibles, then
$(\sigma\Da id_Y):\sigma\to \bigotimes_{v\in V}id_{Y_v}$  is an isomorphism in $\Iso(\M\da\M)$, see \eqref{sigmadecompeq}.
\begin{equation}
\label{sigmadecompeq}
  \begin{tikzcd}
    X \arrow[r, "\sigma"] \arrow[d, "\sigma"']
    & Y \arrow[r, "\simeq"] \arrow[d, "id_Y"]
    & \bigotimes_{v \in V} Y_v \arrow[d, "\bigotimes_v id_{Y_v}"] \\
    Y \arrow[r, "id_Y"']
    & Y \arrow[r, "\simeq"']
    & \bigotimes_{v \in V} Y_v
\end{tikzcd}
\end{equation}
Thus any isomorphism in $\Obj(\Indec^\ot)=\Iso(\M\da \M)$ is isomorphic to tensor products of depth one isomorphisms which are between objects of $\V$. Any $\sigma$  thus decomposes essentially uniquely as $\bigotimes_{v\in V} \sigma_v$ with $\sigma_v \in  \V(X_v,Y_v)$.
In fact, essentially uniquely as identities of irreducible objects.
That the inclusion is a compatibility is clear.
\end{proof}





%
\begin{df}
    A presentation of a  UFC is a category with essentially factorizable morphisms together with  a choice of basic objects, and a compatibility.
This is thus a tuple
$\mathfrak{M} = (\V,\Indec,\M,\imath,\jmath,\imath_\Indec)$.
As a convention, we use the fraktur font to refer to the presentation and use the calligraphic font for the categories.
 We call a presentation strict, if the equivalences $\imath^\ot,\jmath^\ot$ are identities.

A morphism between  UFCs with a presentation $\ff:(\M,\V,\Indec,\imath,\jmath)\to(\M',\V',\Indec',\imath',\jmath')$
is a triple of functors $\ff=(v,p,f)$,
which fit into the following commutative diagrams.
\begin{equation}
  \xymatrix{
    \V \ar[r]^\imath\ar[d]_v&\M\ar[d]^f\\
    \V'\ar[r]^{\imath'}&\M'
    }
    \quad
    \xymatrix{
    \Indec\ar[r]^-\jmath\ar[d]_p&\Iso(\M\da\M)\ar[d]^{(f,f,f)}\\
  \Indec'\ar[r]^-{\jmath'}&\Iso(\M'\da\M')\\
    }
    \quad
    \xymatrix{
    \Indec\ar[d]_{\imath_\Indec}\ar[r]^p&\Indec'\ar[d]_{\imath_{\Indec'}}\\
    \Iso(\V^\ot\da\V^\ot)\ar[r]^{(v^\ot,f,v^\ot)}&\Iso((\V')^\ot\da(\V')^\ot)
    }
\end{equation}
\end{df}

\begin{lem}
\label{cor:fey}
\label{lem:fcn1}Feynman categories are equivalent to  UFCs with a presentation whose basic morphisms are of the type $(n,1)$.\qed
\end{lem}

  \begin{proof}
Given a Feynman category $(\V,\imath,\F)$ one obtains  a UFC all of whose basic morphisms are type $(n,1)$ by setting  $\Indec=\Iso(\V^\ot \da \V)$,  and defining $\imath_P$ by the canonical map $\Iso(\V^\ot \da \V)^\ot\to \Iso(\V^\ot \da \V^\ot)$ sending a formal product of morphisms to the monoidal product of morphisms. Vice--versa given a presentation UFC with morphisms of type $(n,1)$ forgetting the data $\Indec$, yields the FC.
  \end{proof}


%

\begin{rmk}
\label{rmk:UFCbimod}
    There is a definition of factorizability of morphisms in  terms of bimodules used throughout \cite{plethysm}. Consider the functors $m_{\act^{op}\times \act}:(\act^{op}\times \act)^\bt\to \aca$ and
    $m_\E:\E^\bt\to \E$. For a $\B$--bimodule $\nu$, define
    $\nu^\ot=Lan_{m_{\aca}}m_\E\nu^\bt$.
    A $\act$--bimodule is {\em factorizable} if there exists a $\act$--bimodule $\nu$, such that $\rho=\nu^\ot$. The category
     $\C(\rho)$ has factorizable morphisms if $\rho$ does.
     The case at hand, is factorizability for the standard gcp.
   \end{rmk}
\subsection{\Pher{} UFCs}

\begin{prop}
\label{lem:ufcisocm} A category with essentially unique factorization of objects has factorizable ismorphisms. A category with essentially unique factorization of morphisms has a $\ot$--factorization of morphisms.  In particular, any UFC has these properties.
 \end{prop}
\begin{proof}
The first statement is straightforward from  Remark~\ref{rmk:factorizable}. Given two factorizations of a morphisms $\phi=\phi_1\ot\phi_2=\psi_1\ot\psi_2$,
decomposing $\phi$ into irreducibles yields the desired decomposition.
\end{proof}

\begin{lem}
\label{lem:objectpushout}
In a category with essentially uniquely factorizable objects, given any two decompositions of an object $X$ into not necessarily basic objects $X\simeq \bigotimes_{s\in S}X_s$ and $X\simeq \bigotimes_{t\in T}X_t$ for any
decomposition $X$ into basic objects $X\simeq \bigotimes_{v\in V}*_v$ there is a span
$S\stackrel{l}{\leftarrow} V\stackrel{r}{\rightarrow} T$, whose connected components (as a span) yield isomorphisms:
$\bigotimes_{s\in S_c}X_s \simeq \bigotimes_{v\in V_c} *_v\simeq \bigotimes_{t\in T_c}X_t$.

\end{lem}
\begin{proof}
Decomposing as above,  by essentially unique factorization there are maps $V\to S$ and $V\to T$ such that
$X_s\simeq \bigotimes_{v\in l^{-1}(s)}*_v, X_t\simeq \bigotimes_{v\in r^{-1}(t)}*_v$.
Considering the connected components of the span,
we see that the restriction $S_c\leftarrow V_c\rightarrow T_c$ yields the isomorphisms
$\bigotimes_{s\in S_c}X_s \simeq \bigotimes_{v\in V_c} *_v\simeq \bigotimes_{t\in T_c}X_t$.
\end{proof}

\begin{cor}
In a UFC, given two composable morphisms, $X_0\stackrel{\phi_0}{\to} X_1\stackrel{\phi_1}{\to}X_2$,
choosing a presentation and pseudo-inverses
 yields the diagram \eqref{eq:connected}.

\begin{equation}
\label{eq:connected}
    \begin{tikzcd}
    &X_1
    \ar[dl, "\simeq"']
    \ar[d, "\eta"', "\simeq"]
    \ar[dr, "\simeq"] &\\
   \bigotimes_{v\in V} X_{1,v}
   \ar[dr, "\mu'", "\simeq"']
   &\bigotimes_{s\in S} \imath(*_s)
   \ar[r, "\simeq"]
   \ar[d, "\simeq"]
   \ar[l, "\simeq"']
   &\bigotimes_{w\in W} X_{1,w}
   \ar[dl, "\simeq"]\\
   &\bigotimes_{u\in U} X_{1,u}
    \end{tikzcd}
\end{equation}
where $\phi_0:X_0\to X_1$ is decomposed as $\phi_0= \bigotimes_{v\in V}\jmath(\phi_v)$
inducing decompositions of $X_0, X_1$  into  $X_{0,v}= s(\jmath\phi_v)$ and $X_{1,v}=t(\jmath \phi_v)$.
The middle product is the chosen decomposition of $X_1$, the $X_{1,w}$
and $X_{1,w}$ come from decomposing $\phi_1\simeq \bigotimes_{w\in V}\phi_{1,w}$
and $U$ is the push--out of the span $V\leftarrow S \rightarrow W$ which defines the $X_{1,u}$ via Lemma~\ref{lem:objectpushout}.

\end{cor}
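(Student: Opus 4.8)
The plan is to reduce the whole statement to a single application of Proposition~\ref{lem:objectpushout} at the object $X_1$ where the composition is formed, so that the only genuine work is to manufacture the two object decompositions of $X_1$ that the span in \eqref{eq:connected} requires. First I would invoke essentially unique factorization of morphisms (the factorization condition \eqref{eq:decomp} recorded in Remark~\ref{rmk:factorrmk}) to write $\phi_0\simeq\bigotimes_{v\in V}\jmath(\phi_v)$ and $\phi_1\simeq\bigotimes_{w\in W}\jmath(\phi_{1,w})$ as monoidal products of basic morphisms. Passing to targets in the first factorization and to sources in the second yields the two decompositions $X_1\simeq\bigotimes_{v\in V}X_{1,v}$ with $X_{1,v}=t(\jmath\phi_v)$, and $X_1\simeq\bigotimes_{w\in W}X_{1,w}$ with $X_{1,w}=s(\jmath\phi_{1,w})$; these furnish the left and right legs of the diagram. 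I would then fix, once and for all, a basic decomposition $X_1\simeq\bigotimes_{s\in S}\imath(*_s)$, which is the middle object and supplies the isomorphism $\eta$. Note that the $X_{1,v}$ and $X_{1,w}$ need not be basic, which is exactly why the finer basic decomposition indexed by $S$ is needed to mediate between them.

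Next, with these three decompositions of the single object $X_1$ in hand, I would apply Proposition~\ref{lem:objectpushout} with its roles filled by the two object decompositions indexed by $V$ and $W$ and apex given by the basic index $S$. This produces the span $V\xleftarrow{l}S\xrightarrow{r}W$, where $l$ and $r$ record, for each basic factor $\imath(*_s)$, the unique $v$ and the unique $w$ such that $\imath(*_s)$ occurs inside $X_{1,v}$ and inside $X_{1,w}$ respectively; these maps are well defined precisely because, by essential uniqueness, the chosen basic decomposition $\bigotimes_{s}\imath(*_s)$ refines both coarser object decompositions. Setting $U$ to be the pushout $V\sqcup_S W$ of this span, I would observe that its underlying set is the quotient of $V\sqcup W$ by the relation $l(s)\sim r(s)$, which is exactly the set of connected components of the span. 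For each connected component $u\in U$, writing $V_u\subseteq V$, $W_u\subseteq W$, $S_u\subseteq S$ for the factors it contains, Proposition~\ref{lem:objectpushout} delivers the componentwise isomorphisms $\bigotimes_{v\in V_u}X_{1,v}\simeq\bigotimes_{s\in S_u}\imath(*_s)\simeq\bigotimes_{w\in W_u}X_{1,w}$, and I would define $X_{1,u}:=\bigotimes_{s\in S_u}\imath(*_s)$ as the canonical representative.

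Finally I would assemble these into \eqref{eq:connected}: taking $\bigotimes_{u\in U}$ of the componentwise isomorphisms and composing with the regrouping isomorphism $\mu'$ for the partition $V=\bigsqcup_{u}V_u$, and its $W$-analogue, produces the two lower arrows into $\bigotimes_{u\in U}X_{1,u}$, while the third lower arrow is the evident regrouping of the basic factors along $S=\bigsqcup_u S_u$. The main obstacle is not the existence of any of these maps but the commutativity of the whole diagram, i.e.\ checking that the two regrouping isomorphisms together with the basic-decomposition isomorphism $\eta$ all induce the same identification on $X_1$. This is where essential uniqueness does the real work: by Remark~\ref{rmk:factorrmk}\eqref{item:iso} any two isomorphisms from $X_1$ to a product of irreducibles differ only by a wreath-product element that permutes isomorphic factors and acts within them, and since every leg of \eqref{eq:connected} is built from the one fixed basic refinement $\bigotimes_{s}\imath(*_s)$ grouped along the same partition of $S$ into connected components, all routes agree up to this harmless ambiguity. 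Recording $X_{1,u}=\bigotimes_{s\in S_u}\imath(*_s)$ as the common value through which all three legs factor then makes the triangle and the two squares of \eqref{eq:connected} commute by construction, completing the argument.
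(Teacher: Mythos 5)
Your proposal is correct and follows essentially the same route the paper intends: the corollary carries no separate proof there precisely because it is a direct application of Proposition~\ref{lem:objectpushout} to the two decompositions of $X_1$ obtained from the targets of the irreducible factors of $\phi_0$ and the sources of those of $\phi_1$, with the fixed basic decomposition $\bigotimes_{s\in S}\imath(*_s)$ as the mediating refinement and $U$ the pushout of the resulting span. Your additional care about commutativity of the diagram via the wreath-product form of isomorphisms in Remark~\ref{rmk:factorrmk} is consistent with, and slightly more explicit than, what the paper leaves implicit.
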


\begin{rmk}
\label{rmk:degeneratespan}
In the special case that $X_1=\unit$, $S=\emptyset$, viz.\ morphisms that factor through $\unit$.
More generally,  since we are decomposing up to isomorphism and suppressed the unit constraints $\unit\ot X\simeq X\simeq X\ot \unit$ in the above decomposition some of the $X_{1,v}$ or the $X_{1,w}$ can be equal to $\unit$. In this case, we will take the span to be $V'\leftarrow S \rightarrow W'\amalg V''\leftarrow \emptyset \rightarrow W''$ where $V',W'$ index those factors that are not $\unit$ and $V'',W''$ index those that are. This is consistent with the category of spans, see \S\ref{par:spans}, and $\bigotimes_\emptyset=\unit$.
\end{rmk}

\begin{lem}
\label{lem:UFCcommonfact}
Given a pair of composable morphisms $(\phi_0,\phi_1)$ in a  UFC $(\M,\Indec,\jmath)$ together with decompositions   $\phi_0\simeq \bigotimes_{v\in V}\phi_{0,v}$ and  $\phi_1\simeq\bigotimes_{w\in W} \phi_{1,w}$,
there exists a  partition of $V\amalg W=\amalg_{u\in U} P_u$ indexed by $U$, such that for each $u\in U$ there is a composable pair $(\phi_{0,u},\phi_{1,u})$ with $\phi_{1,u}\circ \phi_{0,u}=\phi_u$ and $\phi_{0,u}\simeq \bigotimes_{v\in P_u}\phi_{0,v}, \phi_{1,u}\simeq\bigotimes_{v\in P_u}\phi_{1,v}$, cf.\ \eqref{eq:connected}. These fit into the diagram in Figure~\ref{Udecompeq} and are essentially unique.
\end{lem}

\begin{figure}
  \begin{tikzcd}[row sep = 0.6cm, column sep = 1cm]
    X_0
    \arrow[rr, "\phi_0"']
    \arrow[d, "\simeq"', "\sigma"]
    \arrow[rrrr, "\phi", shift left=3]
    \arrow[dd, "\chi"', bend right=49]
    & & X_1
    \arrow[rr, "\phi_1"']
    \arrow[ld, "\simeq", "\sigma'"']
    \arrow[rd, "\simeq"', "\nu" ]
    & & X_2
    \arrow[d, "\simeq", "\nu"']
    \arrow[dd, "\chi'", bend left=49]
    \\
    {\displaystyle {\bigotimes_{v \in V} X_{0,v}}}
    \arrow[d, "\simeq"', "\mu"]
    \arrow[r, "{\bigotimes_{v \in V} \phi_{0,v}}"']
    & {\displaystyle {\bigotimes_{v \in V} X_{1,v}}}
    \arrow[rd, "\simeq", "\mu'"']
    & & {\displaystyle {\bigotimes_{w \in W} X_{1,w}}}
    \arrow[ld, "\simeq"', "\tau"]
    \arrow[r, "{\bigotimes_{w \in W} \phi_{1,w}}"']
    & {\displaystyle {\bigotimes_{w \in W} X_{2,w}}}
    \arrow[d, "\simeq", "\tau'"']
    \\
    \displaystyle
    \bigotimes_{u \in U} \bigotimes_{v \in P_u} X_v
    \arrow[rr, "{\bigotimes_{u \in U} \phi_{0,u}}"]
    \arrow[rrrr, "\bigotimes_{u \in U} \phi_u"', shift right=8]
    & & {\displaystyle {\bigotimes_{u \in U} X_{1,u}}}
    \arrow[rr, "{\bigotimes_{u \in U} \phi_{1,u}}"]
    & & {\displaystyle {\bigotimes_{u \in U} X_{2,u}}}
  \end{tikzcd}
\caption{
  \label{connecteddiag}
  The diagram for the condition of \pher{} and the definition of connected components
}
  \label{Udecompeq}
\end{figure}
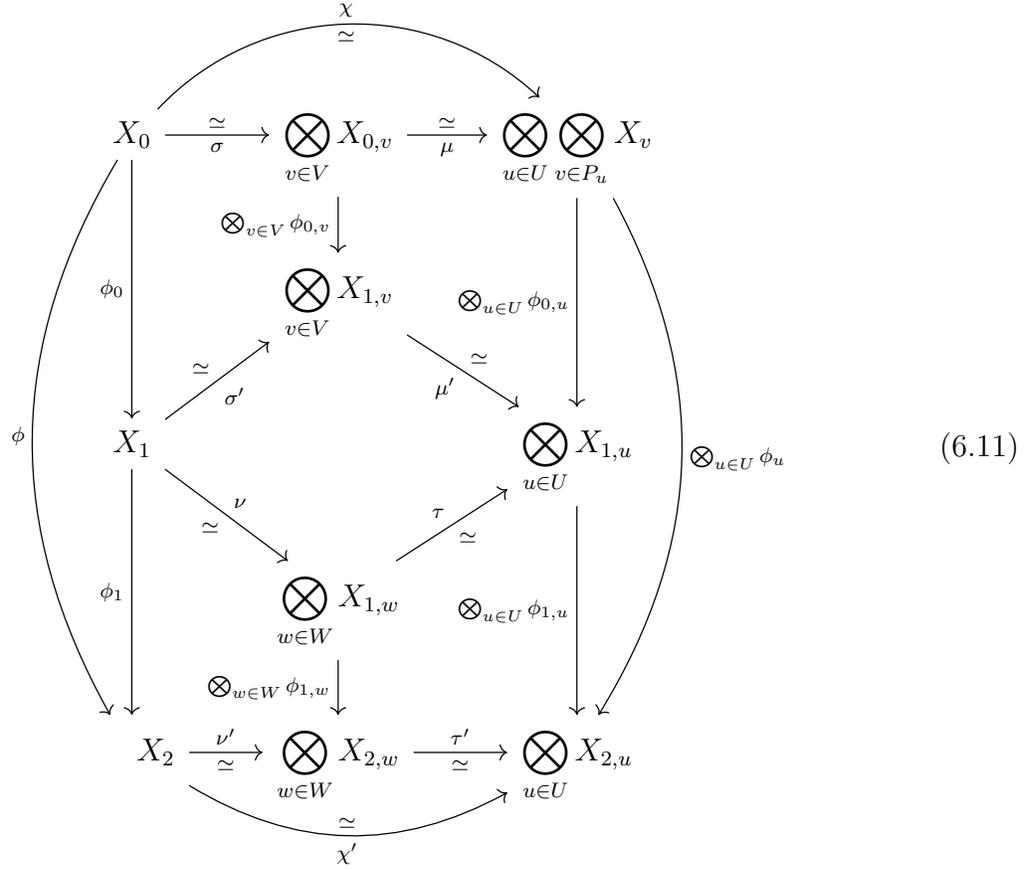

\begin{proof}

To do the computation, fix compatible bases and pseudo--inverse functors $\tilde \imath,\tilde \jmath$, defining $\sds$ for $\phi_0$ and $(\nu\Da\nu')$ for $\phi_1$ in Figure~\ref{Udecompeq}.
This also fixes $\tilde\imath(X_1)\simeq \bigotimes_{s\in S}*_s$ then we obtain a decomposition that defines a ``middle square''  via the push--out $U$ of $V\leftarrow S\rightarrow W$
according to the Corollary above. This
 defines the remaining morphisms in Figure~\ref{Udecompeq}, namely the composable pairs of morphisms $(\phi_{0,u},\phi_{1,u})$ for each $u \in U$ and their compositions $\phi_u=\phi_{1,u}\circ\phi_{0,u}$.
 The construction also shows that the $\phi_u$ are essentially unique given that we are in a UFC.
\end{proof}

The essentially unique morphisms $\phi_u$ above will be called the \emph{connected components} of the decomposition $\phi=\phi_0\phi_1$. Components for which $\phi_{u}$ is in the ground ring will be called trivial.



\begin{prop}
  A UFC is \pher{} iff all the non--trivial connected components are irreducible.
\end{prop}

\begin{proof}
If the non--trivial connected components are irreducible, then these give the morphisms needed for the preherediatry condition. If there is a connected component $\phi_u=\phi_{0,u}\phi_{1,u}$ that is not irreducible, then it is a monoidal product $\phi_u=\psi_1\ot \psi_2$, but being connected means that $\phi_{0,u}$ and $\phi_{1,u}$ cannot be compatibly decomposed into morphisms $\chi_i$ as required by the preherditary condition, thus it does not hold.
\end{proof}

\begin{prop}
    The localized plus construction $\locmonplus{\M,P}$ for a UFC $\M$ with the standard gcp can be calculated via a roof calculus.
\end{prop}
\begin{proof}
    In light of Lemma \ref{lem:locfac}, Lemma \ref{lem:ufcisocm} and Lemma \ref{lem:UFCcommonfact}, the statement follows from Proposition \ref{prop:roof}.
\end{proof}

\subsubsection{A UFC that is not \pher{}}
\label{par:counterex}
For a finite set $X$, a \emph{tie} is given by a pair  $\mathcal{T}= (X_0, \{X_1,\kdk X_k\})$ in which $X_0\subseteq X$ is a possibly empty subset, and
$\{X_1,\kdk X_k\}$ is a partition of $X\setminus X_0$ into nonempty subsets.
$X_0$ is the subset of {\em untied} elements and each $X_i$ for $i>0$ is a tied subset.
Note that a single element $x$ that can be untied $x\in X_0$, a part of a tied subset, or tied by itself $U_i=\{x\}$ for some $i$.
Let $T(X)$ be the set of ties for $X$.

Consider the category $\Ties$ whose objects are finite sets and whose
morphisms will be {\em isomorphisms} of finite sets together with a choice of tie.
Let $\Hom(X,Y) = \Iso(X,Y) \times T(Y)$.
Note that an isomorphism $\sigma: X \to Y$ canonically determines a bijection of ties $T(X) \to T(Y)$.
Now define the composition of $(\sigma, \mathcal{T}) \in \Iso(X,Y) \times T(Y)$ and  $(\sigma', \mathcal T') \in \Iso(Y,Z) \times T(Z)$ to be $(\sigma' \circ \sigma, \mathcal T' \circ \mathcal T)$ where
$
  \mathcal T' \circ \mathcal T
  = (\sigma(U_0) \cap V_0, \{\sigma(U_i) \cap V_j : (i,j) \neq (0,0) \} \setminus \{\emptyset\})
$.
This  formula says that the resulting partition is the least common refinement of the partition for the tied elements with the untied elements
acting as identity on the ties.
In particular, the identity of $X$ in this category is $(\id_X, (X, \emptyset))$.

\begin{figure}
  \begin{subfigure}{0.16\linewidth}
    \centering
    \begin{tikzpicture}[scale=0.4]
      \node [style=Black, inner sep = 0.5mm] (0) at (-1, 1) {};
      \node [style=Black, inner sep = 0.5mm] (1) at (0, 1) {};
      \node [style=Black, inner sep = 0.5mm] (2) at (1, 1) {};
      \node [style=Black, inner sep = 0.5mm] (3) at (-1, -1) {};
      \node [style=Black, inner sep = 0.5mm] (4) at (0, -1) {};
      \node [style=Black, inner sep = 0.5mm] (5) at (1, -1) {};
      \node [style=none] (6) at (-1, 0) {};
      \node [style=none] (7) at (0, 0) {};
      \node [style=none] (8) at (1, 0) {};
      \node [style=none] (9) at (-1.25, -0.25) {};
      \node [style=none] (10) at (0.25, -0.25) {};
      \draw [in=90, out=-90] (1) to (8.center);
      \draw [in=90, out=-90, looseness=0.75] (2) to (7.center);
      \draw (7.center) to (4);
      \draw (8.center) to (5);
      \draw (0) to (6.center);
      \draw (6.center) to (3);
      \draw [bend right=150, looseness=1.25] (9.center) to (10.center);
    \end{tikzpicture}
    \caption{}
  \end{subfigure}
  \hspace{0.2\linewidth}
  \begin{subfigure}{0.35\linewidth}
    \centering
    \begin{tikzpicture}[scale=0.4]
      \node [style=Black, inner sep = 0.5mm] (0) at (-2, 2) {};
      \node [style=Black, inner sep = 0.5mm] (1) at (-2, 0) {};
      \node [style=Black, inner sep = 0.5mm] (2) at (-1, 2) {};
      \node [style=Black, inner sep = 0.5mm] (3) at (-1, 0) {};
      \node [style=Black, inner sep = 0.5mm] (4) at (0, 2) {};
      \node [style=Black, inner sep = 0.5mm] (5) at (0, 0) {};
      \node [style=none] (6) at (-2.25, 1) {};
      \node [style=none] (7) at (-0.75, 1) {};
      \node [style=none] (8) at (-0.25, 1) {};
      \node [style=none] (9) at (0.25, 1) {};
      \node [style=none] (10) at (-1.25, -1) {};
      \node [style=none] (11) at (0.25, -1) {};
      \node [style=none] (12) at (-2.25, -1) {};
      \node [style=none] (13) at (-1.75, -1) {};
      \node [style=Black, inner sep = 0.5mm] (14) at (-2, -2) {};
      \node [style=Black, inner sep = 0.5mm] (15) at (-1, -2) {};
      \node [style=Black, inner sep = 0.5mm] (16) at (0, -2) {};
      \node [style=none] (17) at (1, 0) {$=$};
      \node [style=none] (18) at (-0.25, -1) {};
      \node [style=none] (19) at (-0.25, -1) {};
      \node [style=Black, inner sep = 0.5mm] (21) at (2, 2) {};
      \node [style=Black, inner sep = 0.5mm] (22) at (2, -2) {};
      \node [style=Black, inner sep = 0.5mm] (23) at (3, 2) {};
      \node [style=Black, inner sep = 0.5mm] (24) at (3, -2) {};
      \node [style=Black, inner sep = 0.5mm] (25) at (4, 2) {};
      \node [style=Black, inner sep = 0.5mm] (26) at (4, -2) {};
      \node [style=none] (29) at (3.75, 0) {};
      \node [style=none] (30) at (4.25, 0) {};
      \node [style=none] (31) at (2.75, 0) {};
      \node [style=none] (32) at (3.25, 0) {};
      \node [style=none] (33) at (1.75, 0) {};
      \node [style=none] (34) at (2.25, 0) {};
      \draw (0) to (1);
      \draw (2) to (3);
      \draw (4) to (5);
      \draw [bend right=165, looseness=1.50] (6.center) to (7.center);
      \draw [bend right=150, looseness=2.25] (8.center) to (9.center);
      \draw [bend right=165, looseness=1.50] (10.center) to (11.center);
      \draw [bend right=150, looseness=2.25] (12.center) to (13.center);
      \draw (1) to (14);
      \draw (3) to (15);
      \draw (5) to (16);
      \draw (21) to (22);
      \draw (23) to (24);
      \draw (25) to (26);
      \draw [bend right=150, looseness=2.25] (29.center) to (30.center);
      \draw [bend right=150, looseness=2.25] (31.center) to (32.center);
      \draw [bend right=150, looseness=2.25] (33.center) to (34.center);
    \end{tikzpicture}
    \caption{}
  \end{subfigure}
  \caption{
    Pictorially, one can write a  morphisms as a line diagram for a bijection with at most one tie around each strand, where we think of $T(Y)$ as bands tied at the bottom, as shown in (A).
    Here, the strand on the right is untied.
    Composition  amounts to joining the strands and retying the ties if they overlap as we see in (B).
    }
  \label{fig:tie-diagram}
\end{figure}

The category $\Ties$ is monoidal.
On objects, define $X \ot Y = X \amalg Y$.
For the morphisms, define $(\sigma, \mathcal T) \ot (\sigma', \mathcal T') = (\sigma \amalg \sigma', \mathcal T \ot \mathcal T')$ where $\ot: T(X) \times T(Y) \to T(X \amalg Y)$ is given by
\begin{equation}
  (X_0 , \{X_i\}_{i=1}^n) \ot (Y_0 , \{Y_j\}_{j=1}^m) = (\iota_XX_0 \cup \iota_YY_0 , \{\iota_XX_i\}_{i=1}^n \cup  \{\iota_YY_j\}_{j=1}^m)
\end{equation}
here $\iota_X: X \to X \amalg Y$ and $\iota_Y: Y \to X \amalg Y$ are the natural inclusion maps.

\begin{prop} The category $\Ties$ together with subset of morphisms   $\Indec$ for which the tie is either $(\{x\},\emptyset)$ for a singleton set,
or $(\emptyset, \{X\})$ for any set
is a  UFC, but not a \pher{} UFC.
\end{prop}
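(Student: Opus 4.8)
The plan is to verify the two claims separately, first establishing the UFC structure and then exhibiting an explicit composition that violates heredity. For the UFC claim I would first record that the isomorphisms of $\Ties$ are exactly the morphisms $(\sigma,\mathcal T)$ whose tie is totally untied, $\mathcal T=(Y,\emptyset)$: such a morphism has inverse $(\sigma^{-1},(X,\emptyset))$, while any tie with a nonempty tied block survives every composition (the untied part of a composite is the intersection of the untied parts, so a tied element can never become untied), hence no morphism with a tied block is invertible. Consequently $\Iso(\Ties)\simeq\Iso(\FinSet)$ and the basic objects are the singletons, i.e.\ $\V=\triv$. Next I would show that $\jmath^\bt:\Indec^\bt\to\Iso(\Ties\da\Ties)$ is an equivalence. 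Essential surjectivity is the observation that, using the monoidal product formula $(X_0,\{X_i\})\ot(Y_0,\{Y_j\})=(\iota_XX_0\cup\iota_YY_0,\{\iota_XX_i\}\cup\{\iota_YY_j\})$, every tie splits canonically as the disjoint union of one untied singleton $(\{y\},\emptyset)$ for each $y\in Y_0$ and one fully tied block $(\emptyset,\{Y_i\})$ for each tied block; correspondingly $(\sigma,\mathcal T)$ factors as the monoidal product of the associated type~A and type~B morphisms, each of which lies in $\Indec$. For full faithfulness I would use that a $2$-cell $(\alpha\Da\beta)$ in $\Iso(\Ties\da\Ties)$ forces $\mathcal T'=\beta(\mathcal T)$, so that $\beta$ carries tie blocks to tie blocks bijectively; hence any arrow-category isomorphism between two such factorizations is a permutation of the factors together with an isomorphism on each, which is precisely a morphism of $\Indec^\bt$. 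This gives essentially unique factorization of morphisms, and essential smallness of the slices is clear; by Proposition~\ref{prop:facobj} the factorization of objects follows automatically, so $(\Ties,\Indec,\jmath)$ is a UFC.

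For the failure of heredity I would apply Proposition~\ref{prop:conected=irred}, according to which it suffices to produce one composable pair whose connected component is reducible. Take $X=\{1,2,3\}$ and the two endomorphisms
\begin{equation}
\phi_0=(\id_X,(\emptyset,\{\{1,2\},\{3\}\})),\qquad
\phi_1=(\id_X,(\emptyset,\{\{1\},\{2,3\}\})).
\end{equation}
Their irreducible decompositions are $\phi_0\simeq a_{\{1,2\}}\ot a_{\{3\}}$ and $\phi_1\simeq a_{\{1\}}\ot a_{\{2,3\}}$, where $a_B$ denotes the type~B morphism tying the block $B$. Computing the composite by least common refinement gives $\phi_1\circ\phi_0=(\id_X,(\emptyset,\{\{1\},\{2\},\{3\}\}))\simeq a_{\{1\}}\ot a_{\{2\}}\ot a_{\{3\}}$, a product of three irreducibles. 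The span indexing the connected components is the bipartite graph on the factors of $\phi_0$ and $\phi_1$ with one edge for each element of $X$: the edges $1,2,3$ give $a_{\{1,2\}}\!-\!a_{\{1\}}$, $a_{\{1,2\}}\!-\!a_{\{2,3\}}$ and $a_{\{3\}}\!-\!a_{\{2,3\}}$, which is connected. Thus there is a single connected component, namely $\phi_1\circ\phi_0$ itself, and it is reducible. By Proposition~\ref{prop:conected=irred} the UFC $(\Ties,\Indec,\jmath)$ is therefore not hereditary.

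The routine part is the UFC verification; the conceptual content of the example is that tying $\{1,2\}$ and then $\{2,3\}$ links all three strands through the shared strand $2$, yet the least--common--refinement composition breaks the result back into three singleton ties. The main point to get right is precisely the bookkeeping of Proposition~\ref{prop:conected=irred}: one must check that the four basic factors of $\phi_0$ and $\phi_1$ form a single connected component (so that heredity would force $\phi_1\circ\phi_0$ to be irreducible) while the composite in fact factors into three pieces. Equivalently, and as a cross-check, one can argue directly from the definition of a hereditary basis that the factor $a_{\{1,2\}}$ cannot be assigned to any single block $P_u$ of a putative partition, since its two strands $1$ and $2$ are separated by the refinement performed by $\phi_1$.
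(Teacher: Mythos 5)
Your proposal is correct and follows essentially the same route as the paper: the UFC structure comes from the canonical splitting of a tie into untied singletons and fully tied blocks, and non-heredity is witnessed by the same counterexample (tying $\{1,2\},\{3\}$ followed by $\{1\},\{2,3\}$, whose composite is three singleton ties). The only cosmetic difference is that you phrase the failure via the connected-components criterion of Proposition~\ref{prop:conected=irred}, whereas the paper checks the defining condition \eqref{eq:hereditarycond} directly; these are equivalent by that same proposition.
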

\begin{proof} Note that the isomorphisms of $\Ties$ are of the form $(\sigma, (X,\emptyset))$ and in the arrow category $(\sigma,\mathcal{T})$ is equivalent to $(id_X,\T)=\bigotimes_{x\in X_0} (id_{\{x\}},(\{x\},\emptyset)) \ot \bigotimes_{i=1}^k(id_{X_i},(\emptyset,\{X_i\}))$, if
 $\T=(X_0,\{X_1\kdk X_k\})$ then $(id_X,\T)$. This is unique up to unique isomorphisms proving $\Ties$ is a  UFC.

The fact that this is not a \pher{} UFC is illustrated in Figure~\ref{fig:tie-diagram}. The decomposition of the composition of the two morphisms into indecomposables is $(id_{\{x\}},(\emptyset,\{x\})^{\ot 3})$, while the decomposition of the constituent morphisms each have two factors and the composable decomposition of these involve both factors, so that there is no decomposition into three tensor factors as required for being \pher.
\end{proof}





\subsection{Cospans and the Indexing functor}
 \label{sec:hereditary-UFC-indexing}
\label{cospanexpar}
\subsubsection{The category of cospans}
The  \pher{} UFC analogous to $\FinSet$ for FCs is the category of cospans of finite sets  $\Cospan$.
The objects are finite sets  and morphisms are isomorphism classes of cospans.
A cospan is a diagram of the form \eqref{cospaneq} which we write as $(l,r)$ as a short hand notation.
An isomorphism of cospans is given by an isomorphism in the middle as in \eqref{cospaneq} so that $(l,r)\simeq (\sigma\circ l,\sigma \circ r)$ for an isomorphism $\sigma: V \to V'$. We will denote by $[(l,r)]$  the isomorphism class containing $(l,r)$.
The composition of two classes cospans is given by push--out, see below:
  \begin{equation}
    \label{cospaneq}
    \begin{tikzcd}
      &V& \\
      S \arrow[ru, "l"]
      & & T \arrow[lu, "r"']
    \end{tikzcd} \quad
    \begin{tikzcd}[row sep = small]
      & V \ar[dd, "\sigma", "\simeq"'] & \\
      S \ar[dr, "l'"]
      \ar[ur, "l"'] & &
      T\ar[dl, "r'"']
      \ar[ul, "r"]\\
      &V'&
    \end{tikzcd}\quad
        \begin{tikzcd}
        & & U& & \\
        & V
        \arrow[ru, "f'", dashed] & &
        W \arrow[lu, "g'"', dashed] & \\
        S \arrow[ru, "f_1"] & & C \arrow[lu, "g_1"'] \arrow[ru, "f_2"] & & T \arrow[lu, "g_2"']
        \end{tikzcd}
  \end{equation}

 Note this is well-defined and associative on isomorphism classes due to the universal property of cospans.
 The identity morphism for the set $X$ is the cospan $[(id_X, id_X)]$.
 The {\em isomorphisms} in the category $\Cospan$ are the morphisms $[(\sigma, \tau)]$ where
 $\sigma$ and $\tau$ are bijections.

  This category is naturally {\em monoidal}.
  For a pair of object $X$ and $Y$, the product is their disjoint union $X \amalg Y$.
  Moreover, for morphisms we define $[(l,r)]\amalg [(l',r')]=[(l\amalg l',r\amalg r')]$.
 The monoidal unit is $\emptyset$ which is strict. So, in particular, for morphisms we have $[(l,r)] \amalg [(id_{\emptyset}, id_{\emptyset})] = [(l,r)]$.
$\Cospan$ has a ground ring as $Hom(\emptyset,\emptyset)=\{\emptyset\to \underline{k} \leftarrow \emptyset\vert k\in \N\}\simeq \N$ with the operation of $+$. The action on morphisms is given by disjoint union.

\begin{rmk}
If one does not pass to isomorphism classes, one ends up with
a 2--category, see \cite{Benabou}. Here a 2--morphism between two cospans $(l,r)$ and $(l',r')$
with the same source and target is given by any morphism $m:V\to V'$ such that $l'=lm,r'=rm$.
Such morphisms, more precisely surjections $m$, are natural when considering mergers for directed graphs, see below.
\end{rmk}

We will call a cospan {\em connected} if $|V|=1$.  This notion is well-defined under isomorphisms of cospans and such a map is given by the class
 $[(\pi_S,\pi_T)]$, where for any set $X:\pi_{X}:S\to \{*\}$ is the unique map to a final object.   
     A {\em singleton map} is a map in which $|S|=|T|=|V|=1$.

 A connected cospan is {\em non--degenerate} if it is not in the ground monoid, i.e.\ not both $S=\emptyset$ and $T=\emptyset$.
  Requiring $S \amalg T \to V$ to be surjective and modifying composition leads to the category of \emph{corelations}.
  This category has a trivial ground monoid and unique decomposition of morphism.
 \begin{rmk}
 A cospan is the same as a $V$-partition of $S\amalg T$ given by $l\amalg r$.
  This induces an equivalence relation on $S \amalg T$ whose classes are in 1--1 correspondence with the union of the images of the maps $l$ and $r$. However,
the elements not in this image are empty classes that cannot be recovered from the equivalence relation alone.
  In Figure~\ref{fig:cospan-composition}, the boxes represent the subsets of the partition and a edge represents membership.
  In this graphical representation, the composition is obtained by collecting the boxes and middle dots that are connected into one ``big box'' and retaining the box as a new vertex erasing the inside.

The push--out $U$ of a cospan  is  the relative coproduct
$U=V \bisub{g_1}{\amalg}{f_2} W=(V \amalg W)/\sim$ where $\sim$ is the equivalence relation given by $g_1(c)\sim f_2(c)$ for $c\in C$.
The resulting equivalence relation on $S\amalg T$ is given by the image of $f_1 \circ f'\amalg g_1 \circ g'$.

There is a left and a right injection of $\FinSet$ to $\Cospan$ given by identity on objects and by sending $f$ to $(f,id)$ or $(id,f)$.
 Note that $[(\sigma,\tau)]=[(\tau^{-1}\sigma,id)]$ and $[(\sigma,id_T)]=[(id_S,\sigma^{-1})]$ where $\sigma:S\stackrel{\sim}{\to} T$.
Thus one can identify
$\Iso(\Cospan)\simeq \Iso(\FinSet)$ by the identity on objects and sending an isomorphism $\sigma$ to $[(\sigma,id)]$.
\end{rmk}

The following facts are directly verifiable from the definitions.
\begin{lem} \mbox{}
  \label{lem:cospan}
  \begin{enumerate}
    \item \label{lem:cospan-decomp} Any morphism $\phi\in Hom_{\Cospan}(S,T)$ is isomorphic to a disjoint union $(S \stackrel{l}{\rightarrow} V \stackrel{r}{\leftarrow} T) \simeq \amalg_{v\in V} (\pi_{l^{-1}(v)},\pi_{r^{-1}(v)})$ of connected morphisms.

\item Any morphism uniquely decomposes as $\phi_{deg}\amalg \bar\phi$, where $\phi_{deg}=\emptyset\to \underline{k}\leftarrow \emptyset$ is degenerate and $\bar\phi$ is union of the  non--degenerate connected morphisms.
Moreover, as the union of two cospans $\phi\amalg \phi'$ satisfies $(\phi\amalg \phi')_{deg}=\phi_{deg}\amalg \phi'_{\deg}$ and $\overline{\phi\amalg \phi'}=\bar\phi\amalg \bar\phi'$ the monoidal product $\amalg$ is compatible with the $R$--module structure $r\phi \amalg r'\psi=rr'(\phi\amalg \psi)$.
    \item A basic morphism has $\Aut([\pi_S,\pi_T])=\Aut(S)\times \Aut(T)$. More generally, an isomorphism class of cospans has a residual action of $(\Aut(S)\times \Aut(T))/\Aut(V)$ on $\Hom(S,V) \times \Hom (T,V)$ where $\Aut(V)$ acts diagonally permuting the fibers of $l$ and $r$. This identifies $\Aut([(l,r)])$ as the classes of $\Aut(S) \times \Aut(T)$ which simultaneously preserve the fibers of $l$ and $r$ over each $v$. In the degenerate case, there are no non--identity automorphisms, since there is only one map $\emptyset\to \emptyset$. \qed
    \end{enumerate}
\end{lem}

\begin{prop} The category
  $\Cospan$ is part of a \pher{} UFC $\CCospan$ with $\V=\trivial$ and $\Indec$ being the full subgroupoid $\Ctd$ of $\Iso(\Cospan\da\Cospan)$ whose objects are non--degenerate connected cospans, with $\jmath$ being the inclusion.

\end{prop}

\begin{proof}
Using the convention of \S\ref{sec:rmodulecat},
the fact that $\Cospan$ is a UFC with basis $\Ctd$ is clear from Lemma~\ref{lem:cospan}. Indeed, any cospan is up to isomorphism a disjoint union of a non--degenerate cospan and a degenerate one.
This means that it can uniquely be written as an element of the ground ring times a non--degenerate cospan.  Considering only the non--degenerate cospans, the map $j^\ot$ sends their external product to the disjoint union $(\pi_{S_1},\pi_{t_1})\bdb (\pi_{S_n},\pi_{T_n})\to  (\pi_{S_1},\pi_{t_1})\amalg \cdots \amalg (\pi_{S_n},\pi_{T_n})$. This is clearly an equivalence of $R=Hom(\emptyset,\emptyset)$ modules.
The inverse functor is given by choosing an order for the  non--degenerate components, i.e.\ for the set $V$.

To show the \pher{} condition, let $\phi_0=[(f_1,g_1)]$ and $\phi_1=[(f_2,g_2)]$ be a pair of composable cospans as in the rightmost diagram of \eqref{cospaneq}.
The decomposition of $\phi:=\phi_1\phi_0=
\amalg_{u\in U} \phi_u$ is given by  $\phi_u=[(f'f_1|_{(f'f_1)^{-1}(u)},g'g_2|_{g'g_2^{-1}(u)})]$.

Setting $I_u=f'^{-1}(u)\amalg g'^{-1}(u)$ gives the partition of $V\amalg W$ and each $\phi_u$  decomposes into $\phi_{0,u}=\amalg_{v\in f'^{-1}(u)}\phi_{0,v}$
and  $\phi_{1,u}=\amalg_{w\in g'^{-1}(u)}\phi_{0,w}$ which constitute the desired decomposition.
\end{proof}


\begin{rmk}
\label{rmk:cospanintepretation}
Cospans appear in many guises:
\begin{enumerate}
\item {\it Graph interpretation.} For this, one interprets $V$ as a set of vertices of an aggregate of corollas, $S$ as the set of input flags and $T$ as the set of output flags. The map $\del = l\amalg r:S\amalg T\to V$ is the attaching maps of the flags. If $V$ is a singleton set, then the irreducible $S \rightarrow\{*\}\leftarrow T$ is a directed corolla.
The composition matches input and output flags and then contracts them, generalizing the usual corolla interpretation of the associative and commutative operad, see \cite{woods}.
  \item {\it PROP interpretation.} This composition is not by chance reminiscent of the composition in PROPs.
  It corresponds to the terminal functor $\final$ for the Feynman category for PROPs $\FF_{PROP}$, see \cite[\S2]{feynman}, with values in $\Set$.
  \item {\it Surface  and cobordism interpretation.}
  \label{item:cobordism}
  In this interpretation, consider the cospan $S \rightarrow\{*\}\leftarrow T$ to be a surface with $S$--labeled input boundaries and $T$--labeled output boundaries. Upon gluing the cobordisms, one stabilizes by removing all extra handles.
This corresponds to the contraction above and corresponds to  the stabilized arc structures of \cite{postnikov}. To recover the ``lost genus'', one can use a push--forward as in \cite{BergerKaufmann,DDecDennis}.
\end{enumerate}

\end{rmk}

\begin{prop}
[Bimodule interpretation] \label{bimodule-interpretation} Let  $\B = (\Iso(\FinSet),\amalg)$
with $\amalg$ as the monoidal structure and take $\E = (\Set,\times)$ and $\nu$ to be the trivial bimodule defined  by $\nu(S,T) = \{ * \}$, then $\nu^\otimes$ is isomorphic to the bimodule of cospans $\rho(S,T)=\{[S\to V\leftarrow T]\}$.
\end{prop}
\begin{proof} Computing the left Kan extension on $(\unit_{\B}, \unit_{\B})$ for the ground ring: \\
\begin{equation}
   \nu^{\otimes}(\emptyset, \emptyset)
    = Lan_{m_{\B^{op} \times \B}}(m_{\Set}\nu^\boxtimes) (\emptyset, \emptyset)
    = \colim_{m_{\aca}( -) \downarrow (\emptyset, \emptyset))}m_{\Set}\nu^\boxtimes
\end{equation}
  The colimit is indexed by the category of words $w \in (\B^{op} \times \B)^\bt$ over $(\emptyset, \emptyset)$. These words are
  \ $(\emptyset, \emptyset)^{\bt n}, n\in \N_0$, with $n=0$ corresponding to the empty word $()$. As $m_{\Set}\nu^{\ot}(w)$ is a singleton set, the computation then reduces to $\colim_{\N}\final$, where $\final(n)=\{*\}$. We conclude that $\nu^\otimes(\emptyset, \emptyset) \cong \N_0\cong\{ \emptyset \rightarrow \underline{k} \leftarrow \emptyset | k \geq 0 \}$.

  {\sc Non--degenerate elements.}
  Consider the computation for $(S, T)$ in general:
  \begin{equation}
    \nu^{\otimes}(S, T)
    = \colim_{(m_{\aca} \downarrow (S, T)} m_{\Set}\nu^\bt
  \end{equation}
  This time, the indexing comma category has objects $(\bt_{i=1}^n(S_i, T_i), d)$ where $d: ( \coprod_{i=1}^n S_i, \coprod_{i=1}^n T_i ) \overset{\sim}{\to} (S, T)$ is an isomorphism.
  Each object in this indexing category can be associated to a cospan $S \rightarrow \underline{n} \leftarrow T$ by composing $S \to \coprod_{i=1}^n S_i \to \underline{n}$ and $T \to \coprod_{i=1}^n T_i \to \underline{n}$.
  Moreover, isomorphic objects $(\bt_{i=1}^n(S_i, T_i), d) \to (\bt_{i=1}^n(S'_{\sigma(i)}, T'_{\sigma(i)}), d')$ will be associated to the same cospan.
  Computing the colimit then gives us $\nu^\otimes(S, T) \cong \{ \text{iso classes } S \rightarrow V \leftarrow T \}$.
  A more detailed coend computation is given in \cite{plethysm}.
\end{proof}
\begin{rmk}
  Let $\bar \rho$ be the sub--bimodule of non-degenerate cospans and let $\rho_{deg}$ be the sub--bimodule of degenerate cospans.
  Then $\nu^\ot(S, T) \cong \rho_{deg}(S, T)\amalg \bar \rho(S, T)$.
  Note that $\bar \rho$ itself is factorizable, but it is not a submonoid under the natural monoid structure of $\nu^\ot$.
  To get a monoid structure for $\bar \rho$ (and hence a category structure), one could take $\bar \rho \pl \bar \rho \to  \rho_{deg} \amalg \bar \rho  \to \bar \rho$ where the second map sends everything in $\rho_{deg}$ to the unique element in $\bar \rho(\emptyset, \emptyset)$.
  This is equivalent to considering corelations. To recover the ``missing part'' one should consider curved monoids/categories.
\end{rmk}

\begin{figure}
    \centering
\begin{tikzpicture}[scale=.4]
	\node [style=Black, inner sep = 0.5mm] (1) at (-4, 3) {};
	\node [style=Black, inner sep = 0.5mm] (2) at (-4, 1) {};
	\node [style=Black, inner sep = 0.5mm] (3) at (-4, -1) {};
	\node [style=Black, inner sep = 0.5mm] (5) at (1, 3) {};
	\node [style=Black, inner sep = 0.5mm] (6) at (1, 1) {};
	\node [style=Black, inner sep = 0.5mm] (8) at (-1.5, 3) {};
	\node [style=Black, inner sep = 0.5mm] (9) at (-1.5, 2) {};
	\node [style=Black, inner sep = 0.5mm] (10) at (-1.5, 0) {};
	\node [style=Box] (13) at (-2.5, 2.5) {};
	\node [style=Box] (14) at (-2.5, 0) {};
	\node [style=Box] (15) at (-0.5, 3) {};
	\node [style=Box] (16) at (-0.5, 2) {};
	\node [style=Box] (17) at (-0.5, 0) {};
	\node [style=none] (18) at (-3, 3.5) {};
	\node [style=none] (19) at (0, 3.5) {};
	\node [style=none] (20) at (0, 1.5) {};
	\node [style=none] (21) at (-3, 1.5) {};
	\node [style=none] (22) at (-3, 0.5) {};
	\node [style=none] (23) at (0, 0.5) {};
	\node [style=none] (24) at (-3, -0.5) {};
	\node [style=none] (25) at (0, -0.5) {};
	\node [style=Black, inner sep = 0.5mm] (26) at (3, 3) {};
	\node [style=Black, inner sep = 0.5mm] (27) at (3, 1) {};
	\node [style=Black, inner sep = 0.5mm] (28) at (3, -1) {};
	\node [style=Black, inner sep = 0.5mm] (29) at (6, 3) {};
	\node [style=Black, inner sep = 0.5mm] (30) at (6, 1) {};
	\node [style=Box] (35) at (4.5, 0) {};
	\node [style=Box] (37) at (4.5, 2) {};
	\node [style=none] (38) at (2, 5) {};
	\node [style=none] (39) at (2, -2) {};
	\node [style=none] (40) at (-1.5, 4.75) {\large Composition};
	\node [style=none] (41) at (4.5, 4.75) {\large Result};
	\draw (13) to (8);
	\draw (13) to (9);
	\draw (9) to (16);
	\draw (16) to (5);
	\draw (6) to (16);
	\draw (1) to (13);
	\draw (2) to (14);
	\draw (14) to (10);
	\draw (10) to (17);
	\draw (3) to (14);
	\draw (8) to (15);
	\draw [style=Dash] (18.center) to (21.center);
	\draw [style=Dash] (20.center) to (21.center);
	\draw [style=Dash] (20.center) to (19.center);
	\draw [style=Dash] (19.center) to (18.center);
	\draw [style=Dash] (22.center) to (24.center);
	\draw [style=Dash] (24.center) to (25.center);
	\draw [style=Dash] (25.center) to (23.center);
	\draw [style=Dash] (23.center) to (22.center);
	\draw (37) to (29);
	\draw (30) to (37);
	\draw [in=150, out=-30] (27) to (35);
	\draw (28) to (35);
	\draw (26) to (37);
	\draw (38.center) to (39.center);
    \end{tikzpicture}
    \caption{A composition of two morphisms.}
    \label{fig:cospan-composition}
\end{figure}

The decomposition of a morphism for a UFC is governed by $\Cospan$ in a functorial fashion.
\begin{lem}  A compatible choice allows one to index morphisms of $\M$ by cospans, extending the indexing by type. In particular,
\label{lem:cospanindex}

\begin{equation}
\label{eq:hereditaryUFCdecomp}
\Hom_\M
\left(
\imath(\bigotimes_{s\in S} *_s),
\imath (\bigotimes_{t\in T}*'_t)
\right)
\simeq
\\ \bigoplus_{k}
\bigotimes_{S\stackrel{l}{\to} \underline{k} \stackrel{r}{\leftarrow}T}
\bigotimes_{1\leq i\leq k}
\Indec
\left(
\bigotimes_{s\in l^{-1}(i)} *_s,\bigotimes_{t\in r^{-1}(i)}*'_t
\right)
\end{equation}
where the product is over all skeletal cospans,
we used the short hand notation $\Indec(X,Y)$ to be the morphisms in $\imath_\Indec(\Indec)$ from $\imath(X)$ to $\imath(Y)$.
\end{lem}
If $\underline{k}=0=\emptyset$ then there is only the empty cospan $\emptyset \to \underline{0}\leftarrow \emptyset$. This contribution only appears in $\Hom_\M(\unit,\unit)=\jmath(\unit_\Indec)$ which are the irreducibles of type $(0,0)$.

\begin{proof}
Given a morphism $\tilde \jmath(\phi)=\bigotimes_{v\in V}\phi_v$ with $\phi_v:\tilde\imath(X_v)\to \tilde\imath(Y_v)$, let  $\tilde \imath (X)=\bigotimes_{s\in S} *_s$
 and $\tilde\imath(Y)=\bigotimes_{t\in T} *'_t$, so that  $\index(X)=S$ and $\index(Y)=T$,
 then there is a $V$--partition of $S$ and a $V$--partition of $T$ such that $\tilde \imath(X_v)=
 \bigotimes_{s\in S_v} *_s$ and $\tilde\imath(Y_v)=\bigotimes_{t\in T_v} *'_t$. This defines a $V$--partition of $S$ and $T$ and hence a maps $s_V:S\to V$
 and $t_V: T\to V$.
  Thus a compatible choice of basis yields a pair of maps which forms a cospan: $\index(\phi): (S\stackrel{s_V}{\to} V \stackrel{t_V}{\leftarrow} T)$.
\end{proof}



\begin{prop}
\label{hereditary-UFC-Cospan-index}
If $\M$ has essentially uniquely factorizable objects and is \pher{}, then a presentation defines a functor $\index:\M\to \Cospan$ which extends to a functor
 \pher{} UFCs $\findex: \FM\to \CCospan$.
\end{prop}

\begin{proof}
Given $\M$, fix a compatible basis $\tilde \imath,\tilde \jmath$. The
putative functor is given by $\index$ on objects and $\index$ on morphisms by Lemma \ref{lem:cospan}.
 This is clearly strong monoidal and $\index(\unit_\M)=\unit_{\Cospan}$.

 For the units:
 $\index(id_{\imath(\bigotimes_{s\in S}(*_s))})=(S\stackrel{id}{\to}S\stackrel{id}{\leftarrow}S)=id_S\in \Cospan(S,S)$.
Thus it remains to check that
$
      \index(\phi_0\circ\phi_1)=\index(\phi_0)\circ \index(\phi_1)
$. This is the case, as the connected components are indexed by the composition of the cospans and the vertex $U$ is the index set of the irreducibles of the decomposition.
The functor $\index$
automatically extends to a UFC indexing of $\FM$ by $\CCospan$ as the restriction to $\Indec$ takes values in $\Ctd$ as $|V|=1$ for irreducibles, and
on $\V$ this restricts to  the trivial functor $v:\V\to\triv$.
\end{proof}

\begin{lem}
\label{lem:feyhereditaryUFC}
Being a Feynman category is equivalent to being a \pher{} UFC with elementary morphisms only of type $(n,1)$.
\end{lem}
\begin{proof}
  By Corollary \ref{cor:fey}, a Feynman category is a UFC where all of the elements of $\Indec$ are type $(n,1)$.
  Since the $\phi_{0,v}$ are of type $(n_v,1)$, the left arrow in the span indexing the diagram is a bijection that is $V\leftrightarrow S\rightarrow W$.  The pushout are thus just the fibers of $r$ and are in bijection with $W$. Thus the $\phi_u$ are of type $(n_u,1)$ and hence irreducible since the type is additive.
\end{proof}

\subsubsection{The category of spans}
\label{par:spans}
Just like cospans, one can consider equivalence classes of spans $(l,r)= S\leftarrow V \rightarrow T$ by reversing the arrows.
The composition is then by pull--back. The disjoint union again gives a monoidal structure.
A span is \emph{connected} if the pushout $S\amalg_V T$ has a single element. The ground ring is trivial in this case as $S=T=\emptyset$ implies that $V=\emptyset$.

\begin{prop}
  $\Span$ is a \pher{} UFC with basis $\Ctd_S \subseteq (\Span \da \Span)$ the subgroupoid of connected spans.
\end{prop}
\begin{proof}
We start with the case where both $l$ and $r$ are surjections.
Let $C=S\amalg_VT$ and let $\pi:S\amalg T\to S \amalg_V T$ be the projection.
For $c\in C$, let $S_c$ be the preimage of $c$ under $S \hookrightarrow S \amalg T \overset{\pi}{\to} C$.
Likewise, let $T_c$ be the preimage of $c$ under $T \hookrightarrow S \amalg T \overset{\pi}{\to} C$.
Set $V_c=l^{-1}(S_c)$ then $r(V_c)=T_c$ so that the restriction $(l_c,r_c)$ is well-defined.
Since $l$ and $r$ are surjective, each $V_c$ is non--empty and
$(l,r)=\amalg_{c\in C}(l_c,r_c)$.
As is easily checked,
this decomposition is unique up to unique isomorphisms.
Dropping the surjectivity requirements, we decompose the source as $S=im(l)\amalg S_0$ and the target as $T=im(r)\amalg T_0$.
Then $C$ can be written as $C= (im(l) \amalg_V im(r)) \amalg S_0 \amalg T_0$
and $(l,r)=\amalg_{c\in C}(l_c,r_c)$, where for $s\in S_0 \subseteq C$ we define $\{s\} \overset{l_s}{\leftarrow} \emptyset \overset{r_s}{\rightarrow} \emptyset$ and similarly for $t\in T_0 \subseteq C$.

For the \pher{} condition, note that the connected components of the composition naturally decompose into connected components of the two constituent morphisms by the universal properties of push--forwards.
\end{proof}

From the proof we see that if both maps are surjective, then we can think of both $S$ and $T$ giving partitions of $V$ via their fibers, and the connected components give the greatest common partition of $V=\amalg_{c\in C}V_c$.

\begin{rmk}
  Unlike the case of cospans, $\Hom(\emptyset, \emptyset)$ only contains the span $\emptyset \leftarrow \emptyset \rightarrow \emptyset$ so the ground monoid is trivial. There is a special subcategory, $\mathcal{DS}$  of spans of the form $S \leftarrow \emptyset \rightarrow T$.
  The two generating sets of morphisms $l_S=S\leftarrow \emptyset\rightarrow \emptyset$ and $r_T=\emptyset\leftarrow \emptyset\rightarrow T$  commute under the monoidal structure $l_S\amalg r_T=r_T\amalg l_S$ if one uses a strict version of the definition of the disjoint union with the empty-set $S\amalg \emptyset=S$.
  However, it is not necessary to work in the strict case, but if one does it is still equivalent to a free symmetric monoidal category, as in the following example.

  Let $\Indec$ be the groupoid with objects $l$ and $r$ and define a functor $\imath: \Indec \to \Iso(\mathcal{DS} \da \mathcal{DS})$ such that $\imath(l) = (\{\ast\} \leftarrow \emptyset \rightarrow \emptyset)$ and $\imath(r) = (\emptyset \leftarrow \emptyset \rightarrow \{\ast\})$.
  This extends to a functor $\imath^{\bt}: \Indec^{\bt} \to \Iso(\mathcal{DS} \da \mathcal{DS})$.
  Since any degenerate span $S \leftarrow \emptyset \rightarrow T$ is isomorphic to $(S \leftarrow \emptyset \rightarrow \emptyset) \amalg (\emptyset \leftarrow \emptyset \rightarrow T)$, it follows that $\imath^{\bt}$ is essentially surjective.
  The functor $\imath^{\bt}$ is also full and faithful.
  Consider, for instance, the words $lrrl$ and $lrlr$ and observe that there is a one-to-one correspondence between $\Hom_{\Indec^{\bt}}(lrrl, lrlr)$ and  the set of morphisms between $\imath^\bt(lrrl) = (\{1,4\} \leftarrow \emptyset \rightarrow \{2,3\})$ and $\imath^\bt(lrlr) = (\{1,3\} \leftarrow \emptyset \rightarrow \{2,4\})$ in $\Iso(\mathcal{DS} \da \mathcal{DS})$.
  In the strict case, these objects become identified, but we still get the correct correspondence.
In the strictly commuting case, one also can pass to a larger ground ring, which includes these morphisms. Similar to cospan, where the ground ring introduced lone elements in $V$, here there are lone elements on the source and target side, which commute with everything.

To explicitly exhibit the \pher{} property in the ``degenerate'' situation, consider the composable pair $(S \leftarrow \emptyset \rightarrow T, T \leftarrow \emptyset \rightarrow R)$.
Using the notation of Figure~\ref{connecteddiag}, $U = S \amalg T \amalg R$, and
the pairs $(\phi_{0,u}, \phi_{1,u})$ correspond to $
(\{u\} \leftarrow \emptyset \rightarrow \emptyset, \emptyset \leftarrow \emptyset \rightarrow \emptyset)$
when $u \in S$, to $(\emptyset \leftarrow \emptyset \rightarrow \{u\},\{u\}  \leftarrow \emptyset \rightarrow \emptyset)$ when $u \in T$,
and to $(\emptyset \leftarrow \emptyset \rightarrow \emptyset,\emptyset \leftarrow \emptyset \rightarrow \{u\})$ when $u \in R$.
For $u \in S$, composition evaluates to $\phi_u = (\{u\} \leftarrow \emptyset \rightarrow \emptyset)$ which is a basic span and irreducible.
For $u \in T$,  composition evaluates to $\phi_u = (\emptyset \leftarrow \emptyset \rightarrow \emptyset)$, which corresponds to the empty word in $\W^\bt$ and in the ground ring which is suppressed in the decomposition into basic objects via the unit constraints.
For $u \in R$, composition evaluates to $\phi_u = (\emptyset \leftarrow \emptyset \rightarrow \{u\})$ which is another basic span and hence irreducible.
\end{rmk}

\section{Structural Results, decompositions and standard forms}
\label{par:global}

\subsection{Monoidal categories generated by subcategories and the plus constructions}


\begin{df}
\label{df:crossed}
Given two monoidal subcategories $\I(\M)$ and $\spure(\M)$ which generate a monoidal category $\M$, we call $\M$
$(\spure(\M),I(\M))$-{\em crossed} if  for composable pair $(\Sigma,\G)$ such that $\Sigma\in \I(\M)$ and $\G\in \spure(\M)$, there are morphisms $\Sigma'\in \I(\M)$ and $\G'\in \spure(\M)$ such that
$\Sigma\Gamma=\G'\Sigma'$.
\end{df}
\begin{prop}
\label{prop:alternate}
If $\M$ is generated by two monoidal subcategories $\I(\M)$ and $\spure(\M)$, then
any morphism in $\M$ can be written as $\G_n\circ\Sigma_n \codco \Gamma_1\circ \Sigma_1$ with $\Sigma_i\in \I(\M)$ and $\G_i\in \spure(\M)$.
In particular, if $\M$ is $(\spure(\M),\I(\M))$-{\em crossed}, then any morphism is of the form $\Gamma\Sigma$.
\end{prop}
\begin{proof}
Any morphism in $\M$ can be written as a concatenation of tensor products
by first adding identities to equate tensor lengths if necessary and then using the interchange equation to rewrite
 $(f\circ g)\ot h=(f\circ g)\ot (h\circ id)=(f\ot h)\circ(g\ot id)$ and its symmetric rewriting iteratively.

A convenient way to encode this is as a type of ``brick wall'' picture where the rows correspond to tensoring and the columns to concatenation, see Figure~\ref{fig:compositionsfig}. Note that the horizontal seams go through, by the preparation step above, while the vertical ones can be interrupted. The entries $\Sigma$ are for elements of $\I(\M)$ and $\Gamma$ for an element in $\spure(\M)$.
Due to the interchange relations,
$
  \Gamma\otimes \Sigma = (\Gamma \ot id)\circ (id\ot \Sigma)$ and
  $
   \Sigma\ot\G =(id \ot \G)\circ(\Sigma\ot id )$,
one can ``pull apart'' the
rows into only $\G$ or $\Sigma$ by ``pulling up'' the factors of $\Sigma$,
starting with a row of $\G$ tensoring together the rows gives the desired form, see Figure~\ref{fig:compositionsfig}.
\end{proof}

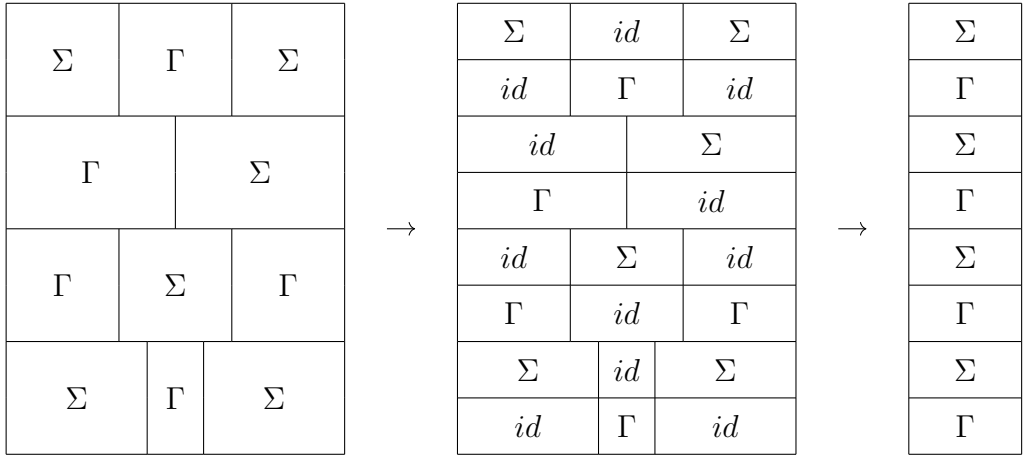
\begin{figure}
    \centering
    \begin{tikzpicture}[scale=.6]
		\node [style=none] (0) at (-3, 2) {};
		\node [style=none] (1) at (-3, 1) {};
		\node [style=none] (2) at (-1, 2) {};
		\node [style=none] (3) at (-1, 1) {};
		\node [style=none] (4) at (1, 2) {};
		\node [style=none] (5) at (1, 1) {};
		\node [style=none] (6) at (3, 2) {};
		\node [style=none] (7) at (3, 1) {};
		\node [style=none] (8) at (-1, 0) {};
		\node [style=none] (9) at (1, 0) {};
		\node [style=none] (10) at (3, 0) {};
		\node [style=none] (11) at (-3, 0) {};
		\node [style=none] (12) at (0, 0) {};
		\node [style=none] (13) at (0, -1) {};
		\node [style=none] (14) at (0, -2) {};
		\node [style=none] (15) at (-3, -1) {};
		\node [style=none] (16) at (-3, -2) {};
		\node [style=none] (17) at (3, -1) {};
		\node [style=none] (18) at (3, -2) {};
		\node [style=none] (20) at (-1, -2) {};
		\node [style=none] (21) at (1, -2) {};
		\node [style=none] (22) at (1, -3) {};
		\node [style=none] (23) at (-1, -3) {};
		\node [style=none] (24) at (-1, -4) {};
		\node [style=none] (25) at (-3, -3) {};
		\node [style=none] (26) at (-3, -4) {};
		\node [style=none] (27) at (3, -3) {};
		\node [style=none] (28) at (3, -4) {};
		\node [style=none] (29) at (1, -4) {};
		\node [style=none] (30) at (-0.5, -4) {};
		\node [style=none] (31) at (0.5, -4) {};
		\node [style=none] (32) at (-3, -5) {};
		\node [style=none] (33) at (-0.5, -5) {};
		\node [style=none] (34) at (0.5, -5) {};
		\node [style=none] (35) at (3, -5) {};
		\node [style=none] (36) at (0.5, -6) {};
		\node [style=none] (37) at (-0.5, -6) {};
		\node [style=none] (38) at (3, -6) {};
		\node [style=none] (39) at (-3, -6) {};
		\node [style=none] (40) at (-2, 1.5) {$\Sigma$};
		\node [style=none] (41) at (2, 1.5) {$\Sigma$};
		\node [style=none] (42) at (1.5, -0.5) {$\Sigma$};
		\node [style=none] (43) at (-1.75, -4.5) {$\Sigma$};
		\node [style=none] (44) at (0, -2.5) {$\Sigma$};
		\node [style=none] (45) at (1.75, -4.5) {$\Sigma$};
		\node [style=none] (46) at (-1.75, -5.5) {$id$};
		\node [style=none] (51) at (0, -5.5) {$\Gamma$};
		\node [style=none] (52) at (2, -3.5) {$\Gamma$};
		\node [style=none] (53) at (-2, -3.5) {$\Gamma$};
		\node [style=none] (54) at (-1.5, -1.5) {$\Gamma$};
		\node [style=none] (55) at (0, 0.5) {$\Gamma$};
		\node [style=none] (56) at (1.75, -5.5) {$id$};
		\node [style=none] (57) at (-2, -2.5) {$id$};
		\node [style=none] (58) at (0, -3.5) {$id$};
		\node [style=none] (59) at (2, -2.5) {$id$};
		\node [style=none] (60) at (-1.5, -0.5) {$id$};
		\node [style=none] (61) at (1.5, -1.5) {$id$};
		\node [style=none] (62) at (-2, 0.5) {$id$};
		\node [style=none] (63) at (2, 0.5) {$id$};
		\node [style=none] (64) at (0, 1.5) {$id$};
		\node [style=none] (65) at (0, -4.5) {$id$};
		\node [style=none] (66) at (3.75, -2) {};
		\node [style=none] (67) at (4.25, -2) {};
		\node [style=none] (68) at (5, 2) {};
		\node [style=none] (69) at (5, 1) {};
		\node [style=none] (70) at (5, 0) {};
		\node [style=none] (71) at (5, -1) {};
		\node [style=none] (72) at (5, -2) {};
		\node [style=none] (73) at (5, -3) {};
		\node [style=none] (74) at (5, -4) {};
		\node [style=none] (75) at (5, -5) {};
		\node [style=none] (76) at (5, -6) {};
		\node [style=none] (77) at (7, 2) {};
		\node [style=none] (78) at (7, 1) {};
		\node [style=none] (79) at (7, 0) {};
		\node [style=none] (80) at (7, -1) {};
		\node [style=none] (81) at (7, -2) {};
		\node [style=none] (82) at (7, -3) {};
		\node [style=none] (83) at (7, -4) {};
		\node [style=none] (84) at (7, -5) {};
		\node [style=none] (85) at (7, -6) {};
		\node [style=none] (86) at (6, 0.5) {$\Gamma$};
		\node [style=none] (87) at (6, 1.5) {$\Sigma$};
		\node [style=none] (88) at (6, -0.5) {$\Sigma$};
		\node [style=none] (89) at (6, -1.5) {$\Gamma$};
		\node [style=none] (90) at (6, -2.5) {$\Sigma$};
		\node [style=none] (91) at (6, -4.5) {$\Sigma$};
		\node [style=none] (94) at (6, -3.5) {$\Gamma$};
		\node [style=none] (95) at (6, -5.5) {$\Gamma$};
		\node [style=none] (96) at (-11, 2) {};
		\node [style=none] (97) at (-11, 1) {};
		\node [style=none] (98) at (-9, 2) {};
		\node [style=none] (99) at (-9, 1) {};
		\node [style=none] (100) at (-7, 2) {};
		\node [style=none] (101) at (-7, 1) {};
		\node [style=none] (102) at (-5, 2) {};
		\node [style=none] (103) at (-5, 1) {};
		\node [style=none] (104) at (-9, 0) {};
		\node [style=none] (105) at (-7, 0) {};
		\node [style=none] (106) at (-5, 0) {};
		\node [style=none] (107) at (-11, 0) {};
		\node [style=none] (108) at (-8, 0) {};
		\node [style=none] (109) at (-8, -1) {};
		\node [style=none] (110) at (-8, -2) {};
		\node [style=none] (111) at (-11, -1) {};
		\node [style=none] (112) at (-11, -2) {};
		\node [style=none] (113) at (-5, -1) {};
		\node [style=none] (114) at (-5, -2) {};
		\node [style=none] (115) at (-9, -2) {};
		\node [style=none] (116) at (-7, -2) {};
		\node [style=none] (117) at (-7, -3) {};
		\node [style=none] (118) at (-9, -3) {};
		\node [style=none] (119) at (-9, -4) {};
		\node [style=none] (120) at (-11, -3) {};
		\node [style=none] (121) at (-11, -4) {};
		\node [style=none] (122) at (-5, -3) {};
		\node [style=none] (123) at (-5, -4) {};
		\node [style=none] (124) at (-7, -4) {};
		\node [style=none] (125) at (-8.5, -4) {};
		\node [style=none] (126) at (-7.5, -4) {};
		\node [style=none] (127) at (-11, -5) {};
		\node [style=none] (128) at (-8.5, -5) {};
		\node [style=none] (129) at (-7.5, -5) {};
		\node [style=none] (130) at (-5, -5) {};
		\node [style=none] (131) at (-7.5, -6) {};
		\node [style=none] (132) at (-8.5, -6) {};
		\node [style=none] (133) at (-5, -6) {};
		\node [style=none] (134) at (-11, -6) {};
		\node [style=none] (135) at (-10, 1) {$\Sigma$};
		\node [style=none] (136) at (-6, 1) {$\Sigma$};
		\node [style=none] (137) at (-6.5, -1) {$\Sigma$};
		\node [style=none] (138) at (-9.75, -5) {$\Sigma$};
		\node [style=none] (139) at (-8, -3) {$\Sigma$};
		\node [style=none] (140) at (-6.25, -5) {$\Sigma$};
		\node [style=none] (142) at (-8, -5) {$\Gamma$};
		\node [style=none] (143) at (-6, -3) {$\Gamma$};
		\node [style=none] (144) at (-10, -3) {$\Gamma$};
		\node [style=none] (145) at (-9.5, -1) {$\Gamma$};
		\node [style=none] (146) at (-8, 1) {$\Gamma$};
		\node [style=none] (147) at (-4.25, -2) {};
		\node [style=none] (148) at (-3.75, -2) {};
		\draw (0.center) to (2.center);
		\draw (2.center) to (3.center);
		\draw (2.center) to (4.center);
		\draw (4.center) to (5.center);
		\draw (6.center) to (4.center);
		\draw (6.center) to (7.center);
		\draw (0.center) to (1.center);
		\draw (1.center) to (11.center);
		\draw (3.center) to (8.center);
		\draw (5.center) to (9.center);
		\draw (7.center) to (10.center);
		\draw (1.center) to (3.center);
		\draw (3.center) to (5.center);
		\draw (5.center) to (7.center);
		\draw (8.center) to (12.center);
		\draw (9.center) to (12.center);
		\draw (12.center) to (13.center);
		\draw (13.center) to (14.center);
		\draw (13.center) to (15.center);
		\draw (13.center) to (17.center);
		\draw (8.center) to (11.center);
		\draw (11.center) to (15.center);
		\draw (15.center) to (16.center);
		\draw (10.center) to (9.center);
		\draw (10.center) to (17.center);
		\draw (18.center) to (17.center);
		\draw (16.center) to (20.center);
		\draw (20.center) to (14.center);
		\draw (14.center) to (21.center);
		\draw (18.center) to (21.center);
		\draw (27.center) to (22.center);
		\draw (22.center) to (23.center);
		\draw (23.center) to (25.center);
		\draw (24.center) to (26.center);
		\draw (30.center) to (24.center);
		\draw (30.center) to (31.center);
		\draw (31.center) to (29.center);
		\draw (29.center) to (28.center);
		\draw (35.center) to (34.center);
		\draw (34.center) to (33.center);
		\draw (33.center) to (32.center);
		\draw (38.center) to (36.center);
		\draw (36.center) to (37.center);
		\draw (37.center) to (39.center);
		\draw (16.center) to (25.center);
		\draw (25.center) to (26.center);
		\draw (32.center) to (26.center);
		\draw (39.center) to (32.center);
		\draw (27.center) to (18.center);
		\draw (28.center) to (27.center);
		\draw (35.center) to (28.center);
		\draw (38.center) to (35.center);
		\draw (20.center) to (23.center);
		\draw (23.center) to (24.center);
		\draw (21.center) to (22.center);
		\draw (22.center) to (29.center);
		\draw (30.center) to (33.center);
		\draw (31.center) to (34.center);
		\draw (33.center) to (37.center);
		\draw (34.center) to (36.center);
		\draw [style=Arrow] (66.center) to (67.center);
		\draw (68.center) to (69.center);
		\draw (69.center) to (70.center);
		\draw (70.center) to (71.center);
		\draw (71.center) to (72.center);
		\draw (72.center) to (73.center);
		\draw (73.center) to (74.center);
		\draw (74.center) to (75.center);
		\draw (75.center) to (76.center);
		\draw (76.center) to (85.center);
		\draw (85.center) to (84.center);
		\draw (84.center) to (83.center);
		\draw (83.center) to (82.center);
		\draw (82.center) to (81.center);
		\draw (81.center) to (80.center);
		\draw (80.center) to (79.center);
		\draw (79.center) to (78.center);
		\draw (78.center) to (77.center);
		\draw (77.center) to (68.center);
		\draw (69.center) to (78.center);
		\draw (79.center) to (70.center);
		\draw (80.center) to (71.center);
		\draw (81.center) to (72.center);
		\draw (82.center) to (73.center);
		\draw (83.center) to (74.center);
		\draw (84.center) to (75.center);
		\draw (96.center) to (98.center);
		\draw (98.center) to (99.center);
		\draw (98.center) to (100.center);
		\draw (100.center) to (101.center);
		\draw (102.center) to (100.center);
		\draw (102.center) to (103.center);
		\draw (96.center) to (97.center);
		\draw (97.center) to (107.center);
		\draw (99.center) to (104.center);
		\draw (101.center) to (105.center);
		\draw (103.center) to (106.center);
		\draw (104.center) to (108.center);
		\draw (105.center) to (108.center);
		\draw (108.center) to (109.center);
		\draw (109.center) to (110.center);
		\draw (104.center) to (107.center);
		\draw (107.center) to (111.center);
		\draw (111.center) to (112.center);
		\draw (106.center) to (105.center);
		\draw (106.center) to (113.center);
		\draw (114.center) to (113.center);
		\draw (112.center) to (115.center);
		\draw (115.center) to (110.center);
		\draw (110.center) to (116.center);
		\draw (114.center) to (116.center);
		\draw (119.center) to (121.center);
		\draw (125.center) to (119.center);
		\draw (125.center) to (126.center);
		\draw (126.center) to (124.center);
		\draw (124.center) to (123.center);
		\draw (133.center) to (131.center);
		\draw (131.center) to (132.center);
		\draw (132.center) to (134.center);
		\draw (112.center) to (120.center);
		\draw (120.center) to (121.center);
		\draw (127.center) to (121.center);
		\draw (134.center) to (127.center);
		\draw (122.center) to (114.center);
		\draw (123.center) to (122.center);
		\draw (130.center) to (123.center);
		\draw (133.center) to (130.center);
		\draw (115.center) to (118.center);
		\draw (118.center) to (119.center);
		\draw (116.center) to (117.center);
		\draw (117.center) to (124.center);
		\draw (125.center) to (128.center);
		\draw (126.center) to (129.center);
		\draw (128.center) to (132.center);
		\draw (129.center) to (131.center);
		\draw [style=Arrow] (147.center) to (148.center);
\end{tikzpicture}

    \caption{Schematic of compositions. First tensor along rows then compose the rows in the column direction. Note the target of a row being the source of a column allows for different tensor decompositions, which in general can be of different length. The interstices are to mark the relative ``cuts'' of these tensors. In a \pher{} UFC the tensor lengths agree ---per unique decomposition.}
    \label{fig:compositionsfig}
\end{figure}

  In light of the previous proposition, we will use the following convenient and suggestive notation
  \begin{equation}
    \gent{\Gamma}{\Sigma}{n}:=\G_n\circ\Sigma_n \codco \Gamma_1 \circ \Sigma_1
  \end{equation}
  For later purposes, it will be convenient to think of this as a column as in Figure~\ref{fig:compositionsfig}.
  Note that these standard forms are not unique, as there are relations.
  For instance, some of the $\G_i$ or $\Sigma_i$ can be identities. A standard form for an identity is $id\stackrel{id}{-}$.

The composition of generators is given by concatenating the diagrams or columns.
\begin{equation}
\begin{aligned}
    &(\gent{\Gamma}{\Sigma}{n})
    \circ
    (\gent{\Gamma'}{\Sigma'}{m})\\
    &=
    \gent{\Gamma}{\Sigma}{n} \gent{\Gamma'}{\Sigma'}{m}
\end{aligned}
\end{equation}

In terms of the brick wall pictures, the monoidal structure is given by placing two brick walls side-by-side and including them into a bigger diagram, where one adds extra identities if the diagrams are not the same length.
In particular, if the two generators have the same length, the product is given by
\begin{eqnarray}
\label{Gammaoteq}
  && (\gent{\Gamma}{\Sigma}{n}) \ot (\gent{\Gamma'}{\Sigma'}{n}) \nn\\
  &=& \G_n \ot \G'_n
      \stackrel{\Sigma_n \ot \Sigma'_n}{-}
      \G_{n-1} \ot \G'_{n-1}
      \stackrel{\Sigma_{n-1} \ot \Sigma'_{n-1}}{-} \cdots \stackrel{\Sigma_{2}\ot \Sigma'_{2}}{-}
      \G_{1}\ot \G'_1
      \stackrel{\Sigma_{1}\ot \Sigma'_{1}}{-} \nn \\
  &=&\gent{\Gamma''}{\Sigma''}{n}
\end{eqnarray}
On the other hand, given generators $\gent{\Gamma}{\Sigma}{n}$ and  $\gent{\Gamma'}{\Sigma'}{m}$ with say $n>m$ formally add $n-m$ identity rows/entries
$\gent{\G}{\Sigma}{m} id\cdots \stackrel{id}{-}id$ to get a length $n$ generator and use the formula above.
  Note that there is an ambiguity of where to add the identities as all these formal expressions   actually represent the same morphism.
  The well-definedness is guaranteed by the interchange equation.

\begin{lem}
\label{lem:irred}
A generator $\gent{\Gamma}{\Sigma}{n}$ of Proposition~\ref{prop:alternate} is $\bt$-reducible if and only if it has a brick--wall representative with a vertical seam. \qed
\end{lem}

For our intended application to the plus constructions, we take the morphisms $\gamma,\mu$ on one hand and the morphisms $(\s,\s')$ on the other.

\begin{df}
   The subcategory $\spure(\catplus{\C,P})\subset \catplus{\C,P}$ is the wide monoidal subcategory with only the $\g$ morphisms.
   The subcategory $\spure(\monplus{\M,P})\subset \monplus{\M,P}$ is the wide  monoidal subcategory which only has the $\g$ and $\mu$ morphisms.

    The subcategory $\spure(\locmonplus{\M,P})\subset \locmonplus{\M,P}$ is the wide  monoidal subcategory which contains  the morphisms $\g$, $\mu$ and $\mu^{-1}$. We furthermore let $\overline{\spure}(\locmonplus{\M,P})$
    be the subcategory which is the image of $\spure(\monplus{\M,P})$ and let $\Loc$ be the wide category generated by the $\mu^{-1}$ morphisms. Together they generate $\spure(\locmonplus{\M,P})$.
The subcategory $\spure(\redmonplus{\M,P})\subset \redmonplus{\M,P}$ is the wide monoidal subcategory which only has the morphisms $\bar \g$.
 In all cases, $\I(\anyplus)=Mor(P^{op}\da P)^\bt$, where for $\locmonplus{\M}$ this means  the image in the localization.  These will be called the {\em base morphisms of the plus construction} or action by base morphisms.  The morphisms generated by the $\scs$ will be called {\em internal} base morphisms, while the morphisms given by the $\tau^{12}$ (cf.\ Definition \ref{def:mnc} \eqref{commu}) will be called {\em external}.
\end{df}

  It is clear that the monoidal subcategories $\spure(\anyplus)$ and $\I(\anyplus)$ generate. Note that $\spure(\anyplus)$ is not symmetric, since all of the commutators are  in $\I(\anyplus)$. The notation $\spure$ should remind of ``planar'' and $\I$ of isomorphisms or 2-morphisms.


\begin{lem}
\label{lem:mcrossed}
The category $\catplus{\C, P}$ is $(\spure(\catplus{\C,P}),\I(\catplus{\C,P}))$--crossed.
If a monoidal category $\M$ has factorizable pointing, e.g.\ for the standard gcp, then \begin{enumerate}
    \item  $\monplus{\M,P}$ is
    $(\spure(\monplus{\M,P}),\I(\monplus{\M},P))$--crossed.
    \item $\locmonplus{\M,P}$ is $(\spure((\locmonplus{\M,P}),\I(\locmonplus{\M,P}))$--crossed, and
     \item $\redmonplus{\M,P}$ is $(\spure(\redmonplus{\M,P}),\I(\redmonplus{\M,P})$--crossed. In particular, this is true for the standard plus construction $\M^+$.
\item If $P$ is a factorizable pointing, then  $\spure(\locmonplus{\M,P})$ is $(\overline{\spure}(\locmonplus{\M,P},\Loc)$--crossed.
\end{enumerate}
\end{lem}
\begin{proof} The first statement follows from outer equivariance in Definition \ref{def:cplus}.
The second statement part~(a) follows from  the first diagram in \eqref{eq:factorizable}  under the inversion of the horizontal arrows in the first diagram, which is possible by assumption. Part~(b) follows from  equivariance (Definition~\ref{def:mnc}~\eqref{eq:muequi}) by inverting the morphisms $\mu$.
This restricts to $\redmonplus{\M,P}$, from whence we obtain Part~(c).
The last statement follows from the third diagram in  \eqref{eq:factorizable}.
\end{proof}

\begin{cor}\mbox{}
  \label{cor:decomp}
 Any morphism in $\catplus{\C,P}$ can be written as $\G\Sigma$ with $\G\in \spure(\catplus{\C,P})$
and $\Sigma\in \catplus{\C,P}$.
Furthermore, if $\M$ has factorizable pointing then
\begin{enumerate}
    \item  Any morphism in $\monplus{\M,P}$ can be written as $\G\Sigma$ with $\G\in \spure(\monplus{\M,P})$ and $\Sigma\in \I(\monplus{\M,P})$.
    \item Any morphism in $\locmonplus{\M,P}$ can be written as $\G\Sigma$ with $\G\in \spure(\locmonplus{\M,P})$ and $\Sigma\in \I(\locmonplus{\M,P})$.
    \item Any morphism in $\redmonplus{\M,P}$ can be written as $\G\Sigma$ with $\G\in \spure(\redmonplus{\M,P})$ and $\Sigma\in \I(\redmonplus{\M,P})$. In particular, this is true for $\M^+$.
  \item If  $P$ is additionally a factorizable pointing then any morphism can be written as $\G \Sigma M^{-1}$ with $\G\in \overline{\spure}(\locmonplus{\M})$, $\Sigma\in \I(\locmonplus{\M,P})$ and $M^{-1} \in \Loc(\locmonplus{\M,P})$.
    \qed
\end{enumerate}
\end{cor}

In the symmetric case,   $\I(\anyplus)$ is generated by the $\scs$ morphisms and the commutators $\t^{12}_{\bt}:\phi_1\bt\phi_2\stackrel{\sim}{\to} \phi_2\bt \phi_1$, which satisfy the relation $\tau_\bt^{12}[(\s_1,\s'_1)\bt(\s_2, \s'_2)]\tau_\bt^{12}=(\s_2,\s'_2)\bt(\s_1, \s'_1)$.

\begin{df}
Define the following subcategories of $\I(\monplus{\M,P})$: $\I_{ext}$ is generated by the external morphisms $\t_\bt^{i \, i+1}$ and $\I_{int}$ is generated by the internal morphisms $\scs$.
We define $\spure(\anyplus)\I_{\it int/ext}$ to be the subcategory generated by $\spure(\anyplus)$ and
$\I_{int}(\anyplus)$ respectively $\I_{ext}(\anyplus)$ of $\anyplus$.
\end{df}

The following results are straightforward:
\begin{prop}
  \label{prop:sigma}
 $\I(\anyplus)$ is both $(\I_{ext},\I_{int})$-crossed and $(\I_{int},\I_{ext})$--crossed. Hence, any isomorphism can be written as $\Sigma  \boldsymbol{\s}$ or $\boldsymbol{\s}\Sigma$ where $\Sigma\in \I_{ext}$  and $\boldsymbol{\s}\in \I_{int}$. Letting $\I_{ext}$ and $\I_{int}$ be the corresponding subcategories in $\I(\anyplus)$,
this decomposition refines all the decompositions in Corollary~\ref{cor:decomp}.

 Any morphism in $\spure(\anyplus)\I_{ext/int}$ can be written as $\G\Sigma$ with $\G\in \spure(\anyplus)$ and $\Sigma\in \I_{ext}(\anyplus)$ respectively $\Sigma \in \I_{int}(\anyplus)$. \qed
\end{prop}

 \subsection{The structure of $\catplus{\C,P}$}
 \label{par:cpgen}
 Using the decomposition, we will assemble the category by building it up from the three subcategories.
For $\spure(\catplus{\C,P})$, removing the associativity brackets leaves the monoidal generators
 $\g_{\phi_1\kdk\phi_n}$.
These compose as follows.
Given composable tuples $(\phi_1^i\kdk \phi_{n_i}^i)$ for $i=1\kdk k$, set
$\psi_i=\phi_1^i\codco\phi_{n_i}^i$ then
$\g_{\psi_1\kdk\psi_k}\circ [\gamma_{\phi_1^1\kdk\phi_{n_1}^1}\bt\dots\bt\g_{\phi^k\kdk\phi_{n_k}^k}  ]=\gamma_{\phi_1^1\kdk\phi_{n_1}^1\kdk \phi_1^k\kdk\phi_{n_k}^k}$.
If one thinks of the generators as directed, or equivalently rooted, linear graphs whose vertices are labeled with $\phi_i$, then this composition agrees with the behavior of graph insertion, see e.g.\ \cite{feynman} for a concrete definition. The monoidal product $\bt$ becomes disjoint union in this interpretation.
\begin{prop}
\label{prop:spurecplus}
The generators above freely generate $\spure(\catplus{\C,P})$ under the monoidal product $\bt$ and $\spure(\catplus{\C,P})$ with the basis of objects $\V=\Mor(M)$ as a discrete groupoid is a cubical non--Sigma Feynman category. The  $\gamma_{\phi_0,\phi_1}$ morphisms are degree $1$ generators.
\end{prop}
\begin{proof} These morphisms clearly generate and are closed under composition.
There are no relations between the different generators, associativity has been incorporated, and
all other relations involve the isomorphisms, see \S\ref{par:cplus}. The statement about being a Feynman category follows readily. There are no isomorphisms and the $\g_{\phi_0,\phi_1}$ are by definition generators.
The generator $\g_{\phi_0\kdk \phi_n}:\phi_0\bdb \phi_n\to \phi_0\codco \phi_n$ has degree $n$ and there are indeed $n!$ decompositions of it by stipulating an order of turning the $n$ symbols $\bt$ into $\circ$ using one morphism $
id \bdb id \bt \gamma_{\phi_i, \phi_{i+1}}\bt id \bdb id$.
\end{proof}

\begin{prop}
\label{prop:spurecplusext}
The monoidal generators of morphisms of  $\spure(\catplus{\C,P})\I_{ext}$ can be written as pairs $(\g_{(\phi_n\kdk \phi_1)},\s)$ with $\s\in \Sn$. The composition is given by the wreath product.
These can be alternatively thought of as a decorated linear rooted tree whose vertices are labeled by morphisms of $\C$ and an enumeration of the vertices.
$\spure(\catplus{\C,P})\I_{ext}$  is a cubical Feynman category with discrete $\V=\Mor(\C)$ and proper degree function in which the $\g_{\phi_0,\phi_1}$ have degree $1$.
\end{prop}
\begin{proof}
The first statement follows from Proposition~\ref{prop:sigma}. For the second, we interpret the morphism
\begin{equation}
    \phi_1\bdb\phi_{n}\stackrel{\s}{\to}
\phi_{\s^{-1}(1)}\bdb\phi_{\s^{-1}(n)}
\xrightarrow{\g_{\phi_{\s^{-1}(1)}\kdk\phi_{\s^{-1}(n)}}} \phi
\end{equation}
as a linear rooted tree whose vertices are decorated by the $\phi_i$. This fixes the map $\g$.
The enumeration of the vertices  gives the source, namely the $\bt$ product of
the morphisms decorating the vertices in that order.
The statement about being a Feynman category follows readily. For the degree functions, the  added morphisms are indeed isomorphisms and have degree $0$. Up to these isomorphisms the presentation of a morphisms in generators is again given by enumerating the symbols $\bt$ in the source.
\end{proof}


To add in internal basic morphisms, recall that  $(\s,\s')\phi=P(\s')\phi P(\s)$ and $(s,s')=(\s,id)(id,\s')=(\s',id)(id,\s)$. Thus the action can be split up into pre-- and post--composing,
and  utilizing inner and outer equivariance the generators $\gent{\Gamma}{\Sigma}{n}$ take on the form:
\begin{multline}
\gen{\phi}{\sigma}{n}:= \\\quad
\gamma_{\s_0 \circ \phi_1\circ\s_1\kdk \phi_{n-1}\circ\s_{n-1},\phi_n}\circ
[(\id ,\s_0)(\phi_1)\odo(id, \s_{n-2})(\phi_{n-1})\ot (\s_n^{-1}, \s_{n-1})(\phi_n)]
\end{multline}
{\sc NB:} Due to the equivariance w.r.t.\ base morphisms, there are several other ways to write this morphism, by ``distributing'' the action of the base morphisms, e.g.:\\
$
        \gen{\phi}{\sigma}{n}
        = (\s^{-1}_n, \s_{0})(\phi_1\s_1\codco\s_{n-1}\phi_{n-1}\phi_n)$\\
\hspace*{\fill}$\gamma_{\phi_1, \s_1\phi_2.\kdk \s_{n-1}\phi_{n}}
[(\id,\id)(\phi_1)\ot (\id,\s_1)(\phi_2)\odo(id, \s_{n-1})(\phi_{n})]
    $.

\noindent The following identities hold:\\
(i) Concatenation:
 $
         (\gen{\phi}{\sigma}{n})(\gen{\phi'}{\sigma'}{m})   =$\\
\hspace*{\fill}  $    \stackrel{\sigma_0}{-}\phi_1\stackrel{\sigma_1}{-}\phi_2\stackrel{\sigma_2}{-} \cdots \stackrel{\sigma_{n-1}}{-}\phi_n\stackrel{\sigma'_0  \sigma_{n}}{-}\phi'_1\stackrel{\sigma'_1}{-}\phi'_2\stackrel{\sigma'_2}{-} \cdots \stackrel{\sigma'_{m-1}}{-}\phi'_m\stackrel{\sigma'_{m}}{-}
  $.\\
 (ii) Action of isomorphisms:
$       (\nu',\nu')  \gen{\phi}{\sigma}{n}  (\t_1, \t'_1)\bdb (\t_n,\t'_n)=$\\
$        \stackrel{\nu\sigma_0}{-}\phi_1\stackrel{\sigma_1\tau'_1}{-}\phi_2\stackrel{\tau_1\sigma_2\tau'_2}{-}\cdots \stackrel{\tau_{n-2}\sigma_{n-1}\tau'_{n-1}}{-}\phi_n\stackrel{\sigma_{n}\nu}{-}
$.\\
(iii) Interchange:
$\mu
\left[
\stackrel{\sigma_0}{-}\phi_1\stackrel{\sigma_1}{-}\phi_2\stackrel{\sigma_2}{-} \cdots \stackrel{\sigma_{n-1}}{-}\phi_n\stackrel{\sigma_{n}}{-}\bt
\stackrel{\t_0}{-}\psi_1\stackrel{\t_1}{-}\psi_2\stackrel{\t_2}{-} \cdots \stackrel{\t_{n-1}}{-}\psi_n\stackrel{\t_{n}}{-}
\right]
=$\\ \hspace*{\fill} $
\stackrel{\sigma_0\ot \t_0}{-}\phi_1\ot \psi_1 \stackrel{\sigma_1\ot \t_2}{-}\phi_2\ot \psi_2\stackrel{\sigma_2\ot \t_2}{-} \cdots \stackrel{\sigma_{n-1}\ot\t_{n-1}}{-}\phi_n\ot \psi_n
\stackrel{\sigma_{n}\ot\t_n}{-}[\mu^{\bt n}] c^+_{n,n}
$,\\
where $C^+_{n,n}$ is the $(n,n)$--shuffle  shuffling in the $\psi_i$ to the right of the $\phi_i$.

Thinking of the generators as elements in $\rho^{\sqb n}$ (i) and (ii) are the fact that this composition descends to a map $\rho^{\sqb n}\sqb\rho^{\sqb m}\to \rho^{\sqb n+m}$,
and (iii) is a consequence of $\rho$ being an MBM.
Graphically, these can be regarded as linear rooted
b/w bipartite trees with a black root, whose  vertices are decorated by the $\phi_i$ and the edges are decorated by the $\sigma_i$. The enumeration left to right corresponds to the enumeration starting at the root, due to the function notation for composition.
Alternatively, these can be seen as b/w bipartite linear rooted trees,
where the black vertices are decorated by elements $\s_i$ and white vertices are decorated by elements $\phi_i$.

\begin{prop}
\label{prop:cplusint}
The morphisms of $\spure(\catplus{\C,P})\I_{int}$ are monoidally freely generated by the $\gen{\phi}{\sigma}{n}$.
$\spure(\catplus{\C})\I_{int}$  is a  non--Sigma Feynman category with groupoid $\V=\Iso(P\da P)$ and a degree function in which $\g_{\phi_0,\phi_1}$ are the degree $1$ generators and base--morphisms are of degree $0$. If these are invertible, then the degree function is proper and the category cubical.
\end{prop}
\begin{proof}
By Proposition~\ref{prop:sigma} and Proposition~\ref{prop:spurecplus},
all the morphisms are of the type $\g_{\phi_1\dots\phi_n}\boldsymbol{\s}$.
If $\tilde\phi_i=(\s_i,\s'_i)(\phi_i)$ and
$\boldsymbol{\s}=\scsp{\s_1}{\phi_i}{\s'_n}\bdb\scsp{\s_n}{\phi_n}{\s'_n}$,
then
$
    \g_{\tilde \phi_n\kdk \tilde\phi_1}\boldsymbol{\s}=\stackrel{\s'_1}{-}\phi_1\stackrel{\s_1\s'_2}{-}{\phi_2}
    \stackrel{\s_2\s'_2}{-}\dots \stackrel{\s_{n-1}\s_n}{-}\phi_n\stackrel{\s_n}{-}
$.
Thus the purported elements generate. The only new relations present in $\spure(\catplus{\C})\I_{int}$ are inner equivariance, which is taken care of in each individual generator. Hence these generators are indeed independent. Note that there are no non--trivial isomorphisms in $\spure(\catplus{\C}$. The statement about being a Feynman category follows readily after identifying $(\s,\s')$ with $(\s^{-1}\Da \s)$.
The statement about the degree function is clear. To be proper, the base--morphisms, which are of degree $0$ need to be isomorphisms. The
cubical structure then is as in \ref{prop:spurecplusext}.
\end{proof}


\begin{thm}
\label{thm:cplus-equals-FC}
The morphisms of $\catplus{\C,P}$ are freely monoidally generated by linear rooted trees together with a labeling of
the vertices by morphisms of $\C$ and edges by morphisms $P(\sigma)$ and an enumeration of the vertices.
 $\catplus{\C,P}$ is a  Feynman category with groupoid $\V=\Iso(P\da P)$  and a degree function in which $\g_{\phi_0,\phi_1}$ are the degree $1$ generators and base--morphisms are of degree $0$. If these are invertible, then the degree function is proper and the category cubical.
These statements also hold for the hyp version.  The standard gcp version is cubical.

The unital versions are Feynman categories with additional generators of degree $-1$, type $(0,1)$ generators corresponding to the units. Counits are new type $(1,0)$ generators which are part of a hereditary UFC.
\end{thm}

\begin{proof}
Adding general base morphisms, we can write any morphism as $\G \boldsymbol{\sigma} P$ or $\G P\boldsymbol{\sigma}$ with  $\G\in \spure$, $\sigma \in \I_{int}$ and $P \in \I_{ext}$. The arguments are then parallel to the results above.

    Setting the degree of $\g$'s, the  degree function for the plus categories $\catplus{\C,P}$ is well defined as all relations are homogeneous with respect to this degree. It is also clearly proper,
    as the $\g$ morphisms except for the identity are not invertible, the isomorphisms are the isomorphisms in the relative twisted arrow category, which is equivalent to $\Iso(P\da P)$ via identifying $scs$ with $(\s^{-1}\da \s)$.

    In $\catplus{\C,P}$, any linear tree represented by a morphism $\gamma_{\phi_0,\kdk,\phi_n}$
  up to isomorphisms has $n!$ decompositions as iterations of $id\bdb id \bt\g_{\phi_i,\phi_{i+1}}\bt id\bdb id$ where nearest neighbors are composed. This corresponds to an edge contraction in the linear tree.

  The statements about the unital version is straightforward. For the counital version, generators of that type are not allowed in an FC, but do yield \pher{} UFC.
  In hyp, the degree $-1$ morphisms become isomorphisms of degree $0$. The degree function is then again proper and if the base morphisms are invertible cubical by the previous arguments. This is the case for the standard gcp plus construction.
\end{proof}

\noindent {\sc NB:} In the unital and counital case, we have generators corresponding to the spans $\emptyset \to \{*\} \leftarrow \{*\}$ and $ \{*\} \rightarrow \{*\}\leftarrow\emptyset$ as in section \ref{cospanexpar}.
 Proposition~\ref{prop:cplusint} means that the generators form a groupoid colored non--Sigma operad given a suitably defined notion of groupoid colored in the enriched case.
  Similarly, Theorem~\ref{thm:cplus-equals-FC} says that the generators form a groupoid colored  operad, see \cite[\S1.11.2]{feynman}, in the enriched case.
 These constructions can be seen as a categorification of the plus construction on the trivial category which yields the FC for monoids, cf.\ \cite[Proposition 3.20, Proposition 3.34]{feynmanrep}.

\subsection{Formulas, cells and graphs}
To describe the morphisms in $\monplus{\M,P}$, we define several formalisms.
A {\em fully bracketed irreducible pre--formula} is a formal expression
of the two formal binary operations $\circ$ and $\ot$. For instance, $(-\circ (-\circ -))\ot (-\circ -)$. The {\em arity} of a formula is the number of the symbol ``$-$'' which is the number of operations minus one.
A fully bracketed  pre--formula gives rise to a flow chart and vice--versa. This is a planar planted binary tree with black vertices for the binary operations $\ot$ and a white vertex for the operation $\circ$. The level of nesting of brackets is the distance to the root vertex plus one, if the outside parenthesis are of level $1$.
To model associative binary operations, we use reduced pre--formulas.
That is, use $(-\circ-\circ -)$ to represent both $((-\circ-)\circ-)$ and $(-\circ(-\circ-))$ and likewise for $\ot$.
Replacing a nested expression of brackets of the same type by just one bracket defines reduced pre--formulas.
In the flow chart, this becomes particularly transparent and one obtains a planar planted b/w bi--partite tree:
Call an edge black (resp.\ white) if it is between two black (respectively white) vertices, the edges between a black and white vertex will be called \emph{mixed}.
The associativity equation acts as usual by edge collapses and expansions of black and white edges, see e.g.\ \cite{woods,del}. Contracting all black and white edges, one is left with only mixed edges, that is a black and white bipartite  tree.

Note that irreducible pre--formulas naturally form a non--Sigma operad by substitution and hence generate a non--Sigma Feynman category.
As flow charts, this is gluing the leaves/inputs to the root flag/output.

A {\em pre--formula} (fully bracketed or reduced) is a formal conjunction of fully bracketed irreducible pre--formulas by a associative formal binary operator $\bt$ ---for instance
\begin{equation}
\label{eq:preformula}
f=((-\circ (-\circ -))\ot (-\circ -))\bt(-\circ-)\bt (-\ot-)
    \end{equation}
in the fully bracketed case. The arity is the sum of arities.
As flow charts, pre--formulas are planar, viz.\ ordered, forests of trees of the given type

\begin{dfprop}
\label{dfprop:formulascubicalFC}
 Consider the category $\F_{b/w-b}$
whose objects are the natural numbers and whose morphisms $\Hom(n,m)$
are ordered forests of b/w binary planar planted trees with $m$ trees having a total of $n$ leaves.  This is a non--Sigma Feynman  category with composition given by gluing roots to leaves.
This has one basic object $1$ and the basic $(n,1)$-morphisms are b/w binary planar planted trees.
Isomorphically, the set of basic $(n,1)$-morphisms are the $n$--ary irreducible fully bracketed pre-formulas.

 Similarly, we obtain a category $\F_{\it form}$  by taking the same objects but letting the basic $(n,1)$-morphisms be the planar planted bipartite trees and letting the general morphisms be ordered forests.
  Isomorphically, we can use reduced formulas as morphisms with the $\boxtimes$-irreducible reduced formulas being the basic morphisms.

In both cases, the  morphisms are generated by $\gamma$ and $\mu$ in $Hom(2,1)$.
In the fully bracketed case, they generate freely and the length decrease is a proper degree function.
In the reduced case, they have quadratic relations and the degree function descends as a proper degree function.

Contracting all edges that connect two vertices of identical color ends a binary b/w tree to a b/w bipartite tree and this operation defines a degree preserving functor $\F_{b/w-b}\to \F_{\it form}$.
\end{dfprop}

\begin{proof}
Straightforward.
\end{proof}

\begin{rmk} The appearance of b/w bipartite trees suggests a connection to little 2-cubes by \cite{del,LucasThesis,Brinkmeier}.This is made precise in \S\ref{par:boxpar} below.
The fact that the $(2,1)$ generate can be taken to mean that there are two $B_+$ operators, cf.\ \cite{KZhang}.
\end{rmk}

Let $S$ be a set with two possibly colored   associative binary  operations $\circ$ and $\ot$ which satisfy the interchange equation. Here colored means that there are source and target maps for $S$ and the binary operations are only defined if they coincide, cf.\ e.g.\ \cite{KYM}.
An example anticipating the next section is furnished by the 2--morphisms of a double category.
Consider the free associative monoid $S^\bt$ on the set $S$.
The \emph{population} of an $n$--ary formula $f$ (fully bracketed or reduced)
by elements $\phi_1\kdk \phi_n$ of $S$
is the formal substitution of the $\phi_i$ into the $i$--th slot of the formula.

\begin{df}
A \emph{valid formula} (fully bracketed or reduced) is a population of a pre--formula whose evaluation is possible.
We will denote this evaluation by $eval(f)(\phi_1\kdk \phi_n)$ and call it the target of
$f(\phi_1\kdk \phi_n)$.
The source of $f(\phi_1\kdk \phi_n)$ is defined to be the expression
$\phi_1\bdb \phi_n$.
\end{df}

The morphism corresponding to the formula $\phi_1\circ \phi_2$ will be called
$\g_{\phi_1,\phi_2}:\phi_1\bt\phi_2\to \phi_1\circ \phi_2$, and
the morphism corresponding to the formula $\phi_1\ot \phi_2$ will be called
$\mu_{\phi_1,\phi_2}:\phi_1\bt\phi_2\to \phi_1\ot \phi_2$. The class of $\gamma$-morphisms and the class of $\mu$-morphisms generate under composition. They satisfy associativity relations and the interchange relation.

\begin{ex}
The expression
\begin{equation}
\label{eq:phi8}
f(\phi_1\kdk\phi_8)=\phi_1\circ(\phi_2\ot (\phi_3\circ\phi_4))\circ (\phi_5\ot \phi_6)\bt (\phi_7 \circ \phi_8)
\end{equation}
is a fully bracketed valid formula given by a population of \eqref{eq:preformula} if the source and target maps align properly for $\phi_1\circ(\phi_2\ot (\phi_3\circ\phi_4))\circ (\phi_5\ot \phi_6)$ and for $\phi_7 \circ \phi_8$.
The formula \eqref{eq:phi8} then
specifies a morphism from
$\phi_1\bdb\phi_6\to \psi_1\bt\psi_2$ where $\psi_1= \phi_1\circ(\phi_2\ot (\phi_3\circ\phi_4))\circ (\phi_5\ot \phi_6)$ and $\psi_2=\phi_7\circ \phi_8$. The morphism can be read off as
\begin{equation}
[\g_{\phi_1,\phi_2\ot (\phi_3\circ\phi_4),\phi_5\ot \phi_6)}\circ (id_{\phi_1}\bt\mu_{\phi_2,\phi_3\circ\phi_4}\bt \mu_{\phi_5,\phi_6})\circ (id_{\phi_1}\bt id_{\phi_2}\bt \g_{\phi_3,\phi_4}\bt id_{\phi_5}\bt id_{\phi_6}]
\bt \g_{\phi_7,\phi_8}
\end{equation}
\end{ex}

A formula such as $(\phi_1 \bt\phi_2)\circ \phi_3$ is never valid as there are no elements of $S$ so that the source of $\phi_3$ is given by $t(\phi_1)\bt t(\phi_2)$. The  pre--formula $(-\bt-)\ot -$ is also not valid.

For the flow chart/tree picture, a population corresponds to a
decoration of the inputs/leaves by  the $\phi_i$ in their order and the decoration of the output/root by the target,
see Figure~\ref{fig:population}.  We call the decoration valid if the corresponding formula is valid.
This entails that each edge is naturally labeled by the output of the compositions above it.
The following is readily checked.

\begin{figure}
  \centering
  \begin{tikzpicture}
  \node [style=White] (0) at (-1, 1) {};
  \node [style=none] (1) at (-1.5, 1.75) {$\phi_3$};
  \node [style=none] (2) at (-0.5, 1.75) {$\phi_4$};
  \node [style=Black] (3) at (-1.5, 0.25) {};
  \node [style=none] (4) at (-2, 1) {$\phi_2$};
  \node [style=White] (5) at (-2, -0.5) {};
  \node [style=none] (6) at (-2.5, 0.25) {$\phi_1$};
  \node [style=Black] (7) at (0, -0.5) {};
  \node [style=none] (8) at (-0.5, 0.25) {$\phi_5$};
  \node [style=none] (9) at (0.5, 0.25) {$\phi_6$};
  \node [style=White] (10) at (-1, -1.25) {};
  \node [style=none] (11) at (-1, -2) {$\phi_1 \circ (\phi_2 \otimes (\phi_3 \circ \phi_4)) \circ (\phi_5 \otimes \phi_6)$};
  \node [style=White] (13) at (3.5, -1.25) {};
  \node [style=none] (14) at (3, -0.5) {$\phi_7$};
  \node [style=none] (15) at (4, -0.5) {$\phi_8$};
  \node [style=none] (16) at (3.5, -2) {$\phi_7 \circ \phi_8$};
  \draw (15) to (13);
  \draw (14) to (13);
  \draw (13) to (16);
  \draw (5) to (10);
  \draw (7) to (10);
  \draw (10) to (11);
  \draw (8) to (7);
  \draw (9) to (7);
  \draw (6) to (5);
  \draw (3) to (5);
  \draw (4) to (3);
  \draw (0) to (3);
  \draw (1) to (0);
  \draw (2) to (0);
\end{tikzpicture}
  \caption{A flow chart representation of \eqref{eq:phi8}}
  \label{fig:population}
\end{figure}

\begin{dfprop}
\label{dfprop:frmcubical}
Valid formulas, both fully bracketed and reduced, form  a non-Sigma Feynman category whose basis of objects is given by $S$ and
whose basis of morphisms is given by valid populated irreducible formulas with the source and target as defined above.
We denote the monoidal category of reduced valid formulas by $\cF(S)$ and the corresponding Feynman category by $\FF_{\it form}(S)$.

 If $\F$ is the underlying category, i.e.\ $S=\Mor(\F)$, the  morphisms are generated  by $\gamma_{\phi_1,\phi_0}\in \F(\phi_1,\phi_0;\phi_1\circ\phi_0)$, whenever the pairs are composable and $\mu_{\psi_1,\psi_2} \in \F(\psi_1,\psi_2;\psi_1\ot\psi_2)$.
 In the fully bracketed case, they generate freely, and, in the reduced case, they have quadratic relations. For both Feynman category, length decrease is a proper degree function.
\end{dfprop}

\begin{proof}
This is just a rephrasing of structures.
 Being a non--Sigma Feynman category is checked readily. Here $\V=S$ as a discrete category and the underlying category $\cF$ of $\FF$ has objects $S^\bt$.  $\Indec$ is freely monoidally generated by the $(n,1)$-morphisms $\cF(\phi_1\kdk \phi_n;\phi)$. The statement about generation follows from the definition.  The statement about the degree function is straightforward.
 \end{proof}


The morphism can be read off from a  valid formula as follows:
Given a valid formula, the target of the corresponding morphism is the value of the formula and the source is obtained by replacing all occurrences of $\circ$ and $\ot$ by $\bt$. The morphism expressed in generators is the combination of $\g$'s and $\mu$'s that changes the respective occurrences of $\bt$ to $\circ$ and $\ot$ whose iteration is determined by the nesting.
The FCs above as colored structures are obtained from their uncolored counterparts by a decoration and restriction, cf.\  \cite[\S2.5]{feynman} and more generally \cite[6.1.4]{decorated}.
    The decoration yields a Feynman category of populated formulas with a restriction to the valid ones.
 The  irreducible reduced formulas form  non--Sigma $S$-colored operad $\cF$ freely generated by two generators. Composition is given by substitution:
    For example, $(\phi_1\circ\phi_2)\bt \phi_3$ corresponds to $\mu_{\phi_1\circ\phi_2,\phi_3}\circ_1 \gamma_{\phi_1,\phi_2}$.  This colored operad is again quadratic.

\subsection{Cells and diagrams}

A particularly helpful way to think about fully bracketed valid formulas is as a
diagram or pasting scheme in a 2--category  $\twocat$ with a  composition ordering.
The two operations are horizontal $\circ_h$ and vertical composition $\circ_v$.
Equivalence classes  defined by the associativity relations for the decomposition correspond to compatible enumerations of the cells. There are again three levels: unpopulated diagrams, populated diagrams and valid diagrams. In the application $\twocat=\underline{\M}$, the 2-category has one object, the 1-morphisms are given by the objects of $\M$ and the two--morphisms are given by morphisms of $\M$.
 The vertical composition of the 2-category is composition of morphisms and the horizontal composition is the tensor product. String diagrams provide a graphical version.

\begin{nota}
With a view towards the application to the 2--category $\underline{\M}$, we will write $\circ:=\circ_v$ for the vertical composition and $\ot:=\circ_h$ for the horizontal composition. The maps for these compositions of 2--cells will be called $\g$ and $\mu$.
\end{nota}

 Considering the associated double category leads to decomposable tight square arrangements considered up to isotopies called \emph{basic decomposable box diagrams}.
 These basic box diagrams can be generalized to account for different sides of the
interchange equation.
There is a dual b/w bipartite graph for a basic box diagram and a b/w bipartite suspension graph. The former is related to string diagrams, while the latter contains
finer information about interchanges.

\label{par:algpar}
Starting from the 2--categorical view, unpopulated diagrams can be seen as abstract diagrams in the categorical sense, where the index 2-category does not need  $0$ or $1$ cell labels or enumeration, as the $0$ and $1$--cells then need to have compatible labels, so this information can be omitted from the enumeration.
However, we do label and enumerate the 2--cells as  $1,\dots, n$.
Given a diagram in a double category $\D$, a decomposition into elementary composition steps involving only one horizontal or vertical composition defines a full enumeration of the cells. An enumeration of cells is {\em compatible} if and only if it is the image of the morphism.

A 2-functor $D\to \D$ of such a diagram into a 2--category   is directly  a  population where sources and targets match.
To obtain a formula, one has a 2--cell  $\phi_i$ for each $i$ and uses the following algorithm:
If two consecutively numbered cells $i$ and $i+1$ are composable either   horizontally or vertically, first compose them, this is the new decoration, and replace the number by $i$ and renumber $j$ to $j-1$ for $j>i$, and secondly
 record this in the formula as the morphism
    $\mu_{\phi_i,\phi_{i+1}}$ for horizontal composition respectively $\g_{\phi_i,\phi_{i+1}}$ for vertical composition.
 Repeat as long as possible.
 If there is only one 2--cell labeled by one morphism left, the enumeration is said to be {\em compatible} and the diagram is called {\em valid}.
  This can be seen as a map from the free monoid on 2--morphisms to 2--morphisms.
 The composition is the target of the formula and the source is $\phi_1\bdb \phi_n$,
 where $\bt$ is the free product. Ordered collections of such diagrams then provide maps from the free product to the free product.

It is well-known that the diagrams of a 2--category can be equivalently represented by string diagrams. These are the dual graphs. Given a diagram, there
is a vertex for each 2-cell and an edge for each 1-cell connecting the two vertices of the 2-cells it bounds with the inherited labeling.
The outer edges, those that are on the boundary of only one 2-cell, are leaves or tails. Each edge/tail is directed  induced by the source to target maps. The monoidal product is formal disjoint union and composition is by insertion into vertices. Note, the string diagrams need not be connected, which is why the monoidal product is formal.

\begin{ex}
The   formula $(\phi_3\circ\phi_2\circ\phi_1)\bt (\psi_2\circ \psi_1)$ corresponding to the morphism
given by $\mu_{\phi_3\circ\phi_2\circ\phi_1,\psi_2\circ\psi_1}\circ (\gamma_{\phi_0,\phi_1,\phi_2)}\bt \gamma_{\psi_2,\psi_1}):\phi_3\bt\phi_2\bt\phi_1\bt \psi_2\bt \psi_1\to (\phi_3\circ\phi_2\circ\phi_1)\bt (\psi_2\circ \psi_1)$ is given in Figure~\ref{fig:2cat} (A) with the decomposition given by first doing the vertical composition and then the horizontal decomposition.
If the first step in the composition is given by (B) and then the standard decomposition the formula reads $((\phi_3\circ\phi_2)\ot \psi_2)\circ(\phi_1\ot \psi_1)$. This has a different source and enumeration, namely $\phi_3\bt \phi_2\bt \psi_2\bt \phi_1\bt \psi_1$.
 The formula obtained by evaluating a different first horizontal composition as in (C) and then using the standard decomposition is $(\phi_3\ot \psi_2)\circ((\phi_2\circ\phi_1)\ot \psi_1)$ which again has a different source.
\end{ex}

\begin{figure}[h]
  \begin{subfigure}[c]{0.30\linewidth}
    \centering
    \begin{tikzcd}
      \bullet & \bullet & \bullet
      \arrow[""{name=L1}, curve={height=-30pt}, from=1-1, to=1-2]
      \arrow[""{name=L2}, curve={height=-10pt}, from=1-1, to=1-2]
      \arrow[""{name=L3}, curve={height=10pt}, from=1-1, to=1-2]
      \arrow[""{name=R1}, curve={height=-20pt}, from=1-2, to=1-3]
      \arrow[""{name=R2}, from=1-2, to=1-3]
      \arrow[""{name=R3}, curve={height=20pt}, from=1-2, to=1-3]
      \arrow[""{name=L4}, curve={height=30pt}, from=1-1, to=1-2]
      \arrow["\psi_1" marking, rotate=90, draw=none, from=R1, to=R2]
      \arrow["\phi_2" marking, rotate=90, draw=none, from=L2, to=L3]
      \arrow["\phi_3" marking, rotate=90, draw=none, from=L3, to=L4]
      \arrow["\phi_1" marking, rotate=90, draw=none, from=L1, to=L2]
      \arrow["\psi_2" marking, rotate=90, draw=none, from=R2, to=R3]
    \end{tikzcd}
    \begin{tikzpicture}[baseline=(current bounding box.center)]
      \tikzmath{\h = 0.8;}
      \node (4a) at (-0.4,4*\h) {};
      \node (4b) at (0.4,4*\h) {};
      \node(3a)[draw, minimum size=5mm] at (-0.4, 3*\h) {};
      \node(2b)[draw, minimum size=5mm] at (0.4, 2.5*\h) {};
      \node(2a)[draw, minimum size=5mm] at (-0.4, 2*\h) {};
      \node(1b)[draw, minimum size=5mm] at (0.4, 1.5*\h) {};
      \node(1a)[draw, minimum size=5mm] at (-0.4, 1*\h) {};
      \node(0a) at (-0.4, 0*\h) {};
      \node(0b) at (0.4, 0*\h) {};
      \begin{scope}[on background layer]
        \draw (4a) -- (3a) -- (2a) -- (1a) -- (0a);
        \draw (4b) -- (2b) -- (1b) -- (0b);
      \end{scope}
    \end{tikzpicture}
    \caption{}
  \end{subfigure}
  \begin{subfigure}[c]{0.30\linewidth}
    \centering
    \begin{tikzcd}
      \bullet & \bullet & \bullet
      \arrow[""{name=L1R1}, bend left = 60, from=1-1, to=1-3]
      \arrow[""{name=L2}, curve={height=-10pt}, from=1-1, to=1-2]
      \arrow[""{name=L3}, curve={height=10pt}, from=1-1, to=1-2]
      \arrow[""{name=R2}, from=1-2, to=1-3]
      \arrow[""{name=R3}, curve={height=20pt}, from=1-2, to=1-3]
      \arrow[""{name=L4}, curve={height=30pt}, from=1-1, to=1-2]
      \arrow["\phi_1 \otimes \psi_1" marking, rotate=90, draw=none, from=L1R1, to=1-2]
      \arrow["\psi_2" marking, rotate=90, draw=none, from=R2, to=R3]
      \arrow["\phi_2" marking, rotate=90, draw=none, from=L2, to=L3]
      \arrow["\phi_3" marking, rotate=90, draw=none, from=L3, to=L4]
    \end{tikzcd}
    \begin{tikzpicture}[baseline=(current bounding box.center)]
    \tikzmath{\h = 0.8;}
    \node (4a) at (-0.4,4*\h) {};
    \node (4b) at (0.4,4*\h) {};
    \matrix(level3) [draw, fill=white, column sep={0.8cm,between origins}, nodes={draw=none, minimum size=5mm}, inner sep = 0] at (0,3*\h)
    {
      \node(3a) {}; & \node(2b){}; \\
    };
    \node(2a)[draw, minimum size=5mm] at (-0.4, 2*\h) {};
    \node(1b)[draw, minimum size=5mm] at (0.4, 2*\h) {};
    \node(1a)[draw, minimum size=5mm] at (-0.4, 1*\h) {};
    \node(0a) at (-0.4, 0*\h) {};
    \node(0b) at (0.4, 0*\h) {};
    \begin{scope}[on background layer]
      \draw (4a) -- (3a) -- (2a) -- (1a) -- (0a);
      \draw (4b) -- (2b) -- (1b) -- (0b);
    \end{scope}
  \end{tikzpicture}
    \caption{}
  \end{subfigure}
  \begin{subfigure}[c]{0.30\linewidth}
    \centering
    \begin{tikzcd}
      \bullet & \bullet & \bullet
      \arrow[""{name=L1}, curve={height=-30pt}, from=1-1, to=1-2]
      \arrow[""{name=L2}, curve={height=-10pt}, from=1-1, to=1-2]
      \arrow[""{name=L3}, curve={height=10pt}, from=1-1, to=1-2]
      \arrow[""{name=R1}, curve={height=-20pt}, from=1-2, to=1-3]
      \arrow[""{name=R2}, from=1-2, to=1-3]
      \arrow[""{name=L4R3}, bend right = 60, from=1-1, to=1-3]
      \arrow["\psi_1" marking, rotate=90, draw=none, from=R1, to=R2]
      \arrow["\phi_2" marking, rotate=90, draw=none, from=L2, to=L3]
      \arrow["\phi_3 \otimes \psi_2" marking, rotate=90, draw=none, from=1-2, to=L4R3]
      \arrow["\phi_1" marking, rotate=90, draw=none, from=L1, to=L2]
    \end{tikzcd}
    \begin{tikzpicture}[baseline=(current bounding box.center)]
    \tikzmath{\h = 0.8;}
    \node (4a) at (-0.4,4*\h) {};
    \node (4b) at (0.4,4*\h) {};
    \node(3a)[draw, minimum size=5mm] at (-0.4, 3*\h) {};
    \node(2b)[draw, minimum size=5mm] at (0.4, 2*\h) {};
    \node(2a)[draw, minimum size=5mm] at (-0.4, 2*\h) {};
    \matrix(level1) [draw, fill=white, column sep={0.8cm,between origins}, nodes={draw=none, minimum size=5mm}, inner sep = 0] at (0,1*\h)
    {
      \node(1a) {}; & \node(1b){}; \\
    };
    \node(0a) at (-0.4, 0*\h) {};
    \node(0b) at (0.4, 0*\h) {};
    \begin{scope}[on background layer]
      \draw (4a) -- (3a) -- (2a) -- (1a) -- (0a);
      \draw (4b) -- (2b) -- (1b) -- (0b);
    \end{scope}
  \end{tikzpicture}
  \caption{}
  \end{subfigure}
  \caption{\label{fig:2cat}A diagram (A) and two different first compositions (B) and (C) represented on the left by pasting diagrams and on the right by string diagrams.}
\end{figure}

\begin{prop}
\label{prop:associnv}
The compatible enumeration of the morphism, as cells in the diagram or letters, is a complete invariant of the associativity relations. Thus, compatibly enumerated populated valid diagrams are in bijection with reduced valid formulas.
\end{prop}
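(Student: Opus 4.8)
The plan is to establish the bijection by showing that two compatibly enumerated valid diagrams yield the same reduced valid formula if and only if they represent the same morphism, with the enumeration serving as the separating invariant. First I would make precise what ``compatible enumeration'' records: by the construction in \S\ref{par:algpar}, a decomposition of a pasting diagram into elementary (single horizontal or single vertical) composition steps produces, via the stated algorithm, a word in the generators $\gamma$ and $\mu$, and simultaneously a total order on the $2$--cells. The key observation is that the \emph{source} $\phi_1 \bdb \phi_n$ reads off the cells in their enumerated order, so the enumeration is literally the data of which $\phi_i$ occupies which $\bt$--slot together with the branching pattern of the formula.

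Next I would address the two directions of the bijection separately. For the forward direction, that equal morphisms give equal compatible enumerations: the only relations available in the fully bracketed setting are the associativity relations for $\circ$ and for $\ot$ (the interchange relation is precisely what distinguishes the \emph{different} enumerations, as illustrated by Figure~\ref{fig:2cat}, so it does not identify two diagrams with the \emph{same} source). I would argue that an associativity relation, say $(\phi_i \circ \phi_j)\circ \phi_k = \phi_i\circ(\phi_j\circ\phi_k)$, is exactly a rebracketing that collapses to the same reduced formula: in the flow-chart/tree picture of \S\ref{par:standard}, associativity acts by collapse and expansion of black edges (for $\ot$) and white edges (for $\circ$), and contracting all such monochromatic edges yields the unique reduced b/w bipartite tree. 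Thus passing to reduced formulas quotients out exactly the associativity relations while preserving the enumeration of leaves, which is the stated complete invariant.

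For the reverse direction, that equal reduced formulas give equal morphisms, I would observe that a reduced valid formula determines its morphism unambiguously: the target is the evaluation $eval(f)(\phi_1\kdk\phi_n)$ and the generator word is recovered by the read-off procedure in Remark following the definition of valid formulas, namely one inserts $\g$ for each $\circ$ and $\mu$ for each $\ot$ with iteration dictated by the nesting. Since any two fully bracketed representatives of a single reduced formula differ only by associativity rebracketings, and these are relations in $\cF(S)$, they yield the same morphism. Combining both directions gives the claimed bijection between compatibly enumerated valid diagrams and reduced valid formulas.

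The main obstacle I anticipate is the bookkeeping in the forward direction: one must verify that \emph{every} pair of distinct elementary decompositions of a fixed diagram that produce the \emph{same} source ordering are connected by a sequence of associativity moves alone (never requiring interchange), and conversely that any interchange move genuinely alters the enumeration. This is where Figure~\ref{fig:2cat} is doing real work---cases (B) and (C) show interchange changing the source from $\phi_3\bt\phi_2\bt\phi_1\bt\psi_2\bt\psi_1$ to a different ordering---so the technical heart is confirming that the enumeration is \emph{insensitive} to associativity but \emph{sensitive} to interchange. I expect this to reduce to the confluence of the monochromatic edge-contraction rewriting on b/w bipartite trees, which is standard (cf.\ the references to \cite{woods,del}) but must be stated carefully so that the leaf-enumeration is manifestly preserved under every move.
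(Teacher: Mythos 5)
Your proposal is correct and follows essentially the same route as the paper's proof: associativity is quotiented out by passing to reduced formulas (the monochromatic edge contractions in the b/w tree picture), and completeness holds because the only remaining relation, interchange, visibly permutes the source $\phi_1\bdb\phi_n$ and hence changes the enumeration. The paper states this more tersely, but the logical content --- including the identification of interchange as the sole potential obstruction and the observation that it alters the source --- is the same.
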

\begin{proof}
The fact that it is an invariant of the associativity transformations is straightforward. The fact that it is complete follows from the fact that the only other relation possibly resulting in different  (de)compositions is the interchange relation \eqref{eq:intinter}. This however changes enumeration of the order and hence the source thereby changing the morphism.
\end{proof}

\begin{df}[Standard numbering]
Given a non--enumerated diagram,
there is a standard way to give a decomposition by prioritizing the vertical compositions over the horizontal ones. For this, first perform all possible vertical compositions then all possible horizontal ones, continue in this manner. This gives a particular formula and enumeration which we call the {\em standard} enumeration.
An irreducible formula is in {\em standard} form or a {\em standard formula}, if its cell diagram has a standard enumeration.
\end{df}

An example is given in Figure~\ref{fig:boxcell}.
In general, there may be more than one compatible enumeration. This is due to the interchange relation.
In particular, \eqref{interchangeeq} corresponds to the two enumerations of cells
$
 \begin{tikzcd}
      \bullet & \bullet & \bullet
      \arrow[""{name=L1}, curve={height=20pt}, from=1-1, to=1-2]
      \arrow[""{name=L2},  from=1-1, to=1-2]
      \arrow[""{name=L3}, curve={height=-20pt}, from=1-1, to=1-2]
      \arrow[""{name=R1}, curve={height=-20pt}, from=1-2, to=1-3]
      \arrow[""{name=R2}, from=1-2, to=1-3]
      \arrow[""{name=R3}, curve={height=20pt}, from=1-2, to=1-3]
      \arrow["2" marking, rotate=90, draw=none, from=R1, to=R2]
      \arrow["1" marking, rotate=90, draw=none, from=L2, to=L3]
      \arrow["3" marking, rotate=90, draw=none, from=L1, to=L2]
      \arrow["4" marking, rotate=90, draw=none, from=R2, to=R3]
    \end{tikzcd}
$
respectively
$
 \begin{tikzcd}
      \bullet & \bullet & \bullet
      \arrow[""{name=L1}, curve={height=20pt}, from=1-1, to=1-2]
      \arrow[""{name=L2},  from=1-1, to=1-2]
      \arrow[""{name=L3}, curve={height=-20pt}, from=1-1, to=1-2]
      \arrow[""{name=R1}, curve={height=-20pt}, from=1-2, to=1-3]
      \arrow[""{name=R2}, from=1-2, to=1-3]
      \arrow[""{name=R3}, curve={height=20pt}, from=1-2, to=1-3]
      \arrow["3" marking, rotate=90, draw=none, from=R1, to=R2]
      \arrow["1" marking, rotate=90, draw=none, from=L2, to=L3]
      \arrow["2" marking, rotate=90, draw=none, from=L1, to=L2]
      \arrow["4" marking, rotate=90, draw=none, from=R2, to=R3]
    \end{tikzcd}
$
 see also Figure~\ref{fig:interchangebox}.
 This means that the left hand side of \eqref{interchangeeq} is in standard form, while the right hand side is not.

\begin{lem}
\label{lem:interchangeenum}
All compatible enumerations are obtained from the standard enumeration by permutations stemming from the interchange relation.
\end{lem}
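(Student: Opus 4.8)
The plan is to reduce the statement to a sorting argument whose only engine is the interchange relation, using Proposition~\ref{prop:associnv} as the essential input: for a fixed valid diagram a compatible enumeration is a complete invariant of the associativity relations, and the only relation among the generators $\g$ and $\mu$ that alters a compatible enumeration (equivalently, the source word $\phi_1\bt\cdots\bt\phi_n$) is the internal interchange relation \eqref{eq:intinter}. It therefore suffices to prove that every compatible enumeration can be brought to the standard one by finitely many interchange moves; since these moves are invertible, this exhibits all compatible enumerations as a single orbit of the standard enumeration.

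First I would characterize the standard enumeration intrinsically: it is distinguished among compatible enumerations by the property that its execution never performs a horizontal composition $\mu$ while a vertical composition $\g$ is enabled. Given an arbitrary compatible enumeration $e$ that is not standard, I look at the first stage of its execution at which a $\mu$ is carried out even though a $\g$ is available. The key local claim is that the horizontally composed pair and the enabled vertical pair sit in the tight $2\times2$ configuration of \eqref{eq:intinter}: the two cells being composed horizontally each possess the vertical partner that enables $\g$, and matching of sources and targets in the box/pasting diagram forces these four cells into an interchange block. Applying \eqref{eq:intinter} exchanges ``horizontal-then-vertical'' for ``vertical-then-horizontal,'' producing a new compatible enumeration $e'$ agreeing with $e$ outside this block.

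To see that iterating terminates at the standard enumeration, I would attach to each compatible enumeration the nonnegative integer statistic counting the pairs consisting of a horizontal composition and a vertical composition for which the horizontal one is executed first although the vertical one was already enabled before it. This statistic vanishes exactly on the standard enumeration, and each interchange move of the above type moves one vertical composition strictly earlier past one horizontal composition, decreasing the statistic by one. Hence after finitely many moves the statistic is zero and we have reached the standard enumeration, which is the assertion.

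The main obstacle is the local $2\times2$ claim in the second paragraph: one must verify that an out-of-order adjacent horizontal/vertical pair is always precisely in the configuration to which \eqref{eq:intinter} applies---that the two horizontally composed cells genuinely share the interlocking vertical neighbors with compatible boundaries, rather than forming an unrelated pair of simultaneously enabled operations. This is exactly where the tightness of the box diagram and the definition of a valid (compatible) diagram must be invoked; once the block structure is secured, both the swap and the termination bookkeeping are routine. Conceptually, this is the statement that compatible enumerations are the linear extensions of the diagram's dependency poset and that interchange moves realize the adjacent transpositions of incomparable elements, so that the classical connectivity of linear extensions underlies the explicit sorting.
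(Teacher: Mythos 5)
Your overall strategy---sort an arbitrary compatible enumeration into the standard one by interchange moves and control termination with an inversion statistic---is a genuinely more constructive route than the paper's, which simply observes that associativity and interchange are the \emph{only} relations between the two compositions and that associativity is enumeration--invariant by Proposition~\ref{prop:associnv}, so that any two decompositions with the same evaluation must have enumerations differing by interchange. However, the step you yourself flag as the main obstacle, the local $2\times 2$ claim, is not merely unproven but false as stated, and this breaks the bookkeeping downstream. Consider the $2\times 3$ grid of cells $c_{ij}$ ($i$ the row, $j$ the column) with the row--major compatible enumeration coming from the decomposition $\bigl[(c_{11}\ot c_{12})\ot c_{13}\bigr]\circ\bigl[(c_{21}\ot c_{22})\ot c_{23}\bigr]$. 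Its first horizontal composition is $\mu_{c_{11},c_{12}}$, performed while, e.g., the vertical composition of $c_{13}$ with $c_{23}$ is enabled; these four cells are not in the configuration of \eqref{eq:intinter}, and no instance of the interchange relation on \emph{atomic} cells applies. The move actually required is \eqref{eq:intinter} applied to the \emph{composite} cells $c_{11}\ot c_{12}$, $c_{13}$, $c_{21}\ot c_{22}$, $c_{23}$, which induces a block permutation of the enumeration (moving $c_{13}$ past the pair $c_{21},c_{22}$) rather than an adjacent transposition, and changes your inversion count by more than one. Relatedly, your closing heuristic is misleading: compatible enumerations are a \emph{proper} subset of the linear extensions of the dependency order (for the same grid, $c_{11},c_{12},c_{21},c_{13},c_{22},c_{23}$ is a linear extension but is not compatible, as no laminar formula tree realizes it), so the classical connectivity of linear extensions by adjacent transpositions cannot be invoked.

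The repair is to argue on the formula tree rather than on the execution sequence: compare the outermost node of a given decomposition with that of the standard one, use \eqref{eq:intinter} applied to the (composite) children together with associativity to convert an outermost $\mu$ into an outermost $\g$ whenever the diagram admits a vertical splitting, and then recurse into the children; termination is by induction on the number of cells rather than by a global inversion count. Once reformulated this way your argument goes through and yields an explicit algorithm, which is more information than the paper's proof provides; but as written, the claim that every out-of-order pair sits in a tight $2\times 2$ block of \eqref{eq:intinter} and that each move decreases the statistic by exactly one does not hold.
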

\begin{proof}
This is clear, since the interchange relation is the only relation other than associativity between the operations of horizontal and vertical composition.
\end{proof}

\begin{df}
We call a diagram  horizontally or ---in the case of $\underline{\M}$--- $\ot$-decomposable if its string diagram is disconnected. The horizontal irreducible, respectively $\ot$-irreducible, components are consequently the connected components of the string diagram.
\end{df}

In terms of the original diagram as a graph, this means that by removing a vertex, the irreducible diagram remains connected.
Dually, one can think of a reducible diagram as merged from irreducible ones by merging vertices.
Another related graphical  realization can be found in   \cite{JoyalStreetGeotensor}.
For the diagrams of Figure~\ref{fig:2cat}, (A) is horizontally reducible while (B) and (C) are horizontally irreducible. This can be read off from the string diagrams in Figure~\ref{fig:2cat}.

\label{par:boxpar}
Reinterpreting the 2--category as a double category, the only vertical morphism is $id_*=\unit$.
Hence one obtains a box diagram of tight decomposable rectangles by expanding the nodes to vertical lines with segments according to the 2--cells.
This is closely related to the considerations of \cite{Dunn,Brinkmeier,BFSV} in that there is a non--Sigma operad structure which is a quotient of  the suboperad of decomposable cubes of tight little 2--cubes, see Remark~\ref{rmk:decompcube}.
``Tight'' means that the little squares fill out the entire cube and ``decomposable''
means precisely that the configuration is in the image of the two basic operations, see below. The quotient is by isotopies of moving line segments.
Conversely, given such a diagram, collapsing the horizontal line segments yields a two--cell diagram,
see Figure~\ref{fig:boxcell}. We will now make this precise.

\begin{figure}[h]
  \begin{subfigure}[c]{0.24\linewidth}
  \centering
  \begin{tikzpicture}[scale=0.9]
    \tikzstyle{dot}=[draw, circle, inner sep=2pt, fill=black]
    \tikzmath{\w = 1.2;}
    \useasboundingbox (-0.25,-0.25) rectangle (3*\w + 0.25, 4 + 0.25);
    \draw (0*\w,4) rectangle (2*\w,3);
    \node at (1*\w, 3.5) {$\phi_1$};
    \draw (2*\w,4) rectangle (3*\w,3);
    \node at (2.5*\w, 3.5) {$\phi_2$};
    \draw (0*\w,3) rectangle (1*\w,1);
    \node at (0.5*\w, 2) {$\phi_3$};
    \draw (1*\w,3) rectangle (3*\w,2);
    \node at (2*\w, 2.5) {$\phi_4$};
    \draw (1*\w,2) rectangle (3*\w,1);
    \node at (2*\w, 1.5) {$\phi_5$};
    \draw (0*\w,1) rectangle (3*\w,0);
    \node at (1.5*\w, 0.5) {$\phi_6$};
    \node[dot] at (0*\w,3) {};
    \node[dot] at (0*\w,1) {};
    \node[dot] at (1*\w,3) {};
    \node[dot] at (1*\w,2) {};
    \node[dot] at (1*\w,1) {};
    \node[dot] at (2*\w,3) {};
    \node[dot] at (3*\w,3) {};
    \node[dot] at (3*\w,2) {};
    \node[dot] at (3*\w,1) {};
  \end{tikzpicture}
  \caption{}
  \end{subfigure}
  \hfill
  \begin{subfigure}[c]{0.24\linewidth}
  \begin{tikzpicture}[scale=0.9]
    \tikzstyle{dot}=[draw, circle, inner sep=2pt, outer sep=1pt, fill=black]
    \tikzmath{\w = 1.2;}
    \useasboundingbox (-0.25,-0.25) rectangle (3*\w + 0.25, 4 + 0.25);
    \node at (1*\w, 3.5) {$\phi_1$};
    \node at (2.5*\w, 3.5) {$\phi_2$};
    \node at (0.5*\w, 2) {$\phi_3$};
    \node at (2*\w, 2.5) {$\phi_4$};
    \node at (2*\w, 1.5) {$\phi_5$};
    \node at (1.5*\w, 0.5) {$\phi_6$};
    \node[dot] (a) at (0*\w,2) {};
    \node[dot] (b) at (1*\w,2) {};
    \node[dot] (c) at (2*\w,3) {};
    \node[dot] (d) at (3*\w,2) {};
    \draw[-to, in=135, out=90, looseness=2] (a) to (c);
    \draw[-to, in=90, out=75, looseness=2.5] (c) to (d);
    \draw[-to, in=120, out=60, looseness=1] (a) to (b);
    \draw[-to, in=180, out=60, looseness=1] (b) to (c);
    \draw[-to, in=120, out=0, looseness=1] (c) to (d);
    \draw[-to,] (b) to (d);
    \draw[-to, in=-120, out=-60, looseness=1.5] (b) to (d);
    \draw[-to, in=-120, out=-60, looseness=1] (a) to (b);
    \draw[-to, in=-90, out=-90, looseness=1.75] (a) to (d);
  \end{tikzpicture}
  \caption{}
 \end{subfigure}
   \begin{subfigure}[c]{0.24\linewidth}
    \centering
    \begin{tikzpicture}[scale=.6]
      \tikzstyle{mor}=[draw, circle, inner sep=2pt]
      \tikzstyle{iso}=[draw, circle, inner sep=2pt, fill=black]
      \tikzmath{\h = -0.6;}
      \tikzmath{\w = 0.8;}
      \node [iso] (1a) at (-0.75*\w, 1*\h) {};
      \node [iso] (1b) at (1*\w, 1*\h) {};
      \node [mor] (2a) at (-0.75*\w, 2*\h) {};
      \node [mor] (2b) at (1*\w, 2*\h) {};
      \node [iso] (3a) at (-1.5*\w, 3*\h) {};
      \node [iso] (3b) at (0*\w, 3*\h) {};
      \node [iso] (3c) at (1*\w, 3*\h) {};
      \node [mor] (4a) at (-1.5*\w, 4*\h) {};
      \node [mor] (4b) at (0.5*\w, 4*\h) {};
      \node [iso] (5)  at (0.5*\w, 5*\h) {};
      \node [mor] (6)  at (0.5*\w, 6*\h) {};
      \node [iso] (7a) at (-1.5*\w, 7*\h) {};
      \node [iso] (7b) at (0.5*\w, 7*\h) {};
      \node [mor] (8)  at (-0.5*\w, 8*\h) {};
      \node [iso] (9)  at (-0.5*\w, 9*\h) {};
      \draw (1a) -- (2a); \draw (1b) -- (2b);
      \draw (2a) -- (3a); \draw (2a) -- (3b); \draw (2b) -- (3c);
      \draw (3a) -- (4a); \draw (3b) -- (4b); \draw (3c) -- (4b);
      \draw (4a) -- (7a);
      \draw (4b) -- (5) -- (6) -- (7b);
      \draw (7a) -- (8); \draw (7b) -- (8);
      \draw (8) -- (9);
      \node (Guide1L) at (-2.25*\w, 1*\h) {};
      \node (Guide1R) at (1.75*\w, 1*\h) {};
      \node (Guide2L) at (-2.25*\w, 3*\h) {};
      \node (Guide2R) at (1.75*\w, 3*\h) {};
      \node (Guide3L) at (-2.25*\w, 7*\h) {};
      \node (Guide3R) at (1.75*\w, 7*\h) {};
      \draw[dashed] (Guide1L.center) to (Guide1R.center);
      \draw[dashed] (Guide2L.center) to (Guide2R.center);
      \draw[dashed] (Guide3L.center) to (Guide3R.center);
    \end{tikzpicture}
    \caption{}
  \end{subfigure}
  \begin{subfigure}[c]{0.24\linewidth}
    \centering
    \begin{tikzpicture}[scale=.6]
      \tikzstyle{mor}=[draw, circle, inner sep=2pt]
      \tikzstyle{iso}=[draw, circle, inner sep=2pt, fill=black]
      \tikzmath{\h = -0.6;}
      \tikzmath{\w = 0.8;}
      \node [iso] (1) at (-0.2*\w, 1*\h) {};
      \node [mor] (2a) at (-1.5*\w, 2*\h) {};
      \node [mor] (2b) at (1*\w, 2*\h) {};
      \node [iso] (3) at (-0.2*\w, 3*\h) {};
      \node [mor] (4a) at (-1.5*\w, 4*\h) {};
      \node [mor] (4b) at (0.5*\w, 4*\h) {};
      \node [iso] (5)  at (0.5*\w, 5*\h) {};
      \node [mor] (6)  at (0.5*\w, 6*\h) {};
      \node [iso] (7) at (-0.5*\w, 7*\h) {};
      \node [mor] (8)  at (-0.5*\w, 8*\h) {};
      \node [iso] (9)  at (-0.5*\w, 9*\h) {};
      \draw (1) -- (2a); \draw (1) -- (2b);
      \draw (2a) -- (3); \draw (2a) -- (3); \draw (2b) -- (3);
      \draw (3) -- (4a); \draw (3) -- (4b); \draw (3) -- (4b);
      \draw (4a) -- (7);
      \draw (4b) -- (5) -- (6) -- (7);
      \draw (7) -- (8); \draw (7) -- (8);
      \draw (8) -- (9);
      \node (Guide1L) at (-2.25*\w, 1*\h) {};
      \node (Guide1R) at (1.75*\w, 1*\h) {};
      \node (Guide2L) at (-2.25*\w, 3*\h) {};
      \node (Guide2R) at (1.75*\w, 3*\h) {};
      \node (Guide3L) at (-2.25*\w, 7*\h) {};
      \node (Guide3R) at (1.75*\w, 7*\h) {};
      \draw[dashed] (Guide1L.center) to (Guide1R.center);
      \draw[dashed] (Guide2L.center) to (Guide2R.center);
      \draw[dashed] (Guide3L.center) to (Guide3R.center);
    \end{tikzpicture}
    \caption{}
  \end{subfigure}
  \caption{\label{fig:boxcell} (A) The box diagram, (B) 2--cell, (C) planar directed b/w bipartite graph and  (D) suspension graph  realization described in Definition~\ref{df:planar-suspension} of the formula $f=(\phi_6\circ(\phi_3\ot (\phi_5\circ \phi_4)))\circ (\phi_1\ot \phi_2)$ in the standard enumeration. The underlying diagram is horizontally irreducible and has only one compatible enumeration}
\end{figure}

\begin{df}
 A {\em basic box diagram} is a class of decomposable
arrangements of boxes (little squares) obtained iteratively by starting with a unit square and subdividing  horizontally or vertically
 modulo   isotopies on the vertical and horizontal line segments.
 {\em The horizontal line segments can be moved} but are not allowed to cross each other. One also may not break triple crossing points or ``T-junctions'' but one {\em is} allowed to move the horizontal line segments to possibly align in a four point crossing and to break  such a crossing, compare Figure~\ref{fig:movesegments}.
{\em The vertical line segments may be moved}, and are not allowed to not cross each other {\em or} any of the intersection points, see Figure~\ref{fig:movesegments}.
 Given a box diagram, one obtains a cell diagram by shrinking all horizontal lines, see Figure~\ref{fig:boxcell} for an example.

A {\em generalized   box diagram} is  a class of box diagrams up to isomorphism,
 where  the horizontal isotopies are {\em not allowed to break four point intersections}, aka.\ cross, or ``+''--intersections,  or to create them.
Such a diagram is called {\em generic} if it has only ``T-intersections''.

\begin{figure}[h]
    \centering
    \begin{tikzpicture}[scale=.6]
\node [style=none] (0) at (-1, 3) {};
\node [style=none] (1) at (-1, 2.25) {};
\node [style=none] (2) at (-1, 1.25) {};
\node [style=none] (3) at (-1, 0.5) {};
\node [style=none] (4) at (0, 0.5) {};
\node [style=none] (5) at (1, 0.5) {};
\node [style=none] (6) at (1, 1.25) {};
\node [style=none] (7) at (1, 2.25) {};
\node [style=none] (8) at (1, 3) {};
\node [style=none] (9) at (0, 3) {};
\node [style=none] (10) at (0, 2.25) {};
\node [style=none] (11) at (0, 1.75) {};
\node [style=none] (12) at (0, 1.25) {};
\node [style=none] (13) at (1, 1.75) {};
\node [style=none] (14) at (2, 3) {};
\node [style=none] (15) at (2, 2.25) {};
\node [style=none] (16) at (2, 1.25) {};
\node [style=none] (17) at (2, 0.5) {};
\node [style=none] (18) at (3, 0.5) {};
\node [style=none] (19) at (4, 0.5) {};
\node [style=none] (20) at (4, 1.25) {};
\node [style=none] (21) at (4, 2.25) {};
\node [style=none] (22) at (4, 3) {};
\node [style=none] (23) at (3, 3) {};
\node [style=none] (24) at (3, 2.25) {};
\node [style=none] (25) at (3, 2.25) {};
\node [style=none] (26) at (3, 1.25) {};
\node [style=none] (27) at (4, 2.25) {};
\node [style=none] (28) at (2, -0.5) {};
\node [style=none] (29) at (2, -1.25) {};
\node [style=none] (30) at (2, -2.25) {};
\node [style=none] (31) at (2, -3) {};
\node [style=none] (32) at (3, -3) {};
\node [style=none] (33) at (4, -3) {};
\node [style=none] (34) at (4, -2.25) {};
\node [style=none] (35) at (4, -1.25) {};
\node [style=none] (36) at (4, -0.5) {};
\node [style=none] (37) at (3, -0.5) {};
\node [style=none] (38) at (3, -1.25) {};
\node [style=none] (39) at (3, -1.25) {};
\node [style=none] (40) at (3, -2.25) {};
\node [style=none] (41) at (4, -1.25) {};
\node [style=none] (42) at (-4, 3) {};
\node [style=none] (43) at (-4, 2.25) {};
\node [style=none] (44) at (-4, 1.25) {};
\node [style=none] (45) at (-4, 0.5) {};
\node [style=none] (46) at (-3, 0.5) {};
\node [style=none] (47) at (-2, 0.5) {};
\node [style=none] (48) at (-2, 1.25) {};
\node [style=none] (49) at (-2, 2.25) {};
\node [style=none] (50) at (-2, 3) {};
\node [style=none] (51) at (-3, 3) {};
\node [style=none] (52) at (-3, 2.25) {};
\node [style=none] (53) at (-3, 1.25) {};
\node [style=none] (54) at (-3, 1.25) {};
\node [style=none] (55) at (-2, 1.25) {};
\node [style=none] (56) at (-4, -0.5) {};
\node [style=none] (57) at (-4, -1.25) {};
\node [style=none] (58) at (-4, -2.25) {};
\node [style=none] (59) at (-4, -3) {};
\node [style=none] (60) at (-3, -3) {};
\node [style=none] (61) at (-2, -3) {};
\node [style=none] (62) at (-2, -2.25) {};
\node [style=none] (63) at (-2, -1.25) {};
\node [style=none] (64) at (-2, -0.5) {};
\node [style=none] (65) at (-3, -0.5) {};
\node [style=none] (66) at (-3, -1.25) {};
\node [style=none] (67) at (-3, -2.25) {};
\node [style=none] (68) at (-3, -2.25) {};
\node [style=none] (69) at (-2, -2.25) {};
\node [style=none] (70) at (1.5, 1.75) {$\leadsto$};
\node [style=none] (71) at (3, 0) {\rotatebox{-90}{$\leadsto$}};
\node [style=none] (72) at (-3, 0) {\rotatebox{-90}{$\leadsto$}};
\node [style=none] (73) at (-1.5, 1.75) {\rotatebox{180}{$\leadsto$}};
\draw (0.center) to (1.center);
\draw (1.center) to (2.center);
\draw (2.center) to (3.center);
\draw (3.center) to (4.center);
\draw (4.center) to (5.center);
\draw (5.center) to (6.center);
\draw (7.center) to (8.center);
\draw (8.center) to (9.center);
\draw (9.center) to (0.center);
\draw (7.center) to (13.center);
\draw (13.center) to (6.center);
\draw (13.center) to (11.center);
\draw (10.center) to (1.center);
\draw (12.center) to (2.center);
\draw (9.center) to (10.center);
\draw (10.center) to (11.center);
\draw (11.center) to (12.center);
\draw (12.center) to (4.center);
\draw (14.center) to (15.center);
\draw (15.center) to (16.center);
\draw (16.center) to (17.center);
\draw (17.center) to (18.center);
\draw (18.center) to (19.center);
\draw (19.center) to (20.center);
\draw (21.center) to (22.center);
\draw (22.center) to (23.center);
\draw (23.center) to (14.center);
\draw (21.center) to (27.center);
\draw (27.center) to (20.center);
\draw (27.center) to (25.center);
\draw (24.center) to (15.center);
\draw (26.center) to (16.center);
\draw (23.center) to (24.center);
\draw (24.center) to (25.center);
\draw (25.center) to (26.center);
\draw (26.center) to (18.center);
\draw (28.center) to (29.center);
\draw (29.center) to (30.center);
\draw (30.center) to (31.center);
\draw (31.center) to (32.center);
\draw (32.center) to (33.center);
\draw (33.center) to (34.center);
\draw (35.center) to (36.center);
\draw (36.center) to (37.center);
\draw (37.center) to (28.center);
\draw (35.center) to (41.center);
\draw (41.center) to (34.center);
\draw (41.center) to (39.center);
\draw (38.center) to (29.center);
\draw (40.center) to (30.center);
\draw (38.center) to (39.center);
\draw (39.center) to (40.center);
\draw (40.center) to (32.center);
\draw (42.center) to (43.center);
\draw (43.center) to (44.center);
\draw (44.center) to (45.center);
\draw (45.center) to (46.center);
\draw (46.center) to (47.center);
\draw (47.center) to (48.center);
\draw (49.center) to (50.center);
\draw (50.center) to (51.center);
\draw (51.center) to (42.center);
\draw (49.center) to (55.center);
\draw (55.center) to (48.center);
\draw (55.center) to (53.center);
\draw (52.center) to (43.center);
\draw (54.center) to (44.center);
\draw (51.center) to (52.center);
\draw (52.center) to (53.center);
\draw (53.center) to (54.center);
\draw (54.center) to (46.center);
\draw (56.center) to (57.center);
\draw (57.center) to (58.center);
\draw (58.center) to (59.center);
\draw (59.center) to (60.center);
\draw (60.center) to (61.center);
\draw (61.center) to (62.center);
\draw (63.center) to (64.center);
\draw (64.center) to (65.center);
\draw (65.center) to (56.center);
\draw (63.center) to (69.center);
\draw (69.center) to (62.center);
\draw (69.center) to (67.center);
\draw (66.center) to (57.center);
\draw (68.center) to (58.center);
\draw (65.center) to (66.center);
\draw (66.center) to (67.center);
\draw (67.center) to (68.center);
\end{tikzpicture}
\caption{\label{fig:movesegments} Different boxes obtained by moving the horizontal line segments. In the top row, moving the line segments into different non--generic positions is an equality in basic diagrams. The diagrams are different as generalized box diagrams.
If the line segments are aligned, one can perform horizontal compositions changing the diagram as indicated. The diagram in the bottom row are neither equivalent as basic nor as generalized box diagrams. The steps correspond to the (de)compositions in Figure \protect{\ref{fig:2cat}}}
\end{figure}
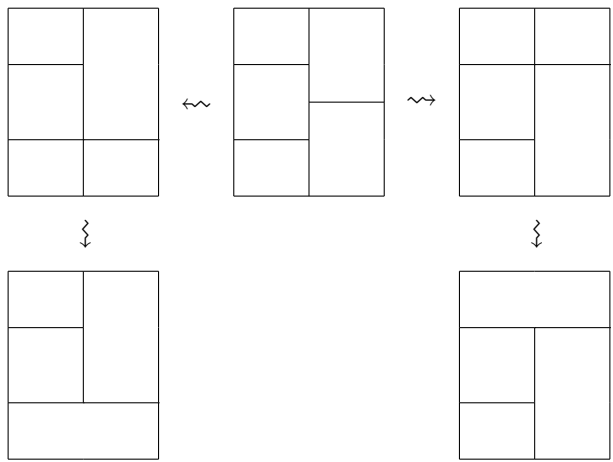

A {\em population} or coloring is given by labeling the boxes with 2--morphisms of $\twocat$ ---that is morphisms of $\M$ if $\twocat=\underline{\M}$. A population is {\em valid} if the compositions line up correctly.  That is for each full line segment, the tensor product of the target boundaries of the two cells bordering this line segment from above agree with the tensor product of the sources of the 2--cells bordering this line segment from below.
\end{df}
For example, for the population to be valid in Figure~\ref{fig:boxcell}, the following equality must hold  $t(\phi_1)\ot t(\phi_2)= s(\phi_3) \ot s(\phi_4)$.

As box diagrams, the generators $\mu_{\phi_1,\phi_2}$ and $\g_{\phi_1,\phi_2}$ are  a box with a vertical line respectively a box with a  horizontal line, and the following valid decorations:
\begin{equation}
\label{eq:boxgenerators}
    \begin{tikzpicture}[baseline={(current bounding box.center)}]
    \tikzmath{\u = 0.9;}
    \tikzmath{\s = 3.5*\u;}
    \draw    (0*\u, 0.5*\u) rectangle (2*\u, 1.5*\u);
    \draw    (1*\u, 0.5*\u) -- (1*\u, 1.5*\u);
    \node at (0.5*\u, 1*\u) {$\phi_1$};
    \node at (1.5*\u, 1*\u) {$\phi_2$};
    \node at ({(2*\u)/2 + (0.5*\u + 1*\s)/2}, 1*\u) {and};
    \draw    (0.5*\u + 1*\s, 0*\u) rectangle (1.5*\u + 1*\s, 2*\u);
    \draw    (0.5*\u + 1*\s, 1*\u ) -- (1.5*\u + 1*\s, 1*\u);
    \node at (1*\u + 1*\s, 1.5*\u) {$\phi_1$};
    \node at (1*\u + 1*\s, 0.5*\u) {$\phi_2$};
  \end{tikzpicture}
\end{equation}
Or just unlabeled boxes in the case of unpopulated diagrams.

Being decomposable means tautologically that it stems from an iteration as above.
The recursive recognition definition is that there is either a horizontal of a vertical line which cuts the diagram into two nonempty parts each of which has the same property.
The  box diagrams obtained in this way are decomposable
by definition  and valid formulas are  obtained by this  iterated substitution.
The ambiguity in bracketing  operations leading to  multiple parallel horizontal or vertical line segments in the choice is taken care of by associativity of composition respectively  the strict monoidal structure.
The basic   box diagrams   also already incorporate the interchange relation, while the generalized box diagram encodes the different sides differently
--- see Figure~\ref{fig:interchangebox}. The following is straightforward:

\begin{figure}[h]
  \begin{tikzpicture}[baseline={(current bounding box.center)}]
    \tikzmath{\u = 0.8;}
    \tikzmath{\s = 3*\u;}
    \draw (0*\u + 2*\s, 0*\u) rectangle (2*\u + 2*\s, 2*\u);
    \draw (0*\u + 2*\s, 1*\u ) -- (2*\u + 2*\s, 1*\u);
    \draw[blue, very thick] (1*\u + 2*\s, 0*\u ) -- (1*\u + 2*\s, 2*\u);
    \node at (0.5*\u + 2*\s, 1.5*\u) {1};  \node at (1.5*\u + 2*\s, 1.5*\u) {3};
    \node at (0.5*\u + 2*\s, 0.5*\u) {2};  \node at (1.5*\u + 2*\s, 0.5*\u) {4};
    \node at ({(2*\u+0*\s)/2+(0*\u+1*\s)/2}, 1*\u) {$\rotatebox{180}{$\leadsto$}$};
    \draw (0*\u + 1*\s, 0*\u) rectangle (2*\u + 1*\s, 2*\u);
    \draw (0*\u + 1*\s, 1*\u ) -- (2*\u + 1*\s, 1*\u);
    \draw (1*\u + 1*\s, 0*\u ) -- (1*\u + 1*\s, 2*\u);
    \draw (1*\u + 1*\s, 0*\u ) -- (1*\u + 1*\s, 2*\u);
    \node at ({(2*\u+1*\s)/2+(0*\u+2*\s)/2}, 1*\u) {$\leadsto$};
    \draw (0*\u + 0*\s, 0*\u) rectangle (2*\u + 0*\s, 2*\u);
    \draw[blue, very thick] (0*\u + 0*\s, 1*\u ) -- (2*\u + 0*\s, 1*\u);
    \draw (1*\u + 0*\s, 0*\u ) -- (1*\u + 0*\s, 2*\u);
    \node at (0.5*\u + 0*\s, 1.5*\u) {1};  \node at (1.5*\u + 0*\s, 1.5*\u) {2};
    \node at (0.5*\u + 0*\s, 0.5*\u) {3};  \node at (1.5*\u + 0*\s, 0.5*\u) {4};
  \end{tikzpicture}
  \caption{\label{fig:interchangebox} Different enumerations corresponding to the different decomposition related by the interchange relation. The basic non--enumerated box diagrams agree while the generalized box diagrams differ as on the left the horizontal line segments may not be moved separately in the generalized case, but in both cases, they may be on the right.}
\end{figure}
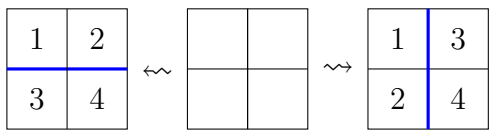

\begin{rmk}\label{rmk:decompcube}
Substitution into the labeled basic/generalized box diagrams form a non--Sigma colored operad structure and hence the morphisms of a non--Sigma Feynman category. There is a morphism of such operads, functor of Feynman categories, from generalized box diagrams to basic box diagrams by classes under the coarser equivalence.
A substitution by composition with a generator \eqref{eq:boxgenerators} in the box picture divides one rectangle with a horizontal of a vertical line.

Similarly, enumerated unlabeled   basic/generalized box diagrams form an operad under substitution. Decomposable tight squares
 form a suboperad of the decomposable squares of \cite{Dunn,Brinkmeier} and there is a morphism
operad to the enumerated basic/box configurations, by  taking the isotopy classes.
\end{rmk}

\begin{prop}\mbox{}
\label{prop:box2cell}
\begin{enumerate}
    \item
Contracting horizontal line segments provides a surjective morphism of the colored non--Sigma operad of basic/generalized box diagrams (with a valid population) to 2--cell diagrams (with a valid population). In particular, the boxes and 2--cells are in bijection.
\item This morphism is an isomorphism for basic box diagrams.
\item
 The morphism from generalized box diagrams to 2--cells factors through the morphism from generalized to basic box diagrams.
 \item The fiber over a fixed 2--cell diagram is the possible decompositions of it.
\end{enumerate}

\end{prop}
\begin{proof} All but the last  statement are straightforward.
For the fiber, we see that these are the generalized box diagrams that  map to the same basic box diagram. These are enumerated by a choice of matching movable horizontal line segments with neighboring movable line segments. Matching or not matching corresponds  to the choice of a side in an application of an interchange relation. This is the only relation leading to different compositions, and thus the last statement follows.
\end{proof}

An enumeration
of the boxes of a basic box diagram is called \emph{compatible/standard} if the enumeration on the 2-cell side is compatible/standard.

\begin{ex}  The box diagram and 2--cell diagram for the morphism
\begin{equation}
\g_{\phi_6,\phi_1\ot \phi_2,\phi_3\ot \phi_5\circ \phi_4}\circ(
id_{\phi_6}\ot \g_{\phi_1\ot \phi_2,\phi_3\ot \phi_5\circ \phi_4})\circ ( id_{\phi_6}\ot id_{\phi_5}\ot
\gamma(\phi_4,\phi_5)\ot id_{\phi_2}\ot id_{\phi_1}):\phi\to \tilde{\phi}
\end{equation}
where $\phi=\phi_6\bt\phi_3\bt \phi_5\bt \phi_4\bt \phi_1\bt \phi_2$
which corresponds to the formula
$
f=\phi_6\circ(\phi_3\ot (\phi_5\circ \phi_4))\circ (\phi_1\ot \phi_2)
$
are given in  Figure~\ref{fig:boxcell}.

%

\end{ex}

\begin{cor}
\label{cor:enumerate}
Fully bracketed formulas are equivalent to both:
 (1) enumerated basic box diagrams, and (2) generalized box diagrams (with standard enumeration).
\end{cor}
\begin{proof}
Fixing an iteration, a fully bracketed formula is equivalent to specifying a compatible enumeration for the corresponding 2--cell diagram by Proposition~\ref{prop:associnv}.
This is the same as a compatibly enumerated basic box diagram by Proposition~\ref{prop:box2cell}. Hence the first claim follows. The last claim follows from Proposition~\ref{prop:box2cell}.
\end{proof}

The following algorithm produces a generalized box diagram from a pre--formula.
First write the corresponding diagram by drawing horizontal and vertical line segments.
Then take its class as a generalized box diagram.
Given such a diagram or suspension graph, inversely use the algorithm to remove full horizontal or vertical line segment
and note the operations as $\circ$ or $\ot$ in the pre--formula.
This process will terminate in just one box by decomposablility.

\begin{df}
\label{df:planar-suspension}
The  {\em planar b/w graph} of a basic decomposable box diagram is the planar directed bipartite b/w graph, whose black vertices are the horizontal line segments, whose white vertices are the boxes and there is an edge if a the horizontal sub--line--segment is the boundary of the box. The edges are directed from top to bottom.

The {\em suspension graph} of a generalized decomposable  box diagram is the planar directed bipartite b/w graph whose white vertices are the boxes, whose black vertices are the full line segments. There is an edge if the full line segment has a sub--line segment which is part of the box and the direction is down.
\end{df}

If the box diagram is decorated,
there is an induced decoration on the graph and a direction  from source to target.
For instance, the cell arrangement in Figures \ref{fig:2cat} and \ref{fig:boxcell} (B) translates to the planar b/w graph and the suspension b/w graph in Figure~\ref{fig:boxgraph} both orientated downward.

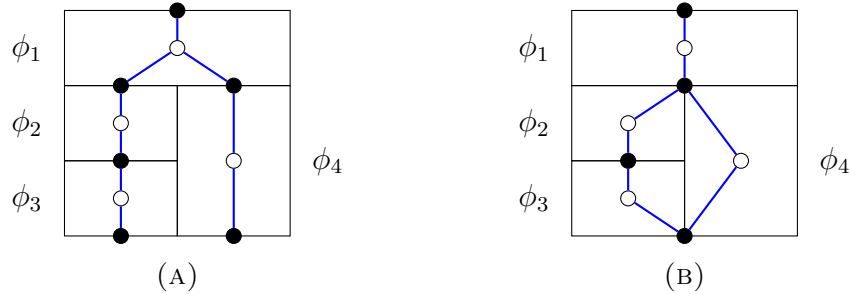
\begin{figure}[h]
  \begin{subfigure}[c]{0.4\linewidth}
    \centering
  \begin{tikzpicture}[scale=.8]
    \tikzstyle{mor}=[draw, circle, inner sep=2pt]
    \tikzstyle{iso}=[draw, circle, inner sep=2pt, fill=black]
    \tikzstyle{line}=[blue, thick]
    \tikzmath{\w = 1.5;}
    \node[iso] (iso01) at (1*\w, 3) {};
    \draw (0*\w,3) rectangle (2*\w,2);
    \node[mor] (phi1) at (1*\w, 2.5) {};
    \node at (-0.5, 2.5) {$\phi_1$};
    \node[iso] (iso12) at (0.5*\w, 2) {};
    \node[iso] (iso14) at (1.5*\w, 2) {};
    \draw (0*\w,2) rectangle (1*\w,1);
    \node[mor] (phi2) at (0.5*\w, 1.5) {};
    \node at (-0.5, 1.5) {$\phi_2$};
    \node[iso] (iso23) at (0.5*\w, 1) {};
    \draw (0*\w,1) rectangle (1*\w,0);
    \node[mor] (phi3) at (0.5*\w, 0.5) {};
    \node at (-0.5, 0.5) {$\phi_3$};
    \draw (1*\w,2) rectangle (2*\w,0);
    \node[mor] (phi4) at (1.5*\w, 1) {};
    \node at (2*\w+0.5, 1) {$\phi_4$};
    \node[iso] (iso35) at (0.5*\w, 0) {};
    \node[iso] (iso45) at (1.5*\w, 0) {};
    \draw[line] (iso01) -- (phi1);
    \draw[line] (phi1) -- (iso12) -- (phi2) -- (iso23) -- (phi3) -- (iso35);
    \draw[line] (phi1) -- (iso14) -- (phi4) -- (iso45);
  \end{tikzpicture}
  \caption{}
 \end{subfigure}
    \begin{subfigure}[c]{0.4\linewidth}
    \centering
    \begin{tikzpicture}[scale=.8]
    \tikzstyle{mor}=[draw, circle, inner sep=2pt]
    \tikzstyle{iso}=[draw, circle, inner sep=2pt, fill=black]
    \tikzstyle{line}=[blue, thick]
    \tikzmath{\w = 1.5;}
    \node[iso] (iso01) at (1*\w, 3) {};
    \draw (0*\w,3) rectangle (2*\w,2);
    \node[mor] (phi1) at (1*\w, 2.5) {};
    \node at (-0.5, 2.5) {$\phi_1$};
    \node[iso] (iso124) at (1*\w, 2) {};
    \draw (0*\w,2) rectangle (1*\w,1);
    \node[mor] (phi2) at (0.5*\w, 1.5) {};
    \node at (-0.5, 1.5) {$\phi_2$};
    \node[iso] (iso23) at (0.5*\w, 1) {};
    \draw (0*\w,1) rectangle (1*\w,0);
    \node[mor] (phi3) at (0.5*\w, 0.5) {};
    \node at (-0.5, 0.5) {$\phi_3$};
    \draw (1*\w,2) rectangle (2*\w,0);
    \node[mor] (phi4) at (1.5*\w, 1) {};
    \node at (2*\w+0.5, 1) {$\phi_4$};
    \node[iso] (iso345) at (1*\w, 0) {};
    \draw[line] (iso01) -- (phi1) -- (iso124);
    \draw[line] (iso124) -- (phi2) -- (iso23) -- (phi3) -- (iso345);
    \draw[line] (iso124) -- (phi4) -- (iso345);
  \end{tikzpicture}
  \caption{}
  \end{subfigure}
  \caption{\label{fig:boxgraph} A box diagram with its the dual b/w string diagram (A) and dual suspension graph (B).}
\end{figure}
A more complicated diagram is given in Figure~\ref{fig:boxcell}.
The removal of line segments that determine the decomposition translate as follows:
\begin{enumerate}
    \item In the planar black and white graph, bivalent black vertices are those that border bivalent white vertices correspond to freely movable line segments. These are the ones than can be removed in a (de)composition.
    \item If line segments are moved to ``+'' crossings, the effect on the graphs is a merging of two black vertices.

    \item In the b/w graph, the black vertices are decorated by the source and target objects.
    \item in the suspension graph, the black vertices are the tensor product of the objects belonging to the sub--line--segments of the full line segment.
    \end{enumerate}
The algorithm of \S\ref{par:algpar} in this language reads:
    \begin{enumerate}
\item In the b/w graph, the removal of a movable  horizontal line segment in the decomposable box diagram  corresponds to the deletion of the corresponding bivalent black vertex and the contraction of the edge between the two neighboring white vertices.
If there is a labeling, the new label is the composition.

The removal of a horizontal line segment corresponds to the merging of two parallel neighboring black-white-black strands where the white vertex is bivalent. If there is a labeling, upon merging labels are tensored.

\item In the suspension graph, a horizontal line piece can  be removed in the generalized box diagram, if it belongs to a bivalent black vertex. Then the procedure is the same as above.

A removal of a vertical line piece is possible if two white vertices are suspended from the same two black vertices and are neighbors. The removal is then the merging of these two white vertices.
\end{enumerate}

Thus the graph captures the different sides of the interchange equation.
In the basic picture, one has to align the line segments horizontally and then compose to do the $\mu$ operations first, compare with the string diagrams of Figure~\ref{fig:2cat}. The  choice of the prep-step is recorded by the suspension graph.
The basic b/w bipartite graph $\bwb$ is the graph with a black input and output and one white vertex.
\begin{prop}
\label{prop:dualgraph}
A directed planar b/w bipartite graph is the dual of a basic diagram if it has black input and output vertices
and the algorithm above terminates with $\bwb$.
We will call these {\em composition graphs}.

A directed b/w bipartite graph is the dual of a generalized box diagram if it has black input and output vertices
and the algorithm above terminates with $\bwb$.
We will call these {\em suspension graphs}.

\end{prop}
\begin{proof}
That the images are of this type is clear. Conversely, building up the diagram by running the algorithm backwards produces the box diagram.
\end{proof}

The algorithm allows us to read off the bracketed formula by recording  the operations as $\circ$ or $\ot$ in the pre--formula.

\begin{cor}
The set of pre--formulas is bijective with respect to the  decomposable  suspension graphs. This yields an isomorphism of non--Sigma FCs.
\end{cor}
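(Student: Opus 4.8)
The plan is to realize the asserted bijection as a composite that passes through generalized box diagrams, for which both halves of the correspondence are already in place. On one hand, the algorithm of \S\ref{par:algpar} (spelled out in the remark preceding the statement) turns a pre--formula into a decomposable generalized box diagram by drawing a horizontal line segment for each $\circ$ and a vertical line segment for each $\ot$, nested according to the bracketing. On the other hand, Definition--Proposition \ref{dfprop:dualgraph} identifies the decomposable suspension graphs as precisely the duals of generalized box diagrams. Composing ``draw line segments'' with ``pass to the dual graph'' gives the map from pre--formulas to decomposable suspension graphs whose bijectivity and FC--functoriality are to be proved.

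First I would verify the set--theoretic bijection. Each constituent map is invertible: dualizing a generalized box diagram to its suspension graph is reversible by Definition--Proposition \ref{dfprop:dualgraph}, since running the reduction algorithm backwards reconstructs the box diagram uniquely, and the passage from a pre--formula to a generalized box diagram is the purely structural (unpopulated) shadow of Corollary \ref{cor:enumerate}. The explicit inverse on suspension graphs is the reduction of \S\ref{par:algpar} read on graphs via Remark \ref{rmk:bwgraphs}: repeatedly remove a full movable line segment, i.e.\ contract the corresponding bivalent black vertex and contract or merge the adjacent white vertices, recording the removed operation as $\circ$ or $\ot$; decomposability is exactly the hypothesis that this terminates in the basic graph $\bwb$, which matches the recursive build--up of a pre--formula from its two binary operations. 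Hence the two maps are mutually inverse on the nose.

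Next I would promote this to an isomorphism of non--Sigma Feynman categories. The pre--formula side is the non--Sigma FC whose basic $(n,1)$ morphisms are the irreducible pre--formulas, with composition given by substitution into an input slot and monoidal product given by the formal $\bt$--conjunction (equivalently, ordered forests of the associated trees). The suspension--graph side carries the non--Sigma colored operad, hence FC, structure of substitution into generalized box diagrams from Remark \ref{rmk:decompcube}. It then remains to check that the bijection intertwines the two structures, i.e.\ that substituting one pre--formula into a slot of another corresponds under ``draw and dualize'' to inserting the associated box diagram into the indicated box---equivalently, grafting the suspension graph at the corresponding white vertex---and that $\bt$--conjunction corresponds to disjoint union of graphs. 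On sources and targets the match is immediate, since the source of a pre--formula is obtained by replacing every $\circ$ and $\ot$ with $\bt$, which is exactly the input/output black--vertex datum of the graph.

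The principal obstacle lies in this last compatibility with substitution, and specifically in the handling of the movable line segments that separate generalized from basic box diagrams. When one box diagram is inserted into a box of another, previously independent full line segments may come to align into ``$+$'' crossings---the merging of black vertices described in Remark \ref{rmk:bwgraphs}---and one must check that the generalized--box equivalence, which is precisely the relation forbidding the breaking or creating of such crossings, is preserved verbatim by substitution, so that neither interchange information is lost nor a spurious decomposition is introduced. Once this bookkeeping is confirmed, functoriality of composition and compatibility with the monoidal product are routine, and the bijection of Corollary \ref{cor:enumerate} together with Definition--Proposition \ref{dfprop:dualgraph} assemble into the claimed isomorphism of non--Sigma FCs.
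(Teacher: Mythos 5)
Your proposal is correct and follows essentially the same route as the paper: the paper obtains the bijection by composing ``draw the line segments'' with dualization (Definition--Proposition \ref{dfprop:dualgraph}) and inverts it by the reduction algorithm that records each removed full line segment as $\circ$ or $\ot$, with the operadic/FC compatibility under substitution already disposed of in the preceding lemma and Remark \ref{rmk:bwsubstitution}. The substitution bookkeeping you flag as the principal obstacle is exactly the content the paper declares ``straightforward'' there, so your write-up is just a more explicit version of the intended argument.
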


\begin{rmk}
\label{rmk:bwsubstitution}
Substitution for the diagrams leads to substitution for the b/w graphs.
For this, a white vertex is replaced by the b/w graph, minus the input and output black vertices. In suspension graphs, these vertices are merged with the corresponding black vertex above and vertex below the white vertex.
In the basic b/w graph, the gluing splits  these vertices according to the labeling.
This depends on the labeling and the combinatorics of the possibilities is, not by chance, that appearing in the compositions of the discretization of the arc operad \cite{KLP,hoch1,hoch2}.
\end{rmk}

\subsection{The structure of $\monplus{\M,P}$ and its localizaions}
Since equivalent categories yield equivalent plus constructions with given equivalences, in the following we {\em assume that $\M$ is strict  monoidal}.
Let  $S=\Mor(\M)$ for the monoidal category $(\M,\ot)$. This has the partially defined binary operation of composition $\circ$ and that of tensor product $\ot$. The free monoid  $S^\bt$ can be identified with the morphisms of the non--symmetric free monoidal category $\M^{\neg\Sigma \bt}$.
A formula is valid if all the compositions $\circ$ can be performed, that is sources and targets line up correctly.
The basic discrete groupoid of objects is $\V=(\M \da \M)^{disc}$.

\begin{prop}
\label{prop:planarmonplusFC}
The category $\spure(\monplus{\M,P})$ is isomorphic to $\cF(S)$ for $S=\Mor(\M)$ and
is part of the corresponding Feynman category $\FF_{\it form}(S)$.
This Feynman category has a proper degree function with the $\g$ and $\mu$ morphisms being generators of degree $1$.
\end{prop}
\begin{proof}
By definition, $\spure(\monplus{\M})$
is generated by $\g_{\phi_0,\phi_1},\mu_{\phi_1,\phi_2}$ and the identities under monoidal product and composition modulo inner associativity and the interchange relation.  In particular, any morphism $\phi\to \tilde\phi$ is given by a sequence of tensor products of these operations.
Such a sequence is in bijective correspondence with a fully bracketed formula. Such a formula is not a unique morphism, but is subject to associativity   according to the relations of Definition~\ref{def:cplus}.
The result follows from the fact that the relations  on both sides are identified by the bijection, and any class of valid formulas corresponds to a reduced formula.
The  degree function is well defined as all relations are homogeneous with respect to this degree.
\end{proof}

 Due to the interchange relation, follows from Proposition \ref{dfprop:frmcubical} by observing that the interchange relation does not change the number of vertices.


%



\begin{cor}
\label{cor:planar}
The morphisms of $\spure(\monplus{\M})$ and the corresponding non--Sigma Feynman category  are isomorphically given by the following non--Sigma colored operads with monoidal structure being ordered disjoint union.

\begin{enumerate}
    \item   Planar planted b/w bipartite trees with valid decorations by morphisms of $\M$ on the leaves, where gluing decorated b/w bipartite trees at leaves gives the operad structure and hence the compositions
    \item Compatibly enumerated 2--cell diagrams with a valid population and substitution. Or equivalently, string diagrams with a valid population and compatible enumeration and substitution.
    \item Compatibly enumerated basic box diagrams with a valid population and substitution. Or dually, composition graphs with a valid decoration and a compatible enumeration. The operad structure is substitution of this type of graph, see Remark~\ref{rmk:bwsubstitution}.
    \item Generalized box diagrams with a valid population and substitution. Or dually, suspension graphs with a valid decoration. The operad structure is substitution of this type of graph, see Remark~\ref{rmk:bwsubstitution}.
\end{enumerate}
\end{cor}

\begin{proof}
This follows from regarding a formula as a flow chart, Proposition~\ref{prop:associnv},
Proposition~\ref{prop:box2cell}, Corollary~\ref{cor:enumerate} and  Proposition~\ref{prop:dualgraph}.
\end{proof}


%




Freely adding the commutators imbues the non--Sigma structures with a free $\SS$-action.
Quotienting out by the interchange relation defines
the symmetric versions of the results above. In this process, entire orbits are identified.
 This works for the 2-cell diagrams and equivalently for the  basic or generalized box diagrams, as well as their dual graphs.


\begin{prop}
Basic box diagrams with standard enumerations of 2-cells  generate the morphisms of $\spure(\monplus{\M})\I_{ext}$ symmetrically monoidally
 that is under the free symmetric product $\bt$. $\spure(\monplus{\M})\I_{ext}$ is part of a  Feynman category with $\V = \Iso(\M\da \M)$ and $\Indec$ given by the set of basic box diagrams with standard enumeration. The length decrease is a proper degree function.

 Isomorphically, the  $(n,1)$-morphisms together with their composition structure  are:
\begin{enumerate}
 \item Composition graphs with labeled and enumerated white vertices.
    \item Pairs $(f,\s)$ of irreducible valid standard formula of arity $n$ and a permutation $\SS_n$ of arity with $\Sn$ acting on $\s$ on the right.
    \item 2--cell diagrams in $\underline{\M}$ with a decoration of cells by elements of  $\M$ and an arbitrary enumeration of the cells. Or,  equivalently the associated string diagrams with a decoration and an arbitrary enumeration.

\end{enumerate}
\end{prop}
\begin{proof}
The proof is in two steps. First adding arbitrary permutations, we obtain pairs $(b,\s)$
of a box diagram $b$ with $n$ boxes in standard enumeration and an element $\s\in\Sn$ where  $\Sn$ acts on the right. This maps to morphisms in  $\spure(\monplus{\M})\I_{\it ext}$ by $\G(b)\circ \s$ where $\G(f=b)$ is the morphism in $\spure(\monplus{\M})$ defined by $b$. By  Proposition~\ref{prop:sigma}, any morphism is of this type and the map is surjective.
Note that  morphisms $\s$ permute the source $\phi_1\bdb \phi_n$ to
$\phi_{\s(1)}\bdb \phi_{\s(n)}$ and there is only one box diagram in standard enumeration in the orbit. This is due to the fact that any use of the interchange relation \eqref{interchangeeq} changes the numbering by a transposition. This is particularly transparent in the b/w tree picture. Due to the same fact any other decomposition is also in the orbit as any formula having the same diagram, but with possibly different compatible enumeration, can be obtained by a repeated application of the interchange relation in the 2--category.
By internal interchange, this corresponds to the pre--composition by a permutation $\s$.
The element $(b,\s)$ is mapped to $\G(b)\s$ and if $b=b'\sigma'$ is the permutation to a standard form $b$, then $\G(b)\s=\G(b')\s'\s$ by internal interchange.
On the other hand, $b$ being in standard form $\G(b)\s=\G(b')\s'$ means that $\s=\s'$ and $b=b'$ so there is only one standard form in each orbit.
Thus, a full orbit of the standard enumeration is given by an arbitrary enumeration.
The rest of the statements again follow from Propositions \ref{prop:associnv} and \ref{prop:box2cell}, Corollary~\ref{cor:enumerate}, and  Proposition~\ref{prop:dualgraph}. That the degree function is cubical follows from
Proposition~\ref{prop:planarmonplusFC}, since the morphisms of $\I_{ext}$ are the only morphisms of degree $0$.
\end{proof}



Adding in the action of $\scs$,
the generators will be classes of generalized box diagrams  (or equivalently suspension graphs)  decorated by morphisms and internal base morphisms, i.e.\ morphisms of the type $P(\s)$.

 \begin{df}
 A  {\em valid   decoration by morphisms and internal base morphisms}
  for a
 {\em generalized} decomposable box diagram,  is a decoration of the boxes by morphisms
  and the {\em full line} segment by base morphisms, that is morphisms of the form $P(\s)$. These have to be compatible
 in the way that the compositions line up correctly. That is, the source of the base morphism is the tensor product the target boundaries of the two cells bordering this line segment from above and the target of the base morphism is the tensor product of the sources of the 2--cells bordering this line segment from below.
For instance in the example in Figure~\ref{fig:boxcell}, the base morphism in the second line from the top will be from $t(\phi_1)\ot t(\phi_2)\to s(\phi_3) \ot s(\phi_4)$. The top and bottom lines only have constraints on the target and the source of the base morphisms respectively.
This translates to the {\em suspension graph} as
  black vertices being decorated by morphisms of the type $P(\sigma)$ and  white vertices by morphisms in a compatible fashion.

  A  {\em valid   decoration by morphisms and internal base morphisms} for a  {\em basic decomposable box diagram} is a decoration of all cells by morphisms of $\M$
 and all  horizontal line segments by base morphisms.  This has to be compatible, so that the tensor product of the base morphisms of the sub--segments satisfy the conditions above.
This translates to the {\em dual b/w string diagram} as
  black vertices being decorated by base morphisms and  white vertices by morphisms in a compatible fashion, and for a {\em 2--cell diagram} as a decoration of a
the 2-cells by morphisms and of the 1-cells by base morphisms, such that the decoration obtained by censuring the decorations of the 1--cells that form the boundary of a 2--cell the decoration is a valid decoration as above.

  \end{df}

 E.g.\ in Figure~\ref{fig:boxcell}, the base morphism decorating the second full interval will be decorated by a base morphism $t(\phi_1) \ot t(\phi_2) \to s(\phi_3) \ot s(\phi_4)$.
 Substituting a morphism $\scs(\psi)$ into $\phi_4$ would yield the composition with the morphism $id\ot P(\s)$ on the top of the box and $id\ot P(\s')$ on the bottom if the outside base morphism of the box being substituted is $\scs$.

 \begin{prop}
 \label{prop:mncpint}
  $\spure(\monplus{\M}\I_{\it int})$ is a non--Sigma Feynman category with $\V = \Iso(\M\da \M)$.
 The basic morphisms of $\spure(\monplus{\M}\I_{\it int})$ can be described by the groupoid colored non--Sigma operad whose generators are generalized box diagrams (or equivalently suspension graphs) with a valid decoration by morphisms and base morphisms.
 \end{prop}

 \begin{proof} Analogously to Proposition~\ref{prop:cplusint},
it follows from inner equivariance that the generators are the suspension graphs decorated as stated. Substituting these generators, composes  outer action by base morphisms which can be pulled through by outer equivariance, where for longer intervals, these are extended by identities as tensor factors on the subintervals that are not part of the box that is being substituted.
The upshot is all the base morphisms can be
associated to the full line segments and this is stable under composition.
\end{proof}


 \begin{thm}
 \label{thm:mncp-equals-FC}
 For a factorizable pointing,
 $\monplus{\M,P}$ is an FC whose basic groupoid of objects is $\V=\Iso(P\da P)$.
 The basic  morphisms of $\monplus{\M,P}$ are equivalently given
 by either (1) box diagrams with a valid decoration by morphisms and base morphisms and an enumeration of their cells,
or (2) composition graphs with a valid decoration by morphisms and base morphisms and an enumeration of their white vertices.
The category is has a proper degree function, with the $\gamma$ and $\mu$ morphisms as degree $1$ generators.

The hyp version is an FC with a proper degree function.
The standard gcp version are Feynman categories with additional generators of degree $-1$ corresponding to the units. The generators are the $\g_{\phi_1,\phi_0}$,  $\g_{\phi_1,\phi_0}$ and $\mu_{\phi_1,\phi_2}$, and $\bar\g_{\phi_1,\phi_0}$ respectively.
 \end{thm}

 \begin{proof}
 If $\M$ has factorizable pointing, then any morphism of $\M$ can be factored as $\G\I_{ext}\I_{int}$ by Corollary~\ref{cor:decomp} and Proposition~\ref{prop:sigma}. The morphisms $\G\I_{ext}$
 are given by box diagrams with a valid decoration by morphisms and an enumeration of the two cells. The decoration is by the morphisms $\I_{int}$ applied to the source.
 As in the proof of Proposition~\ref{prop:mncpint}, the operation of $\I_{int}$ can be represented by
 decorating the boundaries of the boxes compatibly with internal base morphisms.
 The description of graphs again follows from Proposition~\ref{prop:dualgraph}.
  \end{proof}

  \begin{cor}
  \label{cor:gensplit}
  If $\M$ has factorizable pointing, then
 every morphism $\bG$ splits as  $\mu_n (\bG_1\bdb\bG_n)\Perm$, if the box diagram has $n-1$ full vertical lines (top to bottom), equivalently  $n$ is the number of connected components of the composition graph, $\bG_i$ are the morphisms corresponding to the connected components and $\Perm$ is a permutation.
  \end{cor}
 \begin{proof}
   This is clear since the algorithm determining the morphisms will perform the multiplication followed by the last outer base morphism, whose permutations can be pulled to second last place by the factorizations of base morphisms.
   In formulas: $\bG=\Sigma\mu(\bG_1\bdb\bG_n)=\mu \Perm\Sigma'(\bG_1\bdb\bG_n)
 =\mu \Perm(\Sigma_1\bG_1\bdb\Sigma_n\bG_n)
 =\mu (\bG'_{\Perm(1)}  \bdb \bG'_{\Perm(n)})\Perm$
where $\Perm$ is the permutation of the factors determined by $\Sigma=\Perm(\Sigma_1\odo\Sigma_n)\Perm$.
 \end{proof}

 \begin{rmk}
  Note that  if there no assumption of factorizability, there is no concise answer.
If the  pointing is factorizable, then the interchange relations are compatible with the action of the base morphisms, and the decoration of the generalized diagram can be transferred to the basic diagram in whose class it lies. This effectively divides out all the interchange relations.

In the general case, this procedure yields an equivalence relation, dividing out the possible interchange relation after applying base morphisms ---namely those where the base morphisms factors. Then the morphisms of  $\monplus{\M,P}$  are  classes of diagrams where two diagrams are equivalent if there is a compatibly enumerated basic diagram
decorated by morphisms and base morphisms, which maps to both of the generalized decorated diagrams. For example as in Figure~\ref{fig:movesegments}.
 \end{rmk}

 \begin{lem}
 \label{lem:hereditarysplit}
 If $\M$ has a localizable pointing $P$, then if the target $\phi$ of a  morphism $\G:\Phi\to \phi$ in $\monplus{\M,P}$ is $\ot$-decomposable, viz.\ $\phi=\phi_1\odo \phi_n$, then so is $\G$, viz.\ $\G=\G_1\odo \G_n$. That is
 given the solid arrows, the dotted arrows exist
  \begin{equation}
     \begin{tikzcd}
    \Phi_1\bdb \Phi_n\ar[r, dotted, "\mu^{\bt n}"]
    \ar[d,dotted,"(\G_1\bdb \G_n)P"]&\Phi\ar[d,"\G"]\\
   \phi= \phi_1\bdb \phi_n \ar[r,"\mu"]&\phi_1\odo \phi_n
     \end{tikzcd}
 \end{equation}

    In particular, for a \pher{} UFC, each base morphism $\bG$ splits into $n$   irreducible factors if its target is of length $n$.
 \end{lem}

 \begin{proof}
 This follows from the fact that by the conditions, the pullback of $\Phi$ along $\mu$ exists for all generators.
 \end{proof}

 In terms of the decorated box diagrams, this means that they have at least $n-1$ full vertical lines and for the composition graphs, this means that they can be  split into a disjoint union of at least $n$ graphs.

If one does not assume strictness, all arguments
go through  up to equivalence by using a presentation. The results will be independent of the presentation. The argument is parallel to
Lemma~\ref{lem:essential}.



\begin{thm}
\label{thm:remonplusbox}
If $P$ is a fully factorizable pointing of $\M$, then the  $Hom_{\redmonplus{\M,P}}(\phi,\psi)$
are equivalence classes of roofs given by pairs of a decomposition $\phi=\phi_1\odo \phi_n$ and a morphism $\bG$, where the latter can be taken to be a  box diagram with a valid decoration by the morphism $\phi_1\kdk\phi_n$ and a choice of basic morphisms
such that the target is $\psi$.
Or, equivalently, a composition graph with this decoration.

More generally, if the pointing is only factorizable, the above morphisms generate under the monoidal product and composition.
\end{thm}

\begin{proof}
By Proposition~\ref{prop:roof}, if $\M$ is localizable, then the morphisms of $\locmonplus{\M,P}$ are pairs $(\mu_n,\bG)$ where $\bG\in \monplus{\M}$ and where $\mu_n:s(\bG)=\phi_1\bdb\phi_n\to \phi_1\odo\phi_n$ modulo the equivalence relation on roofs given in Lemma~\ref{lem:roof}.
Using Theorem~\ref{thm:mncp-equals-FC}, the first result follows from Proposition \ref{prop:loc-red-equiv}. The second statement is clear.
\end{proof}



\label{par:gcpgenpar}
The unital  versions can also be described in this formalism.
Since the unit was strict, by adding morphisms using $i_{id_X}$, we have vertical identities at our disposal.
Like the proof of Proposition~\ref{prop:alternate}, these create a ``brick wall'' diagram and compose horizontally. In the graph picture, this corresponds to leveling the graph, cf.\ \cite[Appendix B.1.4]{feynmanrep} for the corresponding leveling of trees.
We will use labels $i_\s$ as a morphism decoration to indicate the presence of such a map. E.g.\ in the box diagram for $\gamma_{\phi,\s}\circ (id_\phi\bt i_{\s})$ will have  comprised of two vertically stacked boxes the labels $\phi_1$ and $i_\s$. In particular, as $\g_{\phi,id_{s(\phi)}}\circ ( id_\phi\bt i_{id_{s(\phi)}})=id_\phi$, we can introduce horizontal lines into each box and decorate the new boxes with $i_{s(\phi)}$ or $i_{t(\phi)}$. This means  that the  $\bG$, up  to  permutations and precomposition with $\mu$'s, can be  taken to be the $\gen{\phi}{\s}{n}$. We call this the leveling.

\begin{prop}
\label{prop:mncpgcpgen}
For $\M$ with factorizable pointing,
a basic morphism in $\monplusp{\M,P}$ can be written as
\begin{equation}
    \bG=\gen{\phi}{\sigma}{n}\circ [\mu^{m_1}\bdb \mu^{m_n}]\circ \Perm\circ (i\bt id^\bt)
\end{equation} where $i$ is an inclusion of identity maps $i_{id_X}$ applied to added factors of $\unit$.
That is, a basic morphism is
specified by the data of a composite generator
$\gen{\phi}{\sigma}{n}$ together with decompositions $\phi_l=\phi_l^1\odo \phi_l^{n_1}$ and a permutation $P$,
where  the $\phi_l$ may contain factors of $i_{id_X}$ and $\Perm$ is a permutation rearranging the $\phi_l^j$.
The only relations are moving the insertions of the $i_{id_X}$ which is a choice and  may change the level $l$ of $\phi_l^j$.


 \end{prop}
{\sc \noindent {\sc NB:}} The $\phi_l^j$ decorations do not have any factors of $i_\s$ for $\s\neq id_X$.

\begin{proof}
Given a box diagram, using the unit constraints and the morphisms $i_{t(\phi_i)}$ or $i_{s(\phi_i)}$,
we can insert horizontal lines into each box to make the number of horizontal lines in any two adjacent boxes match. This means that the diagram can be put into a position where it has full horizontal lines (left to right). Using the interchange relation, we can first compose the horizontal and then the vertical compositions.  The horizontal compositions at each level are the $\mu^{n_l}$. These yield the morphisms $\phi_i$ and the base morphisms $\sigma_i$.

With the exception of the units, there are no labels $i_\s$
, so there are no  relations stemming from the unit conditions (Definition~\ref{gcpdef}~(c)), except for the identities.
Thus the statement follows from the definition of $\monplusp{\M,P}$ together with Theorem~\ref{thm:mncp-equals-FC}.
The last statement is an application of the \pher{} condition.
\end{proof}

\section{UFCs, Plus Constructions and Cubical Feynman categories}
\label{par:ufcstructure}

\subsection{Essential Generators}

For UFCs, we are using the standard gcp $P$ and omit this in the notation for their plus constructions. Since all the base morphisms are invertible, we make the identification $\scs=(\s^{-1}\Da \s)$ to make contact with known plus constructions.
We do not assume UFCs to be strict in general, but by the equivalence Theorem \ref{thm:endo}, we could assume a strict model. When working with the non--strict version, the following makes this concrete.

\begin{lem}
\label{lem:essential}
For a  UFC $\FM$, the morphisms $\gamma_{\phi_1,\phi_0}$
 with  $\phi_0,\phi_1 \in \jmath^\ot(\Indec^\ot)$  together with the morphisms $\mu_{\phi_1,\phi_2}$ where $\phi_1,\phi_2\in  \jmath^\ot(\Indec^\ot)$ and the isomorphisms $\sdsphi$ with $\phi\in    \jmath^\ot(\Indec^\ot)$ form a set of essential generators under concatenation and monoidal product of $\monplus{\M}$.
\end{lem}
Here essential means that they generate together with the isomorphisms given by $\jmath^\ot$, so in particular these generate an equivalent subcategory. Thus, in the following, we will assume strictness without loss of generality.

\begin{proof}

We first show that the $\g_{\phi_0,\phi_1}$ with $\phi_0,\phi_1\in \jmath^\ot(\Indec^\ot)$ generate together with the isomorphisms given by $\jmath^{\ot}$.
Given a composable pair $(\phi_0,\phi_1)$, there  are isomorphisms $\sds$ and $\tdt$, such that $\sds(\phi_0)=\phi''_0\in \Indec^\ot$
and $\tdt(\phi_1)=\phi'_1\in \Indec$. These fit into a diagram
\begin{equation}
\xymatrix{
X_0\ar[rr]^{\phi_0}\ar[d]_\s&&X_1\ar[rr]^{\phi_1}\ar[dl]_{\s'}\ar[dr]^\t&&X_2\ar[d]^{\t'}\\
X'_0\ar[r]_{\phi'_0}\ar@(dr,dl)@{-->}[rrr]_{\phi''_0}&X''_1\ar@(dr,dl)[rrr]_{\phi'_1}\ar[rr]_{\t(\s')^{-1}}&&X'_1\ar[r]_{\phi''_1}&X'_2
}
\end{equation}
Setting $\phi'_1=(id\Da \t(\s')^{-1})(\phi'')\in \Indec$, we have $\phi'_1=\phi''_1\t (\s'^{-1})=\t'\phi_1\t^{-1}\t\s'^{-1}=(\s'\Da \t')(\phi_1)$, we find that
\begin{equation}
\begin{aligned}
\g_{\phi_1,\phi_0}&=
(\s\Da \t')^{-1} \g_{\t' \phi_1,\phi_0 \s^{-1}} ((id\Da \t')\ot (\s\ot id))\\
&=(\s\Da \t')^{-1} \g_{\t' \phi_1(\s')^{-1},\s' \phi_0 s^{-1}}  (\s'\Da id)\ot (id\Da \s')
 ((id\Da \t')\ot (\s\ot id))\\
&= (\s\Da \t')^{-1} \g_{\phi'_1,\phi'_0} ( (\s'\Da \t')\ot (\s\Da \s'))
\end{aligned}
\end{equation}
The essential reductions for the other morphisms is analogous.
\end{proof}

\begin{lem}
\label{lem:genstructure}
For a  UFC $\FM$ for  essential generators from the the generators of $\monplus{\M}$ from Lemma \ref{lem:essential} the following generators suffice:
 The $\mu_{\phi_1,\phi_2}$ with $\phi_1,\phi_2\in \jmath(\Indec)$. The $\g_{\phi_0,\phi_1}$ with {\em connected} pairs $(\phi_0,\phi_1)$.
 Note, if the underlying $\M$ is \pher{}, then these are the $\g's$ whose output is in $\jmath(\Indec)$.
 The isomorphisms $\sdsphi$ with $\phi\in \jmath(\Indec)$ and permutations. That is those of the form
     $\sdsphi= (\Sigma\Da \Sigma')(\sdsp{\s_1}{\s'_1}{\phi_1}\odo \sdsp{\s_n}{\s'_n}{\phi_n})$ where each $\s$ and $\s'$ is $\s_i=\s_i^1\odo \s_i^{n_i}$ and $\s'_i=\s_i^{\prime 1}\odo s_i^{\prime m_i}$.


 \begin{proof}
 The statement about the reduction of the generators $\mu_{\phi_1,\phi_2}$ to those with irreducible $\phi_1,\phi_2$ follows from internal associativity.
 The  reduction of the generators $\g_{\phi_1,\phi_2}$ follows from Lemma~\ref{lem:UFCcommonfact}.
  Indeed if $\phi_0\phi_1$ is decomposable, then as $\M$ is a  UFC,
    $\phi_0\phi_1=\bigotimes_{u\in U}\phi_{0,u}\phi_{1,u}$ with $(\phi_{0,u},\phi_{1,u})$ connected.
 In the \pher{} case, this also implies that $\phi_{0,u}\phi_{1,u}$ is in $\Indec$.   By the interchange relations, cf.\ Proposition \ref{prop:loc-red-equiv}, $\gamma_{\phi_0,\phi_1}=\gamma_{\bigotimes_{u\in U}\phi_{0,u},\bigotimes_{u\in U}\phi_{1,u}}=\bigotimes_{u\in U}\gamma_{\phi_{0,u},\phi_{1,u}}$.
The reduction of the isomorphisms follows from the factorizablity of isomorphisms in a UFC.
 First we can factor into $(\Sigma \Da \Sigma)(\sdsp{\s_1}{\s'_1}{\phi_1}\odo \sdsp{\s_n}{\s'_n}{\phi_n})$
 using Remark \ref{rmk:factorizable} and Definition \ref{df:localizable} (1). Finally, applying Definition \ref{df:localizable} (1) to the $\s_i,\s'_i$ and pulling out the permutations yields the result.
 \end{proof}

\end{lem}



\subsection{Structural results for plus constructions of UFCs}
\begin{df}
\label{nonconnectedindex}
If $\M$ is a not-necessarily \pher{} UFC, then the connected components of a composable pair $(\phi_0,\phi_1)$ give a new cospan on the level of indexing of morphisms.
Let $V_0=\index(\phi_0), V_1=\index(\phi_1)$, $W=\index(\phi_1\circ\phi_0)$ and $U$ be the push-out defining the connected components.
Then this defines a co-span {\em the index of $(\phi_0,\phi_1)$ denoted by  $\index(\phi_0,\phi_1)$}.
We define $\depth(\phi_0,\phi_1):=|U|$.
\end{df}
Similarly,
given composable morphisms $(\phi_0,\dots,\phi_{n-1})$:
$\xnelt{X}{\phi}{n}$, fixing bases $X_i\simeq \bigotimes_{s\in S_i}*_s$ and $\phi_i\simeq \bigotimes_{v\in V_i}\phi_v$, then using indexing, we get the first two rows of a push--out diagram \eqref{pushoutdiageq}.
We will call the isomorphism class of the full diagram $\index(\phi_0\kdk\phi_n)$.
Let $U$ be the full--pushout, that is the combination of all the equivalence relations,
then by iterating the construction above, we obtain composable morphisms $(\phi_{u,0},\dots,\phi_{u,n-1})$ with $\phi_i\simeq \bigotimes_{u\in U} \phi_{u,i}$ and if
$\phi=\phi_{n-1}\circ \dots \circ \phi_{0}$ and
$\phi_u=\phi_{n-1,u}\circ \dots \circ \phi_{u,0}$, then
$\phi_i\simeq \bigotimes_{u\in U} \phi_{u,i}$.
\begin{equation}
\label{pushoutdiageq}
\begin{tikzcd}[column sep = small]
&V_0&&V_1&\cdots&V_{n-1}\\
S_0\ar[ur]&&\ar[ul]S_1\ar[ur]&&
    \ar[ul]S_2\ar[ur, dashed, no head]&\cdots&\ar[ul]S_n
   \end{tikzcd}
\end{equation}

We call the $\phi_u$ the {\em connected components of the composition}  and
$(\phi_{u,0},\dots,\phi_{u,n-1})$ the {\em connected components of the composable functions} $(\phi_0,\dots,\phi_n)$.
A sequence $(\phi_0,\dots,\phi_n)$ is called {\em connected}, if it only has only one connected component. This is equivalent to the statement that $\index(\phi_0 \codco \phi_n)$ is connected,
that is $|U|=1$. More generally, we define
$\depth(\phi_0,\dots,\phi_n)=|U|$.
Note that all the morphisms are uniquely fixed by choosing bases. Different choices of bases, however, only change these morphisms by isomorphisms, so that the condition that the connected components are irreducible is independent of basis.

\begin{prop}
\label{prop:connected}
    A  UFC is \pher{} if and only if the connected components $\phi_u$ of any set of composable morphisms $(\phi_0,\dots,\phi_{n-1})$ are irreducible.
\end{prop}

\begin{proof}
  If the connected components are irreducible for any composable sequence, then they are for pairs. The reverse direction can be done by induction. For two morphisms this follows from the same proposition. If the statement is true for composable morphisms
  $(\phi_{0},\dots,\phi_{m})$, $2\leq m\leq n$, consider a composable sequence
  $(\phi_{0},\dots,\phi_{n})$, this gives rise to the pair $(\phi_0,\phi'_1=\phi_{n}\circ\dots\circ \phi_1)$.
  Now the connected components $\phi'_{1,u'}$ of $\phi'_1$, with $u'\in U'$ which is the pushout of the $S_i,V_i,i=1,\dots,n$  are irreducible and are thus part of a diagram in Figure~\ref{Udecompeq} which in turn shows that  the connected components corresponding to the final push--out $U$, that is the $\phi_u$ are irreducible.
  \end{proof}

By a similar argument:
\begin{lem}
 A sequence $(\phi_0,\dots,\phi_n)$ is connected if and only if all $0\leq i_1\leq\dots,\leq i_k\leq n$:
 $(\phi_0\codco \phi_{i_1},\phi_{i_1+1}\codco \phi_{i_2},\cdots,\phi_{i_k+1}\codco \phi_n)$ are connected. \qed
\end{lem}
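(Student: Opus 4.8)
The plan is to route everything through the single combinatorial invariant attached to a composable sequence, namely the full push--out $U$ of the zig--zag \eqref{pushoutdiageq}; by Definition \ref{nonconnectedindex} the sequence $(\phi_0,\dots,\phi_n)$ is connected exactly when $|U|=1$. First I would record that $U$ is the middle object of the iterated composite cospan $\index(\phi_n)\circ\cdots\circ\index(\phi_0)$ formed in $\Cospan$: each factor is the cospan $S_i\to V_i\leftarrow S_{i+1}$, composition in $\Cospan$ is by push--out, and composing all factors simultaneously reproduces precisely the combined equivalence relation that defines $U$. This identification is legitimate because $\index$ is a strong monoidal functor $\M\to\Cospan$ (Proposition \ref{hereditary UFC-Cospan-index}), so it carries composites of morphisms to composites of cospans, and because in the hereditary case the connected components are irreducible (Proposition \ref{prop:connected}), so that the index of a block composite is exactly the push--out of its sub--zig--zag rather than a further refinement of it.

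With this in place the heart of the argument is simply associativity of composition in $\Cospan$. Replacing $(\phi_0,\dots,\phi_n)$ by the coarsened sequence $(\phi_0\codco\phi_{i_1},\dots,\phi_{i_k+1}\codco\phi_n)$ inserts parentheses into the product $\index(\phi_n)\circ\cdots\circ\index(\phi_0)$: the $j$--th block is sent to the composite cospan $\index(\phi_{i_j})\circ\cdots\circ\index(\phi_{i_{j-1}+1})$, and the coarsened sequence's own full push--out $U'$ is the middle object of the so--bracketed composite. Since $\Cospan$ is a category, re--bracketing a composite does not alter it, so $U'=U$ and in particular $|U'|=|U|$. Both implications then fall out: if $(\phi_0,\dots,\phi_n)$ is connected then $|U|=1$, hence $|U'|=1$ for every coarsening, so each coarsened sequence is connected; conversely the original sequence is itself the finest coarsening (cut at every index, $k=n$ and $i_j=j-1$, making each block a single $\phi_j$), so if all coarsenings are connected then so is $(\phi_0,\dots,\phi_n)$. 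Alternatively one can run this as an induction on the number of blocks, reducing to the two--fold case handled by Proposition \ref{prop:conected=irred}, in direct parallel with the proof of Proposition \ref{prop:connected}, which is the ``similar argument'' alluded to.

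The step I expect to be the genuine obstacle is justifying that the total push--out of the zig--zag equals the push--out computed in stages --- first composing within each block and then gluing along the shared boundary index sets $S_{i_j+1}$. The slick route is to not verify this push--out ``Fubini'' property by hand but to let functoriality of $\index$ together with associativity in $\Cospan$ do the work, as above. The single place where the hereditary hypothesis is indispensable is the identification of $\index(\psi_j)$ with the block push--out $U_{[i_{j-1}+1,\,i_j]}$: without irreducibility of connected components (Proposition \ref{prop:connected}) a block composite could factor further into irreducibles, spuriously increasing $|U'|$ beyond $|U|$, and the equivalence would break --- so I would make sure to invoke Proposition \ref{prop:connected} precisely at that point.
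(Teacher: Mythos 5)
Your argument is correct, and it is worth noting how it sits relative to the paper. The paper prints no proof for this lemma: it says only ``by a similar argument,'' pointing at the proof of Proposition \ref{prop:connected}, which is an induction on the length of the sequence reducing to composable pairs via the diagram \eqref{Udecompeq}. Your primary route is instead the one the paper itself only gestures at in the remark \emph{following} the lemma: push everything into $\Cospan$ via $\index$, identify $U$ with the middle object of the iterated composite cospan, and let associativity of composition in $\Cospan$ do all the work, so that the full push--out of any coarsening coincides with $U$. This is cleaner than the induction (it handles all coarsenings at once rather than two--block cuts), and your observation that the finest coarsening $k=n$, $i_j=j-1$ recovers the original sequence correctly disposes of the converse direction as a triviality. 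The induction you mention as a fallback is what the paper's ``similar argument'' actually refers to; either route is fine.

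Your flag on the hereditary hypothesis is not pedantry --- it is essential, and the statement as printed does not include it. Without heredity the forward implication genuinely fails: in the category $\Ties$ of \S\ref{par:counterex}, take the composable pair of \eqref{eq:counterex} followed by the identity with all strands untied. The three--term sequence has $|U|=1$, but the coarsening obtained by composing the first two factors has middle object $\index(\phi_1\circ\phi_0)$ consisting of \emph{three} irreducible singleton ties, and the resulting push--out has three elements. The point of failure is exactly the one you isolate: $\index(\psi_j)$ is the irreducible decomposition, which only agrees with the block push--out $U_{[i_{j-1}+1,\,i_j]}$ when connected components are irreducible, i.e.\ in the hereditary case (Proposition \ref{prop:connected}). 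So your proof is complete for hereditary UFCs, which is the setting the surrounding results (Proposition \ref{hereditary UFC-Cospan-index} and the remark after the lemma) presuppose.
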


\begin{rmk}
  Alternatively, this argument can be done by using  $\index$. In $\Cospan$, the corresponding push--out diagram  must have at least one one--point set on the level 2 and above. If there is such an object, it propagates, since it is not possible to have the push--out $\{*\}\leftarrow \emptyset\rightarrow\{*\}$  after the first level. The only cospan whose middle set is the empty set is the unit.
(This is analogous to the fact that $S^0$ is not connected, but all higher spheres are.)
This means that when composing sequences, composing with something connected makes the result connected.
It is possible that composing two non--connected sequences, the result is connected.
This mirrors the connectivity of cobordisms, see Remark \ref{rmk:cospanintepretation} \eqref{item:cobordism}.
\end{rmk}


Using the functor $\index:\M\to \Cospan$, see \S\ref{par:indexing}, we can also phrase a criterion for a morphism $\bG$ to be $\ot$--irreducible for UFCs.
By the  functionality of the plus construction, there is a graph $\index(\bG)$   obtained functorially from $\bG$. This is given by redecorating the underlying box diagram by $\index(\phi_i)$ and $\index(\sigma_j)$ for the decoration by morphisms and basic morphisms.
In analogy with Proposition~\ref{indecprop}, we have:
\begin{lem}
For a \pher{} UFC,
the underlying graph of a composition graph is the same graph obtained as the image of the morphisms induced by the functor $\index$ and hence connectedness can be checked on the level of source and target objects.\qed
\end{lem}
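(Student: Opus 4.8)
The plan is to deduce both assertions from the functoriality of the plus construction applied to the indexing functor, after which everything becomes bookkeeping in the combinatorics of the underlying graph.

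First I would invoke Proposition \ref{hereditary UFC-Cospan-index}, which for a chosen presentation produces a strong monoidal indexing functor $\index:\M\to\Cospan$ extending to a morphism of hereditary UFCs $\findex:\FM\to\CCospan$. By Theorem \ref{thm:endo}, together with the explicit generator-wise description of the induced functors $F^{nc+}$ and $F^{+}$ recorded in the Equivalence principle section, this induces functors $\index^{nc+}:\M^{nc+}\to\Cospan^{nc+}$ and $\index^{+}:\M^{+}\to\Cospan^{+}$. The crux is to read off how these act on a decorated composition graph $\bG$. I would first replace $\Cospan$ by its strict skeletal model (\S\ref{sec:skeletal-cospan}); by Corollary \ref{cor:plusgood} this changes neither the underlying graph nor the connectedness of the image, but it trivializes the monoidal structure isomorphisms $f$ of $\index$. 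Consulting the formulas, $\index^{+}$ then sends $\bar\g_{\phi_0,\phi_1}\mapsto\bar\g_{\index(\phi_0),\index(\phi_1)}$ (and in the $nc$ case $\g\mapsto\g$, $\mu\mapsto\mu$ since the structure maps $(f\Da f)$ become identities), and $\sdsphi\mapsto(\index(\s)\Da\index(\s'))$. Consequently $\index$ leaves the combinatorial box diagram — equivalently, by Definition--Proposition \ref{dfprop:dualgraph}, the composition graph — entirely intact, and only replaces the decoration of the white vertices by $\index(\phi_i)$ and of the black vertices by $\index(\s_j)$. This is exactly the claim that the underlying graph of $\bG$ coincides with the underlying graph of the image $\index(\bG)$.

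Second, for the connectedness statement I would observe that connectedness of a composition graph is a property of its underlying graph alone: by Definition \ref{nonconnectedindex}, $\bG$ is connected precisely when the full pushout $U$ of the diagram \eqref{pushoutdiageq} attached to the indices of its constituent morphisms satisfies $|U|=1$. Since $\index$ preserves the underlying graph by the previous paragraph, $|U|$ is unchanged, so $\bG$ is connected if and only if $\index(\bG)$ is. But $\index(\bG)$ is a morphism over $\Cospan$ whose vertex decorations $\index(\phi_i)=[(\pi_{\index(s(\phi_i))},\pi_{\index(t(\phi_i))})]$ are determined solely by the source and target objects $s(\phi_i)$ and $t(\phi_i)$ via Proposition \ref{hereditary UFC-Cospan-index}; hence the entire pushout data, and therefore connectedness, is read off from the indices of the source and target objects of $\bG$. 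This is the announced analogue of Proposition \ref{indecprop} at the level of indexing, and combined with that proposition it converts $\ot$-irreducibility of $\bG$ into a purely combinatorial question about $\index(\bG)$.

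The only non-formal point — and the step I expect to be the main obstacle — is verifying that the coherence data of the monoidal functor $\index$ does not deform the shape of the graph but is absorbed into the isomorphism decorations on the black vertices. I would dispatch this precisely by the strictification above: in the strict skeletal model of $\Cospan$ the structure isomorphisms $f$ are identities, so the $\mu$-images carry no extra twist, and Corollary \ref{cor:plusgood} guarantees that the passage to the strict model costs no generality. With that in hand, all remaining verifications are routine unwindings of the generator formulas.
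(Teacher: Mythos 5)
Your argument is correct and is essentially the argument the paper intends: the lemma is stated with a bare \qed precisely because, as the paragraph preceding it already notes, the functorially induced morphism $\index(\bG)$ is obtained by redecorating the underlying box/composition diagram with $\index(\phi_i)$ and $\index(\sigma_j)$ while leaving the combinatorial graph untouched, which is exactly what you establish via the functoriality of the plus construction and strictification of $\Cospan$. Your extra care with the coherence data of the strong monoidal structure (absorbed into the black-vertex decorations after passing to the skeletal strict model) fills in the one point the paper leaves tacit, and your reduction of connectedness to the pushout data of the source and target indices matches the intended reading.
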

\begin{prop}
\label{prop:herpgraph}
For a UFC $\FM$,
a morphism $\bG$ of $\monplus{\M,P}$ is $\ot$--irreducible if the  composition graph is connected.
\end{prop}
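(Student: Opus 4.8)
The plan is to reduce the statement to the canonical $\ot$-splitting already available in $\M^{nc+}$ and then read connectedness directly off the combinatorics of the composition graph. First I would record that a UFC has essentially uniquely factorizable objects by Proposition \ref{prop:facobj}, and in particular factorizable isomorphisms; this is exactly the hypothesis of Proposition \ref{prop:mncp=FC}, which lets me represent the basic morphism $\bG$ of $\M^{nc+}$ by a basic box diagram, equivalently by its dual composition graph, equipped with a valid decoration by morphisms and isomorphisms. I would also recall the relevant notion of reducibility: a diagram is $\ot$-decomposable precisely when its string diagram is disconnected, so $\ot$-irreducibility of $\bG$ means that this string diagram is connected.

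The core of the argument is the Corollary to Proposition \ref{prop:mncp=FC}, which produces the essentially unique splitting $\bG = \mu_n(\bG_1 \bdb \bG_n)P$ in which $n$ is simultaneously the number of full top-to-bottom vertical lines of the box diagram plus one, the number of connected components of the composition graph, and the number of $\ot$-factors of $\bG$; here each $\bG_i$ is the morphism carried by one component and $P$ is the external permutation reordering the $\bt$-factors. I would make explicit why this count is genuinely a count of graph-connectivity: a full vertical line can be crossed neither by a box nor by a horizontal line segment, so it partitions the white vertices and the internal black vertices into its left and right pieces, and it also cuts the top and bottom boundary segments, thereby separating the input and output black vertices as well; conversely the absence of any full vertical line forces the dual graph to be connected. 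I would then note that this is consistent with the string-diagram definition of $\ot$-decomposability, because the composition graph is merely the bipartite refinement of the string diagram obtained by inserting a black vertex on each $1$-cell, an operation that does not change the number of connected components.

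With the splitting in hand the proposition follows at once: $\bG$ is $\ot$-irreducible exactly when it admits no nontrivial factorization through some $\mu_{\psi_1,\psi_2}$, i.e.\ when $n=1$, which by the identification above is exactly the condition that the composition graph has a single connected component; the permutation $P\in\Sigma_{ext}$ is an isomorphism and plays no role in this count. The step I expect to demand the most care is the geometric bookkeeping verifying that a full vertical line disconnects the composition graph \emph{together with} its input and output black vertices (and conversely), but since this count is precisely what is packaged in the Corollary to Proposition \ref{prop:mncp=FC}, in practice the proof is a matter of invoking that correspondence rather than reproving it, the only fresh observation being that the external permutation is immaterial to $\ot$-irreducibility.
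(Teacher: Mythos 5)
Your argument is correct, but it takes a different route from the paper. The paper's proof goes through the indexing functor $\index\colon \M\to\Cospan$ (and the lemma immediately preceding the proposition): if $\bG\simeq\mu\circ(\bG'\bt\bG'')$ then $\index(\bG)$ splits accordingly and its graph has at least two components, so connectedness is checked entirely on the level of the underlying cospans of sources and targets. You instead stay inside the box-diagram/composition-graph combinatorics, invoking the corollary to Proposition~\ref{prop:mncp=FC} and the observation that a full vertical line separates the white vertices and the horizontal line segments into two nonempty pieces. Both work; the paper's version buys the ability to detect $\ot$-irreducibility from objects alone (which is exactly what it needs next, in Theorem~\ref{thm:ufcp=fc}), while yours is more self-contained and makes the geometric mechanism explicit. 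One small logical caution: the corollary you cite is stated in the direction ``$n-1$ full vertical lines $\Rightarrow$ splits into $n$ factors,'' whereas irreducibility of a connected $\bG$ requires the converse, namely that \emph{any} factorization $\bG=\mu\circ(\bG'\bt\bG'')$ forces a disconnection. You do supply this --- any $\ot$-factorization is realized by juxtaposing the two box diagrams along a full vertical line, which disconnects the dual graph --- but that step is doing the real work and should be flagged as such rather than attributed to the corollary; it is the precise analogue of the paper's claim that $\index(\bG)=\mu(\index(\bG')\bt\index(\bG''))$ has a disconnected graph.
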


\begin{proof}
Assume that  $\bG$ is  isomorphic to the $\ot$--product of at least two generators $\G=\mu\circ(\bG'\bt \bG'')$, then
 $\index(\bG)=\mu(\index(\bG)\bt \index(\bG'))$ whose corresponding graph has at least two components.
  Vice--versa, assume that $\index(\bG)=\mu(\index(\bG)\bt \index(\bG'))$ is reducible.
  Then tracing through the diagram $\index(\bG)$, we see that the pre--images must also be disconnected, as they are simply decorations of the diagrams.
\end{proof}


\begin{df}
Consider a \pher{} UFC $\FM$. Extending the definitions of index and depth, set
\begin{eqnarray*}
    \index(\gen{\phi}{\sigma}{n})&=&
\index(\sigma_0 \phi_0,\s_1\phi_1\kdk \s_{n-1}\phi_{n-1}\s_n)\\
\depth(\gen{\phi}{\sigma}{n})&=&\depth(\sigma_0 \phi_0,\s_1\phi_1\kdk \s_{n-1}\phi_{n-1}\s_n)
\end{eqnarray*}

A morphism $\gen{\phi}{\sigma}{n}$ is {\em connected} if
$\depth(\gen{\phi}{\sigma}{n})=1$.
\end{df}
\begin{rmk}
Note that due to the definition of push-outs, up to isomorphisms, we are free to ``rebracket'' the  isomorphisms in the arguments, that is $(\phi_0\sigma_0,  \dots )$ or $(\dots, \phi_i\sigma_{i-1},\dots)$.
\end{rmk}
\begin{prop}
\label{indecprop}
If $\FM$ is a \pher{} UFC, then the target of $\gen{\phi}{\sigma}{n}$ is irreducible  if and only if the morphism itself is connected.
\end{prop}

\begin{proof}
  If $\g=\gen{\phi}{\sigma}{n}$ is connected, assume that it is the product of at least two generators.
  By Lemma~\ref{lem:hereditarysplit}, the morphisms can be iteratively split to $\mu\circ(\g\simeq\gamma'\bt \gamma'')$.
  Since $\depth$ is additive under $\mu$, $1=\depth(\gamma'\ot\gamma'')=\depth(\gamma')+\depth(\gamma'')=\depth(\gamma)$ and thus either $\depth(\g')=0$ or $\depth(\g'')=0$.
  Let's assume $\depth(\g')=0$.
  This means that  the whole diagram of push--outs \eqref{pushoutdiageq} only has entries $\emptyset$,
  which in turn means that $\gamma'\simeq id_\unit$; and, hence $\g$ is irreducible.
  Note as depth is independent under isomorphisms this also holds for any essential decomposition.

  Vice--versa, assume that $\gen{\phi}{\sigma}{n}$ is not connected, then $|U|\geq 2$ and tracing back the pre--images in $\index(\gen{\phi}{\sigma}{n})$ one obtains a monoidal decomposition into connected components with more than one generator and hence it is reducible.
\end{proof}

\begin{rmk}
If $\M$ is just a UFC, $\depth$, see Definition \ref{df:degrees}, is defined and  irreducible, viz.\ $\depth=1$ implies connected, but not vice--versa.
\end{rmk}

\begin{cor}
\label{cor:connetedgen}
For a \pher{} UFC, any composite generator for $\monplus{\M}$  can be decomposed as
\begin{equation}
    \label{eq:hereditarygendecomp}
   \mu^k[\gen{\phi^1}{\sigma^1}{n_1}\bdb  \gen{\phi^k}{\sigma^k}{n_k}]\Perm
\end{equation}
where now the   $\gen{\phi^j}{\sigma^j}{n_1}$ are connected and $\Perm$ is the permutation rearranging the factors.
\end{cor}
 \begin{proof}
     The  statement follows from Corollary \ref{cor:decomp}.
 \end{proof}

\begin{cor}
\label{cor:hufcp}
For  a \pher{} UFC $\M$, after passing to a strict model or restricting to essential generators, every morphism in
$\locmonplus{\M}$ has an essentially unique representative $(\mu_n,\mu_m  (\bG_1\bdb\bG_m)P)$ where the source of $\mu_n$ is $\phi_1\bdb\phi_n$, its target is $\phi_1\odo\phi_n$ with each $\phi_i$  irreducible and the $\bG_i$ are $\ot$--irreducible.  This is unique if $\M$ is a strict UFC.
\end{cor}
\begin{proof} Starting  from the results from Corollary \ref{cor:connetedgen}, we analyze the generating roofs.
  In a strict \pher{} UFC $\Iso(\M\da
\M)=\Indec^\bt$, every $\phi$ can be decomposed {\em essentially uniquely} as $\phi= \phi_1\odo \phi_n$.
Given a morphism $(\mu_1,f_1)$ with $\mu_1:(\phi_1^1 \bdb \phi_1^{m_1})\bdb (\phi_n^1\bdb \phi_n^{m_n})\to \phi_1\bdb\phi_n$ in the obvious indexing, decompose $\phi_j^i$ into irreducibles, let $\mu_n$ be the total map to the source and let $\mu_l$ be the map that factors the total map as $\mu_n=\mu_1\mu_l$. The roof $(\mu_l,id)$ then gives an equivalence to $(\mu,f_2)$ with $f_2=f_1\mu_l$ and we may assume that $\mu_n$ is maximal and thus unique, as any other roof used for an equivalence must have an identity on the left, cf.\ Lemma~\ref{lem:roof}.
Splitting the target $\psi= \psi_1\odo \psi_m$ into irreducibles and applying  Lemma~\ref{lem:hereditarysplit} yields the result.
\end{proof}

\label{par:standard}

\begin{thm}
\label{thm:ufcp-equals-fc}
If  $\M$ is a \pher{} UFC, then the category $\plus{\M}$ is equivalent to the Feynman category whose underlying groupoid is given by
$\Iso(P\da P)$ and whose basic morphisms out of a given $\phi$ are given by a decomposition of $\phi\simeq \phi_1\odo\phi_n$,
and a connected composition graph validly decorated by these morphisms and isomorphisms.
If $\M$ is just a UFC (not necessarily \pher{}), then these morphisms still generate the morphisms of $\plus{\M}$ under $\ot$ and $\circ$.

  Assigning degree $0$ to isomorphisms and degree $1$ to the morphisms $\bar\gamma_{\phi_0,\phi_1}$ with $\phi_0, \phi_1 \in \jmath{\W}$ defines a proper degree function and the Feynman category is cubical.

Vice--versa, if a Feynman category $(\F,\V,\imath)$ is the standard plus construction  for a monoidal category $\M$, i.e.\ $\M^+=\F$, the category $\M$ is necessarily a \pher{} UFC with $\Indec=\V$ and
    $\jmath=\imath$.
    Moreover, such a Feynman category is cubical, viz.\ it has a proper degree function and is cubical with respect to it.
\end{thm}

\begin{proof}
 Given a \pher{} UFC $\M$ with standard gcp $P$, let $\V=\Indec$, then   $\V^\bt\simeq \Indec^\bt=(P\da P)=Iso(\M^+)$ with $\imath$ defined by the inclusion: $\V\to \Indec$, thus $\M^+$ satisfies
    condition (i) of an FC. For condition (iii), notice that the slice category of $\M$ being essentially
    small means that $(P \da P)$ is essentially small as well. The morphisms $\bar \gamma$ in the small model are then a subset of the product of morphisms which is also a set.
 For condition (ii),
using Proposition \ref{prop:loc-red-equiv}, the fact that the morphisms are described by decorated connected composition graph follows from Corollary~\ref{cor:hufcp} together with Proposition~\ref{prop:herpgraph}.
The claim about being an FC  then follows from Corollary \ref{cor:connetedgen}, as any morphism of $\monplus{\M}$  is given by a not--necessarily connected decorated composition graph by Corollary~\ref{cor:hufcp} which is the disjoint union of its connected components and the ordered disjoint union acts freely, that is without relations, since this is already true on the underlying graphs.
    Morphisms $\scs$ and the $\bar\gamma_{\phi_1,\phi_2}$ with $\phi_0, \phi_1$ are indecomposable by Lemma~\ref{lem:genstructure}. Indeed if $\phi_0\phi_1$ is decomposable, then as $\M$ is a hereditary UFC,
    $\phi_0\phi_1=\bigotimes_{u\in U}\phi_{0,u}\phi_{1,u}$ and by the interchange relations, cf.\ Proposition \ref{prop:loc-red-equiv}, $\bar\gamma_{\phi_0,\phi_1}=\bar\gamma_{\bigotimes_{u\in U}\phi_{0,u},\bigotimes_{u\in U}\phi_{1,u}}=\bigotimes_{u\in U}\bar \gamma_{\phi_{0,u},\phi_{1,u}}$. The relations are among these $\bar\g$ are quadratic by definition, cf.~Definition \ref{def:mloc} and hence the degree function is well--defined. It is also proper as non--isomorphisms have degree $>0$.
    For morphisms
    $\bar\gamma_{\phi_0\kdk \phi_n}$ there are indeed $n!$
    decompositions up to isomorphism, applying one $\bar \gamma_i=id\odo id \ot \bar\gamma_{\phi_i,\phi_{i+1}}\ot id \odo id$ at a time, and hence $\M^+=\F$ is cubical.
   The  statement about UFCs is simply that any morphism is represented by a concatenation of the $\bar \g$.

    Vice--versa, if $\F=\M^+$ for a (symmetric) monoidal $\M$ with gcp $P$, then $(P\da P)=Iso(\M^+)\simeq \V^\bt$ by condition (i) of a FC, and $\M$ is a UFC with $\Indec=\V$ and $\jmath=\imath$. The smallness of the slice categories follows from that of $\F$. Given a composable pair $(\phi_0,\phi_1)$ the morphism $\gamma_{\phi_0,\phi_1}$ is decomposable as $\bigotimes_{u\in U}\gamma_{\phi_{0,u},\phi_{1,u}}$ by virtue of $\M^+$ being an FC and hence having factorizable morphisms. Since $\F$ has only irreducible morphisms of type $(n,1)$,
    the images $\phi_{0,u}\phi_{1,u}$ must be indecomposable and moreover $\phi_0\phi_1\simeq\bigotimes_{u\in U} \phi_{0,u}\phi_{1,u}$ is the decomposition of the target of $\gamma_{\phi_0,\phi_1}$. This in turn means that the irreducible $\phi_u$ are the connected components and the UFC is \pher{}.
    The other statements, then follow from the
first part of the proof.
\end{proof}

\begin{cor}
    The Feynman categories for modular operads, their nc version, and props are not equivalent to a standard plus construction.
\end{cor}

\begin{proof}
   The FCs for modular, nc, modular operads and props  need two sets of degree $1$ generators, cf.\ \cite{feynman}.
\end{proof}
{\sc NB:} Props are related to these constructions through $B_+$ operators, see \cite{Michael} and \cite[\S 3.2.1]{feynman}.

\begin{prop}
\label{prop:mp-equals-fc}
For a \pher{} UFC $\M$, $\monplusp{\M}$ is the underlying monoidal category of  a Feynman category whose basic objects are $\Indec$ and whose  basic morphisms  are given by pairs
$(\mu_m, (\gen{\phi}{\sigma}{n})\Perm)$ with
connected $ \gen{\phi}{\sigma}{n}$ where $\phi_l=\phi^l_1\odo \phi^l_{m_l}$ with $\phi^l_i\in \Indec$ or $\phi^l_i=i_{id_X}$, the source of $\mu_m$ is $\phi^1_1\bdb\phi^n_{m_n}$ and its target is $\phi^n_1\bdb\phi^m_{n_m}$.

\end{prop}

\begin{proof}
We can level all  $\bG=\gen{\phi^i}{\s^i}{n}$ in \eqref{eq:hereditarygendecomp} in Corollary~\ref{cor:connetedgen}, such that they are of the same length and the use \ref{Gammaoteq}
to write them in the indicated form.
\end{proof}

 Parallel to Theorem~\ref{thm:ufcp-equals-fc}, we obtain

\begin{thm}
\label{thm  :level}
For a \pher{} UFC the  basic morphisms of $\plusp{\M}$ are represented by
 connected {\em leveled} composition graphs with black vertices
decorated by base morphisms and white vertices decorated by morphism in $\Indec$. \qed
\end{thm}

{\sc NB:} The decoration includes an enumeration of the labels. This enumeration specifies the source of the morphisms:
Identifying the enumeration $n_1^1\kdk n_1^{m_1}\kdk n^m_1\kdk n^m_{m_k}$ with $1\kdk m=\sum_l m_l$,
the enumeration is an element of $\SS_m$, where $m=\sum_{j=1}^k m_j$.
This also fixes the source as $\phi=\phi_{1}\odo\phi_m$. Two representations yield the same morphism if they represent levelings, by insertion of identities, of the same underlying non--leveled graph. These are the only relations.
There is a standard leveling, by inserting all identities at the top.

This leveling is necessary for these morphisms to lead to an indexing. As the corresponding bimodule has to be unital. A  morphism with $m$ levels in $\plusgcp{\M}$ will correspond to an element in the $m$--th level of the nerve. The hyp construction  guarantees that the isomorphisms are not counted, just like in the depth. This corresponds to taking the groupoid
nerve, as is appropriate according to the general philosophy.

\begin{prop}
For a \pher{} UFC, the morphisms of $\monplushyp{\M}$ and $\plushyp{\M}$ are the graphs as specified in  Proposition~\ref{prop:mncpgcpgen}
with the additional restriction that none of the labels $\phi^l_j$ are in $P(\B)$.
\end{prop}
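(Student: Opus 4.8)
The plan is to start from the gcp descriptions already established---Proposition~\ref{prop:mncpgcpgen} for $\M^{nc+,gcp}$ and Proposition~\ref{prop:mp=fc} for $\M^{+,gcp}$---and then exploit the extra invertibility supplied by the hyper construction to eliminate exactly those vertices whose decoration is an isomorphism of $\M$. Recall that a gcp basic morphism is represented by a (leveled) decorated composition graph, i.e.\ a consolidated generator $\gen{\phi}{\sigma}{n}$ composed with $\mu$'s and a permutation, whose white vertices carry labels $\phi^l_j\in\Indec$ (possibly the unit labels $i_{id_X}$) and whose black vertices/edges carry isomorphisms. Passing to $\M^{nc+,hyp}$ resp.\ $\M^{+,hyp}$ adjoins the morphisms $r_\sigma:\sigma\to\unit$ with $r_\sigma i_\sigma=\id=i_\sigma r_\sigma$ (Definition~\ref{df:hyp}), so that every $i_\sigma$ becomes invertible and, by Lemma~\ref{hyper-isomorphism-groupoid}, all isomorphism objects are canonically identified with $\unit$.

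The key step is an \emph{absorption} move. Suppose a white vertex of the consolidated generator is labeled by $\phi_k=\tau\in\Iso(\M)$. By Remark~\ref{rmk:factorrmk}\eqref{item:iso} an irreducible isomorphism is of type $(1,1)$, so this vertex is bivalent, sitting between two edge-isomorphisms $\sigma_{k-1}$ and $\sigma_k$ (or at a leaf, bordered by a single edge). The gcp compatibility relations \eqref{rightcompateq} and \eqref{leftcompateq} say that filling the $\tau$-input of the adjacent $\gamma$ by the unit $i_\tau$ reproduces the isomorphism action $(\tau^{-1}\Da\id)$; in the hyper category $i_\tau$ is invertible, so reading these relations backwards with $r_\tau=i_\tau^{-1}$ rewrites the $\gamma$-composition involving $\tau$ as an isomorphism action together with a cap $r_\tau$ that deletes the $\tau$-factor from the source. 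Graphically this removes the $\tau$-vertex and merges its label into the neighboring edge-isomorphism $\sigma_{k-1}$ (or $\sigma_k$). Iterating over all isomorphism-labeled vertices produces an equivalent representative all of whose white vertices carry non-isomorphism labels, the removed isomorphisms having been absorbed into the black/edge decorations; this is precisely a graph as in Proposition~\ref{prop:mncpgcpgen} subject to $\phi^l_j\notin\Iso(\M)$. Since an irreducible isomorphism vertex is bivalent (or a leaf), its deletion merges two edges into one or trims a leaf, so connectedness is preserved and the restriction to connected graphs in the $\M^+$ case is respected; if every vertex of a connected generator is an isomorphism, the whole generator collapses to a pure isomorphism, which in the hyper category lies in the groupoid and is therefore not counted among the basic morphisms, consistently with the statement.

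It remains to see that no new identifications occur among the isomorphism-free graphs, i.e.\ that the assignment is faithful. The only relations adjoined in passing from gcp to hyp are $r_\sigma i_\sigma=\id$ and $i_\sigma r_\sigma=\id$, which involve solely the isomorphism objects $\sigma$ and $\unit$; by Lemma~\ref{hyper-isomorphism-groupoid} (equivalently, working in the condensed model $\Pl^{hyp,cond}$) these pin down all morphisms between isomorphism objects and cannot identify two distinct isomorphism-free decorated composition graphs, whose representatives contain no $i_\sigma$ or $r_\sigma$ with $\sigma\neq\id$. Hence the absorbed representatives biject with the basic morphisms of $\M^{nc+,hyp}$ resp.\ $\M^{+,hyp}$, and the claimed description follows. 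I expect the main obstacle to be the bookkeeping in the absorption move: verifying that successive deletions of isomorphism vertices are independent of the order in which they are performed and that the accumulated edge-isomorphisms are assigned consistently. This amounts to checking that the resulting reorderings are governed exactly by the equivariance relations \eqref{innerequieq}, \eqref{outerequieq} and the gcp relations already present, so that no choice is made beyond the given equivalences.
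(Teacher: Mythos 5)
Your proposal is correct and follows essentially the same route as the paper's proof, which simply observes that any isomorphism-labeled vertex can be traded for an isomorphism action $(id\Da\s)$ or $(\s\Da id)$ via the (now invertible) unit relations and thereby removed from the standard form. Your more detailed treatment of the absorption move, connectedness, and faithfulness is a legitimate expansion of the same one-line argument.
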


\begin{proof}
Any  occurrence of such a label can be replaced by an isomorphism $(id,\s)$  or $(\s,id)$ and hence removed from a standard form.
\end{proof}

\section{Graphical plus construction}
\label{sec:graphical-plus-construction}

In the case where $\M$ is a strict \pher{} UFC or Feynman category, the standard plus construction has a graphical description in more familiar terms.
The formalism of graphs we use is that of \cite{BorMan,feynman}.
We will then build on this by introducing groupoid-colored graphs expanding on \cite[Appendix B]{feynmanrep}.
This will allow us to describe the standard plus construction of a strict UFC $\mathfrak{M}$
via $\Iso(\M)$-colored graphs whose vertices are decorated by $\Indec$.
The graphical standard gcp and hyp constructions will be variations on this idea.

\subsection{Basics about Feynman category of graphs}
A {\em graph} is a tuple $\G=(V_\G, F_\G, \del_\G, i_\G)$ where: $V_\G$ is a set whose elements are called \emph{vertices}, $F_\G$ is a set whose elements are called \emph{flags},
$\del:F \to V$ is a map saying to which vertex a flag is attached,
$i_\G:F\to F$ is an involution so that $i_\G^2=id$, which gives the set of edges as the orbits of length $2$ and the tails as the orbits of length $1$.
A {\em morphism of graphs $\phi:\G\to \G'$} is a triple $(\phi_V,\phi^F,i_\phi)$ where
 $\phi_V:V_\G\ta V_{\G'}$ is a surjection, $\phi^F:\F_{\G'}\hookrightarrow \F_{\G}$ is an injection and $i_\phi$ is a {\em fixed point free} involution of $F_{\G} \setminus \phi^F(F_{\G'})$.
The orbits (all of length $2$) will be called the {\em ghost edges}.
There  are  several  compatibility  conditions  for which we refer to \cite[Appendix B]{feynmanrep}.
The {\em underlying or ghost graph} of a morphism to be $\gh(\phi) = (V_\G, F_\G, \del_\G, \hat\imath_\phi)$, where $\hat\imath_\phi$ is the trivial extension of $\phi$ to all of $F$.
The category is monoidal with respect to disjoint union of graphs.
A {\em corolla} is a one--vertex graph and an {\em aggregate} is a disjoint union of these.
It is clear that, by adding a vertex to the end of a tail, a graph defines an 1-dimensional CW complex and as usual a graph is a connected if the realization is.
A connected graph is a {\em tree} if its realization is contractible, and a {\em root} of a tree is a marked vertex.
The wide subcategory of aggregates $\Agg$ is part of a Feynman category $\mathfrak{G}$ whose groupoid is the subgroupoid $\Crl$ whose objects are corollas, see \cite[Proposition 2.1.2] {feynman}.
This category has $(n,1)$ generators that are connected, i.e.\ whose ghost graph is connected, these are called edge/loop contractions and those whose ghost graph have no edges, called mergers.
Degree one generators contract one edge, one loop, or merge two vertices, cf \cite[\S 5.1]{feynman}. Restricting to tree generators, this yields the Feynman subcategory $\FF^{\rm operads}$ which corepresents non--unital pseudo--operads.

An {\em oriented edge} is a choice of order on the set $\{f, i(f)\}$ defining the edge; this will be denoted by $\vec{e}=(f,i(f))$. Given an oriented edge $\vec{e}=(f,i(f))$, we denote the edge with the opposite orientation by $\cev{e}=(i(f),f)$.
A directed graph is {\em full} if at every vertex either no output flag is a part of an edge or all of them are and either no input flag is part of an edge or all input flags are part of an edge. A {\em two--level contraction} is a basic morphism of corollas whose ghost graph is full and has two levels. That is, it is isomorphic to a map $\phi:a_{i_1,o_1}\amalg a_{i_2,o_2}\to *_{i_1,o_2}$ where $a_{i_1,o_1},a_{i_2,o_2}$ are two aggregates and the ghost graph is connected and $\imath$ is a bijection between $o_1$ an $i_2$.

Restricting to the sub--Feynman category generated by these basic morphisms, one obtains
$\FF^{\rm nu \mdash prop}=(\Crl^{\rm dir}, \Agg^{\rm dir, 2-level},i)$ and
$\FF^{\rm nu \mdash properad}=(\Crl^{\rm dir},\Agg^{\rm dir, 2\mdash level, ctd},i)$
where in the latter case the ghost graphs of the basic morphisms are also connected. Here ``$\mathrm{nu}$'' stands for ``non--unital''', and
the names come from the fact that the categories of strong monoidal functors out of them are non--unital props and non--unital properads.
The generators of $\F^{\rm nu\mdash properads}$ are two-level edge contractions.
Similarly $\F^{\rm nu\mdash prop}$
is generated by 2--level contractions and mergers.
To add units, one proceeds with adding special black bivalent vertices expressing the presence of units, cf.\ \cite[\S 2.8.2]{feynman}, see also
\cite[\S 2.2.1]{feynman} for operads and \cite[B.1.4]{feynmanrep} for the plus
construction in Feynman categories. These encode morphisms $u:\unit\to id_{*_{1,1}}$ where $*_{1,1}$ is the corolla with vertex $*$, one input and one output. To implement
the unit properties, these graphs are taken modulo the relation of removing black vertices. The classes
are then simply the graphs without special vertices and on extra class, that of $[\dottree]$ which can be expressed as the loose edge $|$ following \cite{Markl}.
This is the special generator $u$.
This construction yields the Feynman categories $\FF^{\rm prop}$ and $\FF^{\rm properads}$.



\subsection{Groupoid-colored graphs}
\label{par:groupoidgraphs}

A \emph{groupoid-colored graph} for a groupoid $\colorcat$ is a triple $(\G,\clr_\G,\bs_\G)$.
The data $\G$ is an ordinary graph.
The data $\clr: F \to \Obj(\colorcat)$ is a function called the \emph{coloring}.
The data $\bs_{\G}$ assigns to each directed edge $\vec{e}=(f, i(f))$ an isomorphism:
$
    \sigma(f,i(f)): \clr(f) \stackrel{\sim}{\to} \clr(i(f))
$.
Moreover, we require $\bs_{\G}$ to be compatible with reorientation so that $\sigma(\vec{e}) = \sigma^{-1}(\cev{e})$.

A morphism of $\colorcat$-colored graphs $\phi:(\G, \clr_\G, \bs_\G) \to (\G', \clr_{\G'}, \bs_{\G'})$ is a triple $(\phi, \bs_\phi, \btau)$. Here, $\phi:\G\to \G'$ is an ordinary graph morphism. The data $\bs_\phi$ is a collection of isomorphisms, one for each directed ghost edge $(f,i_\phi(f))$, $\sigma((f,i_\phi(f))): \clr(f)\stackrel{\sim}{\to} \clr(i_\phi(f))$ --- again satisfying $\sigma(\vec{e})=\sigma^{-1}(\cev{e})$ ---, and $\btau$ is a collection of isomorphisms, one for each $f'\in F'$, $\tau(f):\clr(f')\stackrel{\sim}{\to}\clr (\phi^F(f'))$. We call these \emph{flag recolorings}.
Again, we obtain a category $\colorcat\mdash\Graphs$ of $\colorcat$--colored graphs.

\begin{ex}
Let $\colorcat$ be a discrete category, that is $\colorcat$ only has identity morphisms, and let $V$ be the underlying set of objects. Then $\colorcat\mdash\Graphs$
is just the category of graphs which have $V$--colored flags.
\end{ex}

\begin{prop}
    Any functor $F: \colorcat \to \colorcat'$ induces a functor $\nu: \colorcat \mdash \Graphs \to \colorcat' \mdash \Graphs$ given by $(\G,\nu\circ \clr, \nu\circ\bs )$.
\end{prop}
\begin{proof}
Straightforward.
\end{proof}

The unique functor from the groupoid $\colorcat$ to the trivial category yields a forgetful functor $\colorcat \mdash \Graphs \to \Graphs$.

\begin{definition}
  For a groupoid $\C$, define $\colorcat \mdash \F^{\rm nu \mdash prop}$ to be the wide subcategory of directed $\colorcat$-colored aggregates such that the morphisms are generated by isomorphisms and morphisms with full  directed ghost graphs for basic morphisms.
  Similarly, define
  $\colorcat \mdash \F^{\rm nu \mdash properad}$ to be the wide subcategory of directed $\colorcat$-colored aggregates such that the morphisms are generated by isomorphism and morphisms with full \emph{connected} directed ghost graphs for basic morphisms.
\end{definition}

\begin{rmk}
  Although (undirected) graphs must be groupoid colored, {\em directed} graphs can be colored by any category $\C$.
  In that case, colorings of in-flags will occur in $\C^{op}$ and colorings of out-flags will occur in $\C$.
  However, the graphical plus construction is most natural for the standard gcp where this extra generality would not be utilized.
\end{rmk}

\subsection{Plus construction as a decoration}
We will give a version of the standard plus construction for a strict \pher{} UFC $\M$ with $\Iso(\M) = \V^\bt$ as a decoration of $\V$-colored graphs.
As in \cite{feynman}, we will be concerned with the wide subcategories generated by aggregates of corollas, that is $\V \mdash \Agg$.

\begin{const}
  \label{tuple}
  For notational brevity, let $\mathrm{Indec}(-,-)$ denote the set of indecomposables morphisms for $\M$.
  Define the following set for each aggregate of $\V$-colored corollas where $\V$ is the groupoid such that $\Iso(\M) = \V^\bt$:
  \begin{equation}
    \label{eq:word}
    \mathrm{Tuple}_{\M}
    \left(
      \coprod_{v \in V} \ast_v
    \right) =
    \prod_{v \in V}
    \mathrm{Indec}
    \left(
      \bigotimes_{s \in F_{in}(v)} \clr(s),
      \bigotimes_{t \in F_{out}(v)} \clr(t)
    \right)
  \end{equation}
  Here $F_{in}(v)$ is the set of in-flags and $F_{out}(v)$ is the set of out-flags of the corolla $\ast_v$.

  Let $\V \mdash \mathcal{P}in$ be the subcategory of $\V \mdash \F^{\rm properad}$ where the objects are indexed disjoint unions of corollas $\coprod_{v \in V} \ast_v$ and the morphisms are generated by isomorphisms of corollas and bijections of indices.
  The morphisms of $\V \mdash \mathcal{P}in$ then act on the set \eqref{eq:word} in the following manner:\\
  {\sc Bijections of flags:} For a bijection of flags $b: \ast \to \ast'$, let $b_{in}(v): F'_{in}(v) \to F_{in}(v)$ and $b_{out}(v): F'_{out}(v) \to F_{out}(v)$ be the induced bijections on in-flags and out-flags.
    These induce commutativity constraints $C_{in}: \bigotimes_{s' \in F'_{in}} \clr(s') \to \bigotimes_{s \in F_{in}} \clr(s)$ and $C_{out}: \bigotimes_{t' \in F'_{out}} \clr(s') \to \bigotimes_{t \in F_{out}} \clr(s)$.
    Define $\O_{\M}(b): \O_{\M}(\ast_0) \to \O_{\M}(\ast_1)$ as:
    \begin{equation}
      \begin{tikzcd}[column sep = large]
        \displaystyle
        \mathrm{Indec}
        \left(
          \bigotimes_{s \in F_{in}} \clr(s),
          \bigotimes_{t \in F_{out}} \clr(t)
        \right) &
        \displaystyle
        \mathrm{Indec}
        \left(
          \bigotimes_{s' \in F'_{in}} \clr(s'),
          \bigotimes_{t' \in F'_{out}} \clr(t')
        \right)
        \arrow["{(C_{in}, C_{out}^{-1})}", from=1-1, to=1-2]
      \end{tikzcd}
    \end{equation}
    {\sc Recolorings:} For a label-recoloring $r: \ast \to \ast'$, let $\clr_{\ast}$ denote the color function for $\ast$ and let $\clr_{\ast'}$ denote the color function for $\ast'$.
    The graph morphism gives morphisms $\sigma_i: \clr_{\ast'}(s_i) \to \clr_{\ast}(s_i)$ for each index $i$.
    Likewise, we have morphisms $\tau_i: \clr_{\ast'}(t_j) \to \clr_{\ast}(t_j)$ for the out-flags.
    Define $\O_{\M}(r): \O_{\M}(\ast) \to \O_{\M}(\ast')$ to be:
    \begin{equation}
      \begin{tikzcd}[column sep = huge]
        \displaystyle
        \mathrm{Indec}
        \left(
          \bigotimes_i \clr_{\ast}(s_i),
          \bigotimes_j \clr_{\ast}(t_j)
        \right) &
        \displaystyle
        \mathrm{Indec}
        \left(
          \bigotimes_i \clr_{\ast'}(s_{i}),
          \bigotimes_j \clr_{\ast'}(t_{j})
        \right)
        \arrow["{(\bigotimes_i \sigma_i, \bigotimes_j \tau^{-1}_j)}", from=1-1, to=1-2]
      \end{tikzcd}
    \end{equation}
    {\sc Bijections of indices:} Straightforward, commutativity constraints $\coprod_{v \in V} \to \coprod_{v' \in V'}$ become commutativity constraints $\prod_{v \in V} \to \prod_{v' \in V'}$.
\end{const}

\begin{prop}
  \label{OM-functor}
  The sets $\mathrm{Tuple}_{\M}$ can be extended into a strong monoidal functor $\O_{\M}: \V \mdash \F^{\rm nu\mdash properad} \to \Set$.
\end{prop}
\begin{proof}
For an aggregate $\G$ with flags $F$ and vertices $V$, define
\begin{equation}
  \O_{\M}(\G) :=
  \int^{\amalg_v \ast_v \in \V \mdash \mathcal{P}in}
  \Iso(\amalg_v \ast_v, \G)
  \times
  \mathrm{Tuple}_{\M}(\amalg_v \ast_v)
\end{equation}
  {\sc Isomorphisms:}
  The composition $\Iso(\otimes_v \ast_v, \G) \times \Iso(\G, \G') \to \Iso(\otimes_v \ast_v, \G')$ induces $\O_{\M}(\sigma): \O_{\M}(\G) \to \O_{\M)}(\G')$ for each $\sigma \in \Iso(\G, \G')$.\\
  {\sc 2-level morphisms:}
  For a two--level graph morphism $(\coprod_i \ast_i) \amalg (\coprod_j \ast'_j) \to \star$
  we have:\\
  $\O_{P}
    (
      (\coprod_i \ast_i) \amalg (\coprod_j \ast'_j)
    )
    = \O_{\M}(\coprod_i \ast_i) \times \O_{\M}(\coprod_j \ast'_j)
$.
  Hence an element on the left is a pair of morphisms $(\phi, \phi')$ written below with colors $(X_i)$, $(Y_j)$, $(Y'_j)$, and $(Z_k)$:
  $
      \phi
      \in \O_{\M}(\coprod_i \ast_i)
      \subseteq \Hom(X_1 \odo X_m, Y_1 \odo Y_n)$ and
      $\phi'
      \in \O_{\M}(\coprod_j \ast'_j)
      \subseteq \Hom(Y'_1 \odo Y'_n, Z_1 \odo Z_p)
 $.
  The graph morphism induces a bijection between the out-tails of the aggregate $\coprod_i \ast_i$ and the in-tails of the aggregate $\coprod_j \ast'_j$.
  This gives a commutativity constraint.
  Moreover, each ghost edge of the morphism corresponds to an isomorphism.
  All together, this defines a morphism $\sigma \in \Hom_{\V^\bt}(Y_1 \bdb Y_n, Y'_1 \bdb Y'_n)$.
  Now define the set map $
      \O_{\M}
      (
      (\coprod_i \ast_i) \amalg (\coprod_j \ast'_j)
      )
      \to \O_{\M}(\star)$
      by
    $
        \phi \otimes \phi' \mapsto \phi' \circ P(\sigma) \circ \phi
    $.
\end{proof}

\begin{prop}
    \label{graphical-plus}
    Given a strict \pher{} UFC $\M$, the \emph{graphical plus construction} is defined by the decoration $\plusgr{\M} := (\V \mdash \F^{\rm properad})_{\dec \, \O_{\M}}$ where $\V^{\bt} = \Iso(\M)$.
    Moreover, there is a monoidal equivalence between $\plus{\M}$ and $\plusgr{\M}$.
\end{prop}
\noindent {\sc NB:} For a strong monoidal functor $\O: \F \to \Set$, the category $\F_{\dec \O}$ is the Grothendieck construction for Feynman categories, see \cite{decorated}. Explictly, the objects are pairs $(X, a_X\in \O(X))$ and morphisms are $\phi:X\to Y$ for which $\O(\phi)(a_X)=a_Y$.
\begin{proof}
  As short hand, let $(\G; \Sigma, (\phi_v)_{v \in V})$ denote the graph $\G$ decorated with the class of $(\Sigma, (\phi_v)_{v \in V}) \in \O_{\M}(\G)$.
  By the definition of a UFC, each morphism $\Phi$ in $\plus{\M}$ can be written as $\Phi = (\sigma \Da \sigma')(\phi_1 \odo \phi_n)$ where the $\phi_i$ are basic.
  Define a strong monoidal functor $Graph: \plus{\M} \to \plusgr{\M}$ by
\begin{equation}
    Graph(\Phi) = (\amalg_{i=1}^n \ast_i; (\sigma \Da \sigma'), (\phi_i)_{i=1}^n)
  \end{equation}
  Where $\ast_i$ is the $\V$--colored corolla whose in-flags are colored by the source of $\phi_i$ and the out-flags are colored by the target of $\phi_i$.
  This is well-defined because any other factorization belongs to the same class in $\O_{\M}(\G)$.

  The definition on morphisms is straightforward.
  The $Graph$ functor respects inner equivariance, outer equivariance and internal associativity because composition in the category $\M$ is associative.
  Internal interchange follows from $\O_{\M}$ being a strong monoidal functor.

  For the other direction, first pick a functor $Card: \FinSet \to \FinSet$ that sends every finite set $X$ to $\{1, \ldots, |X|\}$ and a natural isomorphism $\kappa: \id \Rightarrow Card$.
  Now, define a monoidal functor $Mor: \plusgr{\M} \to \plus{\M}$ by
 {\sc Objects:}
    define $Mor(\G; \Sigma, (\phi_v)_{V})
      = \Sigma ( \phi_{\kappa_V^{-1}(1)} \ot \ldots \ot \phi_{\kappa_V^{-1}(|V|)} )$.\\
 {\sc Vertex bijections:}
    If $B$ is a graph morphism which is exclusively a bijection of vertices, then $Word(B)$ is the morphism
 $ (\Gamma; \Sigma, (\phi_v)_{V}) \to (B(\Gamma); B \circ \Sigma, (\phi_v)_{V})
   $.
 {\sc Flag isomorphisms:} reverse the $Graph$ functor.\\
 {\sc 2--level--contractions:} reverse the $Graph$ functor.

  We can always arrange for $\kappa_{\underline{n}}: \underline{n} \to \underline{n}$ to be the identity.
  In that case, the composition $Mor \circ Graph$ is the identity functor.
  On the other hand, $Graph(Mor(\Gamma; \Sigma, (\phi_v)_{V})) = (\Gamma; \Sigma, (\phi_{\kappa_{V}^{-1}(i)})_{i=1}^{|V|})$.
  Let $K_V$ be the vertex bijection induced by $\kappa_V: V \to Card(V)$.
  This gives a morphism of decorated graphs:
$
    (\Gamma; \Sigma, (\phi_v)_{V}) \to (\Gamma; \Sigma \circ K_V^{-1}, (\phi_{\kappa_{V}^{-1}(i)})_{i=1}^{|V|})
 $.
  This is a natural isomorphism $\id_{\gplus{\M}} \cong Graph \circ Mor$.
\end{proof}

\begin{rmk}
  Due to the way the $\Iso(\M)$ actions are handled in Construction~\ref{tuple} and Definition\ref{OM-functor}, it is straight forward to adapt this construction to any pointing by a groupoid.
\end{rmk}

\begin{cor}
  \label{cor:cospan-plus-equals-properad}
  $\plus{\Cospan}$ is equivalent to $\nuproperads$
\end{cor}
\begin{proof}
  Let $\mathsf{Cospan}$ denote the skeletal version of $\Cospan$ whose objects are the sets $\un, n\in \N_0$ and the morphisms are the cospans for these objects.
  Since the plus construction respects the principle of equivalence, $\plus{\Cospan} \simeq \plus{\mathsf{Cospan}}$.
  From the last result, we know $\plus{\mathsf{Cospan}} \simeq \plusgr{\mathsf{Cospan}}$.
  The $\mathrm{Indec}$ sets for $\mathsf{Cospan}$ are always a singleton, so the objects of $\plusgr{\mathsf{Cospan}}$ are all aggregates and the morphisms are those of $\properads$.
\end{proof}

\subsection{Graphical gcp construction}

Define $\vert^X_Y$ to be a corolla with one in-tail labeled by $X$ and one out-tail labeled by $Y$.
Define $\V \mdash \F^{\rm properad,gcp}$ from $\V \mdash \FF^{\rm nu \mdash properad}$ by adding morphisms $i_{\sigma}: \emptyset \to \vert^X_Y$ for each morphism $\sigma: X \to Y$ in $\V$. The presence of these morphisms will be encoded by bivalent vertices $\dottree \sigma$.
We then quotient by the following relations:
\begin{enumerate}
    \item Compatibility with flag recoloring: For isomorphisms $\sigma: X \to Y$, $\tau: X \to X'$ and $\tau': Y \to Y'$, the following diagram commutes
    \begin{equation}
    \begin{tikzcd}
	\emptyset
	&& {\vert^{X'}_{Y'}} \\
	& {\vert^X_Y}
	\arrow["{i_\sigma}"', from=1-1, to=2-2]
	\arrow["{(\tau, \tau')}"', from=2-2, to=1-3]
	\arrow["{i_{\tau' \sigma\tau}}", from=1-1, to=1-3]
	\end{tikzcd}
    \end{equation}
    \item Compatibility with graph composition:
      \begin{equation}
        \begin{tikzcd}
          \Gamma \arrow[rr, "{(\coprod_j \sigma_j, \coprod_k \tau_k)}"] \arrow[rd, "id \amalg \coprod_j i_{\sigma_j} \amalg \coprod_k i_{\tau_k}"'] & & \Gamma' \\
          & \Gamma \amalg (\coprod_j |^{X_j'}_{X_j}) \amalg (\coprod_k |^{Y_k}_{Y'_k}) \arrow[ru] &
        \end{tikzcd}
      \end{equation}
\end{enumerate}

\begin{prop}
  There is a canonical $\O_{\M}^{gcp}: \V \mdash\F^{\rm properad, gcp} \to \Set$ extending $\O_{\M}$.
\end{prop}
\begin{proof}
    Send $i_\sigma: \emptyset \to \vert^{s(\sigma)}_{t(\sigma)}$ to the pointing $\{ \id_{\unit} \} \to \O_{\M}\left(\vert^{s(\sigma)}_{t(\sigma)}\right)$ which selects the element $\sigma$. This respects the relations imposed on the pointings. Hence this defines a monoidal functor.
\end{proof}

\begin{prop}
\label{prop:graphicalgcp}
    For a strict \pher{} UFC $\M$, define the \emph{graphical gcp construction} as the decoration $\plusgcpgr{\M} := (\V \mdash\F^{\rm properads, gcp})_{\dec \O_{\M}^{gcp}}$.
    Then there is a monoidal equivalence between $\plusgcp{\M}$ and $\plusgcpgr{\M}$.
\end{prop}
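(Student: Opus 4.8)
The plan is to build the equivalence by extending the two mutually inverse functors $Graph\colon\M^+\to\M^{G+}$ and $Mor\colon\M^{G+}\to\M^+$ from the proof of Proposition~\ref{graphical-plus} to the gcp setting, where the only new feature on both sides is the collection of unit generators. On the algebraic side $\M^{+gcp}$ is obtained from $\M^+$ by adjoining the morphisms $i_\sigma\colon\unit\to\sigma$ subject to the relations \eqref{gcpreleq}, \eqref{rightcompateq} and \eqref{leftcompateq} (Definition~\ref{gcpdef}), and on the graphical side $\V\mdash\F^{\rm properad,gcp}$ is obtained from $\V\mdash\F^{\rm properad}$ by adjoining the bivalent scaffolding vertices $\dottree\sigma$ representing $i_\sigma\colon\varnothing\to\vert^{s(\sigma)}_{t(\sigma)}$ subject to the groupoid-compatibility and recoloring-compatibility relations. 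Since $\O_{\M}^{gcp}$ already sends $i_\sigma$ to the pointing selecting $\sigma$, the two presentations are built from matching data, and the task reduces to extending the inverse equivalences generator by generator.

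First I would define $Graph^{gcp}\colon\M^{+gcp}\to\M^{G+gcp}$ to agree with $Graph$ on the subcategory $GCP(\M^+)$ and to send the generator $i_\sigma$ to the decorated bivalent vertex $\dottree\sigma$, i.e.\ to $i_\sigma\colon\varnothing\to\vert^{s(\sigma)}_{t(\sigma)}$ carrying the pointing decoration under $\O_{\M}^{gcp}$. To see that this respects the defining relations, I would check that \eqref{gcpreleq} maps precisely to the graphical groupoid-compatibility relation, that the left/right composition compatibilities \eqref{leftcompateq} and \eqref{rightcompateq} map to the recoloring-compatibility relation (inserting $i_\sigma$ on a unit strand is exactly the recoloring of a corolla flag), and that the monoidal unit relation descends because both decorations are the unit pointing. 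By the universal property of the gcp construction (Proposition~\ref{prop:factor}) this suffices to produce a well-defined strong monoidal functor, with monoidality inherited exactly as in the non-gcp case.

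Next I would extend $Mor$ to $Mor^{gcp}\colon\M^{G+gcp}\to\M^{+gcp}$ by sending each scaffolding vertex $\dottree\sigma$ to $i_\sigma$ and reusing the cardinality functor $Card$ together with the natural isomorphism $\kappa$ from the proof of Proposition~\ref{graphical-plus}; well-definedness again follows by matching the two graph relations against \eqref{gcpreleq}, \eqref{rightcompateq} and \eqref{leftcompateq}. The natural isomorphisms witnessing $Mor^{gcp}\circ Graph^{gcp}\cong\id$ and $Graph^{gcp}\circ Mor^{gcp}\cong\id$ are then induced by the same vertex-relabeling isomorphisms $K_V$ as before, since the new generators are carried strictly to one another.

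The main obstacle I anticipate is matching the two notions of ``moving a unit''. On the algebraic side a morphism of $\M^{+gcp}$ is, by Proposition~\ref{prop:mp=fc}, a connected \emph{leveled} composition graph, and distinct levelings obtained by inserting the $i_{id_X}$ at different heights represent the \emph{same} morphism (the relevelling relations described after Corollary~\ref{cor:level}); on the graphical side the bivalent vertices $\dottree\sigma$ likewise behave as scaffolding that can be slid through the recoloring relation. The delicate point is to verify that these two families of slide-relations generate exactly the same equivalence, so that $Graph^{gcp}$ and $Mor^{gcp}$ neither collapse distinct morphisms nor fail to identify morphisms that must be equal. I would settle this by comparing normal forms: Proposition~\ref{prop:mp=fc} furnishes a standard leveling (all units inserted at the top) on the algebraic side, and I would show that the graphical recoloring relation lets one push all scaffolding vertices to a single standard level, so that the two normal forms correspond under $Graph^{gcp}$.
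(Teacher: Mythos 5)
Your proposal is correct and follows essentially the same route as the paper, which simply declares the result ``a routine modification of the proof of Proposition~\ref{graphical-plus}''; you carry out exactly that modification, extending $Graph$ and $Mor$ by sending $i_\sigma$ to the decorated bivalent vertex $\dottree\,\sigma$ and back, and checking that the gcp relations on each side correspond. Your extra attention to matching the relevelling relations against the graphical sliding of scaffolding vertices via normal forms is a sensible elaboration of a point the paper leaves implicit, not a departure from its argument.
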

\begin{proof}
     This is a routine modification of the proof of Theorem~\ref{graphical-plus}.
\end{proof}

\begin{cor}
  $\plusgcp{\Cospan} = \properads$.
\end{cor}
\begin{proof}
  Similar to Corollary~\ref{cor:cospan-plus-equals-properad}.
\end{proof}

\subsection{Graphical Hyper construction}

Define $\V\mdash\F^{\rm properad, hyp}$ from $\V\mdash\F^{\rm properad, gcp}$ by adding morphisms $r_{\sigma}: \vert^{s(\sigma)}_{t(\sigma)} \to \emptyset$ for each morphism $\sigma$ in $\V$ and quotient by the relation $r_{\sigma} \circ i_{\sigma} = id_{\emptyset}$.

\begin{prop}
    There is a canonical extension $\O_{\M}^{Hyp}: \V\mdash\F^{\rm properad, hyp} \to \Set$ of $\O_{\M}$.
\end{prop}
\begin{proof}
    The only option is to send $r_{\sigma}: \vert^{s(\sigma)}_{t(\sigma)} \to \emptyset$ to the unique map $\O_{\M}(\vert^{s(\sigma)}_{t(\sigma)}) \to \{ \id_{\unit} \}$.
    This clearly respects the required relations.
\end{proof}

\begin{prop}
    For a strict \pher{} UFC $\M$, define  $\plushypgr{\M} := (\V \mdash \mathcal{F}^{properads, hyp})_{\dec \, \O_{\M}^{Hyp}}$.
    Then there is a monoidal equivalence between $\plushyp{\M}$ and $\plushypgr{\M}$.
\end{prop}
\begin{proof}
     This is another routine modification of the proof of Theorem~\ref{graphical-plus}.
\end{proof}

\subsubsection{Graphical monoidal plus construction}

One can extend the functors $\O_{\M}$, $\O_{\M}^{gcp}$ and $\O_{\M}^{hyp}$ above to non--connected graphs by first modifying the previous definitions:
\begin{align}
  \mathrm{MTuple}_{\M}
  \left(
    \coprod_{v \in V} \ast_v
  \right)
  &:=
  \prod_{v \in V}
  \Hom
  \left(
    \bigotimes_{s \in F_{in}(v)} \clr(s),
    \bigotimes_{t \in F_{out}(v)} \clr(t)
  \right) \\
  \O^{\ot}_{\M}(\G)
  &:=
  \int^{\amalg_v \ast_v \in \V \mdash \mathcal{P}in}
  \Iso(\amalg_v \ast_v, \G)
  \times
  \mathrm{MTuple}_{\M}(\amalg_v \ast_v)
\end{align}
We then define the merger $\mu$ so that the labels on the vertices are tensored together by the morphisms $\mu_{\phi_1,\phi_2}$.
This leads to a graphical monoidal plus construction $\monplusgr{\M} := \V\mdash\props_{dec \, \O^{\ot}_{\M}}$ and its gcp and hyp versions $\monplusgcp{\M}$ and $\monplushypgr{\M}$ given by decorating $\V \mdash \F^{\rm props, gcp}$ and $\V \mdash \F^{\rm props, hyp}$.
\begin{prop}
\label{prop:graphnc}
For a \pher{} UFC, there are monoidal equivalences  between
$\monplusgr{\M}$ and $\monplus{\M}$ as well as between $\monplusgcpgr{\M}$ and $\monplusgcp{\M}$ and between $\monplushypgr{\M}$ and $\monplushyp{\M}$.
\end{prop}
\begin{proof}
     This is again a variation of the construction above.
\end{proof}

\begin{ex}
$\monplus{\Cospan} = (\nuprops)^{nc}$ and
$\monplusgcp{\Cospan} = (\props)^{nc}$ which
can be merged to $\nuprops$ and $\props$, cf. \cite[Example 3.2.4]{feynman}.
\end{ex}

\subsection{Summary}
\label{sec:summary}

\begin{thm}
  \label{thm:graphs}
  For a \pher{} strict UFC $\M$, the graphical plus constructions $\monplusgr{\M}$ and $\plusgr{\M}$ are equivalent to $\monplus{\M}$ and $\plus{\M}$. Moreover their gcp and hyp versions are also equivalent. \qed
\end{thm}

\begin{cor}
    The FC $\frak{F}^{\it cyc}$ corepresenting cyclic operads is not a standard plus construction.
\end{cor}
\begin{proof}
 $\frak{F}^{cyc}$ as presented in \cite{feynman} has a  presentation, with one type of generator, edge contractions, which are given by one--edge trees, and is cubical, but the graphs  underlying the morphisms are not oriented. To get plus type generators one would need orientation on the graphs. Note the degree $1$ generators are unique up to isomorphism in this case.
 Putting an orientation on trees with one edge, would yield two one--edge 2--level trees corresponding to generators $\g_{\phi_0,\phi_1}$ and $\gamma_{\phi_1,\phi_0}$, duplicating the number of generators.
   In these generators, the category would not be cubical. There are $n!$ enumerations of edges on a tree, but there are an additional $2^{|E|}$ choices of orientations of them.
\end{proof}

\subsubsection{Relation to composition graphs}
\label{par:directgraph}
There is also a direct construction using the composition graphs.
The graphical procedure is as follows:
Given a composition graph,
\begin{enumerate}
    \item split each edge labeled by $X=X_1\odo X_k$ where the $X_i$ are irreducible into $k$ edges labeled by the $X_i$.
 \item   split each
white vertex labeled by $\phi=\phi_1\odo \phi_n$, with $\phi_i\in \Indec$  into $n$ vertices labeled by $\phi_i$
where the flags are assigned to the vertices according to the sources and targets of the $\phi_i$, using that $s(\phi)=s(\phi_1)\odo s(\phi_n)$ an likewise for the target.

\item Split the black labeled with an isomorphisms $\s=\s_1\odo s_n$ in a similar fashion where the isomorphism $\s_i:X_i\to Y_i$ are such that $X_i,Y_i\in \V$.
\end{enumerate}

The result is a labeled graph of the type found in the graphical plus constructions, see Figure~\ref{fig:graphex}.

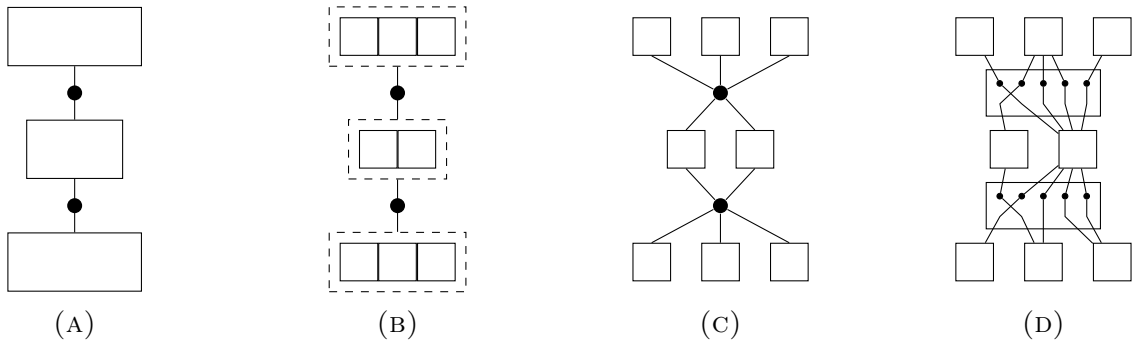
\begin{figure}
  \centering
  \begin{subfigure}[c]{0.22\textwidth}
    \centering
    \begin{tikzpicture}[scale=.9]
      \tikzstyle{basic-mor}=[minimum size=5mm]
      \tikzstyle{mor}=[draw, nodes={basic-mor}, column sep = 0mm]
      \tikzstyle{iso}=[circle, fill=black, inner sep=2pt]
      \matrix(level3) [mor] at (0,3)
      {
        \node(3a) {}; & \node(3b){}; & \node(3c) {}; \\
      };
      \node (iso32) at (0, 2.25) [iso] {};
      \matrix (level2)[mor] at (0,1.5)
      {
        \node(2a) {}; & \node(2b) {}; \\
      };
      \node (iso21) at (0, 0.75) [iso] {};
      \matrix(level1)[mor] at (0,0)
      {
        \node (1a) {}; & \node (1b) {}; & \node (1c) {}; \\
      };
      \draw (level3.south) -- (level2.north);
      \draw (level2.south) -- (level1.north);
    \end{tikzpicture}
    \caption{}
  \end{subfigure}
  \hfill
  \begin{subfigure}[c]{0.22\textwidth}
    \centering
    \begin{tikzpicture}[scale=.9]
      \tikzstyle{basic-mor}=[draw, solid, minimum size=5mm]
      \tikzstyle{mor}=[draw, dashed, nodes={basic-mor}, column sep = 0mm]
      \tikzstyle{iso}=[circle, fill=black, inner sep=2pt]
      \matrix(level3) [mor] at (0,3)
      {
        \node(3a) {}; & \node(3b){}; & \node(3c) {}; \\
      };
      \node (iso32) at (0, 2.25) [iso] {};
      \matrix (level2)[mor] at (0,1.5)
      {
        \node(2a) {}; & \node(2b) {}; \\
      };
      \node (iso21) at (0, 0.75) [iso] {};
      \matrix(level1)[mor] at (0,0)
      {
        \node (1a) {}; & \node (1b) {}; & \node (1c) {}; \\
      };
      \draw (level3.south) -- (level2.north);
      \draw (level2.south) -- (level1.north);
    \end{tikzpicture}
    \caption{}
  \end{subfigure}
  \hfill
  \begin{subfigure}[c]{0.22\textwidth}
    \centering
    \begin{tikzpicture}[scale=.9]
      \tikzstyle{basic-mor}=[draw, minimum size=5mm]
      \tikzstyle{mor}=[draw=none, nodes={basic-mor}, column sep = 4mm]
      \tikzstyle{iso}=[circle, fill=black, inner sep=2pt]
      \matrix(level3) [mor] at (0,3)
      {
        \node(3a) {}; & \node(3b){}; & \node(3c) {}; \\
      };
      \node (iso32) at (0, 2.25) [iso] {};
      \matrix (level2)[mor] at (0,1.5)
      {
        \node(2a) {}; & \node(2b) {}; \\
      };
      \node (iso21) at (0, 0.75) [iso] {};
      \matrix(level1)[mor] at (0,0)
      {
        \node (1a) {}; & \node (1b) {}; & \node (1c) {}; \\
      };
      \draw (3a.south) -- (iso32);
      \draw (3b.south) -- (iso32);
      \draw (3c.south) -- (iso32);
      \draw (iso32) -- (2a.north);
      \draw (iso32) -- (2b.north);

      \draw (2a.south) -- (iso21);
      \draw (2b.south) -- (iso21);
      \draw (iso21) -- (1a.north);
      \draw (iso21) -- (1b.north);
      \draw (iso21) -- (1c.north);
    \end{tikzpicture}
    \caption{}
  \end{subfigure}
  \hfill
  \begin{subfigure}[c]{0.22\textwidth}
    \centering
    \begin{tikzpicture}[scale=.9]
      \tikzstyle{basic-mor}=[draw, minimum size=5mm]
      \tikzstyle{mor}=[draw=none, nodes={basic-mor}, column sep = 4mm]
      \tikzstyle{iso}=[draw, nodes={inner sep=0.75pt}, column sep = 2mm, row sep = 2mm]
      \tikzstyle{iso-dot}=[draw, circle, fill=black]
      \matrix(level3) [mor] at (0,3)
      {
        \node(3a) {}; & \node(3b){}; & \node(3c) {}; \\
      };

      \matrix(iso32) [iso] at (0,2.25)
      {
        \node(32a3) [iso-dot] {}; & \node(32b3) [iso-dot] {}; & \node(32c3) [iso-dot] {}; & \node(32d3) [iso-dot] {}; & \node(32e3) [iso-dot] {}; \\
        \node(32a2)[inner sep = 0mm] {}; & \node(32b2) {}; & \node(32c2) {}; & \node(32d2) {}; & \node(32e2) {}; \\
      };

      \draw (3a) -- (32a3);
      \draw (3b) -- (32b3);
      \draw (3b) -- (32c3);
      \draw (3b) -- (32d3);
      \draw (3c) -- (32e3);

      \matrix (level2)[mor] at (0,1.5)
      {
        \node(2a) {}; & \node(2b) {}; \\
      };

      \draw (32a3) -- (32b2.center) -- (2b);
      \draw (32b3) -- (32a2.center) -- (2a);
      \draw (32c3) -- (32c2.center) -- (2b);
      \draw (32d3) -- (32d2.center) -- (2b);
      \draw (32e3) -- (32e2.center) -- (2b);

      \matrix(iso21) [iso] at (0,0.75)
      {
        \node(21a2) [iso-dot] {}; & \node(21b2) [iso-dot] {}; & \node(21c2) [iso-dot] {}; & \node(21d2) [iso-dot] {}; & \node(21e2) [iso-dot] {}; \\
        \node(21a1) {}; & \node(21b1) {}; & \node(21c1) {}; & \node(21d1) {}; & \node(21e1) {}; \\
      };

      \draw (2a) -- (21a2);
      \draw (2b) -- (21b2);
      \draw (2b) -- (21c2);
      \draw (2b) -- (21d2);
      \draw (2b) -- (21e2);

      \matrix(level1)[mor] at (0,0)
      {
        \node (1a) {}; & \node (1b) {}; & \node (1c) {}; \\
      };

      \draw (21a2) -- (21b1.center) -- (1b);
      \draw (21b2) -- (21a1.center) -- (1a);
      \draw (21c2) -- (21c1.center) -- (1b);
      \draw (21d2) -- (21d1.center) -- (1c);
      \draw (21e2) -- (21e1.center) -- (1c);
    \end{tikzpicture}
    \caption{}
  \end{subfigure}
  \caption{\label{fig:graphex} An example of converting a composition graph into a graph appearing in the graphical plus construction. Here for instructive purposes, we fist split the white vertices splitting the output edges as well, then split the black vertices along with the edges.}
\end{figure}

In this interpretation, the $\g_{\phi_1,\phi_2}$ which correspond to removing horizontal line segments correspond to 2-level contractions.

{\sc NB} Note that the same procedure already works more generally for the suspension graphs, yielding a graphical description for these cases as well, see Figure~\ref{fig:suspensionex}.
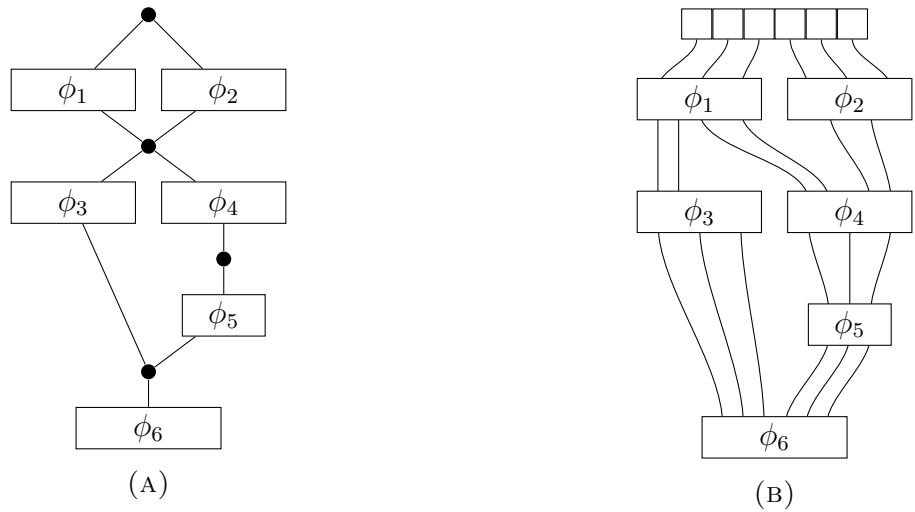
\begin{figure}
  \centering
  \hfill
  \begin{subfigure}[c]{0.40\linewidth}
    \centering
    \begin{tikzpicture}[scale=.6]
      \tikzstyle{iso}=[circle, fill=black, inner sep=2pt]
      \tikzstyle{mor}=[draw, nodes={minimum size=1mm},  fill=white, column sep = 0mm]
      \node[iso] (isoa) at (0, 5) {};
      \matrix [mor] (phi1) at (-1.5, 4)
      {
        \node(phi1a) {}; & \node(phi1b){}; & \node(phi1c) {};  & \node(phi1d) {}; & \node(phi1e) {}; \\
      };
      \node at (phi1) {$\phi_1$} {};
      \matrix[mor] (phi2)  at (1.5, 4)
      {
        \node(phi2a) {}; & \node(phi2b){}; & \node(phi2c) {};  & \node(phi2d) {}; & \node(phi2e) {}; \\
      };
      \node at (phi2) {$\phi_2$};
      \node[iso] (isob) at (0, 3.25) {};
      \matrix [mor] (phi3) at (-1.5, 2.5)
      {
        \node(phi3a) {}; & \node(phi3b){}; & \node(phi3c) {};  & \node(phi3d) {}; & \node(phi3e) {}; \\
      };
      \node at (phi3) {$\phi_3$};
      \matrix[mor] (phi4)  at (1.5, 2.5)
      {
        \node(phi4a) {}; & \node(phi4b){}; & \node(phi4c) {};  & \node(phi4d) {}; & \node(phi4e) {}; \\
      };
      \node at (phi4) {$\phi_4$};
      \node[iso] (isoc) at (1.5, 1.75) {};
      \matrix [mor] (phi5) at (1.5, 1)
      {
        \node(phi5a) {}; & \node(phi5b){}; & \node(phi5c) {}; \\
      };
      \node at (phi5) {$\phi_5$};
      \node[iso] (isod) at (0, 0.25) {};
      \matrix[mor] (phi6)  at (0, -0.5)
      {
        \node(phi6a) {}; & \node(phi6b){}; & \node(phi6c) {};  & \node(phi6d) {}; & \node(phi6e) {};  & \node(phi6f) {};  \\
      };
      \node at (phi6) {$\phi_6$};
      \begin{scope}[on background layer]
        \draw (isoa) -- (phi1) -- (isob);
        \draw (isoa) -- (phi2) -- (isob);
        \draw (isob) -- (phi3) -- (isod);
        \draw (isob) -- (phi4) -- (isoc);
        \draw (isoc) -- (phi5) -- (isod);
        \draw (isod) -- (phi6);
      \end{scope}
    \end{tikzpicture}
    \caption{}
  \end{subfigure}
  \hfill
  \begin{subfigure}[c]{0.40\linewidth}
    \centering
    \begin{tikzpicture}[scale=.6]
      \tikzstyle{mor}=[draw, nodes={minimum size=1mm},  fill=white, column sep = 0mm]
      \matrix [column sep = 0mm, nodes={draw, minimum size=4mm}] (iso) at (0, 5)
      {
        \node(isoa) {}; & \node(isob){}; & \node(isoc) {};  & \node(isod) {}; & \node(isoe) {}; & \node(isof) {}; \\
      };
      \matrix [mor] (phi1) at (-1.5, 4)
      {
        \node(phi1a) {}; & \node(phi1b){}; & \node(phi1c) {};  & \node(phi1d) {}; & \node(phi1e) {}; \\
      };
      \node at (phi1) {$\phi_1$};
      \matrix[mor] (phi2)  at (1.5, 4)
      {
        \node(phi2a) {}; & \node(phi2b){}; & \node(phi2c) {};  & \node(phi2d) {}; & \node(phi2e) {}; \\
      };
      \node at (phi2) {$\phi_2$};
      \matrix [mor] (phi3) at (-1.5, 2.5)
      {
        \node(phi3a) {}; & \node(phi3b){}; & \node(phi3c) {};  & \node(phi3d) {}; & \node(phi3e) {}; \\
      };
      \node at (phi3) {$\phi_3$};
      \matrix[mor] (phi4)  at (1.5, 2.5)
      {
        \node(phi4a) {}; & \node(phi4b){}; & \node(phi4c) {};  & \node(phi4d) {}; & \node(phi4e) {}; \\
      };
      \node at (phi4) {$\phi_4$};
      \matrix [mor] (phi5) at (1.5, 1)
      {
        \node(phi5a) {}; & \node(phi5b){}; & \node(phi5c) {}; \\
      };
      \node at (phi5) {$\phi_5$};
      \matrix[mor] (phi6)  at (0, -0.5)
      {
        \node(phi6a) {}; & \node(phi6b){}; & \node(phi6c) {};  & \node(phi6d) {}; & \node(phi6e) {};  & \node(phi6f) {};  \\
      };
      \node at (phi6) {$\phi_6$};
      \tikzstyle{loose}=[in=90, out=-90, looseness=0.75]
      \begin{scope}[on background layer]
        \draw[loose] (isoa) to (phi1a);
        \draw[loose] (isob) to (phi1c);
        \draw[loose] (isoc) to (phi1e);
        \draw[loose] (isod) to (phi2a);
        \draw[loose] (isoe) to (phi2c);
        \draw[loose] (isof) to (phi2e);
        \draw[loose] (phi1a) to (phi3a);
        \draw[loose] (phi1b) to (phi3b);
        \draw[loose] (phi1c) to (phi4a);
        \draw[loose] (phi1e) to (phi4b);
        \draw[loose] (phi2b) to (phi4d);
        \draw[loose] (phi2d) to (phi4e);
        \draw[loose] (phi4a) to (phi5a);
        \draw[loose] (phi4c) to (phi5b);
        \draw[loose] (phi4e) to (phi5c);
        \draw[loose] (phi3a) to (phi6a);
        \draw[loose] (phi3c) to (phi6b);
        \draw[loose] (phi3e) to (phi6c);
        \draw[loose] (phi5a) to (phi6d);
        \draw[loose] (phi5b) to (phi6e);
        \draw[loose] (phi5c) to (phi6f);
      \end{scope}
    \end{tikzpicture}
    \caption{}
  \end{subfigure}
  \caption{\label{fig:suspensionex} Graphical planar version}
  \hfill
\end{figure}

\bibliography{KMoManin.bib}

\begin{thebibliography}{KWZn15}

\bibitem[Bar78]{barratt-twisted}
Michael~G Barratt.
\newblock Twisted lie algebras.
\newblock In {\em Geometric Applications of Homotopy Theory II: Proceedings,
  Evanston, March 21--26, 1977}, pages 9--15. Springer, 1978.

\bibitem[Bau81]{baues}
H.~J. Baues.
\newblock The double bar and cobar constructions.
\newblock {\em Compositio Math.}, 43(3):331--341, 1981.

\bibitem[BB17]{monads}
M.~A. Batanin and C.~Berger.
\newblock Homotopy theory for algebras over polynomial monads.
\newblock {\em Theory Appl. Categ.}, 32:Paper No. 6, 148--253, 2017.

\bibitem[BD98]{BaezDolan}
John~C. Baez and James Dolan.
\newblock Higher-dimensional algebra. {III}. {$n$}-categories and the algebra
  of opetopes.
\newblock {\em Adv. Math.}, 135(2):145--206, 1998.

\bibitem[B{\'e}n67]{Benabou}
Jean B{\'e}nabou.
\newblock Introduction to bicategories.
\newblock In {\em Reports of the {M}idwest {C}ategory {S}eminar}, pages 1--77.
  Springer, Berlin, 1967.

\bibitem[Ber22]{Bergermoment}
Clemens Berger.
\newblock Moment categories and operads.
\newblock {\em Theory Appl. Categ.}, 38:Paper No. 39, 1485--1537, 2022.

\bibitem[BFSV03]{BFSV}
C.~Balteanu, Z.~Fiedorowicz, R.~Schw\"{a}nzl, and R.~Vogt.
\newblock Iterated monoidal categories.
\newblock {\em Adv. Math.}, 176(2):277--349, 2003.

\bibitem[BH24]{Hackney}
Jonathan Beardsley and Philip Hackney.
\newblock Labelled cospan categories and properads.
\newblock {\em J. Pure Appl. Algebra}, 228(2):Paper No. 107471, 62, 2024.

\bibitem[BK17]{BergerKaufmann}
Clemens Berger and Ralph~M. Kaufmann.
\newblock Comprehensive factorisation systems.
\newblock {\em Tbilisi Math. J.}, 10(3):255--277, 2017.

\bibitem[BK22]{DDecDennis}
Clemens Berger and Ralph~M. Kaufmann.
\newblock {T}rees, graphs and aggregates: a categorical perspective on
  combinatorial surface topology, geometry, and algebra.
\newblock ArXiv 2201.10537, 2022.

\bibitem[BKW18]{BKW}
Michael Batanin, Joachim Kock, and Mark Weber.
\newblock Regular patterns, substitudes, {F}eynman categories and operads.
\newblock {\em Theory Appl. Categ.}, 33:Paper No. 7, 148--192, 2018.

\bibitem[BM08]{BorMan}
Dennis~V. Borisov and Yuri~I. Manin.
\newblock Generalized operads and their inner cohomomorphisms.
\newblock In {\em Geometry and dynamics of groups and spaces}, volume 265 of
  {\em Progr. Math.}, pages 247--308. Birkh\"auser, Basel, 2008.

\bibitem[BM18]{BMplus}
Michael Batanin and Martin Markl.
\newblock Operadic categories as a natural environment for koszul duality.
\newblock Preprint arXiv:1812.02935v3, 2018.

\bibitem[Bri01]{Brinkmeier}
Michael Brinkmeier.
\newblock {\em On operads}.
\newblock PhD thesis, Universit{\"at} Osnabr{\"u}ck, May 2001.

\bibitem[Del90]{TannakaDel}
P.~Deligne.
\newblock Cat\'egories {T}annakiennes.
\newblock In {\em The {G}rothendieck {F}estschrift, {V}ol.\ {II}}, volume~87 of
  {\em Progr. Math.}, pages 111--195. Birkh\"auser Boston, Boston, MA, 1990.

\bibitem[DM82]{DelMilne}
Pierre {Deligne} and J.S. {Milne}.
\newblock {Tannakian categories.}
\newblock {in: Hodge cycles, motives, and Shimura varieties, Lect. Notes Math.
  900, 101-228 (1982).}, 1982.
\newblock See version of 2012 at URL http://www.jmilne.org/math/xnotes/tc.pdf.

\bibitem[DS11]{rigidification-necklaces}
Daniel Dugger and David~I Spivak.
\newblock Rigidification of quasi-categories.
\newblock {\em Algebraic \& Geometric Topology}, 11(1):225--261, 2011.

\bibitem[Dun88]{Dunn}
Gerald Dunn.
\newblock Tensor product of operads and iterated loop spaces.
\newblock {\em J. Pure Appl. Algebra}, 50(3):237--258, 1988.

\bibitem[Get09a]{Getzoperad}
Ezra Getzler.
\newblock Operads revisited.
\newblock In {\em Algebra, arithmetic, and geometry: in honor of {Y}u. {I}.
  {M}anin. {V}ol. {I}}, volume 269 of {\em Progr. Math.}, pages 675--698.
  Birkh\"auser Boston Inc., Boston, MA, 2009.

\bibitem[Get09b]{Getzler}
Ezra Getzler.
\newblock Operads revisited.
\newblock In {\em Algebra, arithmetic, and geometry: in honor of {Y}u. {I}.
  {M}anin. {V}ol. {I}}, volume 269 of {\em Progr. Math.}, pages 675--698.
  Birkh\"auser Boston Inc., Boston, MA, 2009.

\bibitem[GK95]{GKcyclic}
E.~Getzler and M.~M. Kapranov.
\newblock Cyclic operads and cyclic homology.
\newblock In {\em Geometry, topology, \& physics}, Conf. Proc. Lecture Notes
  Geom. Topology, IV, pages 167--201. Int. Press, Cambridge, MA, 1995.

\bibitem[GK98]{GKmodular}
E.~Getzler and M.~M. Kapranov.
\newblock Modular operads.
\newblock {\em Compositio Math.}, 110(1):65--126, 1998.

\bibitem[GS10]{ginzburg-schedler-diffop-bv}
Victor Ginzburg and Travis Schedler.
\newblock Differential operators and bv structures in noncommutative geometry.
\newblock {\em Selecta Mathematica}, 16(4):673--730, 2010.

\bibitem[JS91]{JoyalStreetGeotensor}
Andr\'{e} Joyal and Ross Street.
\newblock The geometry of tensor calculus. {I}.
\newblock {\em Adv. Math.}, 88(1):55--112, 1991.

\bibitem[Kau04]{woods}
Ralph~M. Kaufmann.
\newblock Operads, moduli of surfaces and quantum algebras.
\newblock In {\em Woods {H}ole mathematics}, volume~34 of {\em Ser. Knots
  Everything}, pages 133--224. World Sci. Publ., Hackensack, NJ, 2004.

\bibitem[Kau07a]{hoch1}
Ralph~M. Kaufmann.
\newblock Moduli space actions on the {H}ochschild co-chains of a {F}robenius
  algebra. {I}. {C}ell operads.
\newblock {\em J. Noncommut. Geom.}, 1(3):333--384, 2007.

\bibitem[Kau07b]{del}
Ralph~M. Kaufmann.
\newblock On spineless cacti, {D}eligne's conjecture and {C}onnes-{K}reimer's
  {H}opf algebra.
\newblock {\em Topology}, 46(1):39--88, 2007.

\bibitem[Kau08]{hoch2}
Ralph~M. Kaufmann.
\newblock Moduli space actions on the {H}ochschild co-chains of a {F}robenius
  algebra. {II}. {C}orrelators.
\newblock {\em J. Noncommut. Geom.}, 2(3):283--332, 2008.

\bibitem[Kau09]{postnikov}
Ralph~M. Kaufmann.
\newblock Dimension vs. genus: a surface realization of the little {$k$}-cubes
  and an {$E_\infty$} operad.
\newblock In {\em Algebraic topology---old and new}, volume~85 of {\em Banach
  Center Publ.}, pages 241--274. Polish Acad. Sci. Inst. Math., Warsaw, 2009.

\bibitem[Kau18]{matrix}
Ralph~M. Kaufmann.
\newblock {L}ectures on {F}eynman {C}ategories.
\newblock In {\em 2016 {M}atrix {A}nnals}, volume~1 of {\em {M}atrix {B}ook
  Series}, pages 375--438. Springer, Cham, 2018.

\bibitem[Kau21]{feynmanrep}
Ralph~M. Kaufmann.
\newblock Feynman categories and representation theory.
\newblock In {\em Representations of algebras, geometry and physics}, volume
  769 of {\em Contemp. Math.}, pages 11--84. Amer. Math. Soc., [Providence],
  RI, [2021] \copyright 2021.

\bibitem[Kel82]{kellybook}
Gregory~Maxwell Kelly.
\newblock {\em Basic concepts of enriched category theory}, volume~64 of {\em
  London Mathematical Society Lecture Note Series}.
\newblock Cambridge University Press, Cambridge, 1982.

\bibitem[KL17]{decorated}
Ralph Kaufmann and Jason Lucas.
\newblock Decorated {F}eynman categories.
\newblock {\em J. Noncommut. Geom.}, 11(4):1437--1464, 2017.

\bibitem[KLP03]{KLP}
Ralph~M. Kaufmann, Muriel Livernet, and R.~C. Penner.
\newblock Arc operads and arc algebras.
\newblock {\em Geom. Topol.}, 7:511--568 (electronic), 2003.

\bibitem[KM24]{plethysm}
Ralph~M Kaufmann and Michael Monaco.
\newblock Plethysm products, element--and plus constructions.
\newblock {\em Comptes Rendus. Math{\'e}matique}, 362(G4):357--411, 2024.

\bibitem[KW17]{feynman}
Ralph~M. Kaufmann and Benjamin~C. Ward.
\newblock {F}eynman {C}ategories.
\newblock {\em Ast\'erisque}, (387):vii+161, 2017.
\newblock arXiv:1312.1269.

\bibitem[KW23]{KW2}
Ralph~M. Kaufmann and Benjamin~C. Ward.
\newblock Koszul {F}eynman categories.
\newblock {\em Proc. Amer. Math. Soc.}, 151(8):3253--3267, 2023.

\bibitem[KWZn15]{KWZ}
Ralph~M. Kaufmann, Benjamin~C. Ward, and J.~Javier Z\'{u}\~{n}iga.
\newblock The odd origin of {G}erstenhaber brackets, {B}atalin-{V}ilkovisky
  operators, and master equations.
\newblock {\em J. Math. Phys.}, 56(10):103504, 40, 2015.

\bibitem[KY21]{KYM}
Ralph~M. Kaufmann and Mo~Yang.
\newblock Pathlike co/bialgebras and their antipodes with applications to bi-
  and hopf algebras appearing in topology, number theory and physics.
\newblock arXiv:2104.08895, 2021.

\bibitem[KZ17]{KZhang}
Ralph~M. Kaufmann and Yongheng Zhang.
\newblock Permutohedral structures on {$E_2$}-operads.
\newblock {\em Forum Math.}, 29(6):1371--1411, 2017.

\bibitem[Luc16]{LucasThesis}
Jason~M. Lucas.
\newblock {\em Connecting models of configuration spaces: {F}rom double loops
  to strings}.
\newblock ProQuest LLC, Ann Arbor, MI, 2016.
\newblock Thesis (Ph.D.)--Purdue University.

\bibitem[Man99]{ManinBook}
Yuri~I. Manin.
\newblock {\em Frobenius manifolds, quantum cohomology, and moduli spaces},
  volume~47 of {\em American Mathematical Society Colloquium Publications}.
\newblock American Mathematical Society, Providence, RI, 1999.

\bibitem[Man18]{ManinQG}
Yuri~I. Manin.
\newblock {\em Quantum groups and noncommutative geometry}.
\newblock CRM Short Courses. Centre de Recherches Math\'{e}matiques,
  [Montreal], QC; Springer, Cham, second edition, 2018.
\newblock With a contribution by Theo Raedschelders and Michel Van den Bergh.

\bibitem[Man19]{ManinBlack}
Yuri~I. Manin.
\newblock Higher structures, quantum groups and genus zero modular operad.
\newblock {\em J. Lond. Math. Soc. (2)}, 100(3):721--730, 2019.

\bibitem[Mar08]{Markl}
Martin Markl.
\newblock Operads and {PROP}s.
\newblock In {\em Handbook of algebra. {V}ol. 5}, volume~5 of {\em Handb.
  Algebr.}, pages 87--140. Elsevier/North-Holland, Amsterdam, 2008.

\bibitem[Mit72]{MitchellSemiCat}
Barry Mitchell.
\newblock The dominion of {I}sbell.
\newblock {\em Trans. Amer. Math. Soc.}, 167:319--331, 1972.

\bibitem[ML98]{MacLane}
Saunders Mac~Lane.
\newblock {\em Categories for the working mathematician}, volume~5 of {\em
  Graduate Texts in Mathematics}.
\newblock Springer-Verlag, New York, second edition, 1998.

\bibitem[MMS09]{wheeledprops}
M.~Markl, S.~Merkulov, and S.~Shadrin.
\newblock Wheeled {PROP}s, graph complexes and the master equation.
\newblock {\em J. Pure Appl. Algebra}, 213(4):496--535, 2009.

\bibitem[Mon24]{Michael}
Michael Monaco.
\newblock Calculations for plus constructions.
\newblock In {\em Higher Structures in Topology, Geometry, and Physics}, volume
  802 of {\em Contemp. Math.} Amer. Math. Soc., Providence, R.I., 2024.

\bibitem[MSS02]{MSS}
Martin Markl, Steve Shnider, and Jim Stasheff.
\newblock {\em Operads in algebra, topology and physics}, volume~96 of {\em
  Mathematical Surveys and Monographs}.
\newblock American Mathematical Society, Providence, RI, 2002.

\bibitem[SS12]{sam-snowden-tca}
Steven~V Sam and Andrew Snowden.
\newblock Introduction to twisted commutative algebras, 2012, 1209.5122.

\end{thebibliography}
\bibliographystyle{halpha}
\end{document}